\begin{document}

\newtheorem{lem}{Lemma}[section]
\newtheorem{prop}{Proposition}
\newtheorem{con}{Construction}[section]
\newtheorem{defi}{Definition}[section]
\newtheorem{coro}{Corollary}[section]
\newcommand{\hf}{\hat{f}}
\newtheorem{fact}{Fact}[section]
\newtheorem{theo}{Theorem}
\newcommand{\Br}{\Poin}
\newcommand{\Cr}{{\bf Cr}}
\newcommand{\dist}{{\rm dist}}
\newcommand{\diam}{\mbox{diam}\, }
\newcommand{\mod}{{\rm mod}\,}
\newcommand{\compose}{\circ}
\newcommand{\dbar}{\bar{\partial}}
\newcommand{\Def}[1]{{{\em #1}}}
\newcommand{\dx}[1]{\frac{\partial #1}{\partial x}}
\newcommand{\dy}[1]{\frac{\partial #1}{\partial y}}
\newcommand{\Res}[2]{{#1}\raisebox{-.4ex}{$\left|\,_{#2}\right.$}}
\newcommand{\sgn}{{\rm sgn}}

\newcommand{\CC}{\mathbb{C}}
\newcommand{\D}{{\bf D}}
\newcommand{\Dm}{{\bf D_-}}
\newcommand{\RR}{\mathbb{R}}
\newcommand{\NN}{\mathbb{N}}
\newcommand{\HH}{\mathbb{H}}
\newcommand{\ZZ}{\mathbb{Z}}

\newcommand{\tr}{\mbox{Tr}\,}
\newcommand{\R}{{\bf R}}
\newcommand{\C}{{\bf C}}

\newenvironment{nproof}[1]{\trivlist\item[\hskip \labelsep{\bf Proof{#1}.}]}
{\begin{flushright} $\square$\end{flushright}\endtrivlist}
\newenvironment{proof}{\begin{nproof}{}}{\end{nproof}}

\newenvironment{block}[1]{\trivlist\item[\hskip \labelsep{{#1}.}]}{\endtrivlist}
\newenvironment{definition}{\begin{block}{\bf Definition}}{\end{block}}

\newtheorem{conjec}{Conjecture}

\newtheorem{com}{Comment}
\font\mathfonta=msam10 at 11pt
\font\mathfontb=msbm10 at 11pt
\def\Bbb#1{\mbox{\mathfontb #1}}
\def\lesssim{\mbox{\mathfonta.}}
\def\suppset{\mbox{\mathfonta{c}}}
\def\subbset{\mbox{\mathfonta{b}}}
\def\grtsim{\mbox{\mathfonta\&}}
\def\gtrsim{\mbox{\mathfonta\&}}

\newcommand{\Poin}{{\bf Poin}}
\newcommand{\Bo}{\Box^{n}_{i}}
\newcommand{\Di}{{\cal D}}
\newcommand{\gd}{{\underline \gamma}}
\newcommand{\gu}{{\underline g }}
\newcommand{\ce}{\mbox{III}}
\newcommand{\be}{\mbox{II}}
\newcommand{\F}{\cal{F}}
\newcommand{\Ci}{\bf{C}}
\newcommand{\ai}{\mbox{I}}
\newcommand{\dupap}{\partial^{+}}
\newcommand{\dm}{\partial^{-}}
\newenvironment{note}{\begin{sc}{\bf Note}}{\end{sc}}
\newenvironment{notes}{\begin{sc}{\bf Notes}\ \par\begin{enumerate}}%
{\end{enumerate}\end{sc}}
\newenvironment{sol}
{{\bf Solution:}\newline}{\begin{flushright}
{\bf QED}\end{flushright}}

\title{Common Limits of Fibonacci Circle Maps}

\author{Genadi Levin\\
\small{Einstein Institute of Mathematics}\\
\small{Hebrew University}\\
\small{Givat Ram 91904, Jerusalem, ISRAEL}\\
\small{\tt levin@math.huji.ac.il}\\
\and
Grzegorz \'{S}wia\c\negthinspace tek\\
\small{Department. of Mathematics and Information Science}\\
\small{Politechnika Warszawska}\\
\small{Plac Politechniki 1}\\
\small{00-661 Warszawa, POLAND}\\
\small{\tt g.swiatek@mini.pw.edu.pl}
}
\normalsize
\maketitle

\abstract{We show that limits for the critical exponent tending to $\infty$ exist in both critical circle homeomorphism of golden mean rotation number and Fibonacci circle
coverings. Moreover, they are the same. The limit map is not analytic at the critical point, which is flat, but has non-trivial complex dynamics.} 

\section{Introduction}
\subsection{Limits of renormalization schemes.}
Since the seminal work of Feigenbaum several renormalization schemes have 
been discovered in the dynamics of maps in dimension $1$. They all share 
some common features: 
first return of a mapping from some topological class to suitably chosen intervals that shrink to the critical point (or value) of the dynamics when rescaled 
tend to a limit. This limit depends on the topological class of the original mapping and the exponent of the critical point, but is generally universal. Another 
key feature is that the limit is a solution to some functional equation which gives its rescaling to a smaller interval in term of the fixed point and its rescalings to 
a larger scale. Concrete examples other than the Feigenbaum class include the critical circle homeomorphisms, Fibonacci unimodal maps and Fibonacci circle coverings. 

Later, the dependence of these fixed points on the critical exponent 
was also examined in the situation when the exponent tends to $\infty$, 
or sometimes to $1$. 
So far, it has been done for the Feigenbaum class~\cite{EW},
\cite{feig} and golden mean 
critical circle mappings~\cite{expcirc}. 
It turned out the limits exist, or at least there is a strong evidence for their 
existence, and they were of the same topological type and satisfied the same functional equation as the fixed point maps for finite values of the exponent.  

In this paper we study Fibonacci circle coverings and homeomorphisms. They have been known to have fixed points of renormalization for odd integer values of the critical 
exponent, which belonged to certain distinct topological classes. For circle homeomorphisms the class that was generally used consisted of commuting pairs of maps on the 
interval. For coverings, the limits are box mappings with two branches. 
We have found that in both classes the fixed points 
tend to some limit class when the critical exponent grows to $\infty$. What is surprising is that it is the same limit class in both cases. Topologically, the limit map 
can be viewed as a circle homeomorphism with the golden mean rotation number. 
Thus, when one starts in the class of critical circle covers, in the 
limit of infinite 
criticality there is a change of topological type: for example the degree of the map drops from $2$ to $1$. 

Since this change was unexpected, one is left to wonder what may have caused it and when a similar phenomenon can be expected in the future. We can phrase our expectation 
in the form of a conjecture. 

\begin{conjec}\label{conjec:13xp,1}
Suppose that in a topological class of one-dimensional maps, 
each with one critical value, limits of appropriate 
renormalization schemes exist and satisfy a
functional fixed-point equation,
for each critical exponent from
a sequence tending to $\infty$.
Then there exists a topologically different class of one-dimensional maps,
for which limits of similar renormalization schemes exist
and satisfy the same functional equation. Furthermore, 
in the limit as the exponent tends to $\infty$ in both classes,
the fixed point maps tend to a common limit dynamics.

\end{conjec}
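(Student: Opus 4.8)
Strictly speaking the statement is a conjecture, so what follows is a \emph{program} rather than a proof: I would try to isolate the ingredients that make the phenomenon work in the two concrete classes treated here (Fibonacci circle covers, whose renormalization fixed points are two-branch box mappings, and golden-mean critical circle homeomorphisms, whose fixed points are commuting pairs) and then argue that exactly these ingredients are what Conjecture~\ref{conjec:13xp,1} implicitly postulates. The program has four steps: uniform-in-$d$ compactness, identification of the $d=\infty$ limit, passage of the functional equation to the limit, and a rigidity statement that forces the two limits to agree.

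\textbf{Steps 1 and 2 (compactness and the flat limit).} Normalize each renormalization fixed point $f_d$ so that its critical point is at $0$ and its critical value at $1$, and write $f_d$ in polynomial-like form as $\phi_d\circ p_d$, where $p_d$ is the power map of exponent $d$ (suitably interpreted on $\RR$ or on $\CC$) and $\phi_d$ is univalent on a complex neighbourhood of the interval on which $f_d$ acts. The key analytic input is a version of complex bounds in which \emph{the modulus of the fundamental annulus and the distortion of $\phi_d$ are bounded independently of $d$}. Granting this, $\{\phi_d\}$ is a normal family, the domains converge in the Carath\'eodory sense along any subsequence, and $p_d$ converges off the critical point to the ``flat'' map that is identically $0$ on the closed unit disk; hence $f_{d_k}\to f_\infty$ locally uniformly along a subsequence, with $f_\infty$ nonconstant and with an infinitely degenerate critical point. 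One then checks that the topological type has genuinely changed --- in the Fibonacci-cover case the local degree at the critical point drops from $2$ to $1$, so the limit is, topologically, a circle homeomorphism with golden-mean rotation number.

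\textbf{Steps 3 and 4 (functional equation and rigidity).} The renormalization fixed-point equation has the form $R f_d = f_d$, with $R$ assembled from finitely many compositions, restrictions and affine rescalings of $f_d$; the rescaling factors are ratios of the shrinking intervals and so, by real a~priori bounds, stay bounded and bounded away from $0$. Since composition and restriction are continuous for locally uniform convergence on the controlled domains, the equation passes to the limit: $f_\infty$ solves the \emph{same} functional equation, now read in the limiting class. Carrying the same argument out in the second, topologically different class --- which must first be produced, in our case by the geometric degeneration that merges the two preimages of the critical value as $d\to\infty$ --- yields a second solution $\tilde f_\infty$. A rigidity theorem (uniqueness, up to the normalization, of a solution of the limiting functional equation subject to the appropriate univalence and complex-bounds requirements), proved by the usual quasiconformal-deformation/pullback scheme, then gives $f_\infty=\tilde f_\infty$, which is the assertion.

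\textbf{Main obstacles.} The serious difficulty is Step~1: the nonlinearity of $p_d$ blows up near $0$ as $d\to\infty$, so the geometry must be extracted from the univalent factor $\phi_d$ alone, and one must show that the ``critical collar'' does not collapse --- this is precisely where a finite-$d$ argument typically has constants deteriorating and has to be redone from scratch. Second, there is no general recipe for the topologically different class: recognizing it amounts to recognizing which degeneration the flat limit admits, which in the examples is the drop of local degree but need not have so clean a description in general. Third, the limiting functional equation may a~priori have several solutions, so the rigidity input in Step~4 is itself a substantial theorem rather than a formality.
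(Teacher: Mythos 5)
This statement is a conjecture which the paper explicitly leaves open (the authors themselves say it ``should rather be viewed as an open problem''), so there is no proof in the paper to compare against; what the paper does prove is the single instance consisting of Fibonacci circle covers versus golden-mean critical circle homeomorphisms. Your four-step program is an accurate description of the strategy the paper carries out for that instance: uniform real and complex bounds in $\ell$ (Propositions~\ref{covrb},~\ref{homeorb}), extraction of a limit by normal families, passage of the fixed-point equation~(\ref{equ:27up,1}) to the limit, and rigidity of the limit class (Theorem~\ref{theo:13xp,3}) via towers and the absence of invariant line fields. You have also correctly located the hard points: the uniform bounds, the identification of the degenerate topological type, and the rigidity of a flat-critical-point class where polynomial-like machinery fails.

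Two technical remarks on where your sketch diverges from what the paper actually does, both at your Step~1--2. First, the decomposition used is $\phi_\ell=(E_\ell)^{\ell}$ with $E_\ell$ an Epstein diffeomorphism, i.e.\ diffeomorphism \emph{followed by} the power map, not $\phi_d\circ p_d$; and the compactness is not obtained by letting $p_d$ degenerate to a locally constant map, but by inverting the lifting to the universal cover, $P_{\ell}(w)=E_{\ell}^{-1}(\exp(w/\ell))$, whose domains $\Pi_\ell$ converge to a half-plane. This sidesteps the collapse at the critical point entirely: one never takes a limit of $p_d$ near $0$, only of univalent maps. Second, the mechanism by which the degree drops is not visible at the level of the power map alone; in the paper it comes from the associated dynamics of $G_\ell$, whose repelling fixed point $\tau^2 X_\ell$ merges with the attracting ones into a parabolic point as $\ell\to\infty$ (Lemma~\ref{covcontin}(d) and the discussion in Section~\ref{sec:14xp,1}), so the branch of $H_\ell$ between the attracting fixed points, which carries the extra degree, shrinks to a point. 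Any attempt at the general conjecture would need an analogue of this bifurcation analysis to even identify the ``topologically different class,'' which is the part of your program that remains entirely heuristic.
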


The evidence in favor of this conjecture has so far been rather scant, so perhaps it should rather be viewed as an open problem which in our opinion is worth further study. 
The functional equation is the same for critical circle homeomorphisms and Fibonacci covers where we have found a common universal class. It is different for Fibonacci 
unimodal maps, and indeed although the limits for criticality tending to $\infty$ have not been studied in detail for this class, one can already say that they will have 
two branches and thus be different from any other known limits class. A good test case for this conjecture would be to find a renormalization scheme leading to fixed points
which satisfy the Feigenbaum equation, but in a topological class different from infinitely renormalizable unimodal maps.  Another argument in support of the Conjecture 
follows from a discussion of the associate dynamics given in section~\ref{sec:14xp,1}.

\subsection{Statement of the results.}
\paragraph{Brief summary of the work.}
The methods used here and the general flavor of the results are the same as in an earlier work concerning the limits with infinite criticality for Feigenbaum maps. 
We build complex continuations of the dynamics and show that as the critical exponent tends to $\infty$ they lead to limits in a certain limit class of maps with flat 
critical points. This already turns out to be the same class for critical circle homeomorphisms and covers. In spite of not being analytic in a neighborhood of the critical 
point, this limit class shown to have a non-trivial complex dynamics. Out 
of this complex dynamics one can build McMullen towers and show their 
rigidity using the usual ideas. 
The rigidity of the towers implies the rigidity of the limits class itself which is shown to consist of just one mapping after a normalization. Out of this we derive 
the main theorem of the paper (Theorem~\ref{theo:12xp,1}) which does not mention directly the fixed points of renormalization, but is applicable directly to the underlying 
dynamics.

\paragraph{The classes of dynamics.}
\begin{defi}\label{defi:13xa,1}
Consider open intervals $I^0, I^{-1}, I$ in the following configuration: $\overline{I}^0 \cap \overline{I}^{-1} = \emptyset$, $\overline{I}^0 \cup \overline{I}^{-1} \subset I$,
$0\in I^0$. 
The branches $\psi^0, \psi^{-1}$ are defined on the corresponding intervals, are both monotone increasing, $C^3$, and map onto $I$. Furthermore, each branch $\psi^i$, $i=0,-1$,
has exactly one critical point with the local representation as 
$(\xi^i(x))^{\ell}$ where $\xi^i$ are diffeomorphisms and $\ell$ an odd integer bigger then $1$. This implies that both branches have the same critical value at $0$. 
The set of such mappings will be called ${\cal G}_{\ell}$.    
\end{defi}

Maps from ${\cal G}_{\ell}$ can be obtained naturally by inducing from critical circle coverings, see~\cite{lsuniv}. However, to use the results of that, and most other 
papers, one should consider branches $g^i(x) = \sqrt[\ell]{\psi^i(x^{\ell})}$. The effect of this change of normalization is that there is only one critical point at $x=0$.  

\begin{defi}\label{defi:13xa,2}
Consider open intervals $I^0$ and $I^{-1}$ which share a common endpoint and $I$ which is their convex hull. Suppose that $0\in I^0$. There are branches $\psi^{0},\psi^{-1}$ 
defined from the respective intervals into $I$, both increasing, and after identifying the endpoints of $I$, the union of branches $\psi^0$ and $\psi^{-1}$ extends to a circle 
homeomorphism. 

Each branch $\psi^i$ is a diffeomorphisms of class $C^3$ and has a $C^3$ extension to a neighborhood of the common endpoint $a^i$ of $I$ and $I^i$ in the form  
$\psi^i(x) = (\xi^i(x))^{\ell}$ where $\xi^i$ is a local diffeomorphism  and $\xi^i(a^i) = 0$. The class of such mappings with be denoted with ${\cal G}_{\ell}^1$. 
\end{defi}

\paragraph{Fibonacci combinatorics.}
Our use of the term Fibonacci combinatorics is somewhat different for maps of degree $1$ and higher. For homeomorphisms, we simply mean that the rotation  
number is the golden mean. 

A map $g$ from ${\cal G}_{\ell}$ is said  
to have the Fibonacci combinatorics if there exists a weakly order preserving map from the circle to $I$ with endpoints identified, which conjugates $g$ on the forward orbit 
of the critical point to the dynamics of  an orbit under the golden mean rotation.

\subparagraph{Renormalization.}
Suppose that $\psi$ is either a circle homeomorphism or a map from ${\cal G}_{\ell}$ with Fibonacci combinatorics. 
We normalize $\psi$ so that the critical value is at $0$. Then consider the sequence $x_{q_n}$ where $x_{q_n}$ is the preimage of $0$ of order $q_n$, closest to 
$0$ and $q_n$ is the $n$-th Fibonacci number. Let $I_n$ denote the interval between $x_{q_n}$ and $x_{q_{n-1}}$. Let $\psi_n$ denote the first return map of $\psi$ into $I_n$. 
In particular, $\psi_n(0)=\psi^{q_n}(0)$. 

Let $\zeta_n$ be a linear map specified by the condition 
$\zeta_n(\psi^{q_n}(0)) = 1$. Then, the $n$-th renormalization, 
\[ {\cal R}^n(\psi) = \zeta_n \circ \psi_n \circ \zeta^{-1}_n \]
is a map defined from a dense and open subset of
$\zeta_n(I_n)$ into $\zeta_n(I_n)$.

From Theorem 1 in~\cite{lsuniv} we get the following:

\begin{fact}\label{fa:13xp,1}
For each $\ell$ which is an odd integer greater than $1$, there exists exactly one map $H_{\ell} \in {\cal G}_{\ell}$ with the Fibonacci combinatorics and constant 
$\tau_{\ell} < -1$ so that if $\phi=\phi^0, \phi^{-1}$ are the branches of $H_{\ell}$, then 
\begin{itemize}
\item $\phi(0) = 1$,
\item \[    \phi^{-1}(x) = \phi_{-1}(x)  \] for  $x\in I^{-1}$ where the subscript $k$ denotes a rescaling by $\tau^{-k}_{\ell}$,
e.g., $\phi_{-1}(x) = \tau \circ \phi\circ \tau^{-1}$, 
\item the {\em fixed point equation} holds for all  $x \in \tau^{-1}_{\ell}I^0$:
\begin{equation}\label{equ:27up,1} 
\phi_1(x) = \phi_{-1}\circ \phi(x)\; .
\end{equation}
\item
for any $\psi\in {\cal G}_{\ell}$ with the Fibonacci combinatorics, 
the sequence of renormalizations ${\cal R}^n(\psi)$ converges to $H_{\ell}$ 
uniformly on the domain of $H_{\ell}$. 
\end{itemize} 
\end{fact}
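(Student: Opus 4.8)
The plan is to construct $H_{\ell}$ as the \emph{common limit} of the renormalization orbits ${\cal R}^n(\psi)$ of \emph{all} maps $\psi\in{\cal G}_{\ell}$ with Fibonacci combinatorics, so that existence of a fixed point, its uniqueness, the attracting property, and the functional equation all come out of one construction. This is Theorem~1 of~\cite{lsuniv}; I would organize its proof as follows.

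First I would verify that ${\cal R}$ genuinely acts within the class: if $\psi$ has Fibonacci combinatorics, then the first return map $\psi_n$ to $I_n$ again has exactly two monotone increasing branches — the one on the component of $I_n\setminus\{x_{q_{n-1}}\}$ containing $0$ carrying a single critical point of order $\ell$ inherited from $\psi$ (after passing to the coordinate $g^i(x)=\sqrt[\ell]{\psi^i(x^{\ell})}$ this becomes a genuine critical point at $0$), the other a diffeomorphism — and the combinatorics of the return map is again Fibonacci. This is bookkeeping built on $q_{n+1}=q_n+q_{n-1}$, which at the level of maps reads $\psi^{q_{n+1}}=\psi^{q_{n-1}}\circ\psi^{q_n}$ on the relevant interval. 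Next come the \emph{real a priori bounds}: using the $C^3$ hypothesis together with the Koebe principle and the cross-ratio (Poincar\'e-metric) distortion estimates for maps with a non-flat critical point, one shows that the ratios $|I_{n+1}|/|I_n|$ and the positions of the points $x_{q_k}$ inside $I_n$ lie in a compact subset of $(0,\infty)$ depending only on $\ell$. In particular the scaling factors of $\zeta_n$ are bounded and bounded away from $1$ in modulus, and, because consecutive closest returns of the golden rotation lie on opposite sides of $0$, they are negative, so any limiting scale $\tau_{\ell}$ satisfies $\tau_{\ell}<-1$. The real bounds also make $\{{\cal R}^n(\psi)\}$ precompact in $C^2$ on compact subsets of the rescaled domain. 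I would then promote these to \emph{complex bounds}: for large $n$ the branches of ${\cal R}^n(\psi)$ extend holomorphically to definite complex neighborhoods of their domains (an Epstein-type class), so the orbit is precompact in a space of holomorphic maps and every limit is holomorphic off $0$ with a critical point of order $\ell$.

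The heart of the matter is \emph{rigidity}: given $\psi,\tilde\psi\in{\cal G}_{\ell}$ with Fibonacci combinatorics, one builds a quasiconformal conjugacy between them near the critical orbit and, running the standard pull-back argument through the nested Fibonacci structure while using the complex bounds to recover a definite modulus at each stage, shows that the induced conjugacy between ${\cal R}^n(\psi)$ and ${\cal R}^n(\tilde\psi)$ has dilatation tending to $1$; hence ${\cal R}^n(\psi)-{\cal R}^n(\tilde\psi)\to0$. Consequently $\lim_n{\cal R}^n(\psi)$ exists and is independent of $\psi$; call it $H_{\ell}$. Since ${\cal R}$ is continuous and ${\cal R}^{n+1}(\psi)={\cal R}({\cal R}^n(\psi))$, passing to the limit gives ${\cal R}(H_{\ell})=H_{\ell}$; any other fixed point is the constant limit of its own renormalization orbit and hence equals $H_{\ell}$, which gives uniqueness. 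Finally, $\phi(0)=1$ is the normalization imposed by $\zeta_n$, and rescaling the identity $\psi^{q_{n+1}}=\psi^{q_{n-1}}\circ\psi^{q_n}$ by $\zeta_n$ and letting $n\to\infty$ turns it into $\phi^{-1}=\phi_{-1}$ on $I^{-1}$ and $\phi_1=\phi_{-1}\circ\phi$ on $\tau_{\ell}^{-1}I^0$, i.e.\ equation~(\ref{equ:27up,1}).

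The main obstacle is the rigidity step together with the complex bounds feeding it: transferring real bounds to complex bounds for a branch with a high-order (and, in the $\ell\to\infty$ regime that is the real subject of this paper, nearly flat) critical point, and running the pull-back argument with estimates uniform enough to conclude, is the technically demanding part. Everything else — the combinatorial description of the return maps and the passage to the limit in the functional equation — is routine once the geometric control is in hand.
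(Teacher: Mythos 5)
This statement is not proved in the paper at all: it is stated as a \emph{Fact} imported verbatim from Theorem~1 of~\cite{lsuniv} (``From Theorem 1 in~\cite{lsuniv} we get the following''), so there is no in-paper argument to compare yours against. Your outline is a reasonable reconstruction of the strategy of that cited theorem --- invariance of the class under ${\cal R}$ via the Fibonacci recursion $\psi^{q_{n+1}}=\psi^{q_{n-1}}\circ\psi^{q_n}$, real a priori bounds, promotion to complex bounds in an Epstein-type class, and quasiconformal rigidity via pull-back/towers to identify all limit points --- and it is consistent with how the present paper later runs the analogous argument for its limit class ${\cal EWF}$ (tempering, towers, absence of invariant line fields). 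Two caveats: the step ``dilatation tending to $1$'' is not how the rigidity is usually closed; one rather shows the qc conjugacy between towers carries no invariant line field and is therefore conformal, hence affine, hence the identity after normalization. And existence of the fixed point cannot be read off from ``continuity of ${\cal R}$'' alone --- one needs compactness of the renormalization orbit first, then rigidity to collapse the set of limit points to a single map. With those adjustments your sketch matches the standard proof of the cited result, but for the purposes of this paper the statement is an external input, not something to be re-proved.
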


Similarly, from~\cite{defaria} and~\cite{demelofaria} we get this:
\begin{fact}\label{fa:13xp,2}
For each $\ell$ which is an odd integer greater than $1$, there exists exactly one map $H^1_{\ell} \in {\cal G}^1_{\ell}$ with the Fibonacci combinatorics and constant 
$\tilde{\tau}_{\ell} < -1$ so that if $\phi=\phi^0, \phi^{-1}$ are the branches of $H^1_{\ell}$, then 
\begin{itemize}
\item $\phi(0) = 1$,
\item \[    \phi^{-1}(x) = \phi_{-1}(x)  \] for  $x\in I^{-1}$ where the subscript $k$ denotes a rescaling by $\tilde{\tau}^{-k}_{\ell}$, 
\item the fixed point equation~(\ref{equ:27up,1}) holds for all  $x \in 
\tilde\tau^{-1}_{\ell}I^0$,
\item
for any $\psi\in {\cal G}^1_{\ell}$ with the Fibonacci combinatorics, 
the sequence of renormalizations ${\cal R}^n(\psi)$ converges to $H^1_{\ell}$ uniformly on the domain of $H^1_{\ell}$. 
\end{itemize} 
\end{fact}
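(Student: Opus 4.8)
The content of the statement is, in effect, the renormalization theory of golden-mean critical circle maps of~\cite{defaria} and~\cite{demelofaria}, rewritten in the normalization of Definition~\ref{defi:13xa,2}; the plan is to recall that theory and match conventions. A map $\psi\in{\cal G}^1_\ell$ with golden-mean rotation number is, after the coordinate change $x\mapsto x^\ell$ near each critical endpoint $a^i$, a $C^3$ critical commuting pair of exponent $\ell$, and its renormalization ${\cal R}^n(\psi)=\zeta_n\circ\psi_n\circ\zeta_n^{-1}$ is again one; the usual description of golden-mean first returns identifies the two branches of ${\cal R}^n(\psi)$ with the return branches to $I_n$ of periods $q_n$ and $q_{n-1}$ --- the first agreeing with $\psi^{q_n}$ near the normalized critical value $0$ and so playing the role of $\phi^0$, the second that of $\phi^{-1}$. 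First I would invoke the real and complex a priori bounds of~\cite{defaria,demelofaria}: for any such $\psi$ the orbit $\{{\cal R}^n(\psi)\}$ eventually lies in a compact family of pairs of definite geometry carrying holomorphic extensions of the Epstein type with moduli bounded below. Compactness yields a limit point $H^1_\ell\in{\cal G}^1_\ell$, along which the linear factors $\zeta_n$ converge to a constant $\tilde\tau_\ell$ --- negative, because renormalizing a pair reverses orientation, and with $\tilde\tau_\ell<-1$, because the $I_n$ shrink at a definite rate; the normalization $\phi(0)=1$ is part of the definition of $\zeta_n$.

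To extract the two functional equations I would unwind ${\cal R}(H^1_\ell)=H^1_\ell$, using that the golden-mean combinatorics is stationary. The period-$q_{n-1}$ return branch, which at level $n$ is the short branch $\phi^{-1}$, is at level $n-1$ the long branch $\phi^0$; for a fixed point these coincide up to the change of scale between consecutive levels, which is exactly $\phi^{-1}(x)=\phi_{-1}(x)$ on $I^{-1}$. The period-$q_{n+1}$ branch is the composition of the period-$q_{n-1}$ and period-$q_n$ branches --- the dynamical form of $q_{n+1}=q_n+q_{n-1}$ --- and is again a rescaled long branch one level deeper; after matching normalizations this is $\phi_1(x)=\phi_{-1}\circ\phi(x)$ on $\tilde\tau^{-1}_\ell I^0$, that is, equation~(\ref{equ:27up,1}).

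It remains to upgrade ``limit point along a subsequence'' to ``unique limit'' and to secure uniqueness of $H^1_\ell$ within ${\cal G}^1_\ell$. For this I would quote the convergence theorem of~\cite{defaria,demelofaria}: among critical circle maps of a fixed bounded combinatorial type and a fixed critical exponent the renormalizations of any two maps converge together, so $\{{\cal R}^n(\psi)\}$ has a genuine limit independent of $\psi$, and this limit is the only renormalization fixed point of golden-mean type with exponent $\ell$. Hence any map of ${\cal G}^1_\ell$ having the four listed properties is such a fixed point and equals $H^1_\ell$ once $\phi(0)=1$ is imposed. Finally, the limit does belong to ${\cal G}^1_\ell$: every branch of ${\cal R}^n(\psi)$ is a composition of branches of $\psi$, each of the local form $(\xi^i)^\ell$ with $\xi^i$ a diffeomorphism near its critical endpoint, and this form survives in the limit.

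The step I expect to be the real obstacle is the a priori bounds in the merely $C^3$ setting. For real-analytic (Epstein) pairs they follow from compactness of the relevant class, but under only bounded $C^3$ smoothness one must argue via cross-ratio inequalities and Koebe distortion to control the geometry of deep renormalizations, and then manufacture the holomorphic commuting-pair or polynomial-like extensions, with moduli bounded away from $0$, that the rigidity argument consumes. Granted these bounds, the algebra of the fixed-point equations and the persistence of the critical data are routine.
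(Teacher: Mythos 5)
The paper does not prove this statement at all: it is labeled a \emph{Fact} and simply quoted from~\cite{defaria} and~\cite{demelofaria}, so your outline is being compared against a citation rather than an argument. Your sketch is a faithful summary of what those references actually establish (a priori bounds, compactness of the holomorphic commuting-pair extensions, rigidity, and exponential convergence of renormalization), and you correctly identify the $C^3$ a priori bounds and the construction of holomorphic extensions as the genuine technical content.
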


\paragraph{The main theorem and its corollaries.}
\begin{theo}\label{theo:12xp,1}
There exist $x_0<0$ and $\tau<-1$ for which the following holds.
Consider a sequence of all odd integers $\ell$. For each $\ell$ consider a map $\psi_{\ell}$ which is either in ${\cal G}_{\ell}$ or ${\cal G}^1_{\ell}$ with the Fibonacci 
combinatorics.  Let ${\cal R}^n(\psi_{\ell})$ be the sequence of renormalizations.  
Then, there exists a sequence $k_{\ell}$ such that for any sequence $n_{\ell} \geq k_{\ell}$ for all $\ell$, mappings ${\cal R}^{n_{\ell}}(\psi_{\ell})$ converge almost uniformly 
on the set $(x_0,x_0/\tau) \cup (x_0/\tau,x_0\tau)$ as $\ell \rightarrow\infty$. This limit is independent of the sequence $\psi_{\ell}$ and is a homeomorphism of the 
circle obtained by identifying $x_0$ with $\tau x_0$ with the golden mean rotation number. 
It further belongs the ${\cal EWF}$-class defined later, see Definition~\ref{defi:9xa,1}.
\end{theo}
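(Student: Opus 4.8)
\emph{Proof idea.} The plan is to complexify the fixed points $H_\ell$ and $H^1_\ell$, take a limit as $\ell\to\infty$ by compactness, recognize this limit as a flat-critical-point solution of the same functional equation~(\ref{equ:27up,1}), pin it down by a rigidity argument, and transfer the conclusion to an arbitrary $\psi_\ell$ via Facts~\ref{fa:13xp,1} and~\ref{fa:13xp,2}. The first task is a priori bounds on complex continuations that are \emph{uniform in $\ell$}: each branch of $H_\ell$ (and of $H^1_\ell$) is a real-analytic diffeomorphism onto $I$ away from its critical point and has the form $(\xi^i)^\ell$ near it, so it continues holomorphically to a complex neighborhood of the closure of its domain, univalently off the critical point, and one needs to bound the modulus of such a neighborhood below independently of $\ell$ — the analogue, in the present combinatorics, of the Epstein--Wittwer bounds \cite{EW} for the Feigenbaum class. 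For this one uses the fixed point equation~(\ref{equ:27up,1}), which writes the small-scale branch $\phi_1$ as a composition of large-scale pieces, together with the definite real geometry (interval ratios, and $\tau_\ell,\tilde\tau_\ell$ strictly between $-\infty$ and $-1$) furnished by Facts~\ref{fa:13xp,1} and~\ref{fa:13xp,2}. With these bounds $\{H_\ell\}$ and $\{H^1_\ell\}$ are normal families on a fixed complex region off the critical point, so along subsequences they converge there almost uniformly, while $\tau_\ell$ (resp.\ $\tilde\tau_\ell$) converges; the limiting branches are holomorphic off the critical point but flat, hence non-analytic, at it, the nonlinearity $(\xi^i)^\ell$ having degenerated as $\ell\to\infty$.

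Next, I would identify any such subsequential limit. Passing to the limit in~(\ref{equ:27up,1}), in the normalization $\phi(0)=1$, and in the rescaling relation $\phi^{-1}(x)=\phi_{-1}(x)$, one finds that every subsequential limit — whether obtained from the coverings ${\cal G}_\ell$ or from the homeomorphisms ${\cal G}^1_\ell$ — is a map with a flat critical point satisfying the same functional equation, together with its limiting scaling $\tau<-1$. Tracking the Fibonacci combinatorics across the limit shows that the two interval configurations become the same: the gap between $\overline{I^0}$ and $\overline{I^{-1}}$ present in ${\cal G}_\ell$ closes up, so the union of the two branches extends to a circle homeomorphism, and the degree falls from $2$ to $1$ because the folding at the critical point disappears in the flat limit. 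One also checks that the limit map has a genuinely nontrivial holomorphic extension away from the critical point, so that its complex dynamics — forward orbit of the critical point, associated tower — is available; this produces exactly the ${\cal EWF}$-class of Definition~\ref{defi:9xa,1}.

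The decisive step is rigidity. To a limit map one associates a McMullen tower: the bi-infinite family of rescalings $\{\phi_k\}_k$ glued by the fixed point equation, with their complex dynamics. Given two subsequential limits — say one from the covering side with scaling $\tau$ and one from the homeomorphism side with scaling $\tilde\tau$ — the combinatorial picture above makes them conjugate on the real traces of their postcritical sets; one promotes this to a quasiconformal conjugacy between the complex towers and then runs McMullen's argument: since the critical point is a deep point of the relevant invariant sets, the Beltrami coefficient of the conjugacy must vanish there, so the conjugacy is conformal, hence affine, hence — after imposing $\phi(0)=1$ — the identity, and in particular $\tau=\tilde\tau$. Thus all subsequential limits coincide: $H_\ell$ and $H^1_\ell$ both converge, as $\ell\to\infty$, to a single map $\Phi$ of the ${\cal EWF}$-class with a single scaling $\tau<-1$. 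Choosing $x_0<0$ so that $(x_0,x_0\tau)$ is the domain of $\Phi$, the restriction of $\Phi$ to $(x_0,x_0/\tau)\cup(x_0/\tau,x_0\tau)$, with $x_0$ identified with $\tau x_0$, is a homeomorphism of the circle, whose rotation number is the golden mean because the Fibonacci combinatorics is renormalization-invariant and passes to the limit.

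Finally, for a general $\psi_\ell\in{\cal G}_\ell$ or ${\cal G}^1_\ell$ with Fibonacci combinatorics, Facts~\ref{fa:13xp,1} and~\ref{fa:13xp,2} give ${\cal R}^n(\psi_\ell)\to H_\ell$, respectively $\to H^1_\ell$, uniformly on the domain of the fixed point as $n\to\infty$; choosing $k_\ell$ so that ${\cal R}^n(\psi_\ell)$ lies within $1/\ell$ of that fixed point for all $n\ge k_\ell$, the triangle inequality together with $H_\ell,H^1_\ell\to\Phi$ yields ${\cal R}^{n_\ell}(\psi_\ell)\to\Phi$ almost uniformly on $(x_0,x_0/\tau)\cup(x_0/\tau,x_0\tau)$ for every $n_\ell\ge k_\ell$, the limit being independent of the chosen sequence. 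I expect the main obstacle to be the rigidity step: the flat critical point removes analyticity precisely where quasiconformal-rigidity arguments are most delicate, so one must show by a careful analysis of the complex continuation near the flat point that the critical point is deep enough, and the complex dynamics off it rich enough, to force the quasiconformal conjugacy to be conformal; a secondary but still substantial difficulty is making the complex a priori bounds of the first step uniform as the critical exponent $\ell\to\infty$.
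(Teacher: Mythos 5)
Your proposal follows essentially the same route as the paper: complex a priori bounds uniform in $\ell$ give subsequential limits of $H_\ell$ and $H^1_\ell$ in the ${\cal EWF}$-class (Theorems~\ref{theo:13xp,1} and~\ref{theo:13xp,2}), a tower-rigidity argument shows that class consists of a single map (Theorem~\ref{theo:13xp,3}), and the statement for general $\psi_\ell$ follows from Facts~\ref{fa:13xp,1} and~\ref{fa:13xp,2} by exactly the triangle-inequality/diagonal argument you give. The only divergence is in the rigidity mechanism: where you invoke McMullen's deep-point criterion to kill the Beltrami coefficient, the paper instead extracts an invariant line-field from the quasiconformal tower conjugacy and rules it out by the dichotomy that it must be holomorphic somewhere (via hyperbolic expansion along tower orbits and density of the rescaled Julia sets) yet cannot be holomorphic and non-trivial on any open set.
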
 

\begin{coro}\label{coro:13xp,1}
The scaling factors $\tau_{\ell}$ and $\hat{\tau}_{\ell}$ introduced in Facts~\ref{fa:13xp,1} and~\ref{fa:13xp,2} tend to a common limit $\tau<-1$. 
\end{coro}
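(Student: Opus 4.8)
The plan is to obtain the corollary from Theorem~\ref{theo:12xp,1} by applying that theorem to the renormalization fixed points themselves. For each odd $\ell>1$ the map $H_\ell$ of Fact~\ref{fa:13xp,1} lies in ${\cal G}_\ell$ and has the Fibonacci combinatorics, so it is an admissible choice of $\psi_\ell$ in Theorem~\ref{theo:12xp,1}; likewise $H^1_\ell \in {\cal G}^1_\ell$. Moreover $H_\ell$ is a \emph{fixed point} of renormalization: the rescaling relation $\phi^{-1}=\phi_{-1}$ together with the fixed point equation~(\ref{equ:27up,1}) in Fact~\ref{fa:13xp,1} encode precisely the identity ${\cal R}(H_\ell)=H_\ell$, hence ${\cal R}^n(H_\ell)=H_\ell$ for all $n$, and similarly ${\cal R}^n(H^1_\ell)=H^1_\ell$. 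Consequently, choosing any sequence $n_\ell\ge k_\ell$, Theorem~\ref{theo:12xp,1} tells us that ${\cal R}^{n_\ell}(H_\ell)=H_\ell$ and ${\cal R}^{n_\ell}(H^1_\ell)=H^1_\ell$ both converge, almost uniformly on $(x_0,x_0/\tau)\cup(x_0/\tau,x_0\tau)$ as $\ell\to\infty$, to one and the same circle homeomorphism $H$ — the limit being independent of the sequence, by the theorem. So the first step is simply $H_\ell\to H$ and $H^1_\ell\to H$.

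The second step is to read the scaling constants off the limit. The constant $\tau_\ell$ is an intrinsic invariant of $H_\ell$: by the renormalization structure it equals, up to sign, the stationary ratio of consecutive points of the critical orbit of $H_\ell$ singled out in the definition of ${\cal R}$, equivalently the linear part of the rescaling $\zeta$ implementing ${\cal R}(H_\ell)=H_\ell$. This is determined by finitely many iterates of $H_\ell$ near the critical point, so it depends continuously on the restriction of $H_\ell$ to a fixed compact neighborhood of $0$. Since $H_\ell\to H$ on compact subsets of the common domain, we get $\tau_\ell\to\tau_H$, where $\tau_H$ is the corresponding invariant of $H$; and $\tau_H=\tau$ because, by the conclusion of Theorem~\ref{theo:12xp,1}, the self-similar window of $H$ is the interval with endpoints $x_0$ and $\tau x_0$. (Alternatively, avoiding the explicit formula: any subsequential limit $\tau'$ of $(\tau_\ell)$ lies in $(-\infty,-1)$ by the a priori bounds below, and passing the rescaling relation and equation~(\ref{equ:27up,1}) to the limit along that subsequence shows $H$ satisfies the limiting fixed point system with constant $\tau'$; since $H$ is rigid within the ${\cal EWF}$-class of Definition~\ref{defi:9xa,1} — it is a single normalized map — that constant is unique and equals $\tau$, so $\tau'=\tau$.) Running the same argument with $H^1_\ell\to H$ gives $\hat\tau_\ell\to\tau$ for the constant called $\tilde\tau_\ell$ in Fact~\ref{fa:13xp,2}, with the same $\tau$ precisely because the limit map $H$ is common to both families. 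Boundedness of $(\tau_\ell)$ and $(\hat\tau_\ell)$ then upgrades ``every subsequential limit is $\tau$'' to genuine convergence.

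The one genuinely substantive input, and the expected main obstacle, is the a priori control needed to make these passages to the limit legitimate: one must know that the scaling constants $\tau_\ell,\hat\tau_\ell$ remain in a fixed compact subinterval of $(-\infty,-1)$ — so that no subsequence drifts to $-1$ or to $-\infty$, the relevant precritical windows do not degenerate, and the critical orbit points $H_\ell^{q}(0)$ used above stay inside the region on which $H_\ell\to H$ — and that the self-similar scale of the limit dynamics is the same $\tau$ that enters the description of the limiting circle in Theorem~\ref{theo:12xp,1}. Both are by-products of the analysis underlying that theorem: uniform bounds on the scaling constants are exactly what is required to extract the limit class and build the associated McMullen towers, while the rigidity of those towers forces the limit dynamics, and hence its scale, to be unique. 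Granting this, everything else is a routine limiting argument in identities that involve the maps only on fixed compact subsets of the common domain $(x_0,x_0/\tau)\cup(x_0/\tau,x_0\tau)$.
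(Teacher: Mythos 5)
Your proposal is correct and follows essentially the route the paper intends: the paper leaves the corollary's proof implicit, but it is meant to follow exactly from the combination you use — subsequential convergence of the fixed points $H_\ell$, $H^1_\ell$ and their scaling constants to an ${\cal EWF}$ map (Theorems~\ref{theo:13xp,1} and~\ref{theo:13xp,2}, which rest on the a priori bounds $T_1<|\tau_\ell|,|\tilde\tau_\ell|<T_2$ of Propositions~\ref{covrb} and~\ref{homeorb}), together with the uniqueness of the ${\cal EWF}$ map (Theorem~\ref{theo:13xp,3}) to upgrade subsequential limits to convergence of the full sequences to a common $\tau$. Your parenthetical ``alternative'' argument is precisely the paper's intended derivation, and the substantive inputs you flag (compactness of the scaling constants in $(-\infty,-1)$ and rigidity of the limit class) are indeed established in the course of proving the main theorem.
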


The limit $\tilde\tau_\ell\to \tau$ as $\ell\to \infty$ was the subject
of an experimental study, see~\cite{expcirc}. In particular, 
numerically, $\tau=-3.71...$.

The next result does not follow formally from the main theorem, but can be derived from its proof.
\begin{theo}\label{theo:13xp,4}
The Hausdorff dimension of the post-critical set for maps in ${\cal G}_{\ell}$
with the Fibonacci combinatorics, which depends only on $\ell$ by~\cite{lsuniv}, tends to $1$ as $\ell$ tends to $\infty$. 
\end{theo}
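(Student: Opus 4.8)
The plan is to prove the non-trivial half — that the dimension is at least $1-o(1)$ as $\ell\to\infty$ — since the bound $\dim_H\le1$ is automatic, the post-critical set lying in a line. (For the homeomorphism class ${\cal G}^1_\ell$ the analogous set is the whole circle and has dimension $1$ for every $\ell$, so the content is really about the covering class ${\cal G}_\ell$.) By Fact~\ref{fa:13xp,1} and the quoted fact from \cite{lsuniv} that the dimension depends only on $\ell$, it suffices to treat the post-critical set $\omega_\ell$ of the renormalization fixed point $H_\ell$ itself, which by the real a priori bounds of \cite{lsuniv} has bounded geometry. I would then describe $\omega_\ell$ through the dynamical partitions ${\cal P}_n$ adapted to the Fibonacci combinatorics: ${\cal P}_n$ consists of order $q_{n+1}$ intervals whose union contains $\omega_\ell$, and, because $H_\ell$ is a fixed point, the refinement of ${\cal P}_n$ into ${\cal P}_{n+1}$ follows a single substitution rule, with ratios read off — with uniformly bounded distortion on the monotone parts, and from the explicit form $(\xi^i(x))^\ell$ on the part through the critical point — from the branches $\phi^0,\phi^{-1}$ of $H_\ell$ rescaled by $\tau_\ell^{-1}$ via the fixed point equation~(\ref{equ:27up,1}).

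Consequently $\dim_H\omega_\ell$ equals, up to the bounded distortion, the exponent $s(\ell)$ solving a Moran/pressure-type equation assembled from these finitely many ratios, and a standard mass-distribution estimate gives $\dim_H\omega_\ell\ge1-C_\ell(1-\sigma_\ell)$, where $\sigma_\ell$ is the fraction of a partition interval covered by its children one level down, and $C_\ell$ depends only on a lower bound for the separation of scales and hence, by Corollary~\ref{coro:13xp,1} ($\tau_\ell\to\tau<-1$) together with the real bounds, stays bounded for all large $\ell$. The theorem thus reduces to showing $\sigma_\ell\to1$, i.e. that the relative size of the complementary gaps of $\omega_\ell$ — all of which, by the self-similarity above, are controlled by the single ``top'' gap of the configuration $H_\ell$ — tends to $0$ as $\ell\to\infty$. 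Here Theorem~\ref{theo:12xp,1} enters: the renormalizations ${\cal R}^n(H_\ell)$, which by the fixed point property coincide with $H_\ell$ up to rescaling, converge as $\ell\to\infty$ to a circle homeomorphism of golden mean rotation number, and in the passage to that class the gap between the two branch domains (present because $\overline{I}^0\cap\overline{I}^{-1}=\emptyset$ for maps in ${\cal G}_\ell$ but absent for a homeomorphism, whose branch domains share an endpoint) closes up, and with it the top gap of $\omega_\ell$; this forces $\sigma_\ell\to1$, hence $s(\ell)\to1$.

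The main obstacle is the uniformity hidden in the last two steps. First one needs the geometric control — the real bounds, the distortion estimates and the separation of scales — to be uniform in $\ell$; this does not follow from the statement of Theorem~\ref{theo:12xp,1} but should be extracted from its proof, which is exactly why the present result is said to follow only from that proof rather than from the theorem itself. Second, and more delicate, one must turn the $\ell$-wise convergence of the renormalizations — available only on the fixed domain $(x_0,x_0/\tau)\cup(x_0/\tau,x_0\tau)$ — into a bound on the relative gap sizes that is uniform over all infinitely many scales of $\omega_\ell$. I expect this to be the crux, to be handled by combining the exact self-similarity of $\omega_\ell$ at the fixed point with Koebe-type control of the inverse branches, reducing the question to a single scale, and by checking — again from the proof of Theorem~\ref{theo:12xp,1} — that the limit map, its flat critical point notwithstanding, admits no wandering intervals, so that its post-critical set exhausts the circle.
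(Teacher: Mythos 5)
Your overall shape is right (only the lower bound needs work; the mechanism is that the gaps of $\omega_\ell$ close because the limit dynamics is a circle homeomorphism; the crux is uniformity over scales), but the concrete route you propose breaks down at exactly the step you flag as the crux, and the breakdown is not merely technical. The claim that all complementary gaps of $\omega_\ell$ are ``controlled by the single top gap'' via self-similarity and bounded distortion is not available here: the pullbacks defining the dynamical partitions pass through the critical point of order $\ell$, where Koebe gives nothing and where distortion of ratios is wildly $\ell$-dependent; worse, by Lemma~\ref{covcontin}(d) the associated map $G_\ell$ has an attracting fixed point $x_\ell$ and a repelling one $\tau_\ell^2X_\ell$ which merge to a parabolic point as $\ell\to\infty$, so the geometry of $\omega_\ell$ at deep returns degenerates and no $\ell$-uniform bounded geometry, separation of scales, or distortion constant $C_\ell$ exists. (Indeed, if such uniform control held together with $\sigma_\ell\to1$, you would be proving something false at finite $\ell$.) So the Moran/pressure computation with ``finitely many ratios'' and the bound $\dim_H\omega_\ell\ge1-C_\ell(1-\sigma_\ell)$ cannot be set up as stated.

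The paper's proof supplies precisely the missing device. It introduces the presentation function $\Pi_\ell$ and shows (Lemmas~\ref{fret},~\ref{wfret}) that $\omega_\ell\cap[x_\ell,X_\ell]$ is the closure of the non-escaping set of the first return map $R_\ell$, an \emph{infinite} conformal iterated function system whose branches $\psi_n=\tau_\ell^{2n+1}\phi_\ell$ are diffeomorphisms with negative Schwarzian extending to a fixed larger interval --- so the critical point never enters the distortion estimates. The non-uniformity over the infinitely many branches (the parabolic degeneration) is then handled not by comparing gaps across scales but by Mauldin--Urbanski~\cite{MU}: the dimension of the limit system $R$ (whose non-escaping set is the whole interval $[x_0,x_0/\tau^2]$, of dimension $1$) is the supremum of the dimensions of its \emph{finite} subsystems $R(A)$; for a fixed finite $A$ the branches of $R_\ell(A)$ converge to those of $R(A)$ by Theorem~\ref{theo:12xp,1}, and Hausdorff dimension is continuous for finite conformal IFS, giving $HD(\omega_\ell)\ge HD(J(R_\ell(A)))>1-\delta$ for large $\ell$. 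If you want to salvage your argument, you need to replace the ``single top gap'' comparison with some such truncation to a compact part of the system on which the $\ell\to\infty$ convergence is uniform, together with a principle letting finite subsystems capture the dimension of the infinite one.
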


Note that for Feigenbaum maps of the interval, the Hausdorff dimension of the attractor tends to a limits which is less than
$1$, see~\cite{leprzy}. It is related to the fact that in the Feigenbaum case the topological dynamics does not change in the
limit as $\ell\to\infty$, but in the case of Fibonacci covers studied in the present paper, such a change occurs. Further
comments on this phenomenon follow in connection with the associated dynamics of $G$. 

In the complex plane, this difference disappears and the Hausdorff dimensions of the Julia sets for Feigenbaum maps were shown
to tend to $2$ in~\cite{LSw2}. For Fibonacci covers there is no proof in the literature, but we expect the arguments
of~\cite{LSw2} to work with minor changes.

\subsection{The limit class ${\cal EWF}$.}\label{sec:14xp,1}
Let us first recall the concept of Poincar\'{e} neighborhoods:
\begin{defi}\label{defi:27up,1}
If $I$ is an open interval, then ${\cal D}(I)$ denotes the geometric disk centered at the midpoint of $I$. Similarly, for $0<\alpha\leq \pi$, we write 
${\cal D}(I,\alpha)$ to denote the set of point $z$ in the plane such that the circle passing through $z$ and the endpoints of $I$ intersects the real
line at angle less than $\alpha$ (small $\alpha$ meaning a small section of the disk). 
\end{defi}

In particular, ${\cal D}(I,\pi)$  is the doubly slit plane $(\CC\setminus\RR)\cup I$. 

A star added to the notation of a disk, or disk neighborhood defined
above, will mean a punctured neighborhood with the point $0$
removed. For example, ${\cal D}^*(-1,2)$ is equivalent to ${\cal
  D}(-1,2)\setminus \{0\}$.

\begin{defi}\label{defi:9xa,1}
We start by specifying the map on the real line. 

Fix parameters $x_0<0$, $\tau<-1$ and $\tau^2 > R>\tau x_0$, assuming also $\frac{x_0}{\tau} < 1 < \tau x_0$. 
We consider a mapping $\phi$ continuous on an interval $[x_0,R')$, $R'>x_0/\tau$, and a real-analytic orientation-preserving   
diffeomorphism from $(x_0,R')$ onto its image $(0,R)$. Suppose that $\phi(x_0) = 0, \phi(0) = 1$, $\phi(x_0/\tau) = \tau x_0$ and $\phi(x_0/\tau^2)=x_0/\tau$.  

We can also consider mappings $\phi_{k} = \tau^{-k} \phi \tau^{k}$ for $k\in\ZZ$.  One can easily see that $\phi_{-1}(x_0\tau)=0$ and 
$\phi_{-1}(x_0/\tau) = \tau  \phi(x_0/\tau^2) = x_0$. 
After identifying points $x_0$ and $\tau x_0$ and putting $\phi$ on $[x_0,x_0/\tau]$ and $\phi_{-1}$ on $[x_0/\tau,x_0\tau]$, we get a degree $1$ circle homeomorphism $\cal H$.

We assume that the rotation number of $\cal H$ is the golden mean, $\frac{1}{1+\frac{1}{1+\cdots}}$ and that functional equation~(\ref{equ:27up,1}) holds for 
$x\in [x_0/\tau^2, x_0/\tau]$.

Furthermore, $\phi$ has an analytic continuation, also denoted by $\phi$, and we make the following assumptions about it:

\begin{enumerate}    
\item
$\phi$ is defined on a topological disk $U$ and $U\cap\RR= (x_0,R')$. Also, $U$   is  symmetric with respect to the real axis.  
\item
$\phi$ is a covering
(unbranched) of the punctured disk $V := D(0, R) \setminus \{0\}$ by $U$.  
\item
For any $0<r\leq R$, $\phi^{-1}(D(0,r) \setminus \{0\})$ is contained in ${\cal D} (x_0,r')$ where $r'$ is real and $\phi(r')=r$. 
It implies that $U\subset {\cal D}(x_0,R')$. 
\item
If $I$ is a real segment which does not contain $0$ and $\phi^{-1}$ denotes the inverse branch of $\phi$ which maps $I$ into $\RR$, then for any $0<\alpha\leq\pi$
$\phi^{-1}\left( {\cal D}(I,\alpha)\right) \subset {\cal D}(\phi^{-1}(I),\alpha)$. 
\item
Define the mapping $G(x) := \tau ^{-1} \phi \tau^{-1}$. By previous hypotheses, it  fixes $x_0$ and is analytic in its neighborhood.  
Assume that $G$ has the following power
series expansion at $x_0$:
\[ G(x) = x - \epsilon (x-x_0)^3 + O(|x-x_0|^4) \]
with $\epsilon> 0$.  
\end{enumerate}

The class of all mappings $\phi$ with these properties will be denoted with 
${\cal EWF}$.  
\end{defi}

\begin{figure}
\epsfig{figure=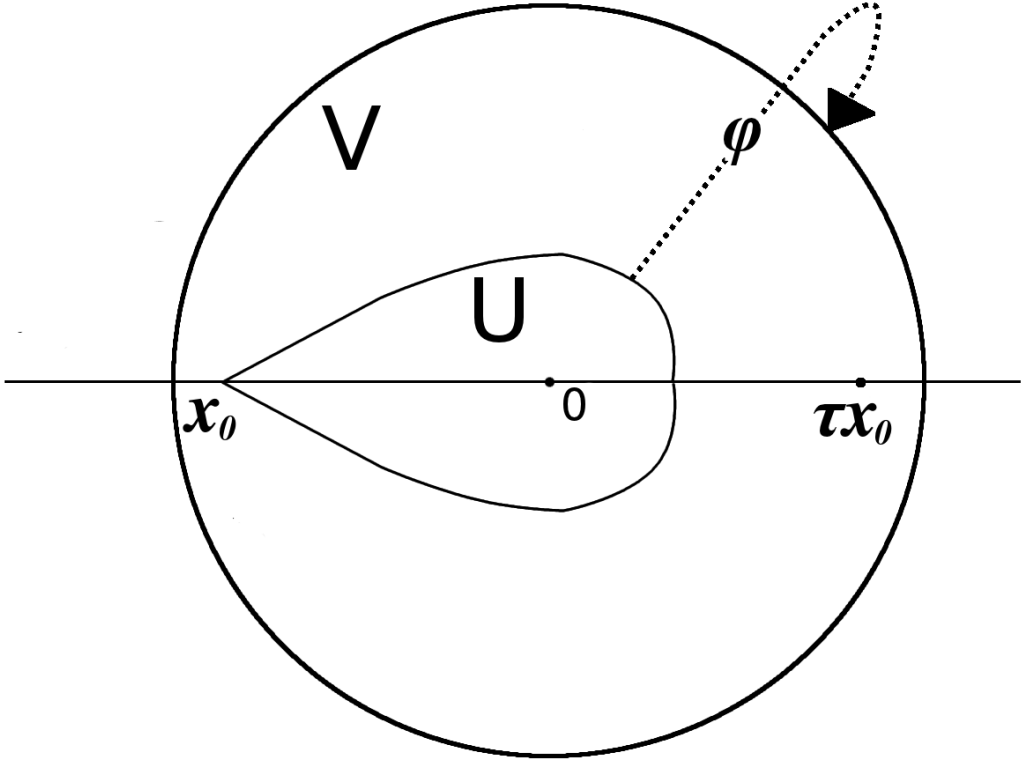, height=12cm, width=15cm}
\caption{The mapping $\phi$.}
\end{figure}

\begin{theo}\label{theo:13xp,3}
Class ${\cal EWF}$ consists of one mapping.
\end{theo}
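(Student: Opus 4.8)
The plan is to prove rigidity of the class $\mathcal{EWF}$ by the standard ``rigidity of towers'' machinery, adapted to the flat-critical-point setting, in close analogy with the treatment of the infinitely-critical Feigenbaum maps. First I would observe that any $\phi\in\mathcal{EWF}$ comes with a full complex dynamical system: the branches $\phi$ and $\phi_{-1}$ (together with all rescalings $\phi_k=\tau^{-k}\phi\tau^k$) and the functional equation~(\ref{equ:27up,1}) generate, on the invariant circle $\mathcal H$ and on its complex neighborhoods built from the Poincar\'e domains $U$ and the disks $V=D(0,R)\setminus\{0\}$, a well-defined holomorphic pseudo-group. The covering hypotheses (2)--(4) in Definition~\ref{defi:9xa,1} are exactly what is needed to pull back Poincar\'e neighborhoods and hence to define a \emph{McMullen tower}: a bi-infinite sequence of rescaled copies of the dynamics, glued by $\tau$, which carries a natural complex structure. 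The key point of hypothesis (5), the cubic tangency $G(x)=x-\epsilon(x-x_0)^3+O(|x-x_0|^4)$ with $\epsilon>0$, is that it pins down the local behaviour at the parabolic-type fixed point $x_0$ and, after normalization, fixes the would-be free parameter $\epsilon$ — so the tower has no remaining modulus there either.

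Next I would establish a \emph{complex bounds} / \emph{a priori bounds} statement: the renormalization pieces of $\phi$, continued to the complex plane, have definite moduli of annuli separating them, uniformly down the tower. For finite $\ell$ these bounds are classical (they underlie Facts~\ref{fa:13xp,1} and~\ref{fa:13xp,2}); here they must be shown to survive the limit $\ell\to\infty$, but that is presumably done in the preceding sections where the class $\mathcal{EWF}$ is extracted as a limit, so I may assume the members of $\mathcal{EWF}$ satisfy such bounds. From the complex bounds one gets \emph{precompactness} of the tower and, crucially, that the tower is its own ``deepest level'': any two maps $\phi,\tilde\phi\in\mathcal{EWF}$ give rise to towers that are \emph{combinatorially equivalent}, since both realize the golden-mean rotation number on $\mathcal H$ and the same functional equation, so there is a canonical topological conjugacy between them respecting all levels.

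Then comes the rigidity step proper: upgrade the topological conjugacy between the two towers to a conformal one. This is the usual McMullen pull-back argument. Start with a quasiconformal conjugacy between $\phi$ and $\tilde\phi$ on the top level — such a q.c.\ conjugacy exists because both are real-analytic diffeomorphisms with matching boundary data ($\phi(x_0)=0$, $\phi(0)=1$, $\phi(x_0/\tau)=\tau x_0$, $\phi(x_0/\tau^2)=x_0/\tau$ and the corresponding images of the Poincar\'e domains) and the combinatorics match. Pull this q.c.\ map back through the tower using the covering structure; the complex bounds give a uniform bound on the dilatation at every level, and they also guarantee that the annuli $\phi^{-1}(\mathcal D(0,r)\setminus\{0\})\subset \mathcal D(x_0,r')$ shrink geometrically, so the non-conformality is pushed onto a set of measure zero (the post-critical Cantor set of $\mathcal H$, which has measure zero by the complex bounds). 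By the measurable Riemann mapping theorem / a standard Bers--Royden or McMullen argument, the conjugacy is actually conformal, hence affine on each level; matching the normalization forces it to be the identity, so $\phi=\tilde\phi$ on the real line and, by analytic continuation, everywhere. A small separate point: one must also check that the \emph{parameters} $x_0$ and $\tau$ themselves are not free — here the cubic coefficient normalization (5) together with the three prescribed values of $\phi$ and the rotation-number condition over-determine $(x_0,\tau,R)$, so rigidity of the dynamics forces rigidity of the parameters too.

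The main obstacle, as usual in this circle of ideas, is the \emph{complex a priori bounds at the flat critical point}: near $x=x_0$ the map $G$ is only $C^3$-tangent to the identity and $\phi$ has an essential singularity structure at $0$ rather than an algebraic one, so the pull-back argument cannot use the polynomial-like / quasiregular extension tricks available for finite $\ell$, and one must instead control the geometry of the nested Poincar\'e neighborhoods $\mathcal D(x_0,r')$ directly. Establishing that these neighborhoods have uniformly bounded shape and shrink at a definite rate — so that the accumulated dilatation from infinitely many pull-backs stays finite and the limiting non-conformal set is negligible — is the technical heart of the argument; everything else is a fairly mechanical adaptation of the McMullen-tower rigidity scheme. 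I expect this to be handled using hypotheses (2)--(4), which were evidently built into the definition of $\mathcal{EWF}$ precisely to make this work.
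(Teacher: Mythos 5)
Your overall strategy --- build the complex dynamics, assemble McMullen towers, produce a quasiconformal conjugacy between them, and then argue the conjugacy must be affine --- is indeed the route the paper takes. But the step where you upgrade the conjugacy from quasiconformal to conformal contains a genuine error. You propose to push the non-conformality onto ``the post-critical Cantor set of $\mathcal H$, which has measure zero by the complex bounds.'' In this setting the post-critical set is \emph{not} a Cantor set and is \emph{not} of measure zero: the circle dynamics of an ${\cal EWF}$ map is conjugate to the golden-mean rotation (Proposition~\ref{prop:28ua,1}), so the closure of the critical orbit is the entire interval $[x_0,\tau x_0]$ --- this is precisely the ``closing of the gaps'' phenomenon the paper highlights, and it is why Theorem~\ref{theo:13xp,4} asserts the Hausdorff dimension tends to $1$. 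Moreover, the pull-back construction deposits the dilatation on the filled Julia set $K_H$, whose planar Lebesgue measure is not shown (and not needed) to vanish. So the ``support on a null set, then apply the measurable Riemann mapping theorem'' mechanism is unavailable here.

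What the paper does instead is the invariant line-field argument: the Beltrami differential of the tower conjugacy (Proposition~\ref{prop:17fa,1}) yields a measurable line-field invariant under every level $H_{-n}^*$ and under the rescaling $z\mapsto\tau z$. Proposition~\ref{linetriv} shows that any non-trivial such field must be holomorphic near some point --- this uses the orbit alternative of Proposition~\ref{prop:11fa,1}, uniform expansion of the hyperbolic metric of $\CC\setminus\RR$ along tower orbits, and a Lebesgue-density/Koebe renormalization (Lemma~\ref{lem:1gp,1}) --- while Lemma~\ref{linehol} shows, by propagating holomorphy through the flat critical point and the rescalings, that no invariant field can be holomorphic and non-trivial on an open set. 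The contradiction forces the field to vanish a.e., so the conjugacy is conformal, hence affine, hence the identity by the normalization at $0$ and $1$. Separately, you underestimate the construction of the initial quasiconformal conjugacy: because the post-critical set has unbounded geometry at the parabolic point $x_0$, the paper must route the construction through the (post-critically finite) presentation function and use the explicit form of the Fatou coordinate to obtain quasi-symmetry at $\tau x_0$ (Lemma~\ref{lem:29up,3}); ``matching boundary data'' alone does not produce a quasisymmetric conjugacy there.
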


\paragraph{Associated dynamics of $G$.}
An insight into the nature of the flat critical point of $\phi$, but also the bifurcation which occurs for the limit dynamics and allows one to unfold it 
as either a circle homeomorphism, or a covering map, can be gained from looking at the associated dynamics of the function $G$. Function $G$ appears in item 5. of 
Definition~\ref{defi:9xa,1}. Its dynamical interpretation comes from the functional equation~(\ref{equ:27up,1}) and is stated as Lemma~\ref{lem:29up,2}, or simply:
\begin{equation}\label{equ:14xp,1} 
\phi\circ G = \tau^{-2} \phi \; .
\end{equation}

Since $x_0$ is a neutral but topologically attracting fixed point of $G$, equation~(\ref{equ:14xp,1}) implies that it has to be a flat critical point of $\phi$. Further 
information about this point can be gained from the interpretation of $\log\phi$ as a Fatou coordinate for this point. Perhaps more interestingly, one can consider 
the bifurcation of this fixed point which corresponds to considering the dynamics of fixed points of renormalization $H_{\ell}$ or $H^1_{\ell}$ for $\ell$ large but 
finite. There are two ways of bifurcating a neutral, topologically attracting fixed point for a real analytic map with negative Schwarzian derivative. 
One will create a pair of attracting fixed points and a repelling one  between them, all on the real line. The other is to make the fixed
point on the real line attracting, and form a pair of repelling ones in the complex plane. 

The second bifurcation regime is easier to understand since it leads to no topological change of the dynamics on the real line. 
 The first mode creates a repelling fixed point on the real line. By equation~(\ref{covphiG}), a repelling fixed point of
 $G_{\ell}$ corresponds to a singularity where $H_{\ell}$ goes to $\infty$, see also Lemma~\ref{covcontin}. Hence, the
 dynamics between the attracting points has unbounded image for every $\ell$ even though its domain shrinks to a point 
as $\ell$ goes to $\infty$. The dynamics of $H_{\ell}$ outside the interval between the attracting fixed points undergoes no
bifurcation at the limit. However, the dynamics between the fixed points vanishes in the limit and this is the reason why the
topological degree drops.       

As an example of the consequences of this, we can recall the difference in limiting behavior of the Hausdorff dimension of the
attractors for Feigenbaum polynomials and circle covers. For Feigenbaum polynomials the second type bifurcation occurs which
leads to no change of dynamics of the real line as a consequence no qualitative change of the Hausdorff dimension. As will  be
show in this paper, for circle coverings the first type of bifurcation happens which leads to the disappearance of a part of
dynamics and closing of gaps on Cantor sets in the limit. 

Finally, the big difference between two types of bifurcation is only relevant on the real line. So, one does not expect to see
it when studying the Hausdorff dimension of Julia sets, for example.

\subsection{Basic properties of ${\cal EWF}$ maps.}

\paragraph{Connection between $\phi$ and the Fatou coordinate.}
Observe that $\log \phi$ is a well defined univalent map. Its inverse
can be defined as the lifting of $\exp$ to the universal covering
$\phi$. 
\begin{lem}\label{lem:29up,2}
The transformation $h(z) = \frac{\log\phi}{\log \tau^{-2}}$ is a Fatou
coordinate for $G$:
\[ h\circ  G(z) = h(z)+1 \]
for all $z\in U$.
\end{lem}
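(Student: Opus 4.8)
The statement is $h(z) = \frac{\log\phi(z)}{\log\tau^{-2}}$ satisfies $h\circ G = h + 1$ on $U$, which is equivalent to the functional equation $\phi\circ G = \tau^{-2}\phi$ on $U$ (equation~(\ref{equ:14xp,1})). So the proof decomposes into two tasks: (1) establish that $\log\phi$ is a well-defined univalent map, so $h$ makes sense; (2) derive $\phi\circ G = \tau^{-2}\phi$ on all of $U$ from the hypotheses in Definition~\ref{defi:9xa,1}.

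For task (1): by item 2 of the definition, $\phi: U \to V = D(0,R)\setminus\{0\}$ is an unbranched covering. The punctured disk $V$ has fundamental group $\mathbb{Z}$, generated by a loop around $0$; its universal cover is realized by $\exp$ (suitably scaled/translated), i.e.\ $\log: V \to \{w : \mathrm{Re}\, w < \log R\}$ lifts through $\exp$. The plan is to argue that $U$, being a topological disk (item 1), is simply connected, hence the covering $\phi: U\to V$ is \emph{the} universal covering up to deck transformations; therefore $\log\circ\,\phi$ lifts to a well-defined holomorphic map $U \to \{\mathrm{Re}\, w < \log R\}$, single-valued because $U$ is simply connected. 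Univalence of $\log\phi$ then follows: if $\log\phi(z_1)=\log\phi(z_2)$ then $\phi(z_1)=\phi(z_2)$, and since $\phi$ is a covering of $V$ by the simply connected $U$ with $V$ having universal cover of the same simply-connected type, $\phi$ must in fact be a homeomorphism onto $V$ (a covering map from a simply connected space that is itself the universal cover is injective iff the base is simply connected — here we instead use that the covering is infinite-sheeted matching $\exp$, so $\phi$ restricted appropriately, composed with $\log$, is a bijection onto its image). Cleanly: $\log\phi = (\text{a lift of } \exp^{-1})$, and the lift of the universal covering map $\exp$ of $V$ to $U$ along $\phi$ is a biholomorphism $U\to\{\mathrm{Re}\,w<\log R\}$; its inverse is what the lemma's preceding sentence calls ``the lifting of $\exp$ to the universal covering $\phi$.'' Hence $\log\phi$ is univalent and $h$ is well-defined and univalent on $U$.

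For task (2): the functional equation $\phi_1(x) = \phi_{-1}\circ\phi(x)$ (equation~(\ref{equ:27up,1})) is assumed in Definition~\ref{defi:9xa,1} to hold for $x\in[x_0/\tau^2, x_0/\tau]$ on the real line. Writing out $\phi_1 = \tau^{-1}\phi\tau$ and $\phi_{-1} = \tau\phi\tau^{-1}$ and substituting $x = \tau^{-1} y$, a direct computation turns (\ref{equ:27up,1}) into $\tau^{-1}\phi(\tau\cdot\tau^{-1}y) = \tau\phi(\tau^{-1}\phi(\tau^{-1}y))$, i.e.\ $\phi(y) = \tau^2 \phi\bigl(\tau^{-1}\phi(\tau^{-1}y)\bigr) = \tau^2\phi(G(y))$ with $G(y) = \tau^{-1}\phi(\tau^{-1}y)$, which is precisely (\ref{equ:14xp,1}) on the corresponding real interval. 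Now both sides of $\phi\circ G = \tau^{-2}\phi$ are holomorphic on the open set where the composition $\phi\circ G$ is defined — $G$ is analytic near $x_0$ and maps a neighborhood of the relevant real interval into $U$ — and they agree on a real interval with an accumulation point, so by the identity principle they agree on the connected component of that domain, and then one propagates to all of $U$ using that $U$ is connected and that the relation $\phi G = \tau^{-2}\phi$ lets one extend the domain of validity: wherever $\phi$ is defined and $G$ maps into the current domain, the equation holds. The main obstacle is this last domain-chasing step — verifying that the identity, initially known only near the post-critical interval, genuinely propagates to \emph{all} of $U$; this requires knowing that $G = \tau^{-1}\phi\tau^{-1}$ maps $U$ (or the relevant region) into $U$, which should follow from the Poincaré-neighborhood containments in items 3 and 4 of the definition together with $\tau^2 > R$, but it is the place where the geometric hypotheses on $U$ actually get used rather than just the algebra. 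Once $\phi\circ G = \tau^{-2}\phi$ is known on $U$, taking $\log$ and dividing by $\log\tau^{-2}$ gives $h\circ G = h + 1$ immediately (the additive constant $\frac{\log\tau^{-2}}{\log\tau^{-2}} = 1$ is exactly right, and there is no branch ambiguity because $h$ was shown single-valued).
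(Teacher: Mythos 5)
Your decomposition matches what the paper intends: the lemma is stated without an explicit proof, resting on the preceding observation that $\log\phi$ is well defined and univalent (your task (1), which you argue correctly via the universal-covering structure of $\phi:U\to V$ over the punctured disk and the simple connectivity of $U$) and on the identity $\phi\circ G=\tau^{-2}\phi$ (your task (2), whose algebraic derivation from equation~(\ref{equ:27up,1}) is correct). Your closing remark about the absence of branch ambiguity is also fine: the additive constant in $\log(\tau^{-2}\phi)= \log\tau^{-2}+\log\phi+2\pi ik$ is locally constant, hence constant on the connected set $U$, and equals $0$ on the real segment.

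The one genuine gap is exactly the step you flag: to assert $\phi\circ G=\tau^{-2}\phi$ \emph{on all of} $U$ you need $U\subset\tau U$ and $G(U)\subset U$ so that the left-hand side is even defined there, and the identity principle cannot supply this. Your suggestion that it "should follow from items 3 and 4" is not how it goes; in the paper the inclusion $G(\Omega_-)\subset\Omega_-$ is \emph{deduced from} the relation $\tau^{-2}\phi=\phi\circ G$ (see the proof of Lemma~\ref{lem:30ua,1}), so arguing in your direction risks circularity. The way out is to notice that, after the substitution $z=\tau w$, the identity $\phi(G(z))=\tau^{-2}\phi(z)$ for $z\in U$ is literally equation~(\ref{equ:27up,1}) for $w\in\tau^{-1}U$, i.e.\ it is the statement of Lemma~\ref{lem:9xa,1}. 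That lemma is proved not by domain-chasing but by a lifting argument: $\phi_1$ on $\tau^{-1}U$ and $\phi_{-1}\circ\phi$ on a suitably constructed disk $W$ are both universal coverings of $D(0,R/\tau)\setminus\{0\}$ which agree on the real segment $(x_0/\tau^2,x_0/\tau)$, so the lift of the identity is a univalent map identifying the two domains and forcing global equality. This simultaneously produces the needed domain inclusions and the functional equation, and is the argument you should substitute for the "propagate by the identity principle" step.
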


This provides useful information about $h$, and therefore the
singularity of $\phi$ at $x_0$, in the light of uniqueness of the
Fatou coordinate.  It leads to the next geometrical lemma.

\begin{lem}\label{lem:28ua,1}
For any $\epsilon>0$ there is $\delta>0$ and for each $z\in U$, if
$|z-x_0|<\delta$, then $|\arg(z-x_0)|<\frac{\pi}{4}+\epsilon$. 
\end{lem}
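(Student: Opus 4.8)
The plan is to exploit the identification of $h = \frac{\log\phi}{\log\tau^{-2}}$ as a Fatou coordinate for $G$ at the fixed point $x_0$, established in Lemma~\ref{lem:29up,2}, together with the explicit cubic vanishing of $G(x)-x$ at $x_0$ from item 5 of Definition~\ref{defi:9xa,1}. Since $x_0$ is a parabolic fixed point with $G(x) = x - \epsilon(x-x_0)^3 + O(|x-x_0|^4)$, the point $x_0$ has \emph{two} attracting petals (corresponding to the two directions in which $\mathrm{Re}\bigl((x-x_0)^{-2}\bigr)$ increases under iteration, i.e.\ roughly the half-planes $\mathrm{Re}(x-x_0)<0$ and $\mathrm{Re}(x-x_0)>0$ near $x_0$). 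Because $\phi$ restricted to $(x_0,R')$ is a real diffeomorphism onto $(0,R)$ and $U\cap\RR=(x_0,R')$ with $U$ symmetric about $\RR$, the domain $U$ near $x_0$ lies in the attracting petal attached on the side $x>x_0$; I would first pin down precisely which petal, namely the one containing a real right-neighborhood of $x_0$.

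Next I would invoke the Leau–Fatou normal form: there is a holomorphic change of coordinate $w = \sigma(z)$, conformal near $x_0$ with $\sigma(z)\sim c\,(z-x_0)^{-2}$ as $z\to x_0$ inside the petal, conjugating $G$ to the translation $w\mapsto w+1$ on a right half-plane $\{\mathrm{Re}\,w > M\}$, and in this coordinate the Fatou coordinate is $w$ itself up to an additive constant. By the uniqueness of the Fatou coordinate on the petal (normalized by the translation equation $h\circ G = h+1$), we get $h(z) = \sigma(z) + \text{const}$ on the petal, hence $\log\phi(z) = \log\tau^{-2}\,\bigl(\sigma(z)+\text{const}\bigr)$, so $\phi(z)$ behaves like $\exp\bigl(\log\tau^{-2}\cdot c\,(z-x_0)^{-2}\bigr)$ near $x_0$. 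The key consequence is a \emph{sectorial} description: the petal, i.e.\ the region where this normal form is valid and where $U$ must therefore sit near $x_0$, is asymptotically the sector $|\arg(z-x_0)| < \frac{\pi}{2}$ opening to the right — more precisely, $\sigma^{-1}$ of a right half-plane is a region asymptotic to $\{|\arg(z-x_0)|<\frac\pi2\}$, since $w\mapsto (w/c)^{-1/2}$ maps a right half-plane to a sector of half-opening $\frac\pi4$ in the $w^{-1/2}$ variable, which after the square root relating to $(z-x_0)$ gives half-opening... here I must be careful: $\sigma(z)\sim c(z-x_0)^{-2}$ means $(z-x_0)\sim (c/\sigma)^{1/2}$, and the preimage of $\{\mathrm{Re}\,w>M\}$ under $w\mapsto c(z-x_0)^{-2}$ is, to leading order, $\{\mathrm{Re}\bigl((z-x_0)^{-2}\bigr) > M/|c|\cdot(\dots)\}$, which is the union of two sectors of half-opening $\frac\pi4$ about the rays $\pm$(real direction). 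So near $x_0$ the component of $U$ attached to the real axis on the right lies in $\{|\arg(z-x_0)|<\frac\pi4+\epsilon\}$ once $|z-x_0|$ is small enough, which is exactly the claim.

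I would organize the write-up as: (i) identify the attracting petal of $G$ containing the real interval $(x_0,x_0+\eta)$ using the cubic expansion; (ii) observe $U$, being a connected domain meeting $\RR$ in $(x_0,R')$ on which $\phi$ is a nonconstant holomorphic map with the established covering property onto $V=D(0,R)\setminus\{0\}$, cannot escape the basin of $x_0$ near $x_0$, so a punctured neighborhood of $x_0$ in $U$ is contained in that petal; (iii) apply the Leau–Fatou normal form / uniqueness of the Fatou coordinate (Lemma~\ref{lem:29up,2}) to get the asymptotic $\phi(z)\approx\exp\bigl(\mathrm{const}\cdot(z-x_0)^{-2}\bigr)$, whence the petal is asymptotically the sector $|\arg(z-x_0)|<\frac\pi4$; (iv) conclude that for every $\epsilon>0$ there is $\delta>0$ with $U\cap\{0<|z-x_0|<\delta\}\subset\{|\arg(z-x_0)|<\frac\pi4+\epsilon\}$, and the puncture at $z=x_0$ is harmless since $x_0\notin U$ anyway (it is a boundary point). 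The main obstacle I anticipate is making the "asymptotic sector" statement rigorous and uniform: one needs an effective version of the normal-form theorem — e.g.\ that the conformal conjugacy $\sigma$ extends to a full sectorial neighborhood $\{|\arg(z-x_0)|<\frac\pi2-\epsilon', 0<|z-x_0|<\delta'\}$ with the stated asymptotics — rather than just on an abstract petal, and one must rule out that $U$ spirals or has a component wrapping around $x_0$ outside the sector, which is where the covering hypothesis (item 2) and the Poincaré-neighborhood containments (item 3, $U\subset{\cal D}(x_0,R')$) do the work of confining $U$.
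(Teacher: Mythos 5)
Your plan follows essentially the same route as the paper's proof: identify $h=\log\phi/\log\tau^{-2}$ as a Fatou coordinate for $G$ (Lemma~\ref{lem:29up,2}), compare it via uniqueness with the standard Leau--Fatou coordinate $\xi\bigl(C(z-x_0)^{-2}\bigr)$, and observe that the preimage of a deep right half-plane under $w=C(z-x_0)^{-2}$ lies in the sector $|\arg(z-x_0)|<\frac{\pi}{4}$. The confinement step you flag as the main obstacle is closed in the paper without any petal analysis: the range of $h$ on all of $U$ lies in a half-plane $\Re w>A_1$ (because $|\phi|<R$), so by $h\circ G=h+1$ a fixed finite iterate $G^k$ maps $U$ into the sectorial set $W=h^{-1}(\{\Re w>A_2+T\})$, and pulling back by the conformal $G^{-k}$ (tangent to the identity at $x_0$) preserves the asymptotic angle up to $\epsilon$.
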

\begin{proof}
The key to the proof of this Lemma is Lemma~\ref{lem:29up,2} and the transformation $h$ introduced there. By its definition, the range of $h$ is contained in a certain 
right half-plane $\Re w > A_1$. Next, we consider a standard construction of the Fatou coordinate following for example~\cite{carleson}, which gives 
$\tilde{h} = \xi(\frac{C}{(z-x_0)^2})$ where $\xi(z) = z + O(|z|^{-1/2})$ and $C>0$. Since a relevant inverse branch of $\frac{C}{(z-x_0)^2}$ maps the set 
$\{ w :\: \Re w > 0\}$ inside an angle $|\arg(z-x_0)|<\frac{\pi}{4}$, the preimage $\tilde{h}^{-1}(\{ w :\: \Re w > A_2\}$ is also contained in the same angle 
if $A_2$ is chosen sufficiently large. 

By the uniqueness of the Fatou coordinate $h(z) - \tilde{h}(z) = T$ and so $W:=h^{-1}(\{ w :\: \Re w > A_2 + T\})$ is contained in the same angle. Finally, choose $k$ so that 
$k> A_2+T-A_1$. Then, by Lemma~\ref{lem:29up,2}, the domain $U$ is contained in $G^{-k}(W)$. Since $G$ is conformal, the claim follows.
\end{proof}   

Based on the interpretation of $\log\phi$ as a Fatou coordinate up to a normalization, see~\cite{carleson}, one gets:
\begin{fact}\label{fa:29up,3}
$\log\phi$ extends to a quasiconformal mapping of the Riemann sphere
 sending $x_0$ to $\infty$.
\end{fact}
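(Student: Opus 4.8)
The plan is to recognise $\log\phi$ as a conformal isomorphism of $U$ onto a half-plane and then invoke Ahlfors' characterisation of quasidiscs, the only non-formal ingredient being the local behaviour at $x_0$, which is read off from the parabolic normal form already used for Lemma~\ref{lem:28ua,1}. First I would make precise that $\log\phi$ maps $U$ conformally onto a half-plane. Since $U$ is a topological disc and, by hypothesis, $\phi\colon U\to V:=D(0,R)\setminus\{0\}$ is an unbranched covering, it is \emph{the} universal covering of $V$; as $\exp\colon\{\,w:\Re w<\log R\,\}\to V$ is likewise a universal covering, the map $\log\phi$ (defined, as in the text, by $\exp\circ\log\phi=\phi$) is a conformal bijection of $U$ onto the half-plane $H:=\{\,w:\Re w<\log R\,\}$. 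Item~3 of Definition~\ref{defi:9xa,1} forces $\phi(z)\to0$, hence $\Re\log\phi(z)\to-\infty$, as $z\to x_0$ within $U$; thus in $\widehat{\CC}$ the boundary point $x_0$ of $U$ corresponds to the single boundary point $\infty$ of $H$.

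The substantive point is that $\partial U$ is a quasicircle. Away from $x_0$ this is easy: there $\partial U$ is a real-analytic arc, being the $\phi$-preimage of $\partial D(0,R)$, to a neighbourhood of which $\phi$ continues as a covering by the set-up. The behaviour at $x_0$ is governed by the Fatou coordinate. Using Lemma~\ref{lem:29up,2} together with the standard construction of that coordinate from~\cite{carleson} — exactly as in the proof of Lemma~\ref{lem:28ua,1} — one has $\log\phi(z)=\log(\tau^{-2})\,h(z)$ with $h(z)=\xi\bigl(C(z-x_0)^{-2}\bigr)+T$, $C>0$, $\xi(w)=w+O(|w|^{-1/2})$, so that $\log\phi(z)=c\,(z-x_0)^{-2}(1+o(1))$ with $c:=C\log(\tau^{-2})<0$. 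Since $\log\phi$ carries $U$ onto the \emph{full} half-plane $H$, writing $z-x_0=\rho e^{i\theta}$ the equation $\Re\log\phi(z)=\log R$ becomes $c\rho^{-2}\cos2\theta=\log R+o(1)$, which forces $\theta\to\pm\pi/4$ as $\rho\to0$; correspondingly $U$ occupies the sector $|\theta|<\pi/4+o(1)$, in accordance with Lemma~\ref{lem:28ua,1} and the symmetry of $U$. Hence near $x_0$ the curve $\partial U$ is the union of two analytic arcs meeting at $x_0$ at interior angle $\pi/2$ — no cusp and no spiralling — so it has bounded turning and is a quasicircle.

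To conclude I would apply Ahlfors' theorem: a quasidisc is precisely a Jordan domain a Riemann map of which extends to a quasiconformal homeomorphism of $\widehat{\CC}$; since any two conformal maps of $U$ onto $H$ differ by a M\"obius automorphism of $H$ (hence by a globally conformal map of $\widehat{\CC}$), the particular map $\log\phi$ also extends to a quasiconformal homeomorphism $\Phi$ of $\widehat{\CC}$, and the boundary correspondence above forces $\Phi(x_0)=\infty$. The main obstacle is precisely the claim of the second paragraph that the singularity of $\log\phi$ at $x_0$ is exactly a $\pi/2$-corner: this is where the Fatou-coordinate picture is indispensable, since the $(z-x_0)^{-2}$ asymptotics convert the a priori unknown shape of $U$ near $x_0$ into a wedge, and it is the reason the Fact is stated as a consequence of the parabolic normal form of~\cite{carleson}. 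If one wishes to avoid quoting Ahlfors' theorem, the same normal form produces $\Phi$ by hand: transport the anticonformal reflection $w\mapsto 2\log R-\bar w$ across $\partial H$ back to $\widehat{\CC}\setminus\overline U$ through $\log\phi$; near $x_0$ this reflection is asymptotic to $z\mapsto x_0\pm i\,\overline{(z-x_0)}$, hence quasiconformal there, and it is quasiconformal away from $x_0$ because $\partial U$ is analytic, so $\log\phi$ glued to this reflection is a quasiconformal homeomorphism of the sphere sending $x_0$ to $\infty$.
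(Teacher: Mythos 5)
Your argument is correct in substance and rests on exactly the ingredient the paper invokes: the identification of $\log\phi$ (up to the factor $\log\tau^{-2}$) with the Fatou coordinate of the parabolic germ $G$ at $x_0$, whose normal form $\xi\bigl(C(z-x_0)^{-2}\bigr)$ turns the a priori unknown shape of $U$ at $x_0$ into a $\pi/2$-wedge. The paper, however, offers no argument beyond citing~\cite{carleson} for the quasiconformal extendability of the Fatou coordinate; you instead route the deduction through Ahlfors' quasidisc criterion, showing that $U$ is a Jordan domain with quasicircle boundary and that $\log\phi$ is a Riemann map onto the half-plane $\{\Re w<\log R\}$. This packaging is arguably cleaner, because it separates the only delicate point (the corner at $x_0$, controlled by the normal form) from the rest of $\partial U$, whereas the bare citation leaves unexplained how the local statement near $x_0$ propagates to all of $U$, whose real trace $(x_0,R')$ is far from being contained in a small petal. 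Your closing remark that the extension can be built by hand via the quasiconformal reflection across $\partial U$ is essentially the content of the cited fact, so the two proofs converge. One soft spot: the claim that $\partial U\setminus\{x_0\}$ is an analytic arc because ``$\phi$ continues as a covering to a neighbourhood of $\partial D(0,R)$ by the set-up'' is not literally an axiom of Definition~\ref{defi:9xa,1}; it does hold for every map the paper actually produces (there $R<\tau^2$ and $\phi=\exp\circ P^{-1}$ with $P$ univalent on the strictly larger half-plane $\{\Re w<\log\tau^2\}$), and it can also be extracted from the Epstein-type hypothesis in item~4 with $\alpha=\pi$, but you should say which of these you are using rather than leaving it implicit, since without some continuation of $\phi$ across $\partial U$ the regularity of the boundary --- and hence the Fact itself --- would not follow.
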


\paragraph{Analytic continuation of the functional equation.}
\begin{lem}\label{lem:9xa,1}
Suppose that $\phi$ a mapping from the ${\cal EWF}$-class. Then the functional equation~(\ref{equ:27up,1}) holds for every argument $z\in \tau^{-1} U$, meaning also that both sides of the functional equation are well defined. 
\end{lem}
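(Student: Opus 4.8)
The plan is to push the functional equation from the real segment on which Definition~\ref{defi:9xa,1} postulates it out to the complex domain $\tau^{-1}U$ by the identity principle; the only substantive work is to check that both sides of~(\ref{equ:27up,1}) are genuinely analytic on all of $\tau^{-1}U$.

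First, the left-hand side $\phi_1=\tau^{-1}\circ\phi\circ\tau$ is analytic on the whole of $\tau^{-1}U$, since $\phi$ is analytic on $U$. By Definition~\ref{defi:9xa,1}, equation~(\ref{equ:27up,1}) holds on the real interval $[x_0/\tau^2,x_0/\tau]$, and from the postulated inequalities (in particular $x_0<0<x_0/\tau<R'$) one checks that the open interval $(x_0/\tau^2,x_0/\tau)$ lies in $\tau^{-1}U\cap\RR$, where both sides are real-analytic. Since $\tau^{-1}U$ is connected, being the affine image of the topological disk $U$, it therefore suffices to show that the right-hand side $\phi_{-1}\circ\phi$ is defined and analytic on all of $\tau^{-1}U$: the two analytic functions then agree on that real segment, hence everywhere on $\tau^{-1}U$ by the identity principle.

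That $\phi_{-1}\circ\phi$ be well defined on $\tau^{-1}U$ is equivalent to the two containments $\tau^{-1}U\subseteq U$ (so that $\phi(z)$ is defined) and $\phi(\tau^{-1}U)\subseteq\tau U$ (so that $\phi_{-1}=\tau\circ\phi\circ\tau^{-1}$ may be applied to $\phi(z)$), and this is where the effort goes. I would argue by continuation. Let $\Omega$ be the connected component of the open set $\{z\in\tau^{-1}U:\ z\in U,\ \phi(z)\in\tau U\}$ containing the segment $(x_0/\tau^2,x_0/\tau)$; on $\Omega$ the identity principle already gives $\phi_{-1}\circ\phi=\phi_1$, the latter analytic on all of $\tau^{-1}U$. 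If $\Omega$ were a proper subset of $\tau^{-1}U$, a boundary point $z^*\in\partial\Omega\cap\tau^{-1}U$ would have to lie in $\partial U$ or satisfy $\phi(z^*)\in\partial(\tau U)$. To exclude these I would use the structural hypotheses of Definition~\ref{defi:9xa,1}: passing to the limit along $\Omega$ in the equivalent form~(\ref{equ:14xp,1}) shows that $\phi(\tau^{-1}\phi(z_n))=\tau^{-2}\phi(\tau z_n)$ stays inside the punctured disk $D(0,R/\tau^2)\setminus\{0\}$, so by hypothesis~(3) the point $\tau^{-1}\phi(z_n)$ stays in a Poincar\'e neighborhood of $x_0$ whose closure avoids the outer part of $\partial U$; on the $z$-side, the angular estimate of Lemma~\ref{lem:28ua,1}, the fact that $x_0\notin\tau^{-1}U$, and the description of $\partial U$ furnished by the Fatou-coordinate picture (Lemma~\ref{lem:29up,2} and Fact~\ref{fa:29up,3}, realising $\log\phi$ as a conformal map of $U$ onto a half-plane) keep $z^*$ off $\partial U$. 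Hence $z^*$ satisfies both open conditions and so lies in $\Omega$, a contradiction; therefore $\Omega=\tau^{-1}U$.

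The main obstacle is precisely this domain bookkeeping: verifying that, as $z$ sweeps across all of $\tau^{-1}U$, neither $\phi(z)$ nor $\phi(\tau^{-1}\phi(z))$ ever leaves the domain in which the next map is defined, and in particular that the composition stays clear of the flat critical point $x_0$. A clean preliminary fact worth isolating first is $x_0\notin\tau^{-1}U$, equivalently $R'\le\tau x_0$: this is forced, for otherwise~(\ref{equ:27up,1}) at $x_0$ would read $\phi(\tau x_0)=\tau^2$, which is impossible since $\phi(\tau x_0)<R<\tau^2$; and it is exactly what keeps $\phi_{-1}\circ\phi$ away from the singularity. Everything else --- analyticity of each side separately, connectedness of $\tau^{-1}U$, and the closing application of the identity principle --- is routine.
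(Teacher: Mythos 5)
Your overall strategy (make the right-hand side analytic on all of $\tau^{-1}U$, then apply the identity principle) is viable in outline, but the step you yourself identify as ``the main obstacle'' is exactly where the argument has a genuine gap, and none of the tools you cite close it. In your open-closed argument you must rule out a boundary point $z^*\in\partial\Omega\cap\tau^{-1}U$ with $z^*\in\partial U$; but Lemma~\ref{lem:28ua,1} and the Fatou-coordinate description only control the shape of $\partial U$ \emph{near} $x_0$, and hypothesis~3 of Definition~\ref{defi:9xa,1} only confines $U$ to Poincar\'e disks ${\cal D}(x_0,r')$ --- it gives no lower bound on how thick $U$ is at an interior point of $(x_0,R')$. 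Since $U$ pinches to zero width at $x_0$, nothing you have written prevents $\partial U$ from cutting into $\tau^{-1}U$ somewhere over the interval $(R'/\tau,x_0/\tau)$; establishing $\tau^{-1}U\subset U$ (equivalently, that $G=\tau^{-1}\phi\tau^{-1}$ maps $U$ into itself) directly from the axioms is essentially as hard as the lemma itself. There is also a circularity in your ``clean preliminary fact'': the inequality $R'\le\tau x_0$ is deduced by evaluating~(\ref{equ:27up,1}) at $x_0$, but the equation is only postulated on $[x_0/\tau^2,x_0/\tau]$, which does not contain $x_0$; at that stage of the proof you are not entitled to it anywhere else.

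The paper's proof sidesteps all of this domain bookkeeping by exploiting the covering structure rather than fighting it. One notes that $\phi_{-1}$ restricted to $W_1:=\phi_{-1}^{-1}(D(0,R/\tau)\setminus\{0\})$ is a universal covering of the punctured disk $D(0,R/\tau)\setminus\{0\}$, that $W_1$ lies in ${\cal D}((a_1,\tau x_0))$ with $a_1>0$ so a single inverse branch $\phi^{-1}$ pulls it back to a disk $W\subset U$, and hence that $\phi_{-1}\circ\phi:W\to D(0,R/\tau)\setminus\{0\}$ is again a universal covering; meanwhile $\phi_1:\tau^{-1}U\to D(0,R/\tau)\setminus\{0\}$ is a universal covering of the \emph{same} punctured disk. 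Two universal coverings that agree on the segment $(x_0/\tau^2,x_0/\tau)$ are related by a lift of the identity, which is univalent, fixes the segment, and is therefore the identity; this forces $W=\tau^{-1}U$ and the equality of the two maps simultaneously. In other words, the inclusions you were trying to verify by hand ($\tau^{-1}U\subset U$, $\phi(\tau^{-1}U)\subset\tau U$, and $R'\le\tau x_0$) come out as \emph{consequences} of the uniqueness of the universal cover, rather than being needed as inputs. I recommend you restructure your proof along these lines.
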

\begin{proof}
We will show a topological disk $W$ such that $W \cap \RR = (a, x_0/\tau)$, $\phi_{-1}\circ\phi(a)=R/\tau$, and $\phi_{-1} \circ \phi$ is defined on $W$ and a covering of $D(0,R/\tau)\setminus \{0\}$. 
By item 3 of Definition~\ref{defi:9xa,1}, $\phi_{-1}$ provides a universal covering of  $D(0,R/\tau)\setminus \{0\}$ by some topological disk $W_1$ such that 
$W_1 \cap \RR = (a_1, \tau x_0)$, $W_1 \subset {\cal D}(a_1,\tau x_0)$ and 
\[ a_1=\phi_{-1}^{-1}(R/\tau) > \phi_{-1}^{-1} \tau = \tau \phi^{-1}(1) = 0 \; .\]
Then $W = \phi^{-1}(W_1)$ where $\phi^{-1}$ denotes a univalent inverse branch which maps the real trace $(a_1,\tau x_0)$ to its preimage $(a,x_0/\tau)$. 

Then looking at $\phi_1$ on its domain $\tau^{-1} U$, we observe that it is a universal covering of $D(0,R/\tau)\setminus\{0\}$. So $\phi_1$ and $\phi_{-1}\circ\phi$ are 
universal coverings of the same set and are equal on a segment $(x_0/\tau^2,x_0/\tau)$ by equation~(\ref{equ:27up,1}). Then the lifting of the identity is a univalent map from 
$W$ onto $U$ which is the identity on the segment and thus globally. 
\end{proof}

\section{Critical circle covers}
This section is devoted to the proof of the following theorem.
\begin{theo}\label{theo:13xp,1}
Take a sequence of odd integers $\ell_n$ tending to $\infty$. 
Consider a sequence of fixed point maps $H_{\ell_n}$ and scaling constants $\tau_{\ell_n}$ introduced in Fact~\ref{fa:13xp,1} and let $\phi_{\ell_n}$ denote their branches 
whose domains contain $0$.  
Let $x_n$ denote the critical point of the $\phi_{\ell_n}$. 
For a subsequence $n_k$ the following are true:
\begin{itemize}
\item
sequences $x_{n_k}$ and $\tau_{\ell_{n_k}}$ converge to $x_0$ and $\tau$, respectively, which satisfy the inequalities postulated by 
Definition~\ref{defi:9xa,1}, 
\item
mappings $\phi_{\ell_{n_k}}$ converge almost uniformly on $(x_0,x_0/\tau)$ to $\phi$ which belongs to the ${\cal EWF}$-class.
\end{itemize}
\end{theo}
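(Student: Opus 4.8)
The plan is to extract convergent subsequences by compactness and then verify, term by term, that each defining property of the ${\cal EWF}$-class survives the limit. First I would set up the complex picture for finite $\ell$: for each fixed point map $H_{\ell}$, the branch $\phi_{\ell}$ containing $0$ in its domain is real-analytic and, by the fixed-point equation~(\ref{equ:27up,1}) together with the results of~\cite{lsuniv}, extends to a complex analytic map on a topological disk $U_{\ell}$ symmetric about $\RR$, realized as a branched covering of a punctured disk $D(0,R_{\ell})\setminus\{0\}$ branched only over $0$ with local degree $\ell$ at the critical point $x_{\ell}$. The key a priori bounds I would need are: (i) real bounds on the scaling constants $\tau_{\ell}$ and the critical points $x_{\ell}$, giving sequential compactness of these numerical parameters with limits satisfying the strict inequalities of Definition~\ref{defi:9xa,1}; (ii) complex bounds controlling the size and shape of the domains $U_{\ell}$ from below and above, so that after passing to a subsequence the domains converge in the Carathéodory sense to a topological disk $U$ with $U\cap\RR=(x_0,R')$; and (iii) Koebe-type distortion estimates, coming from the negative Schwarzian derivative of the real maps and the Poincaré-neighborhood technology of Definition~\ref{defi:27up,1}, that confine the preimages $\phi_{\ell}^{-1}(D(0,r)\setminus\{0\})$ inside the disk neighborhoods ${\cal D}(x_{\ell},r'_{\ell})$ uniformly in $\ell$.

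With these bounds in hand, the argument proceeds as follows. On the real line, a normal families argument on compact subsets of $(x_0,x_0/\tau)$ away from the critical point gives a subsequential limit $\phi$ which is real-analytic and orientation-preserving there; the normalizations $\phi_{\ell}(x_{\ell})=0$, $\phi_{\ell}(0)=1$, and the scaling relations $\phi_{\ell}(x_{\ell}/\tau_{\ell})=\tau_{\ell}x_{\ell}$, $\phi_{\ell}(x_{\ell}/\tau_{\ell}^2)=x_{\ell}/\tau_{\ell}$ (all forced by Fact~\ref{fa:13xp,1}) pass to the limit and pin down the boundary behavior postulated in Definition~\ref{defi:9xa,1}. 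The golden-mean rotation number of the associated circle map and the functional equation~(\ref{equ:27up,1}) on $[x_0/\tau^2,x_0/\tau]$ are stable under uniform limits, since they are closed conditions on the dynamics. In the complex plane, the uniform covering and distortion bounds let one apply a diagonal/normal-families argument to get convergence of $\phi_{\ell}$ almost uniformly on $U$; the limit $\phi$ is then an unbranched covering of $V=D(0,R)\setminus\{0\}$ — here one uses that the branch point, where local degree is $\ell\to\infty$, escapes to the puncture in the limit — and items 3 and 4 of Definition~\ref{defi:9xa,1} follow by taking limits in the corresponding inclusions of Poincaré neighborhoods, which are preserved because the inclusions are between closed sets depending continuously on the parameters. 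Finally, item 5: writing $G_{\ell}(x)=\tau_{\ell}^{-1}\phi_{\ell}\tau_{\ell}^{-1}$, the functional equation implies $G_{\ell}$ fixes $x_{\ell}$, and its Taylor expansion at $x_{\ell}$ has the form $x - \epsilon_{\ell}(x-x_{\ell})^3 + O(|x-x_{\ell}|^4)$ with $\epsilon_{\ell}>0$ — the cubic term coming from the parabolic/Fibonacci combinatorial structure — and I would need a lower bound $\epsilon_{\ell}\geq c>0$ so that the limit $\epsilon$ is strictly positive; this lower bound should follow from the real a priori bounds on $\tau_{\ell}$ and on the geometry of the renormalization intervals.

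The main obstacle I anticipate is step (ii)–(iii): obtaining the complex a priori bounds on the domains $U_{\ell}$ and the uniform Koebe/Poincaré-neighborhood control as $\ell\to\infty$. For finite fixed $\ell$ this kind of complex bound is standard (it underlies~\cite{lsuniv}), but here the local degree of the critical point grows without bound, so the branched covering degenerates, and one must show the ``non-critical part'' of the domain stays uniformly large and well-shaped while the ramification concentrates at the puncture. This is exactly the analytic heart of the infinite-criticality analysis — it is the analogue of the delicate estimates in~\cite{LSw2} and the earlier Feigenbaum work — and I expect it to require careful use of the negative Schwarzian derivative, cross-ratio inequalities, and the specific Fibonacci combinatorics to rule out collapse of the domains. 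A secondary, more technical point is verifying that the limit domain $U$ is genuinely a topological disk (simple connectivity could in principle be lost in a Carathéodory limit), which I would handle by exhibiting an explicit exhaustion of $U$ by round disks or Poincaré neighborhoods coming from the covering structure.
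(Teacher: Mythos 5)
Your overall architecture (compactness plus term-by-term verification of Definition~\ref{defi:9xa,1}) matches the paper's, and you correctly identify the degeneration of the branched covering as the central issue. But two points deserve comment, one a difference of route and one a genuine gap.

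On the route: the uniform complex bounds you flag as the ``analytic heart'' are obtained in the paper not by a separate delicate estimate but essentially for free from the Epstein structure. Writing $\phi_{\ell}=(E_{\ell})^{\ell}$ with $E_{\ell}$ an Epstein diffeomorphism (Lemma~\ref{covcontin}), one passes to the inverse lifted to the universal cover: $P_{\ell_m}(w)=E_{\ell_m}^{-1}(\exp(w/\ell_m))$ on a $2\pi\ell_m$-periodic domain. The Epstein property confines the $P_{\ell_m}$ to a fixed Poincar\'{e} disk, Montel gives a univalent limit $P$ on the half-plane $\{\Re w<\log\tau^2\}$, and $\phi:=\exp\circ P^{-1}$ is then automatically a universal covering of the punctured disk; items 3 and 4 of Definition~\ref{defi:9xa,1} follow from the Epstein class rather than from a limit of closed inclusions. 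The only genuinely hard real-analytic input is Proposition~\ref{covrb}. So the obstacle you anticipate is real but is dissolved by inverting and lifting, not by uniform Koebe control of the forward maps.

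The genuine gap is your treatment of item 5. You assert that for finite $\ell$ the map $G_{\ell}=\tau_{\ell}^{-1}\phi_{\ell}\tau_{\ell}^{-1}$ already has the parabolic expansion $x-\epsilon_{\ell}(x-x_{\ell})^{3}+O(|x-x_{\ell}|^{4})$ and propose to prove $\epsilon_{\ell}\geq c>0$. This is false: by Lemma~\ref{covcontin}(d), $G_{\ell}'(x_{\ell})=\alpha_{\ell}^{-2}=\tau_{\ell}^{-2/\ell}\neq 1$, so $x_{\ell}$ is an \emph{attracting} fixed point for every finite $\ell$, flanked by a second attracting fixed point $\Gamma(x_{\ell})$ and a repelling one $\tau_{\ell}^{2}X_{\ell}$ between them; there is no coefficient $\epsilon_{\ell}$ to bound below. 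The parabolic point appears only in the limit, when these three fixed points merge, and one must then rule out both a surviving quadratic term and a degeneracy of order higher than cubic. The paper does this by a perturbation argument: each $(G_{\ell_m})^{-1}$ extends univalently to the half-planes, has exactly three real fixed points with the middle one repelling, so by Wolff--Denjoy $G_{\ell_m}$ has no non-real fixed points, and Rouch\'{e} forces $q=2$ in the limit (the same fixed-point count is what shows $x_{0}=x_{0}^{-}$, i.e.\ that the critical points do not detach from the left endpoint of the limit domain --- another point your proposal leaves open). Some argument of this kind is indispensable; the ``lower bound on $\epsilon_{\ell}$ from real bounds'' you propose does not exist as stated.
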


\subsection{Fixed-point equations}

\begin{figure}
\epsfig{figure=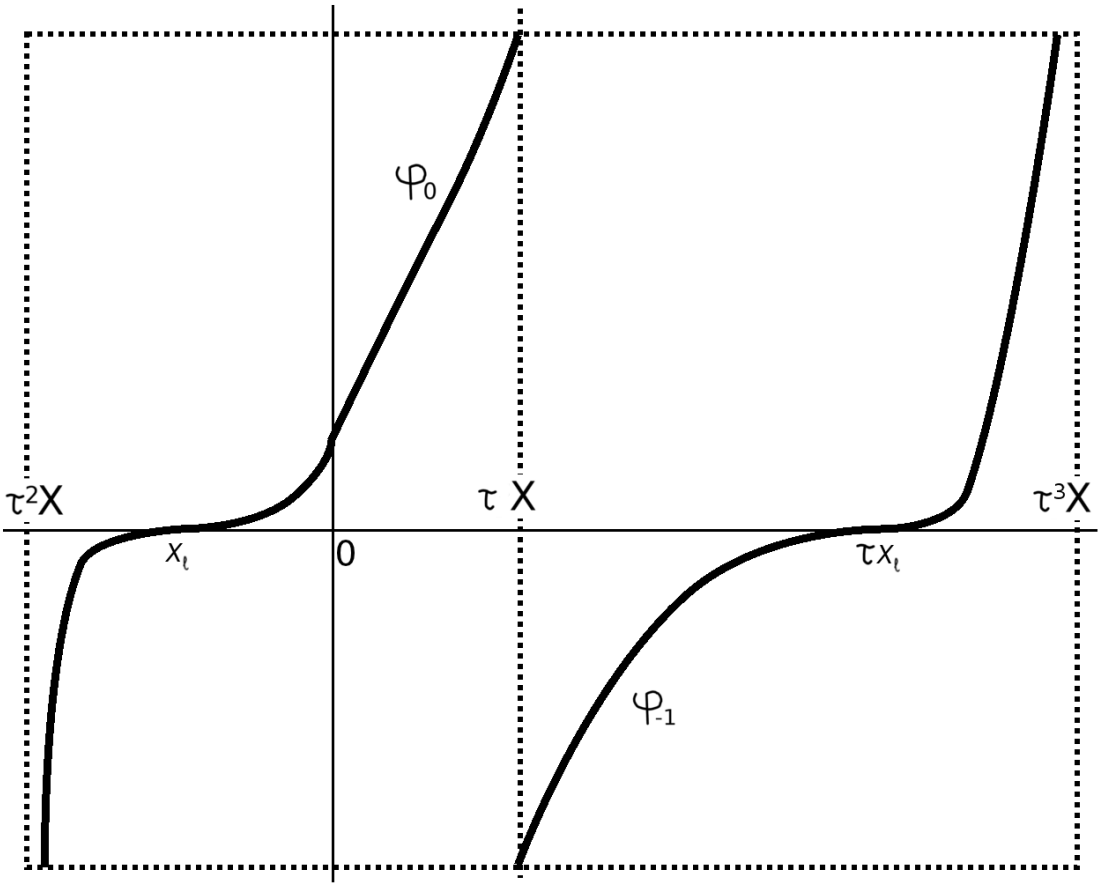, height=12cm, width=15cm}
\caption{The graph of $H_{\ell}$.}
\end{figure}

\paragraph{Map near the critical point}
Fix an odd integer $\ell\ge 3$, and let $H_\ell$ be the map from the 
Fact~\ref{fa:13xp,1}. Consider also the corresponding map $h_\ell$ near
the critical point. In other words, if $p(x)=x^\ell$ is the change of variable
on $\RR$,
then $h_\ell=p^{-1}\circ H_\ell\circ p$. It consists of two branches
$g_i=p^{-1}\circ \phi_i\circ p$, where $\phi_i$, $i=0,-1$ are the
branches of $H_\ell$, so that $g_i$ is defined on $J^i=p^{-1}(I^i)$
with the common image $J=p^{-1}(I)$.
The scaling factor for $h_\ell$ is
$\alpha_\ell=\tau_\ell^{1/\ell}<-1$.
The first return map of $h_\ell$ to the central interval $J^0$
retained to those components in $J^0$,
which intersect the forward critical orbit $\{h^i(0)\}_{i\ge 0}$ 
consists of the central branch which is $g_{-1}\circ g_0$ and is defined
on $\alpha^{-1}J^0$, and the off-central branch which is
$g_0$ and is defined on $\alpha_\ell^{-1}J^1$. Furthermore,
after the rescaling by $x\mapsto \alpha_\ell x$, 
this first return map coincides with the
initial map $h_\ell$. In other words, $\alpha_\ell^{-1}J^1\subset J^0$ and 
$\alpha_\ell J^0=J$,
and $g_0$, $g_{-1}$ satisfy the functional equations
\begin{eqnarray}\label{covfixp0}
g_{-1} &=& \alpha\circ g_{0}\circ \alpha^{-1}, \\
g_{0}&=& \alpha\circ g_{-1}\circ g_{0}\circ \alpha^{-1}.
\end{eqnarray}
Since $g_0(x)=E(x^\ell)$, where a diffeomorphism $E=E_\ell$
belongs to the Epstein class,
it follows that the map $h_\ell$ has a non-positive Schwarzian derivative.
Recall that a real-analytic diffeomorphism belongs to the Epstein class provided that 
its inverse has an analytic continuation to the upper half-plane. Note that $E$ maps $I^0$ onto $J$. 

For every $n\ge 1$, consider the first return map $h^n$ of $h_\ell$
to the interval $J_n=\alpha_\ell^{-n}J$ (so that $h^1=h_\ell$ and $J_1=J^0$) 
retained to the components in $J_n$ intersecting the forward critical orbit.
As it follows from (\ref{covfixp0}), the map $h^n$
is equal to $\alpha_\ell^{-n}\circ h_\ell\circ \alpha_\ell^{n}$. 
$h^n$ consists of a pair of homeomorphisms
$g_n=\alpha_\ell^{-n}\circ g_0\circ \alpha_\ell^{n}:J_{n+1}\to J_n$
and $g_{n-1}=\alpha_\ell^{-n+1}\circ g_0\circ \alpha_\ell^{n-1}:
\alpha^{-n}J^1\to J^n$. 
By the Fibonacci combinatorics,
the first entry of the $h_\ell$-iterates of $0$
to the interval $J_n$ occurs at the time $q_n$,
where $q_0=1, q_1=2, q_2=q_0+q_1=3,...$ are the Fibonacci numbers.

\paragraph{Analytic continuation}
Fix $\ell$. In this paragraph, we sometimes drop
the index $\ell$ denoting $\tau=\tau_\ell$, $\alpha=\alpha_\ell$
etc. Let $\phi=\phi_0$, the branch of $H$ which contains $0$ in its
domain $I^0$. 
Recall that $\phi$ satisfies the equation
\begin{equation}\label{covphi}
\tau^{-2}\phi(x)=\phi\circ \tau^{-1}\circ \phi\circ \tau^{-1}(x), 
\ \ x\in I^0_c.
\end{equation}
We define 
\begin{equation}\label{equ:21za,1}
G_{\ell}=\tau_{\ell}^{-1}\circ \phi_{\ell}\circ \tau_{\ell}^{-1} .
\end{equation}
Then ~(\ref{covphi}) turns into
\begin{equation}\label{covphiG}
\tau_{\ell}^{-2}\phi_{\ell}=\phi_{\ell}\circ G_{\ell}.
\end{equation}
Denote by $y_\ell$ the unique zero of $g_0: J^0\to J$. 
Set $x_\ell=p(x_\ell)=y_\ell^\ell$. 
Then $x_\ell$ is the only zero and the only critical point of $\phi$
on $I^0$. Denote $I=(A, B)$.
Then $I^0=(B/\tau, A/\tau)$ and $B/\tau<x_\ell<0<A/\tau$.

\begin{lem}\label{covcontin}

(a) $\phi: (B/\tau, A/\tau)\to (A, B)$ has a real analytic continuation to  
$\phi: (A, B)\to \tau^2(A, B)$. 

(b) The restriction
$\phi: (x_\ell, \tau x_\ell)\to (0, \tau^2)$ is a diffeomorphism,
which belongs to the Epstein class.

(c) $\phi(1/\tau)=1/\tau^2$, $\phi'(1/\tau)=1$,
$\phi(x_\ell/\tau)=\tau x_\ell$.
Also, $1/|\tau|<|A|<|\tau|$,
$1<\tau x_\ell<B<|\tau|^2$.

(d) (associated dynamics of $G$).
There exists a unique point $X\in (1/\tau, 0)$, such that
$\phi(X)=\tau X$. Then $\tau \phi(x_\ell/\tau^2)<\tau^2 X<x_\ell$,
and these $3$ points are the only real
fixed points of $G$. Moreover, the points 
$\tau \phi(x_\ell/\tau^2)$ and $x_\ell$ are attracting with
the multiplier $1/\alpha^2$ while the point $\tau^2 X$ is strictly repelling.

(e) mapping $\phi$ extends to a real-analytic homeomorphism from $(\tau^2 X, \tau x_\ell)$
onto $(-\infty, \tau^2)$, such that $\phi=E^\ell$, where
$E: (\tau^2 X, \tau x_\ell)\to (-\infty, \alpha^2)$ is a diffeomorphism
from the Epstein class. 
\end{lem}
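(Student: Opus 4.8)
\textbf{Plan of proof of Lemma~\ref{covcontin}.}
The whole lemma is an unwinding of the two functional equations~(\ref{covfixp0}) for the branches of $H_\ell$, rewritten in terms of the single branch $\phi=\phi_0$ via~(\ref{covphi}) and its consequence~(\ref{covphiG}). The plan is to proceed in the order (a), (c), (b), (d), (e), since the later parts use the analytic continuation produced in (a) together with the concrete values pinned down in (c). For part (a), I would start from the fixed-point equation~(\ref{covphi}): on $I^0_c=\tau^{-1}I^0$ it reads $\tau^{-2}\phi = \phi\circ\tau^{-1}\circ\phi\circ\tau^{-1}$, i.e. $\phi(x) = \tau^2\,\phi\bigl(\tau^{-1}\phi(\tau^{-1}x)\bigr)$. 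Read the other way, for $x\in I$ one has $\tau^{-1}x\in I^0$, so $\phi(\tau^{-1}x)$ is defined with values in $I$; then $\tau^{-1}\phi(\tau^{-1}x)\in\tau^{-1}I=I^0$ (using $\tau I = \ldots$, the nesting $\tau^{-1}I=I^0$ coming from $I^0=(B/\tau,A/\tau)$), so the right-hand side is a composition of maps already defined on $I^0$, hence defines a real-analytic extension of $\phi$ to all of $I=(A,B)$; its image is $\tau^2\phi(I^0)=\tau^2 I$. This is the only genuinely substantive step and I expect it to be the main obstacle — one must check carefully that every intermediate point lands in a domain on which $\phi$ (in its already-established range) is defined, and that the formula agrees with the original $\phi$ on the overlap $I^0_c$, which it does by~(\ref{covphi}) itself; analyticity is then automatic since $\tau$ is linear and $\phi$ is analytic on $I^0$.

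For part (c): the normalizations $\phi(0)=1$ and $\phi^{-1}(x)=\phi_{-1}(x)=\tau\phi(\tau^{-1}x)$ from Fact~\ref{fa:13xp,1}, combined with the fixed-point equation, force specific values at the scaled points $1/\tau$, $1/\tau^2$, $x_\ell/\tau$. Concretely, plugging $x=1$ into $\phi_1=\phi_{-1}\circ\phi$ (equation~(\ref{equ:27up,1})) and tracking $\phi(0)=1$ gives $\tau^{-1}\phi(\tau)=\phi_{-1}(1)=\tau\phi(\tau^{-1})$, while differentiating the fixed-point equation at the appropriate point and using the $C^1$ matching across $x_0/\tau$ yields $\phi'(1/\tau)=1$; the identities $\phi(1/\tau)=1/\tau^2$ and $\phi(x_\ell/\tau)=\tau x_\ell$ are exactly the conditions already recorded in Definition~\ref{defi:9xa,1} for the limit, so here one is checking them at finite $\ell$ directly from the renormalization equations. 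The two-sided bounds $1/|\tau|<|A|<|\tau|$ and $1<\tau x_\ell<B<|\tau|^2$ follow from $\phi$ mapping $I^0$ onto $I=(A,B)$ monotonically, together with $\phi(0)=1$, $\phi(x_\ell)=0$ being the minimum, and $\phi(I)=\tau^2 I$ from part (a).

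Part (b) is then immediate: $\phi|_{(x_\ell,\tau x_\ell)}$ is monotone increasing (it is increasing past its only critical point $x_\ell$, where it has a minimum value $0$), maps onto $(0,\tau^2)$ by the endpoint values in (c) and (a), and lies in the Epstein class because $g_0 = E_\ell(x^\ell)$ with $E_\ell$ Epstein (Epstein class is stable under composition with the real-analytic power change of variables and under the linear rescalings that relate $\phi$ to $g_0$). For part (d), the fixed points of $G_\ell=\tau^{-1}\phi\tau^{-1}$ correspond via $G_\ell(x)=x\iff\phi(\tau^{-1}x)=\tau^{-1}x\cdot\tau = \ldots$ — more precisely $G(x)=x$ iff $\phi(y)=\tau^2 y$ with $y=x/\tau$ — to points where the graph of $\phi$ meets the line $t\mapsto\tau^2 t$; by negative Schwarzian (noted just before the lemma) and the known boundary behavior from (a) and (c) there are exactly three such intersections, giving the three fixed points $\tau\phi(x_\ell/\tau^2)$, $\tau^2 X$, $x_\ell$ in increasing order, with $X\in(1/\tau,0)$ the solution of $\phi(X)=\tau X$. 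The multiplier of $G$ at a fixed point is $\tau^{-2}\phi'$ at the corresponding point; at $x_\ell$ one gets $\tau^{-2}\phi'(x_\ell/\tau)=\tau^{-2}\cdot\ldots$, and evaluating via the chain rule through the power change of variable produces the multiplier $1/\alpha^2$ (here $\alpha=\tau^{1/\ell}$), with the analogous computation at $\tau\phi(x_\ell/\tau^2)$; that $\tau^2 X$ is strictly repelling then follows because a negative-Schwarzian map cannot have two attracting and one neutral-or-attracting fixed point in a row. Finally, part (e) is obtained by iterating the continuation scheme of part (a): applying the functional equation~(\ref{covphiG}) $\tau^{-2}\phi=\phi\circ G$ backwards along the attracting dynamics of $G$ near its repelling fixed point $\tau^2 X$ pushes the domain of $\phi$ out to $(\tau^2 X,\tau x_\ell)$ and the range out to $(-\infty,\tau^2)$; the factorization $\phi=E^\ell$ with $E$ Epstein is inherited at each stage because composing with $G$ (itself built from $\phi$ and linear maps) and with the power map preserves the structure $E_\ell(x^\ell)$, and the Epstein property of the limiting diffeomorphism $E:(\tau^2 X,\tau x_\ell)\to(-\infty,\alpha^2)$ survives the limit since Epstein class is closed under locally uniform limits.
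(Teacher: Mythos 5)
Your overall route coincides with the paper's for (a), (c) and the continuation scheme in (e): the extension in (a) is exactly the paper's observation that $G=\tau^{-1}\circ\phi\circ\tau^{-1}$ is already defined on $(A,B)$ with image $I^0$, so $\phi=\tau^2\,\phi\circ G$ extends $\phi$; the values in (c) come from evaluating and differentiating the fixed-point equation at $0$; and (e) is obtained by setting $\phi(x)=\tau^{2n}\phi(G^n(x))$ along the $G$-orbit, with $n\to\infty$ as $x\to\tau^2X$. Two places, however, are not adequately justified as written. First, in (d) you assert that the three intersections of the graph of $\phi$ with the line $y\mapsto\tau^2y$ ``give'' the fixed points $\tau\phi(x_\ell/\tau^2)$, $\tau^2X$ and $x_\ell$, but nothing in your argument shows that $\tau\phi(x_\ell/\tau^2)$ and $\tau^2X$ (defined via $\phi(X)=\tau X$) actually satisfy $G(x)=x$, nor that they are ordered as claimed. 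The paper gets all of this at once from the device you omit: $\Gamma=\tau\circ\phi\circ\tau^{-2}$ is an orientation-reversing square root of $G$ (the functional equation gives $\Gamma^2=G$), it fixes $\tau^2X$, and it interchanges $x_\ell$ with $\tau\phi(x_\ell/\tau^2)$; since $\Gamma$ is decreasing, its fixed point separates the two points of the $2$-cycle, and the chain rule gives $G'(\Gamma(x_\ell))=G'(x_\ell)$ without a second multiplier computation. The strict inequality $x_\ell\neq\Gamma(x_\ell)$ is itself a combinatorial input ($x_\ell/\tau<\phi(x_\ell/\tau^2)$, coming from the position of the first return of the critical orbit), which your sketch does not supply.

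Second, your Epstein-class verifications in (b) and (e) invoke the wrong tools. ``Closure under locally uniform limits'' is inapplicable: there is no sequence of Epstein maps converging here, only a single map extended by the dynamics. What the paper actually does is build the univalent extension of the inverse branch explicitly: for (b), $\phi^{-1}=G^{-1}\circ\phi^{-1}\circ\tau^{-2}$ maps the slit plane $\CC_{(0,\tau^2)}$ univalently into $\CC_{(x_\ell,\tau x_\ell)}$, using that $E^{-1}$ extends univalently to $\CC_{(0,B)}$ and hence $G^{-1}$ to $\CC_{(B/\tau,0)}$; for (e) one again extends $E^{-1}$ univalently and matches it with the real-analytic continuation by the uniqueness theorem. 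A last small point: since $\ell$ is odd, $x_\ell$ is an inflection-type critical point of the increasing map $\phi$, not a minimum, and $\phi$ takes negative values on $(B/\tau,x_\ell)$ (indeed $A<0$); your description of $0$ as the minimum value would derail the derivation of the inequalities in (c) if taken literally.
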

\begin{proof}
Map $G_{\ell}$, here abbreviated to $G$, is given by formula~(\ref{equ:21za,1}). It is real-analytic on $(A, B)$ and
$G((A, B))=(B/\tau, A/\tau)$. Then (a) follows from~(\ref{covphi}).
Since $\phi=(E)^\ell$ and $E$ is in the Epstein class, then
the inverse map $\phi^{-1}: (0, B)\to (x_\ell, A/\tau)$
extends to a univalent map 
$\phi^{-1}: {\CC}_{(0, B)}\to {\CC}_{(x_\ell, A/\tau)}$, and hence,
for a real branch of the map $G^{-1}$, 
there is a univalent extension
$G^{-1}: {\CC}_{(B/\tau, 0)}\to {\CC}_{(A, \tau x_\ell)}$.
Since $B/\tau<x_\ell$, 
it allows us to define 
a univalent map $\phi^{-1}=G^{-1}\circ \phi^{-1}\circ \tau^{-2}$
from ${\CC}_{(0, \tau^2)}$ into ${\CC}_{(x_\ell, \tau x_\ell)}$,
where it is a diffeomorphism on the real line.
Finally, let us show (c). We use (\ref{covphi}). First,
$\tau^{-2}=\tau^{-2}\phi(0)=\phi\circ \tau^{-1}\phi\circ \tau^{-1}(0)=
\phi(1/\tau)$. Besides, $\tau^{-2}\phi'(0)=\tau^{-2}\phi'(1/\tau)\phi'(0)$,
i.e., $\phi'(1/\tau)=1$. Also, $\phi(G(x_\ell))=\tau^{-2}\phi(x_\ell)=0$,
which implies that $G(x_\ell)=\tau^{-1}\phi(x_\ell/\tau)=x_\ell$.
To prove the rest of (c), notice that, by the combinatorics,
$B>\tau x_\ell>\phi(0)=1>A/\tau$. Coupled with the inequality 
$1/|\tau|<|A/B|<|\tau|$, this implies the inequalities of (c). 

(d) The function $\tau^{-1}\phi$ is strictly decreasing in a neighborhood of
$[1/\tau, 0]$, and $\tau^{-1}\phi(1/\tau)=1/\tau^3>1/\tau$, 
$\tau^{-1}\phi(0)=1/\tau<0$. Hence, it has a unique fixed point 
$X\in (1/\tau, 0)$. Introduce $\Gamma=\tau\circ \phi\circ \tau^{-2}$.
By the functional equation, $\Gamma^2=G$. But $\Gamma(\tau^2 X)=\tau^2 X$.
Therefore, $\Gamma(x_\ell), \tau^2 X, x_\ell$ are fixed points
of $G$. Let us show that $\tau \phi(x_\ell/\tau^2)<\tau^2 X<x_\ell$.
Indeed, as for the map $h$, the points $y_\ell/\alpha_\ell^2$
and $y_\ell/\alpha_\ell$ lie in the central interval $I^0$. Hence
$y_\ell/\alpha_\ell<g_0(y_\ell/\alpha_\ell^2)\in I^1$.
Then $x_\ell/\tau< \phi(x_\ell/\tau^2)$, i.e. $x_\ell<\Gamma(x_\ell)$.
In turn, since $\Gamma$ is strictly decreasing,
this implies that $\Gamma(x_\ell)<\tau^2 X<x_\ell$.
Furthermore, since $\phi(x_\ell+x)=a x^\ell + o(|x|^{\ell})$,
expanding $G$ into the power series and substituting in 
(\ref{covphi}) yields $G'(x_\ell)=G'(\Gamma(x_\ell))=\alpha^{-2}$. 
That is, $x_\ell, \Gamma(x_\ell)$ are attracting fixed points 
of $G$. On the other hand, $G$ is in the Epstein class,
in particular, $SG\le 0$. It follows that $\tau^2 X$ is the only other fixed
point of $G$, and it is strictly repelling.

(e) For any $x\in (\tau^2 X, x_\ell)$, we find $n\ge 0$ such that
$G^n(x)\in (B/\tau, x_\ell)$. Then we can define
$\phi(x)=\tau^{2n}\phi(G^n(x))$. Clearly,
$n\to \infty$ as $x\to \tau^2 X$, i.e. $\phi(x)\to -\infty$ as $x\to \tau^2 X$.
It follows that $E$ in the decomposition $\phi=E^\ell$ extends
to a real-analytic diffeomorphism 
$E: (X, \tau x_\ell)\to (-\infty, \alpha^2)$.
It remains to check that this extension is still in the Epstein class.
Indeed, we have an univalent map 
$E^{-1}: \CC_{(A^{1/\ell}, B^{1/\ell})}\to \CC_{(B/\tau, A/\tau)}$.
By the above, $E^{-1}: (A^{1/\ell}, B^{1/\ell})\to (B/\tau, A/\tau)$
extends to a real-analytic diffeomorphism from
$(-\infty, \alpha^2)$ onto $(X, \tau x_\ell)$. By
the Uniqueness Theorem for analytic functions, we are done.
\end{proof}

\subsection{Bounds for covering maps} 
\paragraph{Real bounds.}

\begin{prop}\label{covrb}
There exist
two constants $1<T_1<T_2<\infty$,
such that $T_1<|\tau_\ell|<T_2$, for all $H_\ell$.
\end{prop}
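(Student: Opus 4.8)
The plan is to establish uniform two-sided bounds on the scaling constants $\tau_\ell$ by exploiting the \emph{real a priori bounds} for maps of negative Schwarzian derivative together with the rigid combinatorial structure coming from the Fibonacci dynamics and the fixed-point equations~(\ref{covfixp0}). Recall from Lemma~\ref{covcontin} that $h_\ell$ has non-positive Schwarzian derivative, that $g_0 = E_\ell^{\ell}$ with $E_\ell$ in the Epstein class, and that we have the normalization $\phi_\ell(0)=1$. The key quantitative handle will be the ``complex'' or ``Koebe-type'' bounds available for Epstein maps: an Epstein diffeomorphism mapping an interval $T$ over an interval $M\supset T$ with definite space on both sides has a definite distortion bound on a proportionally smaller subinterval, with constants depending only on the geometry, not on $\ell$.

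First I would recall that by Fact~\ref{fa:13xp,1} the map $H_\ell$ lives in ${\cal G}_\ell$ with the fixed configuration of intervals $I^0, I^{-1}\subset I$, all determined up to the normalization $\phi(0)=1$ by the constant $\tau_\ell<-1$. The inequalities in Lemma~\ref{covcontin}(c), namely $1/|\tau|<|A|<|\tau|$ and $1<\tau x_\ell<B<|\tau|^2$, already show that $|\tau_\ell|$ being bounded is \emph{equivalent} to the interval $I=(A,B)$ having bounded length and the critical value $1$ sitting at definite proportional position inside it. So the proposition reduces to: the renormalization interval geometry of the Fibonacci covering fixed point does not degenerate as $\ell\to\infty$.

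For the \textbf{lower bound} $|\tau_\ell|>T_1$: if $|\tau_\ell|$ were close to $1$, then by the inequalities above the intervals $I^0$ and $I^{-1}$ would be forced to nearly coincide or to collapse against the endpoints of $I$, contradicting the disjointness $\overline{I}^0\cap\overline{I}^{-1}=\emptyset$ together with $0\in I^0$, $\phi(0)=1\in I$; a short argument using the functional equation~(\ref{covphi}) at the points $0$ and $1/\tau$ (where we know $\phi(1/\tau)=1/\tau^2$, $\phi'(1/\tau)=1$) pins down that $|\tau_\ell|$ must exceed an absolute constant — essentially because $\phi$ must stretch $I^0$ of comparable size onto $I$ while being an $\ell$-to-one type critical map, which forces a definite expansion factor. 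For the \textbf{upper bound} $|\tau_\ell|<T_2$, which I expect to be the main obstacle: one must rule out the intervals $I_n=\alpha_\ell^{-n}J$ shrinking too fast, i.e.\ the return maps becoming too expanding. Here I would use the standard ``real bounds'' / ``shortest interval'' argument for Fibonacci combinatorics with negative Schwarzian (as in de Melo--van Strien, or the Epstein-class version in~\cite{lsuniv},~\cite{defaria}): the non-positive Schwarzian derivative and the Epstein property of $E_\ell$ give the Koebe space / cross-ratio inequalities that prevent the consecutive renormalization intervals from having unbounded ratio, uniformly in $\ell$, because the estimates only use the combinatorics (which is fixed — golden mean) and the Epstein class, in which $\ell$ enters only through the composition $E^\ell$ that still has $SG\le 0$. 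The one subtlety is that the critical exponent $\ell\to\infty$ could a priori degrade the Koebe constants; this is handled by observing that $\phi=E^\ell$ with $E$ \emph{independent of the exponent in its domain-range configuration} up to the normalization, so the relevant distortion estimates are applied to $E$, not to $\phi$ directly, and the exponent only makes the critical point flatter without affecting the interval geometry bounds. Assembling the cross-ratio estimates along the Fibonacci cascade then yields $|\tau_\ell|<T_2$ with an absolute $T_2$.

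The hard part will be making the upper bound genuinely uniform in $\ell$: one has to verify that every invocation of a Koebe or cross-ratio distortion lemma uses only the Epstein/negative-Schwarzian property and the fixed golden-mean combinatorics, never a bound that deteriorates with the exponent. I would organize this as a finite chain of such lemmas applied to the first few renormalization levels (finitely many, by self-similarity of the fixed point under ${\cal R}$), so that the whole estimate closes after a bounded number of steps independent of $\ell$.
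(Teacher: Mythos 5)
Your proposal correctly identifies the toolbox (Epstein class, non-positive Schwarzian, cross-ratio inequalities, the shortest-interval argument, the functional equation evaluated at $0$ and $1/\tau$) and correctly locates the crux in the uniformity with respect to $\ell$. But both of the actual estimates are missing or rest on a premise that fails, so there is a genuine gap rather than an alternative route.

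For the lower bound, your suggestion that disjointness of $I^0$ and $I^{-1}$ together with the normalization ``pins down'' $|\tau_\ell|>T_1$ does not close. The interval geometry that one can control directly is that of $h_\ell=p^{-1}\circ H_\ell\circ p$, whose scaling factor is $\alpha_\ell=\tau_\ell^{1/\ell}$, and this geometry \emph{does} degenerate: $|\alpha_\ell|\to 1$, so one must prove the precise rate $|\alpha_\ell|\ge 1+C/\ell$ with $C$ independent of $\ell$ in order to conclude $|\tau_\ell|=|\alpha_\ell|^\ell\ge T_1>1$. No soft argument from the configuration of $I^0,I^{-1},I$ produces a rate; the Epstein/Koebe bounds allow arbitrarily large expansion near the endpoints of the range, so the tiny component of $I^0\setminus\{x_\ell\}$ can indeed be blown up onto half of $I$ without contradiction. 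The paper's proof is a genuinely dynamical two-case shortest-interval argument applied to the $q_{n-1}$ pairwise disjoint images $h^j(J_n)$, with a pull-back controlled by the non-positive Schwarzian and by the fact that $1$ splits $J_n^1$ with bounded ratio; this is the longer and more delicate half of the proposition, not the easier one as you assumed.

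For the upper bound, the claim that ``the relevant distortion estimates are applied to $E$, not to $\phi$ directly, and the exponent only makes the critical point flatter without affecting the interval geometry bounds'' is the step that would fail. The configuration seen by $E_{-1}$ --- the points $\alpha^3,\,\alpha,\,1/\alpha$ with $|\alpha|=|\tau|^{1/\ell}$ --- collapses at rate $\log|\tau|/\ell$, so the cross-ratio one feeds into the Epstein inequality is itself $O(1/\ell)$-degenerate; a uniform Koebe constant for $E$ gives nothing about $\tau$. What actually produces the bound is an exact cancellation: the functional equation forces $\phi_{-1}'(1)=1$, the chain rule $\phi_{-1}'=\ell\,E_{-1}^{\ell-1}E_{-1}'$ contributes a factor $\ell$, and the cross-ratio inequality for $E_{-1}$ applied to $\tau^2x_\ell,0,1,1+dx$ then yields
\[
1\le \frac{\bigl|1-|\tau|^{4/\ell}\bigr|}{|\tau|^{1+2/\ell}}\cdot\ell \le \frac{|\tau|^{2/\ell}\log\bigl(|\tau|^{4/\ell}\bigr)\,\ell}{|\tau|},
\]
i.e.\ $|\tau|\le 4|\tau|^{2/\ell}\log|\tau|$, which is what bounds $|\tau|$. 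Also note that ``preventing consecutive renormalization intervals from having unbounded ratio'' is not the right target: at the level of $h_\ell$ the consecutive ratio is $|\alpha_\ell|\to1$, and the quantity to be bounded is the $\ell$-fold product $|\alpha_\ell|^\ell$, which standard real bounds with $\ell$-independent constants do not control.
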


The proof is contained in the following two lemmas~\ref{covT_1},
~\ref{covT2}.

\begin{lem}\label{covT_1}
There exists $1<T_1$,
such that $T_1<|\tau_\ell|$ for all $H_{\ell}$.
\end{lem}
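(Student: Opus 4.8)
The plan is to prove the sharper statement $|\tau_\ell|\ge\frac{1+\sqrt{5}}{2}$ for every admissible odd $\ell\ge 3$, so that the lemma holds with any $T_1\in\bigl(1,\tfrac{1+\sqrt{5}}{2}\bigr)$, e.g.\ $T_1=3/2$. The mechanism I would use is the interplay between the three normalization identities for $\phi=\phi_\ell$ recorded in Lemma~\ref{covcontin}(c) --- namely $\phi(0)=1$, $\phi(1/\tau)=1/\tau^2$ and $\phi'(1/\tau)=1$ --- and the non-positive Schwarzian derivative of $\phi$. These force $\phi'\ge 1$ throughout $[1/\tau,0]$; integrating then gives $1-1/\tau^2=\phi(0)-\phi(1/\tau)=\int_{1/\tau}^{0}\phi'(x)\,dx\ge 1/|\tau|$, which rearranges to $|\tau|^2-|\tau|-1\ge 0$ and hence to the claimed bound. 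Note that every inequality below holds for all $\ell$ with a final estimate free of $\ell$, so uniformity in $\ell$ is automatic.

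First I would collect, directly from Lemma~\ref{covcontin}, the facts needed. Since $\phi=E^\ell$ with $E$ in the Epstein class, and $t\mapsto t^\ell$ has non-positive Schwarzian off the origin, $\phi$ is increasing with $S\phi\le 0$ on $(\tau^2 X,\tau x_\ell)$ away from its flat critical point $x_\ell$; moreover $x_\ell$ is the only critical point of $\phi$ there. One has $x_\ell<1/\tau<0<x_\ell/\tau$, and $[1/\tau,x_\ell/\tau]\subset(\tau^2 X,\tau x_\ell)$ --- the left inclusion follows from $X\in(1/\tau,0)$ together with $\phi(X)=\tau X$ (which pins $\tau^2 X<1/\tau$), the right from $|\tau|>1$. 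Finally $\phi'(1/\tau)=1$, and $\phi'(x_\ell/\tau)=\tau^2 G'(x_\ell)=\tau^2/\alpha^2>1$, because $G=\tau^{-1}\phi\tau^{-1}$ gives $\phi'(x_\ell/\tau)=\tau^2 G'(x_\ell)$, $G'(x_\ell)=\alpha^{-2}$ by Lemma~\ref{covcontin}(d), and $|\alpha|=|\tau|^{1/\ell}<|\tau|$.

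Next I would apply the minimum principle for maps with non-positive Schwarzian derivative to $\phi$ on the interval $[1/\tau,x_\ell/\tau]$, on which $\phi$ is a genuine analytic diffeomorphism with $\phi'>0$ (it avoids $x_\ell$). The principle says $\phi'$ has no positive interior local minimum there, hence $\phi'\ge\min\{\phi'(1/\tau),\phi'(x_\ell/\tau)\}=1$ on the whole interval, in particular on $[1/\tau,0]$. Integrating over $[1/\tau,0]$ gives $1-1/\tau^2\ge 1/|\tau|$, i.e.\ $|\tau|^2-|\tau|-1\ge 0$, so $|\tau_\ell|\ge\frac{1+\sqrt{5}}{2}$ for all $\ell$. (The inequality is in fact strict, since $\phi$ is real-analytic and not affine, so $\phi'>1$ on a set of positive measure; but the non-strict version already suffices.)

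The one point requiring care is verifying that the interval $[1/\tau,x_\ell/\tau]$ on which the minimum principle is invoked really avoids the flat critical point $x_\ell$ and lies inside the domain $(\tau^2 X,\tau x_\ell)$ where Lemma~\ref{covcontin}(e) guarantees $\phi=E^\ell$ with $E$ Epstein; this rests on the precise location inequalities of Lemma~\ref{covcontin}(c)--(e), but uses no new estimate. Everything else is a short and entirely elementary computation.
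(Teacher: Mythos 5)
Your argument is correct, and it takes a genuinely different route from the paper's. You exploit the rigid normalization identities of the fixed point collected in Lemma~\ref{covcontin}: $\phi(0)=1$, $\phi(1/\tau)=1/\tau^{2}$, $\phi'(1/\tau)=1$, and $G'(x_\ell)=\alpha^{-2}$, which gives $\phi'(x_\ell/\tau)=\tau^{2}\alpha^{-2}=|\tau|^{2-2/\ell}\ge 1$. The interval $[1/\tau,x_\ell/\tau]$ does avoid the critical point (since $\tau x_\ell>1$ forces $x_\ell<1/\tau$) and sits inside $(x_\ell,\tau x_\ell)$, where $\phi=E^{\ell}$ is an increasing Epstein diffeomorphism, so $S\phi\le 0$ and the (non-strict) minimum principle --- convexity of $(\phi')^{-1/2}$ --- legitimately yields $\phi'\ge\min\{\phi'(1/\tau),\phi'(x_\ell/\tau)\}=1$ there; integrating over $[1/\tau,0]$ gives $1-1/\tau^{2}\ge 1/|\tau|$ and hence the explicit uniform bound $|\tau_\ell|\ge(1+\sqrt5)/2$. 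There is no circularity, since Lemma~\ref{covcontin} precedes Lemma~\ref{covT_1} and uses no bound on $\tau_\ell$. The paper instead runs a dynamical shortest-interval argument on the orbit of the intervals $J_n^j=h^j(J_n)$ for the map $h_\ell$ near the critical point, combining the combinatorics with Schwarzian distortion control to get $|\alpha_\ell|\ge 1+C/\ell$ and hence $|\tau_\ell|=|\alpha_\ell|^{\ell}$ bounded below. Your proof is shorter and produces a clean explicit constant, but it leans entirely on the exact fixed-point equations, so it would not transfer to maps that merely have Fibonacci combinatorics, nor to the homeomorphism case, where the paper needs the separate and more involved Lemma~\ref{homeoT_1}; the paper's argument is in essence a real a priori bound uniform in $\ell$ and is more robust in that respect.
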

\begin{proof} 
Fix $\ell$ large. 
It is enough to prove that there is $C>0$ independent on $\ell$
such that $|\alpha_\ell|\ge 1+C/\ell$. We will drop the index $\ell$ in
$\alpha_\ell, \tau_\ell$, $h_\ell$ etc.

Consider the first entry map $h_{n-1}$ to the interval $J_{n-1}$, for
$n$ large enough. Let us apply the shortest interval argument
to the collection of pairwise disjoint intervals 
$J^j_n=h^j(J_n)$, $j=0,1,...,q_{n-1}-1$.
Denote by $J^k_n$ the shortest interval of this collection.
Consider two cases.

1. $k=0$, i.e., $J_n$ is the shortest interval. 
The intervals $h^{q_{n-2}}(J_n)$ and $h^{q_{n-3}}(J_n)$
lie on the opposite sides from $J_n$ and both are subsets of $J_{n-3}$.
Therefore, by the assumption, $|J_{n-3}|\ge 3|J_n|$.
On the other hand, $|J_{n-3}|/|J_n|=|\alpha|^3$, hence,
$|\alpha|\ge 3^{1/3}$.

2. $k\ge 1$. Denote $J=(a,b)$. As $\alpha$ is negative and
$J_{1}=\alpha^{-1}J\subset J$, then $|\alpha|^{-1}\le |a|/b\le |\alpha|$.
Since $J_n$ is the interval between the points
$a/\alpha^n, b/\alpha^n$, 
the origin divides the interval
$J_n$ into two intervals with the ratio of their lengths
at most $|\alpha|$. Now, we can assume that $|\alpha|<1+1/\ell$
because otherwise there is nothing to prove. Therefore, 
the above ratio is at most $1+1/\ell$. Consider
$J_n^1=h(J_n)=E(p(J_n))$, where, as before, $p(x)=x^\ell$,
and $E=E_\ell$ is a diffeomorphism near $0$, $E(0)=1$. 
Hence, given $\ell$, $|E'(x)/E'(y)|<2$ for
$x,y\in J_n$ provided $n$ is large. 
We get
the following {\it fact}: there 
is $C_0>1$ independent on $\ell$, such that,
provided $n$ is large enough, the point $1$
divides the interval $J_n^1$ into two intervals with the ratio
of their lengths at most $C_0$. 
Denote by $J_n^{j_1}, J_n^{j_2}$ two neighbors
of $J_n^k$ from the right and from the left
(if there is no neighbor from one side,
add an interval on this side of the length $|J_n^k|$). 
Denote by $K$ the smallest interval containing $J_n^{j_1}, J_n^{j_2}$.
Let us pull back $K$ by a branch of $h^{k-1}$ from $J_n^k$ 
to $J_n^1$. Along the way, a preimage of either $J_n^{j_1}$
or $J_n^{j_2}$ can turn into $I_n^1$ at most once.
If it happens, i.e., say, for $i=j_1-1$, 
$h^{-i}(J_n^{j_1})=J_n^1$, the point $1$ splits $h^{-i}(K)$ into
two parts. Then 
we cut off the part of $h^{-i}(K)$ which does not contain $h^{-i}(J_n^k)$,
and continue to pull back.
Since $h$ has a non-positive Schwarzian and by the above {\it fact},
there exists some $C_1>0$ independent on $\ell$, such that, for
every $n$ large enough, a $C_1$-neighborhood 
$K_1=\{x: dist(x, J_n^1)<C_1|J_n^1|\}$ of the interval $J_n^1$
contains at most one interval on each side of $J_n^1$ from the collection
of intervals $J^j_n$, $j=0,1,...,q_{n-1}-1$. As
$J_{n-5}$ contains at least two intervals from our collection
on each side of $J_n$,
$K_1$ is contained in $h(J_{n-5})$. Now we use that $n$ is large and get
$3 C_1\le |h(J_{n-5})|/|h(J_n)|\le 2 |\tau|^5$, i.e.,
$|\tau|^5\ge C_2$, where $C_2=3 C_1/2$ is independent on $\ell$. 
\end{proof}

\begin{lem}\label{covT2}
There exists 
$T_2$ such that for all $H_{\ell}$, 
we get $|\tau_{\ell}| < T_2$. 
\end{lem}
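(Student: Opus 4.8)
The plan is to bound $|\alpha_\ell|$ from above, since $|\tau_\ell|=|\alpha_\ell|^\ell$ and we expect $|\alpha_\ell|\le 1+C/\ell$ for a universal $C$, which yields a universal $T_2=e^C$. The lower bound from Lemma~\ref{covT_1} already gives $|\alpha_\ell|>1$; the issue now is to prevent the geometry from becoming too expanding. First I would again look at the first return map $h_\ell$ near the critical point, using the functional equations~(\ref{covfixp0}) and the decomposition $g_0(x)=E(x^\ell)$ with $E=E_\ell$ in the Epstein class. The central branch $g_{-1}\circ g_0$ is defined on $\alpha^{-1}J^0$ and maps onto $J$, and $\alpha J^0=J$; the nonlinearity of the branches on $J_n$ for large $n$ is bounded independently of $\ell$ because $E_\ell$ is Epstein and we are looking at a small neighborhood of $0$ where $E_\ell\approx 1$.

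The heart of the argument is a real-bounds estimate in the opposite direction to Lemma~\ref{covT_1}: I would show that for $n$ large the interval $J_n=\alpha^{-n}J$ is \emph{not} too small relative to its neighbors $h^j(J_n)$ among the pairwise disjoint family $J_n^j=h^j(J_n)$, $j=0,\dots,q_{n-1}-1$ tiling (a large part of) $J$. Equivalently, apply the \emph{longest} interval argument (or a Koebe/cross-ratio distortion estimate) to this family: if $J_n$ were much shorter than $J$, i.e. $|\alpha|^n$ huge, then since the whole collection covers a definite portion of $J$ while consisting of $q_{n-1}\sim c\varphi^n$ intervals, some interval $J_n^k$ would have to be comparably short; pulling $J_n^k$ back to $J_n^1=h(J_n)=E(p(J_n))$ by the appropriate branch of $h^{k-1}$ (which has bounded distortion by non-positive Schwarzian plus the Epstein bound on $E_\ell$, cutting off at most once when a preimage passes through the turning image $1$, exactly as in Lemma~\ref{covT_1}) gives a lower bound on $|J_n^1|/|J_n|$, hence an upper bound on the expansion $|E'|\cdot\ell|y_\ell|^{\ell-1}$ near $0$, which translates into $|\alpha|\le 1+C/\ell$. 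Alternatively — and this may be cleaner — one uses directly that $\phi_\ell$ restricted to $(x_\ell,\tau x_\ell)$ is an Epstein diffeomorphism onto $(0,\tau^2)$ with $\phi_\ell(1/\tau)=1/\tau^2$, $\phi_\ell'(1/\tau)=1$ (Lemma~\ref{covcontin}(b),(c)): the fixed normalization $\phi(0)=1$ together with $\phi'(1/\tau)=1$ and the Epstein (hence bounded Koebe distortion) property pins $|\tau|$ from above, because an Epstein map with derivative $1$ at an interior point and image a fixed-geometry interval cannot have arbitrarily large range.

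I expect the main obstacle to be the same as in Lemma~\ref{covT_1}: controlling the pull-back of the neighboring intervals through the turning image at the point $1$, and making sure the distortion bounds are genuinely uniform in $\ell$ (not just in $n$) — this is where the Epstein/negative-Schwarzian structure of $h_\ell$, together with the fact that on the relevant shrinking scales $E_\ell$ is close to the identity map $x\mapsto 1+E_\ell'(0)x$, must be invoked carefully. A secondary technical point is ensuring that the Fibonacci combinatorics really do force the family $J_n^j$, $0\le j<q_{n-1}$, to cover a definite fraction of $J$ (so that the "longest interval'' has length $\gtrsim |J|/q_{n-1}$ from below is not enough — one needs a genuine packing statement), which follows from the standard real-bounds technology for Fibonacci maps but should be stated explicitly. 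Once $|\alpha_\ell|\le 1+C/\ell$ is established, $|\tau_\ell|=|\alpha_\ell|^\ell\le e^{C}=:T_2$ and the proof is complete.
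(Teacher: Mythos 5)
Your second, ``cleaner'' alternative is the one the paper actually pursues, but your sketch of it stops just short of the point where the proof happens. The paper does not apply the Epstein/cross-ratio estimate to $\phi$ (or $\phi_{-1}$) itself: the principle ``an Epstein map with derivative $1$ at an interior point and range of fixed geometry cannot have large range'' is not what is used and, applied to $\phi_{-1}$ directly, does not produce a contradiction (all the cross-ratio factors for $\phi_{-1}$ can remain of order one as $|\tau|\to\infty$). The whole mechanism lives at the level of the $\ell$-th root: one writes $\phi_{-1}=(E_{-1})^{\ell}$ with $E_{-1}=\alpha\circ E\circ\tau^{-1}$ Epstein, and the normalization $\phi_{-1}'(1)=1$ (which follows from $\phi'(0)=\phi_{-1}'(1)\phi'(0)$, i.e.\ from the fixed-point equation) then forces $|E_{-1}'(1)|=|\tau|/(\ell|\alpha|)$ --- growing essentially linearly in $|\tau|$. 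On the other hand the cross-ratio inequality for the Epstein diffeomorphism $E_{-1}$ at the configuration $\tau^2x_\ell,0,1,1+dx$, whose image points $\alpha^3,\alpha,1/\alpha$ all collapse together because $|\alpha|=|\tau|^{1/\ell}\to1$, bounds $|E_{-1}'(1)|$ above by a quantity of order $|\tau|^{4/\ell}-1\le\log(|\tau|^{4/\ell})=\tfrac{4}{\ell}\log|\tau|$. Comparing the two gives $|\tau|\le 4|\tau|^{2/\ell}\log|\tau|$, hence the uniform bound. Without passing to $E_{-1}$ you never see the competition between $|\tau|/\ell$ and $\log|\tau|/\ell$, and the argument does not close.

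Your primary route (a ``longest interval''/packing argument mirroring Lemma~\ref{covT_1}) has a genuine logical gap beyond vagueness. Disjointness of the $q_{n-1}\sim\varphi^{n}$ intervals $J_n^j$ in $J$ only gives an \emph{upper} bound on the length of the shortest one, and at best compares $|\alpha|$ with the golden ratio rather than producing an absolute bound; and the step ``a lower bound on $|J_n^1|/|J_n|$, hence an upper bound on the expansion near $0$'' runs the inequality in the wrong direction --- a lower bound on $|J_n^1|/|J_n|$ is a lower bound on the average derivative of $h$ on $J_n$, which is what Lemma~\ref{covT_1} exploits, not its negation. Nothing in that route isolates the factor $\ell$ coming from the $\ell$-th power at the critical point, which is where the scale $1+C/\ell$ for $|\alpha|$ must originate; so even granting uniform distortion control, the contradiction with $|\tau|$ large is not reached.
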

\begin{proof} 
Consider the map $\phi_{-1}=\tau\circ \phi\circ \tau^{-1}$.
By Lemma~\ref{covcontin}, 
$\phi_{-1}: (\tau ^2 x_\ell, \tau x_\ell)\to (\tau^3, 0)$ is a diffeomorphism
from the Epstein class. On the other hand, 
$\phi_{-1}=(E_{-1})^\ell$, where 
$E_{-1}=\alpha\circ E\circ \tau^{-1}$. Hence,
$E_{-1}: (\tau^2 x_\ell, \tau x_\ell)\to (\alpha^3, 0)$ is a diffeomorphism,
from the Epstein class, too.

Let us estimate from above the
$|E'_{-1}(1)|$.  Consider the infinitesimal cross-ratio formed
by points $\tau^2 x_\ell, 0, 1, 1+dx$ .
Note that $E_{-1}(\tau^2)=\alpha^3$, $E_{-1}(0)=\alpha$ 
and, from Lemma~\ref{covcontin}(c),
$E_{-1}(1)=1/\alpha$.   
Since $E_{-1}$ is in the Epstein class, the
cross-ratio inequality gives
\[ |E_{-1}'(1)| \frac{|\alpha^3 - \alpha|}{|\tau^2 x_\ell|}
\frac{1}{|1/\alpha-\alpha|} \frac{|1-\tau^2 x_\ell|}{|1/\alpha-\alpha^3|}  
\le 1\; \]
Note that $\frac{|1-\tau^2 x_\ell|}{|\tau^2 x_\ell|} > 1$.
Let us calculate $E_{-1}'(1)$. We use 
that $\phi_{-1}'=\ell E_{-1}^{\ell-1} E_{-1}'$ and 
$E_{-1}(1)^{\ell-1}=(1/\alpha)^{\ell-1}=|\alpha|/|\tau|$.
On the other hand, $\phi=\tau\circ \phi_{-1}\circ \phi\circ \tau^{-1}$
and so $\phi'(0)=\phi_{-1}'(1)\phi'(0)$. Thus $\phi_{-1}'(1)=1$.
We get 

\begin{equation}\label{covupper} 
1 \le \ell \frac{|\alpha|}{|\tau|}\frac{|1/\alpha-\alpha||1/\alpha-\alpha^3|}
{|\alpha^3 - \alpha|}=\frac{|1-|\tau|^{4/\ell}|}{|\tau|^{1+2/\ell}}\; .
\end{equation}
We can use the inequality $1-x<\log(1/x)$, which holds for $0<x<1$,
to obtain from~(\ref{covupper})

\[ |\tau|\le \ell |\tau|^{2/\ell}\log(|\tau|^{4/\ell})\;.\]

Now it is easy to conclude that there is $T_2$ independent on $\ell\ge 3$,
such that $|\tau|<T_2$.
\end{proof}


\subsection{Limit maps}
We follow general lines of \cite{feig} 
although some modifications and changes are necessary.
Our aim is to pick a convergent subsequence from 
$\phi_{\ell_m}$ by some kind of compactness argument. 
As $\ell_m\to \infty$,
then the domains of definition  
tend to degenerate at a limit of the critical points $x_{\ell_m}$.
To deal with this phenomenon, we consider inverse branches of $\phi_{\ell_m}$
corresponding to values to the right of
the point $x_{\ell_m}$.

By Lemma~\ref{covcontin}, each $\phi_\ell$
can be represented as $(E_{\ell}(x))^{\ell}$ with
$E_{\ell}$ an Epstein diffeomorphism with the range at least onto the
interval $(\sqrt[\ell]{A_{\ell}}, \sqrt[\ell]{\tau_{\ell}^2})$. 
Further from the same
Lemma, one gets that $E_{\ell}^{-1}
({\cal D}(\sqrt[\ell]{A_{\ell}}, \sqrt[\ell]{\tau_{\ell}^2})) 
\subset {\cal D}(B_\ell/\tau_\ell, \tau_\ell x_\ell)$. 
In the light of
Proposition~\ref{covrb} and Lemma~\ref{covcontin} (c), by taking a subsequence
we may assume that $\tau_{\ell_m} \rightarrow
\tau > 1$ and $A_{\ell_m}\to A$, $B_{\ell_m}\to B$.
Choosing yet another subsequence, we may assume that
$x_{\ell_m} \rightarrow x_0$.  Note that $1/|\tau|\le |A|\le |\tau|$,
$1\le \tau x_0\le B\le |\tau|^2$.

We will actually invert not $\phi_{\ell_m}$, but its lifting
to the universal cover, which is defined as follows. 
Consider the universal cover of the punctured disk
${\cal D}^*(\sqrt[\ell]{A_{\ell}}, \sqrt[\ell]{\tau_{\ell}^2})$
by $\exp$ and apply to the cover the linear map $w\mapsto \ell_m w$.
Then we get a $2\pi \ell_m$-periodic domain $\Pi_m$, which
contains the left half-plane and is bounded by a curve
$x=\gamma_m(y)$ ($w=x+iy$), where $\gamma_m(0)=\log \tau_{\ell_m}^2$
and $\gamma_m(\pm \pi \ell_m)=\log A_{\ell_m}$.
The real branch of the lifting of $\phi_{\ell_m}$, which
maps onto a right neighborhood of $x_{\ell_m}$, has a complex extension
\begin{equation}\label{covP}
P_{\ell_m}(w) :=
E_{\ell_m}^{-1}(\exp(w/\ell_m))
\end{equation}
which is defined in $\Pi_{m}$ and maps it into 
${\cal D}(B_\ell/\tau_\ell, \tau_\ell x_\ell)$
and so $P_{\ell_m}$ are uniformly bounded. 
By Montel's theorem we can  pick a subsequence $m_k$, such that 
$P_{\ell_{m_k}}$ converges to a mapping $P$.
Its domain of definition is 
$\Pi_\infty := \{ w :\: \Re w < \log \tau^2\}$. 
Indeed, since $\exp(w/\ell_m)$ maps 
an open arc of $\gamma_m$ between the points
with $y=\pm \pi \ell_m$ 
bijectively onto the boundary of 
${\cal D}(\sqrt[\ell_m]{A_{\ell_m}}, \sqrt[\ell_m]{\tau_{\ell_m}^2})$
(strictly, speaking, without the real point $\sqrt[\ell]{A_{\ell}}$),
and since $A_{\ell_m}, \tau_{\ell_m}$ converge, then it is easy to see
that every compact subset of 
$\Pi_\infty$ belongs to $\Pi_{m}$ for almost all $m$, and vice versa.
Since the domains
vary with $m$, they should be normalized for example by precomposing
with a translation, which tends to $0$ in the limit. This implies 
uniform convergence $P_{\ell_{m_k}}\to P$
on compact subsets.        

Let us see that $P$ is non-constant.
Note that $0\in \Pi_\infty$.
As $P_{\ell_m}(0)=0$, then $P(0)=0$.
Besides, points $\log(1/\tau_{\ell_m}^2)$ converge to
the point $\log(1/\tau^2)$, which lies in the left half-plane,
i.e., in $\Pi_\infty$. Hence, 
using Lemma~\ref{covcontin}(c), 
$P(\log(1/\tau^2))=1/\tau$.
In particular, $P$ is not a constant function.

It is also clear that $P$ is univalent. This is because
for any compact subset of $\Pi_\infty$ and $\ell_m$ large enough,
$P_{\ell_m}$ is univalent on this set, which is evident from their
defining  formulas~(\ref{covP}).

Let us define $x_0^{-} : = \lim_{x \rightarrow -\infty} P(x)$.  
Since $(P)^{-1}$ in increasing to the right of 
$x_0^-$ and $x_{\ell_m}<P_{\ell_m}(x)$ for every $m$ and real
negative $x$, 
we must have $x_0\leq x_0^-$. 
Let us also note that $x_0^-<P(\log(1/\tau^2))=1/\tau$.

We will next show that, for any $b<\tau^2$, the image of the half-plane
$\{w: \Re w < \log b\}$ by the limit map $P$ is contained
in ${\cal D}((x_0, b'),\pi/2)$ where $b'=P(\log b)$.


The inclusion 
will follow once we show that, for any $w=x+iy$ with $x<\log b$ and 
for $m$ large enough (depending on $w$),
\begin{equation}\label{conf}
\exp(w/\ell_m) \in {\cal D}((0, \sqrt[\ell_m]{b}), \pi/2).
\end{equation}
The inclusion then follows from the definition of $P$ and since
$E_{\ell_m}$ is in the Epstein class. In turn, (\ref{conf})
can be checked directly by showing that
\begin{equation}\label{covep}
\frac{|\exp(\frac{w}{\ell})-\frac{1}{2}\exp(\frac{\log b}{\ell})|}
{\frac{1}{2}\exp(\frac{\log b}{\ell})}=
|2 \exp\frac{w-\log b}{\ell} -1 |<1,
\end{equation}
if $\Re w<\log b$ and $\ell$ is large enough.

We can now define a limit mapping $\phi$ which will be shown to
belong to the ${\cal EWF}$ class.

Fix any $\tau x_0<R<\tau^2$ and
define $\Pi_*=\{w: \Re w<\log R\}$. Consider $P$ on $\Pi_*$.
We set $U = P (\Pi_*)$. Then $\phi_{|U} :=\exp\circ (P)^{-1}$.
The intersection of $U$ and the real axis
is an interval $(x_0^-, R')$, where $R'=P(\log R)$. 


We have shown that
$\phi_{\ell_m}$ converge to $\phi$ uniformly on any compact subset of 
$(x_0^-, R']$. 

As $[\log(1/\tau^2), 0]\subset \Pi_*$
and $P(0)=0$, $P(\log(1/\tau^2))=1/\tau$, there is a complex neighborhood
$S_1$ of the interval $[1/\tau, 0]$, such that $\phi_{\ell_m}\to \phi$
uniformly in $S_1$. In particular, 
$x_0\le x_0^-<1/\tau<0$ and $\phi$ is analytic in $S_1$.

Recall that $G_{\ell_m}(z)=\tau_{m}^{-1}\phi_{\ell_m}(\tau_{m}^{-1}z)$.
We define $G(z)=\lim_{m\to \infty} G_{\ell_m}(z)$ wherever the limit exists.
$G$ is defined and analytic in a complex neighborhood $S_2$ of $0$,
and $\phi=\tau^{2}\circ \phi\circ \tau^{-1}\phi\circ \tau^{-1}$ in $S_2$.
Indeed, for a small enough disk $S_2$ centered at $0$,
$\tau^{-1}S_2\subset S_2\subset S_1$, so that 
$G=\tau^{-1}\circ \phi\circ \tau^{-1}$ is well-defined and analytic in $S_2$,
and $G(S_2)$ is a small neighborhood of $G(0)=1/\tau$. As $1/\tau\in S_1$,
we may pass to the limit uniformly in $S_2$ in the equations  
$\phi_{\ell_m} = \tau_{\ell_m}^2\circ \phi_{\ell_m} \circ G_{\ell_m}$.

Now we can extend $\phi$ to an analytic map which is defined
in a complex neighborhood $S_3$ of the interval $[0,1]$ as follows.
Since $\tau^{-1}[0,1]=[1/\tau, 0]\subset S_1$, then
$G=\lim G_{\ell_m}$ is also defined and analytic
in a complex neighborhood $S_3$ of $[0,1]$, and $G(S_3)$
is a neighborhood of $\tau^{-1}\phi([1/\tau, 0])=[1/\tau, 1/\tau^3]$,
where the last interval is contained in $S_1$. Then we define 
in $S_3$: $\phi=\tau^{2}\circ \phi\circ G$. Since $S_2\subset S_3$,
then we get an analytic continuation of $\phi$. It is also clear that
$\phi_{\ell_m}\to \phi$ uniformly in a neighborhood of $[1/\tau, 1]$, and
$\phi$ is strictly increasing in $[x_0^-, 1]$.


As the next step, since $\tau^{-1}[x_0,0]\subset [0,1]$, 
$G=\lim G_{\ell_m}$, uniformly in a neighborhood $S_4$ of $[x_0,0]$.
In particular, $G(x_0)=x_0$, and $x_0$ is topologically
non-repelling: $|G(x) - x_0| \leq |x-x_0|$ for every $x\in [x_0,0]$.

\begin{lem}\label{covGfix}
$G([x_0, x_0^-]) = [x_0, x_0^-]$. 
\end{lem}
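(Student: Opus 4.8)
The plan is to show the two inclusions $G([x_0,x_0^-])\subset[x_0,x_0^-]$ and $[x_0,x_0^-]\subset G([x_0,x_0^-])$ separately, using the already-established facts that $G(x_0)=x_0$ with $|G(x)-x_0|\le|x-x_0|$ on $[x_0,0]$, that $x_0^-<1/\tau<0$, and that $G$ is analytic in a neighborhood $S_4$ of $[x_0,0]$ arising as a locally uniform limit of the Epstein maps $G_{\ell_m}$. First I would record the monotonicity: each $G_{\ell_m}=\tau_{\ell_m}^{-1}\phi_{\ell_m}\tau_{\ell_m}^{-1}$ is strictly increasing on the relevant interval (being $\phi_{\ell_m}$, increasing, pre- and post-composed with the increasing linear maps $\tau^{-1}$, using $\tau<-1$ so $\tau^{-1}>-1$ and the double composition restores orientation), hence $G$ is non-decreasing on $[x_0,0]$; combined with $SG\le 0$ in the limit it is in fact strictly increasing, so $G$ maps $[x_0,x_0^-]$ onto the interval $[G(x_0),G(x_0^-)]=[x_0,G(x_0^-)]$. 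Thus the whole statement reduces to the single identity $G(x_0^-)=x_0^-$.

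The key step is therefore to identify $x_0^-=\lim_{x\to-\infty}P(x)$ as a fixed point of $G$. Here I would exploit the defining relation of $P$: since $P_{\ell_m}=E_{\ell_m}^{-1}(\exp(\cdot/\ell_m))$ and $\phi_{\ell_m}=(E_{\ell_m})^{\ell_m}$, the point $x_0^-$ is the limit, as $\Re w\to-\infty$, of the $\phi_{\ell_m}$-preimages tending to the critical point $x_{\ell_m}$; passing to the limit in the functional equation $\phi_{\ell_m}\circ G_{\ell_m}=\tau_{\ell_m}^{-2}\phi_{\ell_m}$ (valid in $S_4$ by the construction preceding the lemma) and using that $\phi(x_0^-)=\lim\phi_{\ell_m}(x_0^-)=0$ (as $x_0^-$ is the left endpoint of the image interval $(0,R)$ of $\phi$, i.e. $\phi\to 0$ there), one gets $\phi(G(x_0^-))=\tau^{-2}\phi(x_0^-)=0$, whence $G(x_0^-)$ is again a zero of $\phi$. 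Since $\phi$ is a strictly increasing diffeomorphism on $(x_0^-,R')$ with $\phi(x_0^-)=0$ as a limit, the only point where $\phi$ vanishes (or limits to $0$ from the right) in a one-sided neighborhood is $x_0^-$ itself, forcing $G(x_0^-)=x_0^-$. One must also check $G(x_0^-)\in S_4$, i.e. that $G(x_0^-)\in[x_0,0]$, which follows from $|G(x_0^-)-x_0|\le|x_0^--x_0|$ applied at the endpoint of continuity.

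I expect the main obstacle to be the boundary bookkeeping at $x_0^-$: $\phi$ is defined and analytic only on the open domain $(x_0^-,R')$, not at $x_0^-$, so "$\phi(x_0^-)=0$" and "$G(x_0^-)$" are statements about one-sided limits, and one must argue that the limit relation $\phi_{\ell_m}\circ G_{\ell_m}=\tau^{-2}_{\ell_m}\phi_{\ell_m}$ together with local uniform convergence genuinely propagates to the endpoint. The clean way around this is to avoid evaluating at $x_0^-$ directly: instead show $G([x_0,x_0^-])\subset[x_0,x_0^-]$ by the contraction estimate $|G(x)-x_0|\le|x-x_0|$ (which already gives $G([x_0,x_0^-])\subset[x_0,x_0^-]$ since $G$ is increasing and fixes $x_0$, provided $G$ does not overshoot $x_0^-$ — and it cannot, since $|G(x_0^-)-x_0|\le x_0^--x_0$ forces $G(x_0^-)\le x_0^-$), and then get the reverse inclusion from surjectivity of the increasing continuous map onto $[x_0,G(x_0^-)]$ combined with $G(x_0^-)=x_0^-$ proved as above. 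So the contraction inequality does most of the work for "$\subset$", and only the fixed-point identity $G(x_0^-)=x_0^-$, via the functional equation and the fact that $x_0^-$ is the unique zero of $\phi$ on the closure of its domain, is needed for "$\supset$".
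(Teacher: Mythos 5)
Your overall strategy is the right one and is essentially the paper's: the contraction estimate $|G(x)-x_0|\le |x-x_0|$ together with monotonicity reduces everything to showing that $x_0^-$ cannot be moved strictly to the left by $G$, and the tool for that is the functional equation $\phi=\tau^2\,\phi\circ G$ combined with the fact that $\phi$ is strictly positive on $(x_0^-,R')$ and tends to $0$ only at $x_0^-$. However, the way you execute the key step has a genuine gap. You evaluate the functional equation at $x_0^-$ and write $\phi(G(x_0^-))=\tau^{-2}\phi(x_0^-)=0$, concluding that $G(x_0^-)$ is a zero of $\phi$ and hence equals $x_0^-$. But in the only case that needs to be excluded, namely $G(x_0^-)<x_0^-$, the point $G(x_0^-)$ lies in $[x_0,x_0^-)$, which is precisely where the limit map $\phi$ is \emph{not} defined: at this stage $\phi=\exp\circ P^{-1}$ exists only on $(x_0^-,R')$ (plus the complex neighborhoods $S_1,S_3$ of $[1/\tau,0]$ and $[0,1]$, which do not reach left of $x_0^-$), and the approximants $\phi_{\ell_m}$ do not converge in any controlled way to the left of $x_0^-$ --- that degeneration is the whole difficulty of the limit $\ell\to\infty$. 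So ``$G(x_0^-)$ is a zero of $\phi$, and $x_0^-$ is the unique zero'' tacitly assumes $G(x_0^-)\ge x_0^-$, which is what you are trying to prove; your proposed workaround in the last paragraph still relies on ``$G(x_0^-)=x_0^-$ proved as above'' and therefore does not close the circle.

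The paper's proof avoids this by pulling back instead of pushing forward. Assume $G(x_0^-)<x_0^-$. Since $G$ is continuous with $G(0)=\tau^{-1}\phi(0)\cdot{}$--- more precisely $G(0)=1/\tau>x_0^-$ --- the intermediate value theorem produces $x'\in(x_0^-,0)$ with $G(x')=x_0^-$. Now both points at which $\phi$ must be evaluated lie in the closure of its actual domain: $x'>x_0^-$ is interior, and $G(x')=x_0^-$ is the boundary point where $\phi$ has the one-sided limit $0$. The functional equation then gives $\phi(x')=\tau^{2}\phi(G(x'))=\tau^2\cdot 0=0$, contradicting $\phi(x')>0$. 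This is a one-line repair of your argument, but it is exactly the step that makes the proof work; as written, your version evaluates $\phi$ where it does not exist.
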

Indeed, otherwise $G(x_0^-)<x_0^-$,
and there is $x'\in (x_0^-, 0)$ such that $G(x')=x_0^-$.
Then $\phi(x')=\tau^2\circ \phi\circ G(x')=0$, 
where $x'>x_0^-$, a contradiction. This proves the lemma.

$G_{\ell_m}$ converge to $G$ uniformly on a complex
neighborhood of $[x_0,0]$. 
We use now Lemma~\ref{covcontin} (d).
Since $G'_{\ell_m}(x_{\ell_m}) = \sqrt[\ell_m]{\tau^{-2}_{m}}$, the
convergence implies $G'(x_0) = 1$. Coupled with the information
that $x_0$ is topologically non-repelling on both sides, this implies
the power-series expansion: 
\[ G(z) - x_0=(z-x_0)+a(z-x_0)^{q+1}+O(|z-x_0|^{q+1})\]
with some $a\leq 0$ and some $q$ even. 
First, we prove that $a\not=0$, i.e. $G$ is not the identity.
If $G(z)=z$, then, for every $x$ near $0$,
$\phi(x)=\phi(G(x))=\phi(x)/\tau^2$, i.e. $\phi(x)=0$, a contradiction. 
Thus, $a<0$.

Now we prove that $q=2$ considering a perturbation.
There is a fixed complex neighborhood
$W$ of $x_0$,
such that the sequence of maps $(G_{\ell_m})^{-1}$
are well-defined in $W$ and converges 
uniformly in $W$ to $G^{-1}$.
Since each $\phi_{\ell_m}$ belongs to the Epstein class,
then each $(G_{\ell_m})^{-1}$
extends to a univalent map
of the upper (and lower) half-plane
into itself. It extends also continuously on the real line,
and has there exactly $3$ fixed point $\Gamma_{\ell_m}(x_{\ell_m})$, 
$\tau^2_m X_{\ell_m}$, $x_{\ell_m}$, where $\tau^2_m X_{\ell_m}$
is strictly repelling. Therefore, by the Wolff-Denjoy theorem,
every point in either half-plane is attracted to $\tau^2_m X_{\ell_m}$
by the iterates of $(G_{\ell_m})^{-1}$, in particular, $G_{\ell_m}$
has no non-real fixed points.
This implies $q=2$ by Rouche's
principle.

Finally, we show that
$x_0=x_0^-$. Indeed, otherwise $x_0^-$ would be a fixed point of
$G$ to the right of $x_0$. Then, for big $m$, $G_{\ell_m}$ would have
either a fixed point in the upper half-plane or a fourth real fixed point,
a contradiction.

\section{Critical circle homeomorphisms}
The goal of this Section is to prove the following theorem.
\begin{theo}\label{theo:13xp,2}
Take a sequence of odd integers $\ell_n$ tending to $\infty$. 
Consider a sequence of fixed point maps $H^1_{\ell_n}$ and scaling constants $\tilde{\tau}_{\ell_n}$ introduced in Fact~\ref{fa:13xp,2} and let 
$\phi_{\ell_n}$ denote their 
branches whose domains contain $0$.  
Let $x_n$ denote the critical point of the $\phi_{\ell_n}$. 
For a subsequence $n_k$ the following are true:
\begin{itemize}
\item
sequences $x_{n_k}$ and $\tilde{\tau}_{\ell_{n_k}}$ converge to $x_0$ and $\tau$, respectively, which satisfy the inequalities postulated by Definition~\ref{defi:9xa,1}, 
\item
mappings $\phi_{\ell_{n_k}}$ converge almost uniformly on 
$(x_0,x_0/\tau)$ to $\phi$ which belongs to the ${\cal EWF}$-class.
\end{itemize}
\end{theo}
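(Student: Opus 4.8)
The plan is to follow the architecture of the proof of Theorem~\ref{theo:13xp,1}, replacing the statements about coverings with the analogous statements for the homeomorphism class ${\cal G}^1_\ell$. The first task is to establish the exact analogue of Lemma~\ref{covcontin} for the maps $H^1_\ell$ coming from Fact~\ref{fa:13xp,2}: namely, that the branch $\phi_\ell=\phi^0_{\ell}$ containing $0$ has a real-analytic continuation obtained from the functional equation~(\ref{covphiG}) with $\tilde\tau_\ell$ in place of $\tau_\ell$, that this continuation admits a decomposition $\phi_\ell=E_\ell^\ell$ with $E_\ell$ in the Epstein class (this follows from the $C^3$ power-law form of $\psi^i$ near the common endpoint and the negative Schwarzian mechanism in~\cite{defaria},~\cite{demelofaria}), and that the associated map $G_\ell=\tilde\tau_\ell^{-1}\phi_\ell\tilde\tau_\ell^{-1}$ has exactly three real fixed points with the same attracting/repelling pattern and multipliers $\alpha_\ell^{\pm2}$, $\alpha_\ell=\tilde\tau_\ell^{1/\ell}$. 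The combinatorial input—that the first entry of the critical orbit into the $n$-th renormalization interval happens at the Fibonacci time $q_n$, and the placement of $y_\ell/\alpha_\ell$ and $y_\ell/\alpha_\ell^2$ inside the central interval—is identical, being governed by golden-mean rotation-number combinatorics in both classes.

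Next I would prove the real bounds $1<T_1<|\tilde\tau_\ell|<T_2$, which is the analogue of Proposition~\ref{covrb}. The lower bound via the shortest-interval argument (Lemma~\ref{covT_1}) carries over essentially verbatim: the argument uses only disjointness of the pieces $h^j(J_n)$, $j<q_{n-1}$, the Fibonacci return structure, non-positive Schwarzian, and the uniform $C^3$-control of $E_\ell$ near its fixed point, all of which hold for ${\cal G}^1_\ell$. The upper bound (Lemma~\ref{covT2}) likewise transfers: one applies the cross-ratio inequality for the Epstein map $E_{-1}=\alpha\circ E\circ\tilde\tau^{-1}$ at the point $1$, using the normalizations $\phi(0)=1$, $\phi'(1/\tilde\tau)=1$ and the values of $\phi_{-1}$ at the relevant points, to get an inequality forcing $|\tilde\tau|$ to stay bounded as $\ell\to\infty$. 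I should double-check that the configuration of intervals in the homeomorphism case gives the same sign pattern and the same estimate $\tfrac{|1-\tilde\tau^2x_\ell|}{|\tilde\tau^2x_\ell|}>1$; this is where the proof must be read most carefully, because the intervals $I^0$ and $I^{-1}$ now share an endpoint rather than being separated, which slightly changes the geometry of the cross-ratio but not, I expect, the conclusion.

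With the uniform bounds in hand, the compactness argument of the subsection "Limit maps" applies with no essential change. Passing to a subsequence we get $\tilde\tau_{\ell_m}\to\tau>1$, $A_{\ell_m}\to A$, $B_{\ell_m}\to B$, $x_{\ell_m}\to x_0$; we lift $\phi_{\ell_m}$ to the universal cover, precompose with $\exp$ and the scaling $w\mapsto\ell_m w$, and invert to obtain uniformly bounded univalent maps $P_{\ell_m}$ on exhausting domains $\Pi_m\to\Pi_\infty=\{\Re w<\log\tau^2\}$; by Montel we extract a non-constant univalent limit $P$, using $P(0)=0$ and $P(\log(1/\tau^2))=1/\tau$ to see it is non-degenerate. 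Defining $\phi=\exp\circ P^{-1}$ on $U=P(\Pi_*)$ for fixed $\tau x_0<R<\tau^2$, one then checks the Poincaré-neighborhood inclusions (item 3 and the $\pi/2$-estimate via~(\ref{covep})) exactly as before, propagates the functional equation $\phi=\tilde\tau^2\circ\phi\circ G$ across the neighborhoods $S_1,\dots,S_4$ of $[1/\tau,0]$, $[0,1]$, $[x_0,0]$ to extend $\phi$ analytically and to force $G(x_0)=x_0$, $G'(x_0)=1$, topological non-repulsion of $x_0$, and—via the Wolff--Denjoy theorem applied to $(G_{\ell_m})^{-1}$ on the half-plane together with Rouché—the expansion $G(z)-x_0=(z-x_0)-\epsilon(z-x_0)^3+O(|z-x_0|^4)$ with $\epsilon>0$, as well as $x_0=x_0^-$. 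Verifying the inequalities of Definition~\ref{defi:9xa,1} for $(x_0,\tau)$ and that the resulting $\phi$ satisfies all five analytic hypotheses then places $\phi$ in the ${\cal EWF}$-class, completing the proof. The main obstacle, as noted, is confirming that every step of Lemmas~\ref{covcontin}--\ref{covT2} genuinely survives the change from separated to adjacent intervals and from a two-branch covering to a commuting pair; I expect this to be routine but it is the point requiring the most care.
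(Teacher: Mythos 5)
Your overall architecture matches the paper's, and two of your transfer claims are indeed correct: the upper bound $|\tilde\tau_\ell|<T_2$ is obtained word for word as in Lemma~\ref{covT2}, and the compactness/limit-map analysis is repeated almost verbatim. However, there are two genuine gaps at exactly the places you flagged as "requiring the most care."

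First, the lower bound $T_1<|\tilde\tau_\ell|$ does \emph{not} carry over verbatim from Lemma~\ref{covT_1}. Case 2 of that proof hinges on the critical point $0$ lying in the \emph{interior} of the central interval $J_n$ and splitting it into two pieces of comparable length (ratio at most $|\alpha|\le 1+1/\ell$), which is then pushed forward by $E_\ell$ to control the position of $1$ in $J_n^1$. In the commuting-pair setting the central interval is $J^0=[1/\tilde\alpha_\ell,0]$, so the critical point sits at an \emph{endpoint} of the central interval and this step has no analogue. The paper instead proves Lemma~\ref{homeoT_1} by an entirely different argument borrowed from~\cite{gregbra}: one perturbs the circle homeomorphism to $f_t=f-t$, chooses the minimal $t>0$ for which $f_t$ acquires a $q_{n+2}$-periodic orbit, exploits the monotone dependence of forward and backward orbits on $t$ to freeze the circular ordering, and only then applies a shortest-interval argument to the resulting periodic partition. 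You would need to supply this (or some substitute) rather than cite Lemma~\ref{covT_1}.

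Second, your claim that the associated map $\tilde G_\ell=\tilde\tau_\ell^{-1}\phi_\ell\tilde\tau_\ell^{-1}$ "has exactly three real fixed points with the same attracting/repelling pattern" as in Lemma~\ref{covcontin}(d) is false, and it is false in a way the paper takes pains to emphasize (the two bifurcation modes of the parabolic point). Lemma~\ref{homeocontin0} shows that in the homeomorphism class $\tilde\Gamma_\ell$, hence $\tilde G_\ell=\tilde\Gamma_\ell^2$, has a \emph{unique} real fixed point $x_\ell$, which is attracting with multiplier $\tilde\alpha_\ell^{-2}$; the remaining two fixed points needed for the Rouch\'e count are a complex-conjugate pair, controlled by applying Wolff--Denjoy to $(\tilde G_{\ell_m})^{-1}$ in each half-plane ("at most one fixed point in either half-plane"), rather than by excluding non-real fixed points as in the covering case. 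This also changes the analytic continuation: $\phi_\ell$ extends to $(\tilde\tau_\ell^2x_\ell,\tilde\tau_\ell x_\ell)$ with \emph{bounded} image $(\tilde\tau_\ell,\tilde\tau_\ell^2)$, with no analogue of the repelling point $\tau^2X$ at which $\phi\to-\infty$ in Lemma~\ref{covcontin}(e). Your version of the "perturbation plus Rouch\'e" step for $q=2$, and of the continuation lemma, is therefore run from a wrong premise even though the conclusions themselves survive with the corrected fixed-point count.
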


\subsection{Fixed-point equations}
\paragraph{Map near critical point}
Fix an odd integer $\ell\ge 3$, and let $H^1_\ell$ be the map from the 
Fact~\ref{fa:13xp,2}. Consider the corresponding map $\tilde h_\ell$ near
the critical point, i.e., $\tilde h_\ell=p^{-1}\circ H^1_\ell\circ p$,
where $p(x)=x^\ell$. 
It consists of two branches
$\tilde g_i=p^{-1}\circ \phi_i\circ p$, where $\phi_i$, $i=0,-1$ are the
branches of $H^1_\ell$, so that $\tilde g_i$ is defined on $J^i=p^{-1}(I^i)$
with the common image $J=p^{-1}(I)$. The intervals $J_0, J^{-1}$
have a common endpoint.
The scaling factor for $\tilde h_\ell$ is
$\tilde\alpha_\ell=\tilde \tau_\ell^{1/\ell}<-1$.
The functional equations for $H^1_\ell$ imply that 
$\tilde g_i$ satisfy similar equations:
\begin{equation}\label{homeofixp0}
\tilde g_{-1}=\tilde\alpha_\ell\circ \tilde g_{0}\circ \tilde
\alpha_\ell^{-1}(x), 
\end{equation}
\begin{equation}\label{homeofixp00}
\tilde\alpha_\ell^{-1}\circ \tilde g_{0}\circ \tilde\alpha=
\tilde g_{-1}\circ \tilde g_{0}. 
\end{equation}
Furthermore,
by~\cite{defaria},~\cite{demelofaria}, $\tilde g_0$ extends 
in a real-analytic fashion through
the left end point of $J^0$ to a neighborhood of the interval 
$[\tilde\alpha_\ell, 0]$, and similarly $\tilde g_{-1}$
extends to a real-analytic homeomorphism defined in a neighborhood 
of the interval $[0,1]$, so that 
\begin{equation}\label{comm}
\tilde g_0\circ \tilde g_{-1}=\tilde g_{-1}\circ \tilde g_{0}
\end{equation}
near the point $0$.
Finally, $\tilde g_0(x)=E(x^\ell)$ where a diffeomorphism $E=E_\ell$
from a neighborhood of $[\tilde \alpha^\ell, 0]$ onto its image
belongs to the Epstein class.

\begin{com}\label{homeopair}
The pair of maps $f_-=\tilde g_0: [1/\tilde\alpha_\ell, 0]
\to [\tilde g_0(1/\tilde\alpha_\ell), 1]$,
$f_+=\tilde\alpha_\ell^{-1}\circ f_-\circ 
\tilde\alpha_\ell: [0,1]\to [f_+(0),  f_+(1)]$
is a commuting pair in the sense of ~\cite{defaria},~\cite{demelofaria},
which is the unique fixed point of the renormalization operator
corresponding to the golden mean rotation number.
To be more precise, the equations~(\ref{homeofixp0})-
(\ref{homeofixp00}),~(\ref{comm}) mean that
$$\tilde\alpha_\ell^{-1}\circ \tilde g_{0}\circ \tilde\alpha_\ell=
\tilde g_{-1}\circ \tilde g_{0}=\tilde g_{0}\circ \tilde g_{-1},$$
and they
are equivalent to the following two conditions:
$$f_+=\alpha\circ f_+\circ f_-\circ \alpha^{-1}, \ \ 
f_+\circ f_-=f_-\circ f_+.$$
\end{com}

We introduce a point $y_\ell$, which is the unique zero of $\tilde g_0$
in $[\alpha, 0]$. Define also an associated dynamics
$\gamma=\tilde\alpha_\ell\circ g_0\circ \tilde\alpha_\ell^{-2}$.
Note that the map $\gamma$ depends on $\ell$.

\begin{lem}\label{homeocontin0}
(a) $\tilde g_0$ extends to a real-analytic orientation preserving
homeomorphism from $(\tilde\alpha_\ell^2 y_\ell, \tilde\alpha_\ell y_\ell)$ 
onto
$(\tilde\alpha_\ell, \tilde\alpha_\ell^2)$, 
where it has a representation $\tilde g_0(x)=E(x^\ell)$
with $E$ a diffeomorphism in the Epstein class.
In particular, $\tilde\alpha_\ell^2 y_\ell< \tilde\alpha_\ell$.

(b) $\gamma: (\tilde\alpha_\ell^2 y_\ell, 0)\to (\tilde\alpha_\ell, 0)$
is an orientation reversing diffeomorphism, which has a unique
fixed point $y_\ell$. Moreover, $\gamma'(y_\ell)=1/\tilde\alpha_\ell
\in (-1, 0)$.
We have: $\tilde\alpha_\ell^{-1}\circ \tilde g_0=\tilde g_0\circ \gamma$ 
wherever both sides are well-defined.

(c) $\gamma^2=\tilde\alpha_\ell^{-1}\circ 
\tilde g_0\circ \tilde\alpha_\ell^{-1}:
(\tilde\alpha_\ell^2 y_\ell, 0)\to 
(\tilde\alpha_\ell, 1/\tilde\alpha_\ell)$ is an orientation preserving
diffeomorphism, which has a unique fixed point at $y_\ell$.
\end{lem}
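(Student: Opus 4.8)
The plan is to derive all of Lemma~\ref{homeocontin0} from the functional equations~(\ref{homeofixp0})--(\ref{comm}) together with the Epstein-class property of $E$, by the same ``bootstrapping'' of an analytic continuation along the orbit of a topologically attracting fixed point that is used in the proof of Lemma~\ref{covcontin}. The main structural difference from the covering case is that here $H^1_\ell$ is a circle homeomorphism, so the branch $\tilde g_0$ has already been extended through the endpoint $\tilde\alpha_\ell$ of $J^0$, and the ``associated dynamics'' is the order-reversing map $\gamma = \tilde\alpha_\ell\circ \tilde g_0\circ \tilde\alpha_\ell^{-2}$ whose square plays the role that $G$ played for covers. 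Concretely: from~(\ref{homeofixp0}) we get $\tilde g_{-1}=\tilde\alpha_\ell\circ \tilde g_0\circ \tilde\alpha_\ell^{-1}$, so~(\ref{homeofixp00}) reads $\tilde\alpha_\ell^{-1}\circ\tilde g_0\circ\tilde\alpha_\ell = \tilde\alpha_\ell\circ\tilde g_0\circ\tilde\alpha_\ell^{-1}\circ\tilde g_0$, which after conjugation is precisely the relation $\tilde\alpha_\ell^{-1}\circ\tilde g_0 = \tilde g_0\circ\gamma$ claimed in part (b). One should record this identity first, since everything else hinges on it.

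For part (a), I would argue that the identity $\tilde\alpha_\ell^{-1}\circ\tilde g_0 = \tilde g_0\circ\gamma$ lets us propagate the domain of $\tilde g_0$: starting from the given neighborhood of $[\tilde\alpha_\ell,0]$ on which $\tilde g_0=E\circ p$ is defined, for any $x$ with $\tilde\alpha_\ell^2 y_\ell < x$ one finds $n\ge 0$ with $\gamma^n(x)$ in the already-known domain and sets $\tilde g_0(x) = \tilde\alpha_\ell^{-n}\,\tilde g_0(\gamma^n(x))$; as $x\downarrow \tilde\alpha_\ell^2 y_\ell$ the image runs off to $\tilde\alpha_\ell$ (the left endpoint), giving the homeomorphism onto $(\tilde\alpha_\ell,\tilde\alpha_\ell^2)$. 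That $\tilde\alpha_\ell^2 y_\ell$ is the correct left endpoint — i.e.\ that $y_\ell$ is the fixed point $\gamma$ accumulates on — and that $\tilde\alpha_\ell^2 y_\ell < \tilde\alpha_\ell$ follow from the combinatorics exactly as the analogous inequality $\tau\phi(x_\ell/\tau^2)<x_\ell$ was obtained in Lemma~\ref{covcontin}(d), using that $y_\ell/\tilde\alpha_\ell$ and $y_\ell/\tilde\alpha_\ell^2$ lie in the central interval. Finally, that the extended $E$ stays in the Epstein class is the Uniqueness-Theorem argument of Lemma~\ref{covcontin}(e): the inverse branch $E^{-1}$ already has a univalent continuation to $\CC_{(\,\cdot\,)}$ from the Epstein property near $0$, and the real continuation just constructed must agree with it.

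For parts (b) and (c): $\gamma$ is a composition of the increasing diffeomorphism $\tilde g_0$ with negative linear maps, hence orientation reversing on $(\tilde\alpha_\ell^2 y_\ell,0)$, and it maps this interval onto $(\tilde\alpha_\ell,0)$ by part~(a); $\gamma(y_\ell)=y_\ell$ because $\tilde g_0(y_\ell)=0$ and $\tilde\alpha_\ell\cdot 0 = 0$... more carefully, $\gamma(y_\ell)=\tilde\alpha_\ell \tilde g_0(\tilde\alpha_\ell^{-2}y_\ell)$ and one checks from the fixed-point equation $\tilde\alpha_\ell^{-1}\tilde g_0 = \tilde g_0\circ\gamma$ evaluated appropriately that this equals $y_\ell$; uniqueness of the fixed point follows since $\gamma$ is orientation reversing (so at most one fixed point) and existence from the intermediate value theorem on $(\tilde\alpha_\ell^2 y_\ell,0)$. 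The multiplier: differentiate $\tilde\alpha_\ell^{-1}\tilde g_0 = \tilde g_0\circ\gamma$ at $y_\ell$ to get $\tilde\alpha_\ell^{-1}\tilde g_0'(y_\ell) = \tilde g_0'(y_\ell)\gamma'(y_\ell)$, whence $\gamma'(y_\ell)=1/\tilde\alpha_\ell\in(-1,0)$ since $\tilde g_0'(y_\ell)\ne 0$ ($y_\ell$ is in the range where $\tilde g_0$ is a diffeomorphism, not at the critical point). Part (c) is then immediate: $\gamma^2 = \gamma\circ\gamma$, and using $\tilde g_0\circ\gamma=\tilde\alpha_\ell^{-1}\tilde g_0$ twice gives $\gamma^2 = \tilde\alpha_\ell^{-1}\circ\tilde g_0\circ\tilde\alpha_\ell^{-1}$; it is orientation preserving as a composition of two reversing maps, its fixed point is $y_\ell$, and uniqueness again follows from monotonicity (strictly, from $SG\le 0$ / the Epstein property one rules out extra fixed points, but here monotonicity of $\gamma$ alone suffices for the count on the real interval). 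The step I expect to be most delicate is pinning down the \emph{exact} endpoints of all the intervals — verifying $\tilde g_0(\tilde\alpha_\ell^2 y_\ell)=\tilde\alpha_\ell$, $\gamma(\tilde\alpha_\ell^2 y_\ell)=\tilde\alpha_\ell$, $\gamma^2$ landing at $1/\tilde\alpha_\ell$ — since these depend on carefully iterating the functional equation and on the combinatorial position of $y_\ell$, rather than on any soft argument.
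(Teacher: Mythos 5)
Your overall strategy coincides with the paper's: establish the semi-conjugacy $\tilde g_0\circ\gamma=\tilde\alpha_\ell^{-1}\circ\tilde g_0$, deduce $\gamma(y_\ell)=y_\ell$ and $\gamma'(y_\ell)=1/\tilde\alpha_\ell$ by evaluating and differentiating it, obtain $\tilde\alpha_\ell y_\ell>1$ (hence $\tilde\alpha_\ell^2 y_\ell<\tilde\alpha_\ell$) from $\tilde g_0(1/\tilde\alpha_\ell)=1/\tilde\alpha_\ell^2$ and monotonicity, and use the functional equation as an analytic-continuation device. Two points, however, need repair. First, the identity $\tilde\alpha_\ell^{-1}\circ\tilde g_0=\tilde g_0\circ\gamma$ does \emph{not} follow from~(\ref{homeofixp00}) ``after conjugation'': writing $\gamma=\tilde g_{-1}\circ\tilde\alpha_\ell^{-1}$, one has $\tilde g_0\circ\gamma=(\tilde g_0\circ\tilde g_{-1})\circ\tilde\alpha_\ell^{-1}$, while~(\ref{homeofixp00}) gives $\tilde\alpha_\ell^{-1}\circ\tilde g_0=(\tilde g_{-1}\circ\tilde g_0)\circ\tilde\alpha_\ell^{-1}$; so the identity is exactly the commutation relation~(\ref{comm}), which you list among your inputs but never actually invoke. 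Composing~(\ref{homeofixp00}) on the right with $\tilde\alpha_\ell^{-1}$ alone yields $\tilde\alpha_\ell^{-1}\circ\tilde g_0=\gamma\circ\tilde g_{-1}$, a different statement.

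Second, and more substantively, your extension procedure produces only half of part (a). On the left it works, and in fact no iteration is needed: $\gamma$ maps $[\tilde\alpha_\ell^2 y_\ell,0)$ onto $(\tilde\alpha_\ell,0]$, which already lies in the original domain of $\tilde g_0$, so a single application of $\tilde g_0=\tilde\alpha_\ell\circ\tilde g_0\circ\gamma$ extends $\tilde g_0$ to $(\tilde\alpha_\ell^2 y_\ell,0)$ with $\tilde g_0(\tilde\alpha_\ell^2 y_\ell)=\tilde\alpha_\ell$ (no ``running off''; also the rescaling factor should be $\tilde\alpha_\ell^{\,n}$, not $\tilde\alpha_\ell^{-n}$). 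But part (a) also asserts an extension to the right of $0$, onto $(0,\tilde\alpha_\ell y_\ell)\to(1,\tilde\alpha_\ell^2)$, and there your recipe is circular: for $x\in(0,\tilde\alpha_\ell y_\ell)$ the value $\gamma(x)=\tilde\alpha_\ell\,\tilde g_0(x/\tilde\alpha_\ell^2)$ requires $\tilde g_0$ at the positive point $x/\tilde\alpha_\ell^2$, which is exactly what is being defined, and the given neighborhood of $[\tilde\alpha_\ell,0]$ need not reach that far. The paper circumvents this with the closed-form identity $\gamma^2=\tilde\alpha_\ell^{-1}\circ\tilde g_0\circ\tilde\alpha_\ell^{-1}$ (a consequence of~(\ref{homeofixp00})), which carries $(0,\tilde\alpha_\ell y_\ell]$ into $(1/\tilde\alpha_\ell,0]$ using only $\tilde g_0$ on $[y_\ell,0)$; the relation $\tilde g_0=\tilde\alpha_\ell^2\circ\tilde g_0\circ\gamma^2$ then performs the right-hand extension and gives $\tilde g_0(\tilde\alpha_\ell y_\ell)=\tilde\alpha_\ell^2$. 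Your treatment of (b), (c) and of the Epstein-class claim otherwise matches the paper's.
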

\begin{proof}
We have formally: $\tilde g_0\circ \gamma=\tilde\alpha_\ell^{-1}
\circ \tilde g_0$.
On the other hand, both sides are well-defined neat $y_\ell$,
and $\gamma(y_\ell)\in (\tilde\alpha_\ell, 0)$. Hence,
$\tilde g_0(\gamma(y_\ell))=0$ implies that $\gamma(y_\ell)=y_\ell$.
In particular, $\tilde g_0'(y_\ell)\gamma'(y_\ell)=
\tilde\alpha_\ell^{-1} \tilde g_0'(y_\ell)$,
hence, $\gamma'(y_\ell)=\tilde\alpha_\ell^{-1}$.
We have: $\tilde\alpha_\ell\circ \tilde g_0\circ 
\tilde\alpha_\ell^{-1}\circ \tilde g_0=
\tilde\alpha_\ell^{-1}\circ \tilde g_0\circ 
\tilde\alpha_\ell$. Applying this for $x=0$ we get 
$\tilde g_0(1/\tilde\alpha_\ell)=
1/\tilde\alpha_\ell^2$. Since $0>y_\ell>\tilde\alpha_\ell$, we can write:
$y_\ell/\tilde\alpha_\ell=
\tilde g_0(y_\ell/\tilde\alpha_\ell^2)>\tilde g_0(1/\tilde\alpha_\ell)=
1/\tilde\alpha_\ell^2$,
i.e., $\tilde\alpha_\ell y_\ell>1$ and $\tilde\alpha_\ell^2 y_\ell
<\tilde\alpha_\ell$.
But $\gamma$ is a diffeomorphism  of a neighborhood of
$[\tilde\alpha_\ell^2 y_\ell, 0)$ onto a neighborhood of 
$(\tilde\alpha_\ell, 0]$.
Hence, the formula 
$\tilde g_0=\tilde\alpha_\ell\circ \tilde g_0\circ \gamma$ gives us an 
analytic continuation of $\tilde g_0$ to a neighborhood of
$[\tilde\alpha_\ell^2 y_\ell, 0)$,
with $\tilde\alpha_\ell^2 y_\ell$ the only critical point and with
$\tilde g_0(\tilde\alpha_\ell^2 y_\ell)=\tilde\alpha_\ell$.

Formally, $\gamma^2=\tilde\alpha_\ell^{-1}\circ \gamma\circ 
\tilde\alpha_\ell^{-1}$.
The latter map is an orientation preserving
diffeomorphism of a neighborhood of
$(0, \tilde\alpha_\ell y_\ell]$ onto a neighborhood
of $(1/\tilde\alpha_\ell, 0]$. Then the formula
$\tilde g_0=\tilde\alpha_\ell^2\circ \tilde g_0\circ \gamma^2$ 
defines an analytic continuation
of $\tilde g_0$ to $(0, \tilde\alpha_\ell y_\ell]$ 
with $\tilde\alpha_\ell y_\ell$ the only
critical point and with 
$\tilde g_0(\tilde\alpha_\ell y_\ell)=\tilde\alpha_\ell^2 
\tilde g_0(0)=\tilde\alpha_\ell^2$.
The rest follows easily.  
\end{proof}

Let us come back to the  $H^1_\ell$.
It consists of the pair
$\phi_j=p\circ \tilde g_j\circ p^{-1}$, $j=0,-1$, 
which are defined on the intervals 
$(x_\ell, x_\ell/\tilde\tau_\ell)$ and 
$(x_\ell/\tilde\tau_\ell, \tilde\tau_\ell x_\ell)$
respectively, where $x_\ell=y_\ell^\ell$. 
Notice that $\phi_0(x)=(E(x))^\ell$.
Recall that 
\begin{equation}\label{homeofixp}
\phi_{-1}=\tau\circ \phi_{0}\circ \tau^{-1}, \ \ \ \ \ \ \ \ \ \ \
\phi_{0}=\tau\circ \phi_{-1}\circ \phi_{0}\circ \tau^{-1}.
\end{equation}  
We denote 
$$\tilde\Gamma_\ell=p_\ell\circ \gamma\circ p_\ell^{-1}, \ \ \ 
\tilde G_\ell=\tilde\Gamma_\ell^2.$$



Then Lemma~\ref{homeocontin0} is reformulated as follows
(except for the part (d) below, which still needs to be proved).
\begin{lem}\label{homeocontin}
(a) $\phi=\phi_0$ extends to a real-analytic orientation preserving
homeomorphism from $(\tilde\tau_\ell^2 x_\ell, 
\tilde\tau_\ell x_\ell)$ onto
$(\tilde\tau_\ell, \tilde\tau_\ell^2)$, 
where it has a representation $\phi(x)=(E(x))^\ell$
with $E=E_\ell$ a diffeomorphism in the Epstein class, $E(0)=1$.

(b) $\tilde\Gamma_\ell: (\tau^2 x_\ell, 0)\to (\tau, 0)$
is an orientation reversing diffeomorphism, which has a unique
fixed point at $x_\ell$. Moreover, $\tilde\Gamma_\ell'(x_\ell)=
1/\tilde\alpha_\ell\in (-1, 0)$.
We have: $\tilde\tau_\ell^{-1}\circ \phi=\phi\circ \tilde\Gamma_\ell$ 
wherever both sides are well-defined.

(c) $\tilde G_\ell=\tilde\tau_\ell^{-1}\circ \phi\circ \tilde\tau_\ell^{-1}:
(\tilde\tau_\ell^2 x_\ell, 0)\to (\tilde\tau_\ell, 1/\tilde\tau_\ell)$ 
is an orientation preserving
diffeomorphism, which has a unique fixed point at $x_\ell$.

(d) $\phi(1/\tilde\tau_\ell)=1/\tilde\tau_\ell^2$, 
$\phi'(1/\tilde\tau_\ell)=1$,
$\phi(x_\ell/\tilde\tau_\ell)=\tilde\tau_\ell x_\ell$.
Also,
$1<\tilde\tau_\ell x_\ell<\tilde\tau_\ell^2$.
\end{lem}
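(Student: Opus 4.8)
The plan is the following. Parts (a)--(c) require no new work: each is the image of the corresponding part of Lemma~\ref{homeocontin0} under conjugation by $p(x)=x^\ell$, using $\tilde\tau_\ell=\tilde\alpha_\ell^\ell$, $x_\ell=y_\ell^\ell$, $\phi_0=p\circ\tilde g_0\circ p^{-1}$, $\tilde\Gamma_\ell=p\circ\gamma\circ p^{-1}$ and $\tilde G_\ell=\tilde\Gamma_\ell^2$. Every interval and every functional identity transports verbatim, and the representation $\phi(x)=(E(x))^\ell$ holds with the same Epstein diffeomorphism $E$, normalized by $E(0)=1$, that appears in $\tilde g_0(x)=E(x^\ell)$. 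So I would only write out (d), following the scheme already used for Lemma~\ref{covcontin}(c).

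For (d), first I would eliminate $\phi_{-1}$ between the two equations of~(\ref{homeofixp}) (where $\tau=\tilde\tau_\ell$) to get the single functional equation $\phi=\tilde\tau_\ell^{\,2}\circ\phi\circ\tilde\tau_\ell^{-1}\circ\phi\circ\tilde\tau_\ell^{-1}$, valid in a neighborhood of $0$ since all the branches involved extend analytically there. Evaluating it at $x=0$ and using $\phi(0)=1$ (Fact~\ref{fa:13xp,2}) gives $1=\tilde\tau_\ell^{\,2}\phi(1/\tilde\tau_\ell)$, i.e.\ $\phi(1/\tilde\tau_\ell)=1/\tilde\tau_\ell^{\,2}$; differentiating at $0$ gives $\phi'(0)=\phi'(1/\tilde\tau_\ell)\phi'(0)$, and since $\phi'(0)=\ell E'(0)\neq0$ we get $\phi'(1/\tilde\tau_\ell)=1$. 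The third identity I would extract from (c): there $\tilde G_\ell=\tilde\tau_\ell^{-1}\circ\phi\circ\tilde\tau_\ell^{-1}$ is a diffeomorphism of $(\tilde\tau_\ell^2x_\ell,0)$ with its only fixed point at $x_\ell$, so writing out $\tilde G_\ell(x_\ell)=x_\ell$ gives $\phi(x_\ell/\tilde\tau_\ell)=\tilde\tau_\ell x_\ell$.

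It then remains to prove $1<\tilde\tau_\ell x_\ell<\tilde\tau_\ell^2$. The lower bound comes directly from the inequality $\tilde\alpha_\ell y_\ell>1$ established inside the proof of Lemma~\ref{homeocontin0}(a): since $\tilde\alpha_\ell y_\ell>0$ and $\ell$ is odd, raising to the $\ell$-th power gives $\tilde\tau_\ell x_\ell=(\tilde\alpha_\ell y_\ell)^\ell>1$. For the upper bound I would observe that $x_\ell/\tilde\tau_\ell$ lies strictly inside the domain $(\tilde\tau_\ell^2x_\ell,\tilde\tau_\ell x_\ell)$ of the extended $\phi$ from (a) --- from $x_\ell<0$ and $\tilde\tau_\ell<-1$ one reads off $\tilde\tau_\ell^2x_\ell<0<x_\ell/\tilde\tau_\ell<|x_\ell|<\tilde\tau_\ell x_\ell$, the last inequality strict because $\tilde\tau_\ell^2\neq1$ --- and that by (a) the extended $\phi$ maps that open interval onto the open interval $(\tilde\tau_\ell,\tilde\tau_\ell^2)$; hence $\tilde\tau_\ell x_\ell=\phi(x_\ell/\tilde\tau_\ell)<\tilde\tau_\ell^2$.

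I do not expect a genuine obstacle: the whole argument transcribes the covering case of Lemma~\ref{covcontin}(c). The only mildly delicate point is the bookkeeping of domains --- checking that~(\ref{homeofixp}) may be evaluated and differentiated at $0$, equivalently that $1/\tilde\tau_\ell$ lies in the domain of the extended $\phi$, and that part (c) really applies at $x_\ell$ --- but this reduces to the elementary sign-and-size estimates indicated above, using $\tilde\tau_\ell<-1$ and the bound $\tilde\tau_\ell x_\ell>1$ already in hand.
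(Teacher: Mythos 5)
Your proposal is correct and follows essentially the same route as the paper: parts (a)--(c) are obtained by conjugating Lemma~\ref{homeocontin0} with $p(x)=x^\ell$, and (d) is proved by evaluating and differentiating the combined fixed-point equation at $0$, reading off $\phi(x_\ell/\tilde\tau_\ell)=\tilde\tau_\ell x_\ell$ from the fixed point of $\tilde G_\ell$, and getting $1<\tilde\tau_\ell x_\ell<\tilde\tau_\ell^2$ from monotonicity of the extended $\phi$ (the paper's chain $\tilde\tau_\ell^2=\phi(\tilde\tau_\ell x_\ell)>\phi(x_\ell/\tilde\tau_\ell)>\phi(0)=1$ is the same estimate, with your lower bound alternatively imported as $\tilde\alpha_\ell y_\ell>1$ from the proof of Lemma~\ref{homeocontin0}).
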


It remains to check (d), and it is done very similar to the proof
of the part (c) of Lemma~\ref{covcontin} as the equations are identical.
First,
$\tilde\tau_\ell^{-2}=\tilde\tau_\ell^{-2}\phi(0)=
\phi\circ \tilde\tau_\ell^{-1}\phi\circ \tilde\tau_\ell^{-1}(0)=
\phi(1/\tilde\tau_\ell)$. Besides, 
$\tilde\tau_\ell^{-2}\phi'(0)=\tilde\tau_\ell^{-2}\phi'(1/\tilde\tau_\ell)
\phi'(0)$,
i.e., $\phi'(1/\tilde\tau_\ell)=1$. Finally,
$\tilde\tau_\ell^2=\phi(\tilde\tau_\ell x_\ell)>\tilde\tau_\ell x_\ell=
\phi(x_\ell/\tilde\tau_\ell)>1=\phi(0)$.

\subsection{Bounds} 
\paragraph{Real bounds.}

\begin{prop}\label{homeorb}
There exist
two constants $1<T_1<T_2<\infty$,
such that $T_1<|\tilde\tau_\ell|<T_2$, for all $H^1_\ell$.
\end{prop}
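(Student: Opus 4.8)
The plan is to establish Proposition~\ref{homeorb} by the same two-sided strategy used for covering maps in Proposition~\ref{covrb}, that is, by proving a lower bound $|\tilde\tau_\ell|>T_1>1$ and an upper bound $|\tilde\tau_\ell|<T_2<\infty$ separately, each uniform in the odd integer $\ell\ge 3$. The advantage over the covering case is that here the relevant renormalization fixed point is the commuting pair $(f_-,f_+)$ of Comment~\ref{homeopair}, and the underlying geometry — real bounds and \emph{a priori} bounds for critical circle maps of golden mean rotation number — is classical and already available through~\cite{defaria},~\cite{demelofaria}. So the first step is to recall from those references the uniform real bounds: the ratios of consecutive renormalization intervals $I_n$ are bounded away from $0$ and $\infty$ by constants independent of the map, hence in particular of $\ell$. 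The scaling factor $\tilde\tau_\ell$ is (up to the change of variable $p(x)=x^\ell$ and the rescaling $\tilde\alpha_\ell=\tilde\tau_\ell^{1/\ell}$) exactly such a ratio of nested intervals under three Fibonacci steps, so uniform real bounds for the commuting pair already pin $|\tilde\tau_\ell|$ into a compact subinterval of $(1,\infty)$.

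For the lower bound $|\tilde\tau_\ell|>T_1$, I would argue as in Lemma~\ref{covT_1}: it suffices to show $|\tilde\alpha_\ell|\ge 1+C/\ell$ for some $C>0$ independent of $\ell$, since then $|\tilde\tau_\ell|=|\tilde\alpha_\ell|^\ell\ge(1+C/\ell)^\ell$ stays bounded below by a constant $>1$. Assuming $|\tilde\alpha_\ell|<1+1/\ell$ (otherwise there is nothing to prove), one runs the shortest-interval argument on the pairwise disjoint collection $\tilde h^j(J_n)$, $0\le j<q_{n-1}$: the origin divides $J_n$ in a ratio at most $1+1/\ell$, so after applying $\tilde g_0=E(x^\ell)$ with $E$ an Epstein diffeomorphism, $\tilde h(J_n)$ is divided by the point $1$ in a ratio bounded by a universal constant $C_0$ (the $\ell$-th power map is nearly an affine rescaling on a tiny interval and $E$ has bounded distortion there). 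Then non-positivity of the Schwarzian derivative of $\tilde h_\ell$ — which holds because $\tilde g_0(x)=E(x^\ell)$ with $E$ Epstein — gives a definite neighborhood of $\tilde h(J_n)$ containing at most one interval of the collection on each side, and comparing with $\tilde h(J_{n-5})$, which contains at least two, forces $|\tilde\tau_\ell|^5\ge C_2>1$. The only care needed is to use the correct continuation of $\tilde g_0$ from Lemma~\ref{homeocontin}(a) so that the Epstein property is available on the enlarged interval; here that is automatic since commuting-pair renormalization fixed points are known to extend with Epstein class branches.

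For the upper bound $|\tilde\tau_\ell|<T_2$, I would imitate Lemma~\ref{covT2} using the branch $\phi_{-1}=\tilde\tau_\ell\circ\phi_0\circ\tilde\tau_\ell^{-1}=(E_{-1})^\ell$ with $E_{-1}=\tilde\alpha_\ell\circ E\circ\tilde\tau_\ell^{-1}$, which by Lemma~\ref{homeocontin} is an Epstein-class diffeomorphism on a definite interval. From $\phi_0=\tilde\tau_\ell\circ\phi_{-1}\circ\phi_0\circ\tilde\tau_\ell^{-1}$ one gets $\phi_{-1}'(1)=1$, hence $E_{-1}'(1)=\frac{1}{\ell}E_{-1}(1)^{1-\ell}=\frac{1}{\ell}|\tilde\tau_\ell|^{(\ell-1)/\ell}$ up to sign using $E_{-1}(1)=1/\tilde\alpha_\ell$ from Lemma~\ref{homeocontin}(d). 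Feeding this into the cross-ratio (Koebe) inequality for the Epstein map $E_{-1}$ applied to four points $\tilde\tau_\ell^2 x_\ell,0,1,1+dx$, together with the location bounds $1<\tilde\tau_\ell x_\ell<\tilde\tau_\ell^2$ of Lemma~\ref{homeocontin}(d), yields an inequality of the shape $|\tilde\tau_\ell|\le \ell\,|\tilde\tau_\ell|^{2/\ell}\log(|\tilde\tau_\ell|^{4/\ell})$ after using $1-x<\log(1/x)$; this bounds $|\tilde\tau_\ell|$ from above by a constant $T_2$ independent of $\ell\ge 3$.

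The main obstacle is not the estimates themselves — they are essentially transcriptions of Lemmas~\ref{covT_1} and~\ref{covT2}, since the functional equations~(\ref{homeofixp0})--(\ref{homeofixp00}) are formally the same as~(\ref{covfixp0}) — but rather bookkeeping: one must verify that the continuations provided by Lemma~\ref{homeocontin} genuinely supply Epstein-class branches on intervals large enough to run the cross-ratio argument, and that the geometric input (origin dividing $J_n$ in bounded ratio, two intervals of the collection inside a definite neighborhood) survives the degenerate change of variable $x\mapsto x^\ell$ uniformly in $\ell$. Both points are handled exactly as in the covering case, the extra ingredient being the \emph{a priori} bounds of~\cite{defaria},~\cite{demelofaria} for commuting pairs, so I would state those once at the outset and then reuse the covering-map arguments almost verbatim.
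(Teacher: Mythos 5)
Your upper-bound half is fine: it is exactly what the paper does, and indeed the paper's Lemma~\ref{homeoT2} reduces word for word to Lemma~\ref{covT2} once Lemma~\ref{homeocontin} supplies the Epstein branch $\phi_{-1}$ on $(\tilde\tau_\ell^2 x_\ell,\tilde\tau_\ell x_\ell)$ and the normalization $\phi_{-1}'(1)=1$. The problems are all in the lower bound, and they start already in your first paragraph: the real bounds of~\cite{defaria},~\cite{demelofaria} are \emph{not} uniform in $\ell$ (they are proved for a fixed, essentially cubic, criticality), so you cannot ``recall at the outset'' that $|\tilde\tau_\ell|$ lies in a compact subinterval of $(1,\infty)$ --- that uniformity is precisely the content of Proposition~\ref{homeorb}.

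The proposed transplant of Lemma~\ref{covT_1} then has a genuine gap. The covering argument hinges on the identity $h(J_n)=E(p(J_n))$ with the critical point $0$ \emph{interior} to the domain of the single branch $g_0$, so that the critical value $1$ sits inside one smooth image interval whose two halves have controlled ratio, and so that a pullback can ``turn into $J_n^1$'' and be cut at most once. For the commuting pair, $0$ is the \emph{common endpoint} of $J^0=[1/\tilde\alpha_\ell,0]$ and $J^{-1}=[0,1]$: the two halves of $J_n$ on either side of $0$ are carried by two different branches to intervals abutting the two distinct values $\tilde g_0(0)=1$ and $\tilde g_{-1}(0)=\tilde\alpha_\ell$, which coincide only after the circle identification. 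So the division-ratio ``fact'', the neighbor-counting inside $h(J_{n-5})$, and the control of how often the pullback chain passes the critical value all have to be rebuilt in the circle picture; none of this is ``handled exactly as in the covering case''. The paper's actual proof of Lemma~\ref{homeoT_1} takes a different route (from~\cite{gregbra}): it returns to the circle homeomorphism $f$ itself, perturbs it to $f_t=f-t$ with $t>0$ minimal so that $f_t$ has a periodic orbit of period $q_{n+2}$, uses the resulting \emph{finite invariant partition} (whose combinatorics is frozen in $t$ by the monotonicity property (*)) to run the shortest-interval argument, and obtains $|\tilde\tau_\ell|^4\ge K$ directly --- with no intermediate estimate $|\tilde\alpha_\ell|\ge 1+C/\ell$ and no need to track the degenerate change of variable $x\mapsto x^\ell$ through the partition. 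If you want to salvage your route you must either prove the homeomorphism analogues of the combinatorial and distortion facts used in Lemma~\ref{covT_1} from scratch, or adopt the perturbation argument.
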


The proof is contained in the following two lemmas~\ref{homeoT_1},
~\ref{homeoT2}.

\begin{lem}\label{homeoT_1}
There exists $1<T_1$,
such that $T_1<|\tilde\tau_\ell|$ for all $H^1_{\ell}$.
\end{lem}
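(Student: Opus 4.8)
The goal is a lower bound $|\tilde\tau_\ell| > T_1 > 1$ uniform in odd $\ell \ge 3$, paralleling Lemma~\ref{covT_1}. As in the covering case, it suffices to show $|\tilde\alpha_\ell| \ge 1 + C/\ell$ for a constant $C>0$ independent of $\ell$, since $|\tilde\tau_\ell| = |\tilde\alpha_\ell|^\ell$. I would work with the map $\tilde h_\ell$ near the critical point, its branches $\tilde g_0, \tilde g_{-1}$, and the scaling factor $\tilde\alpha_\ell$, and argue by contradiction assuming $|\tilde\alpha_\ell| < 1 + 1/\ell$.

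\textbf{Key steps.} First I would set up the analogue of the ``shortest interval argument'': using the Fibonacci combinatorics of the commuting pair (Comment~\ref{homeopair}), form the collection of pairwise disjoint intervals obtained by iterating the first-entry map to a deep interval $J_n = \tilde\alpha_\ell^{-n} J$ under the golden-mean dynamics, i.e.\ $J^j_n$ for $j = 0, 1, \dots, q_{n-1}-1$, and pick the shortest one, $J^k_n$. Second, split into the case $k=0$ and $k \ge 1$. In the case $k=0$, the two images of $J_n$ under the appropriate Fibonacci-time iterates land on opposite sides of $J_n$ inside some $J_{n-c}$, giving $|J_{n-c}|/|J_n| = |\tilde\alpha_\ell|^c \ge 3$, hence $|\tilde\alpha_\ell| \ge 3^{1/c} > 1$ and we are done in that case. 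Third, and this is the substantive case, $k \ge 1$: here I would use that $0$ (or the relevant critical point $y_\ell$) divides $J_n$ into two pieces with ratio at most $|\tilde\alpha_\ell| < 1 + 1/\ell$, then push forward by one step $\tilde g_0(x) = E(x^\ell)$ with $E$ an Epstein diffeomorphism, $E(0)=1$; bounded distortion of $E$ on the (tiny, for large $n$) interval $J_n$ shows the image point $1$ divides $\tilde g_0(J_n)$ with bounded ratio $C_0 > 1$ independent of $\ell$. Fourth, pull back a neighborhood $K$ of $J^k_n$ containing its two nearest neighbors by a branch of $\tilde h^{k-1}$ down to the level-one image, using non-positive Schwarzian (available since $\tilde g_0 = E(x^\ell)$, $E$ Epstein) and the bounded-geometry \textit{fact} from step three, cutting off at the point $1$ whenever a preimage tries to grow past it; this yields that a definite $C_1$-neighborhood of $\tilde g_0(J_n)$ contains at most one interval of the collection on each side. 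Fifth, since a deeper interval $J_{n-c'}$ contains at least two intervals of the collection on each side of $J_n$, conclude $C_1$-times-something $\le |\tilde h(J_{n-c'})|/|\tilde h(J_n)| \le 2|\tilde\tau_\ell|^{c'}$, so $|\tilde\tau_\ell|^{c'} \ge C_2 > 1$ uniformly, which completes the bound.

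\textbf{Main obstacle.} The combinatorics and the real-bounds machinery are essentially identical to the covering case, so the only genuine care is needed in transcribing the disjointness structure: for a degree-one commuting pair the intervals $J^0_n, J^{-1}_n$ share an endpoint (they are not separated as in the covering case), so I must be careful that the Fibonacci-time images $\tilde h^j(J_n)$ are still pairwise disjoint and that "opposite sides" and "nearest neighbors" make sense — this uses the order structure of the golden-mean rotation orbit and the fact that the critical point is at a shared boundary point. I expect this bookkeeping of which iterates land where, together with verifying that the pullback-and-cut procedure terminates correctly when a preimage reaches the critical value $1$, to be the only place requiring real attention; everything else (bounded distortion of $E$, cross-ratio/Koebe estimates from non-positive Schwarzian, the final comparison of interval lengths) is routine and parallels Lemma~\ref{covT_1} verbatim.
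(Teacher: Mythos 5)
Your plan is a direct transplant of the proof of Lemma~\ref{covT_1}, and the structural prerequisites do appear to carry over to the homeomorphism case: after the change of variable $p(x)=x^{\ell}$ the branch $\tilde g_0(x)=E(x^{\ell})$ has its critical point at $0$ in the \emph{interior} of $J^0$ with critical value $1$ (exactly as for coverings, even though for $H^1_{\ell}$ itself the critical points sit at the identified outer endpoints); $J_n=\tilde\alpha_{\ell}^{-n}J$ contains $0$ and is divided by it with ratio at most $|\tilde\alpha_{\ell}|$; $E$ is Epstein so the distortion and Schwarzian inputs are available; and the golden-mean combinatorics supplies both the pairwise disjointness of $\{\tilde h^j(J_n)\}_{0\le j<q_{n-1}}$ and the positioning facts needed in your cases $k=0$ and $k\ge 1$. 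So I regard your route as viable, subject to the bookkeeping you yourself flag.

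The paper, however, proves this lemma by an entirely different method, taken from~\cite{gregbra}. It works with a critical circle homeomorphism $f$ whose renormalizations converge to $\phi$, perturbs it to $f_t=f-t$ with $t$ minimal so that $f_t$ acquires a periodic orbit of period $q_{n+2}$, and runs the shortest-interval argument on the partition of the circle by that periodic orbit. The perturbation freezes the cyclic ordering of the orbit (statement (*) in the proof), and the long iterates $f_t^{q_{n+1}}$ and $f_t^{q_n-1}$ are checked to be diffeomorphisms on the relevant four-point configurations, so the cross-ratio constant $K$ in the Claim is manifestly independent of $\ell$; the bound is then read off from $q_{n-3}(0)/q_{n+1}(0)\to\tilde\tau_{\ell}^4$, giving $\tilde\tau_{\ell}^4>K$. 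What the paper's route buys is that the critical point is encountered at only one controlled iterate, so there is no analogue of your ``cut at the critical value $1$'' step and no need for the estimate $(1+1/\ell)^{\ell}\le e$ that makes the covering argument $\ell$-uniform. What your route buys is uniformity of exposition with Lemma~\ref{covT_1} and the absence of the auxiliary family $f_t$; the price is that you must re-justify, for the glued commuting pair, that each $\tilde h^j(J_n)$ is a single well-defined interval and that the pullback-and-cut procedure meets the critical value at most once, which is precisely where your attention should go.
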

\begin{proof} 
We use an idea from~\cite{gregbra}
Let $f$ be a critical circle homeomorphism with the golden mean
rotation number,
a single
critical value at $0=f(c)$, where $c$ is the critical point of an integer 
odd order $\ell$,
and $f$ is $C^3$ with negative Schwarzian. Then, by~\cite{defaria},
\cite{demelofaria}
$\phi$ on $[1/\tilde\tau_\ell, 0]$ is the uniform limit of a sequence of maps
$A_n\circ f^{q_n}\circ A_n^{-1}$, where $q_n$ is the Fibonacci sequence
and $A_n$ is a linear map (from the angle coordinate of the unit circle
into reals), which maps $f^{q_{n}}(0)$ to $1$.
We will consider also $f_t=f-t$, for small real $t>0$, and denote,
for $N$ integer and $0\le t'\le t$, $N(t')=f_{t'}^{N}(0)$.
Note that when $t$ moves back to $0$, all positive iterates of any
point $x$ move to the right, and all negative move to the left.
Let us fix a minimal positive $t$, such that
$f_t$ has a periodic orbit of period $q_{n+2}$. It implies
the following fact (*), to which we will often refer:

(*) for every point $x$, the ordering on the circle of points 
$f_{t'}^m(x)$, for $0\le m\le q_{m+2}-1$, does not depend on $t'\in [0, t]$.

We denote by $|(a,b)|$ the length of an interval with the end points
$a$ and $b$.

{\bf Claim.} {\it There is $K>0$ such that, for every $\ell$
and every $n$ large enough,
either
\begin{equation}\label{forw}
\frac{|(q_n(t), 2q_n(t))|}{|(0, q_n(t))|}\ge K,
\end{equation}
or
\begin{equation}\label{back}
\frac{|(-2q_n(t), -q_n(t))|}{|(-q_n(t), 0)|}\ge K,
\end{equation}
}
Let us see the Claim implies the Lemma.
Assume (\ref{forw}) holds, for a fixed $\ell$
and every $n$ big enough. Note that, 
$0<-q_{n+1}(0)<q_n(t)<2q_n(t)<2q_n(0)<-q_{n-3}(0)$. 
On the other hand, as $n\to \infty$, $-q_{n-3}(0)/-q_{n+1}(0)\to 
\tilde\tau_\ell^4$.
Hence, for $n$ big, 
$$\tilde\tau_\ell^4>
\frac{|(-q_{n+1}(0), 2q_n(0))|}{|(0, -q_{n+1}(0))|}
>\frac{|(q_n(t), 2q_n(t))|}{|(0, q_n(t))|}\ge K.$$
Similarly, let (\ref{back}) hold. Then
$0>q_{n+1}(0)>-q_n(t)>-2q_n(t)>-2q_n(0)>q_{n-3}(0)$
while, as $n\to \infty$, $q_{n-3}(0)/q_{n+1}(0)\to \tilde\tau_\ell^4$.
Hence, for $n$ big, 
$$\tilde\tau_\ell^4>
\frac{|(-2q_n(0), q_{n+1}(0))|}{|(q_{n+1}(0), 0)|}
>\frac{|(-2q_n(t), -q_n(t))|}{|(-q_n(t), 0)|}\ge K.$$
Let us prove the Claim. Consider a partition of the unit circle
by the points, $N(t)$, for $0\le N\le q_{n+2}-1$. Note that
$q_{n+2}(t)=0$. Denote by $J$ the shortest interval of this partition.

(1) $J_0=(0, q_n(t))$ is an interval of the partition. 
Indeed, if it contains some $i(t)$, $1\le i\le q_{n+2}-1$,
then, by (*), the same holds with $t=0$, a contradiction, because
$q_{n+2}(0)$ is the first return of $0$ to $(0, q_n(0))$.

Consider the configuration of $4$ points $-q_n(t), 0, q_n(t), 2q_n(t)$.
First, we apply to it $f_t^{q_{n+1}}$.

(2) Let us check that $F_t=f_t^{q_{n+1}}$ is a diffeomorphism
on $(-q_n(t), 2q_n(t))$ except for the last iterate when 
$(q_n+q_{n+1}-1)(t)$ is the critical point of $f_t$.

(i) $F_t(q_n(t), 2q_n(t))=(0, q_n(t))$, hence, by (1),
$F_t$ is a diffeomorphism on $(q_n(t), 2q_n(t))$.

(ii) $F_t(J_0)=(q_{n+1}(t), 0)$. The latter interval contains
no $i(t)$, $1\le i\le q_{n+1}-1$, because this is true for $t=0$.
Thus $F_t$ is a diffeomorphism on $J_0$.

(iii) $F_t=f_t^{q_{n-1}}\circ f_t^{q_n}$. Since 
$f_t^{q_n}((-q_n(t), 0))=J_0$, $f_t^{q_n}$ is a diffeomorphism
on $(-q_n(t), 0)$. Also, $f_t^{q_{n-1}}$ is a diffeomorphism
on $J_0$. Indeed, $f_t^{q_{n-1}}(J_0)=(q_{n-1}(t), q_{n+1}(t))$,
and the latter interval contains no $i(t)$, $0\le i\le q_{n-1}-1$,
because the same is true for $t=0$ (we again use (*)).

Thus (2) checked. 
By this, if $f_t^i(J_0)=J$, for some
$0\le i\le q_{n+1}-1$, then we employ the shortest interval argument and
arrive at (\ref{forw}). If $f_t^{q_{n+1}}(J_0)=(-q_n(t), 0)$ is $J$,
then we get immediately (\ref{back}).

(3) Otherwise, we must meet $J$ when applying $f_t^i$
(for some
$1\le i\le q_n-1$) to the interval $(-q_n(t), 0)$ inside of the configuration
$-2q_n(t), -q_n(t), 0, q_n(t)$. We have checked in (2) that $f_t^{q_n-1}$
is a diffeomorphism on $[-q_n(t), q_n(t)]$. But $f_t^{q_n-1}$
is a diffeomorphism on $(-2q_n(t), -q_n(t))$, too, because
otherwise $(-q_n(t), 0)$ contains some $i(t)$, $1\le i\le q_n-1$, hence,
$J_0$ contains $(i+q_n)(t)$. Since $i+q_n<q_{n+2}$, this is
impossible by the step (1). Thus $f_t^j((-q_n(t), 0))=J$, for
some $1\le i\le q_n-1$ where $f_t^i$ is a diffeomorphism
on $(-2q_n(t), q_n(t))$. Then, by the shortest interval argument,
(\ref{back}) follows.

\end{proof}

\begin{lem}\label{homeoT2}
There exists 
$T_2$ such that for all $H^1_{\ell}$, 
we get $|\tilde\tau_{\ell}| < T_2$. 
\end{lem}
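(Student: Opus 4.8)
The plan is to imitate the proof of Lemma~\ref{covT2} step by step, since the branches of $H^1_\ell$ obey the functional equations~(\ref{homeofixp}), which coincide with those satisfied by the branches of $H_\ell$, and by Lemma~\ref{homeocontin} they admit the same Epstein-class continuations, so exactly the same cross-ratio estimate is available. Write $\tau=\tilde\tau_\ell$ and $\alpha=\tilde\alpha_\ell=\tau^{1/\ell}$, let $\phi=\phi_0$ with its representation $\phi(x)=(E(x))^\ell$, where $E=E_\ell$ is in the Epstein class and $E(0)=1$, and set $\phi_{-1}=\tau\circ\phi\circ\tau^{-1}=(E_{-1})^\ell$ with $E_{-1}=\alpha\circ E\circ\tau^{-1}$, again of Epstein class. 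By Lemma~\ref{homeocontin}(a),(d) one has $\phi(x_\ell)=0$, $\phi(0)=1$, $\phi(1/\tau)=1/\tau^2$, $\phi'(1/\tau)=1$, $\phi(\tau x_\ell)=\tau^2$ and $1<\tau x_\ell<\tau^2$. Reading these off gives $E_{-1}(\tau^2 x_\ell)=\alpha E(\tau x_\ell)=\alpha^3$ (since $\phi(\tau x_\ell)=\tau^2$), $E_{-1}(0)=\alpha$, and $E_{-1}(1)=\alpha E(1/\tau)=1/\alpha$ (since $\phi(1/\tau)=1/\tau^2$); moreover $\phi_{-1}'(1)=\phi'(1/\tau)=1$, and from $\phi_{-1}'=\ell E_{-1}^{\ell-1}E_{-1}'$ together with $E_{-1}(1)^{\ell-1}=(1/\alpha)^{\ell-1}=|\alpha|/|\tau|$ one gets $|E_{-1}'(1)|=|\tau|/(\ell|\alpha|)$.

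Next I would apply the cross-ratio inequality for the Epstein class to the infinitesimal cross-ratio at the points $\tau^2 x_\ell<0<1<1+dx$, all of which lie in the interval $(\tau^2 x_\ell,\tau x_\ell)$ on which $E_{-1}$ is of Epstein class (for the two middle points this uses $1<\tau x_\ell$ from Lemma~\ref{homeocontin}(d); the leftmost being an endpoint is handled by a limiting argument, exactly as in Lemma~\ref{covT2}). Just as there, this yields
\[ |E_{-1}'(1)|\,\frac{|\alpha^3-\alpha|}{|\tau^2 x_\ell|}\,\frac{1}{|1/\alpha-\alpha|}\,\frac{|1-\tau^2 x_\ell|}{|1/\alpha-\alpha^3|}\le 1 . \]
Since $\tau^2 x_\ell<0$ the factor $|1-\tau^2 x_\ell|/|\tau^2 x_\ell|$ exceeds $1$ and may be discarded; substituting the values above and writing $\alpha^2=|\tau|^{2/\ell}$, $\alpha^4=|\tau|^{4/\ell}$, the estimate collapses to the same inequality as~(\ref{covupper}), namely $|\tau|^{1+2/\ell}\le\ell\,(|\tau|^{4/\ell}-1)$.

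Finally, applying $1-x<\log(1/x)$ for $0<x<1$ with $x=|\tau|^{-4/\ell}$ gives $|\tau|^{4/\ell}-1<|\tau|^{4/\ell}\log(|\tau|^{4/\ell})$, whence $|\tau|<\ell|\tau|^{2/\ell}\log(|\tau|^{4/\ell})=4|\tau|^{2/\ell}\log|\tau|$, i.e. $|\tau|^{1-2/\ell}<4\log|\tau|$. Because $\ell\ge 3$ forces $1-2/\ell\ge 1/3$ and $|\tilde\tau_\ell|>1$, this last inequality bounds $|\tilde\tau_\ell|$ above by an absolute constant $T_2$, which is the claim. I do not expect a genuine obstacle here: the only inputs are the functional equations and the Epstein property collected in Lemma~\ref{homeocontin}, which are literally identical to those of Lemma~\ref{covcontin}, so the chain of inequalities is word for word that of Lemma~\ref{covT2} — one could even just invoke the algebraic estimate established there. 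The only care required is keeping track of signs and of which points fall in the domain on which $E_{-1}$ is of Epstein class.
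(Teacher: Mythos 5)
Your proof is correct and is exactly the paper's argument: the paper's own proof of this lemma consists of citing Lemma~\ref{homeocontin}(a) for the Epstein-class extension of $\phi_{-1}$ and then proceeding ``word by word as in the proof of Lemma~\ref{covT2},'' which is precisely the cross-ratio computation you carry out in detail.
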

\begin{proof} 
Consider the map $\phi_{-1}=\tau\circ \phi\circ \tau^{-1}$.
By Lemma~\ref{homeocontin} (a), 
\[ \phi_{-1}: (\tilde\tau_\ell^2 x_\ell, \tilde\tau_\ell x_\ell)
\to (\tilde\tau_\ell^3, 0)\] is a diffeomorphism
from the Epstein class. 
Then we can proceed word by word as in the proof of Lemma~\ref{covT2}.
\end{proof}

\subsection{Limit maps}
Consider inverse branches of $\phi_{\ell_m}$
corresponding to values to the right of
the point $x_{\ell_m}$.
By Lemma~\ref{homeocontin}, each $\phi_\ell$
can be represented as $(E_{\ell}(x))^{\ell}$ with
$E_{\ell}: (\tilde\tau_\ell^2 x_\ell, \tilde\tau_\ell x_\ell)\to
(\sqrt[\ell]{\tilde\tau_\ell}, \sqrt[\ell]{\tilde\tau_\ell^2})$ 
an Epstein diffeomorphism.
Hence, 
\[ E_{\ell}^{-1}
(({\cal D}(\sqrt[\ell]{\tilde\tau_\ell}, \sqrt[\ell]{\tilde\tau_\ell^2}))
\subset {\cal D}(\tilde\tau_\ell^2 x_\ell, \tilde\tau_\ell x_\ell)\; . \]
In the light of
Proposition~\ref{homeorb} and Lemma~\ref{homeocontin}, by taking a subsequence
we may assume that $\tilde\tau_{\ell_m} \rightarrow
\tau > 1$ and 
$x_{\ell_m} \rightarrow x_0$.  Note that 
$1\le \tau x_0\le |\tau|^2$.

Consider the universal cover of the punctured disk
${\cal D}^*(\sqrt[\ell]{\tilde\tau_{\ell}}, \sqrt[\ell]{\tilde\tau_{\ell}^2})$
by $\exp$ and apply to the cover the linear map $w\mapsto \ell_m w$.
Then we get a $2\pi \ell_m$-periodic domain $\Pi_m$, which
contains the left half-plane and is bounded by a curve
$x=\gamma_m(y)$ ($w=x+iy$), where $\gamma_m(0)=\log \tilde\tau_{\ell_m}^2$
and $\gamma_m(\pm \pi \ell_m)=\log \tilde\tau_{\ell_m}$.
The real branch of the lifting of $\phi_{\ell_m}$, which
maps onto a right neighborhood of $x_{\ell_m}$, has a complex extension
\begin{equation}
P_{\ell_m}(w) :=
E_{\ell_m}^{-1}(\exp(w/\ell_m))
\end{equation}
which is defined in $\Pi_{m}$ and maps it into 
${\cal D}(\tilde\tau_\ell^2 x_\ell, \tilde\tau_\ell x_\ell)$
and so $P_{\ell_m}$ are uniformly bounded. 
Pick a subsequence $m_k$, such that 
$P^{\ell_{m_k}}$ converge to a mapping $P$.
Its domain of definition is 
$\Pi_\infty := \{ w :\: \Re w < \log \tau^2\}$. 
This implies 
uniform convergence $P_{\ell_{m_k}}\to P$
on compact subsets.        

Let us see that $P$ is non-constant.
Note that $0\in \Pi_\infty$.
As $P_{\ell_m}(0)=0$, then $P(0)=0$.
Besides, points $\log(1/\tilde\tau_{\ell_m}^2)$ converge to
the point $\log(1/\tilde\tau^2)$, which lies in the left half-plane,
i.e., in $\Pi_\infty$. Hence, 
using Lemma~\ref{homeocontin}(c), 
$P(\log(1/\tilde\tau^2))=1/\tilde\tau$.
In particular, $P$ is not constant.
$P$ is univalent because
for any compact subset of $\Pi_\infty$ and $\ell_m$ large enough,
$P_{\ell_m}$ is univalent on this set, which is evident from 
its definition.
Let us define $x_0^{-} : = \lim_{x \rightarrow -\infty} P(x)$.  
Since $(P)^{-1}$ in increasing to the right of 
$x_0^-$ and $x_{\ell_m}<P_{\ell_m}(x)$ for every $m$ and real
negative $x$, 
we must have $x_0\leq x_0^-$. 
Let us also note that $x_0^-<P(\log(1/\tau^2))=1/\tau$.

Precisely as in the covering maps case, we show that, for any $b<\tau^2$,
the image of the half-plane
$\{w: \Re w < \log b\}$ by the limit map $P$ is contained
in ${\cal D}((x_0, b'),\pi/2)$ where $b'=P(\log b)$.

We can now define a limit mapping $\phi$ which will be shown to
belong to the ${\cal EWF}$ class.

Fix any $\tau x_0<R<\tau^2$ and
define $\Pi_*=\{w: \Re w<\log R\}$. Consider $P$ on $\Pi_*$.
We set $U = P (\Pi_*)$. Then $\phi_{|U} :=\exp\circ (P)^{-1}$.
The intersection of $U$ and the real axis
is an interval $(x_0^-, R')$, where $R'=P(\log R)$. 


We have shown that
$\phi_{\ell_m}$ converge to $\phi$ uniformly on any compact subset of 
$(x_0^-, R']$. 

As $[\log(1/\tau^2), 0]\subset \Pi_*$
and $P(0)=0$, $P(\log(1/\tau^2))=1/\tau$, there is a complex neighborhood
$S_1$ of the interval $[1/\tau, 0]$, such that $\phi_{\ell_m}\to \phi$
uniformly in $S_1$. In particular, 
$x_0\le x_0^-<1/\tau<0$ and $\phi$ is analytic in $S_1$.

Recall that $\tilde G_{\ell_m}(z)=\tau_{m}^{-1}\phi_{\ell_m}(\tau_{m}^{-1}z)$.
We define $\tilde G(z)=\lim_{m\to \infty} \tilde G_{\ell_m}(z)$ 
wherever the limit exists.
 
Repeating the proof for the covering maps, 
we extend $\phi$ to an analytic map which is defined
in a complex neighborhood $S_3$ of the interval $[0,1]$,
and 
$\tilde G$ is defined and analytic in a complex neighborhood $S_4$ of 
$[x_0, 0]$.
In particular, $\tilde G(x_0)=x_0$, and $x_0$ is topologically
non-repelling: $|\tilde G(x) - x_0| \leq |x-x_0|$ for every $x\in [x_0,0]$.
Then we have:
\begin{lem}
$\tilde G([x_0, x_0^-]) = [x_0, x_0^-]$. 
\end{lem}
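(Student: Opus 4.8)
The plan is to reproduce the argument already used for the covering case (Lemma~\ref{covGfix}), since the setup is now completely parallel: $\tilde G$ is defined and analytic in a complex neighborhood $S_4$ of $[x_0,0]$, it fixes $x_0$, and it is topologically non-repelling on $[x_0,0]$, meaning $\tilde G([x_0,x_0^-])\subseteq [x_0,x_0^-]$ is immediate from $|\tilde G(x)-x_0|\le |x-x_0|$ together with the fact that $\tilde G$ is an orientation-preserving real-analytic diffeomorphism near $[x_0,0]$ (it is a limit of the orientation-preserving diffeomorphisms $\tilde G_{\ell_m}$ of Lemma~\ref{homeocontin}(c)). So the content is the reverse inclusion, i.e. surjectivity onto $[x_0,x_0^-]$, equivalently $\tilde G(x_0^-)=x_0^-$.

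First I would argue by contradiction, exactly as in the covering case. If $\tilde G(x_0^-)\ne x_0^-$ then, since $\tilde G$ maps $[x_0,x_0^-]$ into itself and fixes $x_0$, we must have $\tilde G(x_0^-)<x_0^-$. Because $\tilde G$ is a continuous (indeed analytic) diffeomorphism on a neighborhood of $[x_0,0]$ with $\tilde G(x_0^-)<x_0^-$ and, for $x$ slightly bigger than $x_0^-$, $\tilde G(x)$ still close to $\tilde G(x_0^-)<x_0^-$, while $\tilde G$ eventually takes values $\ge x_0^-$ further to the right (e.g. near $0$ the point $\tilde G(0)$ is close to $1/\tau>x_0^-$, using Lemma~\ref{homeocontin}(d) and the established convergence), the intermediate value theorem yields a point $x'\in(x_0^-,0)$ with $\tilde G(x')=x_0^-$.

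Then I would invoke the functional equation in the limit. From $\phi=\tau^2\circ\phi\circ\tilde G$ (the limit of $\phi_{\ell_m}=\tilde\tau_{\ell_m}^2\circ\phi_{\ell_m}\circ\tilde G_{\ell_m}$, valid on the relevant neighborhood by the uniform convergence established above), we get $\phi(x')=\tau^2\,\phi(\tilde G(x'))=\tau^2\,\phi(x_0^-)$. But $x_0^-=\lim_{x\to-\infty}P(x)$ is precisely the left endpoint of the interval $(x_0^-,R')$ onto which the inverse branch $(P)^{-1}$ maps, so $\phi(x_0^-)=\exp\circ(P)^{-1}$ has limiting value... more directly, $\phi$ extends continuously to $x_0^-$ with $\phi(x_0^-)=0$ (this is how $x_0^-$ was singled out: it is the image of $-\infty$ under $P$, where $\exp\to 0$). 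Hence $\phi(x')=0$ with $x'>x_0^-$, contradicting the fact that $\phi$ is a strictly increasing diffeomorphism on $(x_0^-,R')$ with $\phi(x_0^-)=0$, so $\phi>0$ there. This contradiction forces $\tilde G(x_0^-)=x_0^-$, and therefore $\tilde G([x_0,x_0^-])=[x_0,x_0^-]$.

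The only mildly delicate point — the main obstacle, such as it is — is making sure the limiting functional equation $\phi=\tau^2\circ\phi\circ\tilde G$ is valid at the point $x'$ and that $\phi(x_0^-)=0$; both are consequences of the uniform-convergence statements and the construction of $x_0^-$ already carried out in this subsection (mirroring the covering case), so no new estimates are needed. Everything else is a verbatim transcription of the proof of Lemma~\ref{covGfix}.
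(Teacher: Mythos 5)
Your proposal is correct and follows essentially the same route as the paper, which proves the covering-case analogue (Lemma~\ref{covGfix}) by exactly this contradiction: if $G(x_0^-)<x_0^-$ then some $x'\in(x_0^-,0)$ satisfies $G(x')=x_0^-$, whence $\phi(x')=\tau^2\phi(G(x'))=0$ with $x'>x_0^-$, contradicting that $\phi$ is a diffeomorphism of $(x_0^-,R')$ onto $(0,R)$. Your additional care with the forward inclusion and the intermediate value step only makes explicit what the paper leaves implicit.
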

  
$\tilde G_{\ell_m}$ converge to $\tilde G$ uniformly on a complex
neighborhood of $[x_0,0]$. 
We use now Lemma~\ref{homeocontin} (d).
Since $\tilde G'_{\ell_m}(x_{\ell_m}) = \sqrt[\ell_m]{\tilde\tau^{-2}_{m}}$, 
the
convergence implies $\tilde G'(x_0) = 1$. Coupled with the information
that $x_0$ is topologically non-repelling on both sides, this implies
the power-series expansion: 
\[ G(z) - x_0=(z-x_0)+a(z-x_0)^{q+1}+O(|z-x_0|^{q+1})\]
with some $a\leq 0$ and some $q$ even. 
First, we prove that $a\not=0$, i.e. $G$ is not the identity.
If $G(z)=z$, then, for every $x$ near $0$,
$\phi(x)=\phi(G(x))=\phi(x)/\tau^2$, i.e. $\phi(x)=0$, a contradiction. 
Thus, $a<0$.

Now we prove that $q=2$ considering a perturbation.
There is a fixed complex neighborhood
$W$ of $x_0$,
such that the sequence of maps $(\tilde G_{\ell_m})^{-1}$
are well-defined in $W$ and converges 
uniformly in $W$ to $\tilde G^{-1}$.
Since each $\phi_{\ell_m}$ belongs to the Epstein class,
then each $(\tilde G_{\ell_m})^{-1}$
extends to a univalent map
of the upper (and lower) half-plane
into itself. It extends also continuously on the real line,
and has there exactly one fixed point, which is
$x_{\ell_m}$ and which is repelling. Therefore, by the Wolff-Denjoy theorem,
$(\tilde G^2)^{-1}$ has at most one fixed point in either half-plane, and one
which is strictly attracting.
This implies $q=2$ by Rouche's
principle.

Finally, we show that
$x_0=x_0^-$. Indeed, otherwise $x_0^-$ would be a fixed point of
$\tilde G$ to the right of $x_0$. Then, for big $m$, $\tilde G_{\ell_m}$ 
would have
either two fixed points in the upper half-plane or a second real fixed point,
a contradiction.

\section{Dynamics of {\cal EWF} Maps}
\subsection{Three levels of dynamics.}
There are three types of dynamics which can be associated with a map
$\phi$ from the ${\cal EWF}$-class: dynamics of ${\cal H}$ on the circle,
its extension to the complex dynamics similar to a polynomial-like
map, and a so-called presentation function. Dynamics on the circle is
easy to construct and was already described when we defined 
${\cal EWF}$-class. 

\paragraph{Construction of the complex dynamics $H$.}
The complex dynamical system will consists of two branches mapping onto the same range 
${\cal D}(x_0,\tau x_0)\setminus \{0\}$. The first one is $\phi$. Its domain $\Omega_-$ is going to be contained in the domain 
$U$ introduced in the definition of class ${\cal EWF}$. In particular it is contained in 
${\cal D}(x_0,x_0/\tau)$ and, obviously, in ${\cal D}(x_0,\tau x_0)$. 

The second branch is $\phi_{-1}$ mapping onto the same range. Its domain $\Omega_+$ by rescaling is contained in $\tau U$ and hence in 
${\cal D}(x_0,\tau x_0)$. However, we want to emphasize that even though $\Omega_+\cap\RR = (x_0/\tau,x_0\tau)$, $\Omega_+$ has no reason to be contained in 
${\cal D}(x_0/\tau,x_0\tau)$. We have the following lemma, though: 

\begin{lem}\label{lem:28ua,2}
\[ \overline{\Omega}_- \cap \overline{\Omega}_+ = \{\frac{x_0}{\tau}\} \; .\]
\end{lem}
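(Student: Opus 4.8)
The plan is to locate both closures precisely on the real line first, and then show that away from the real axis the two domains stay apart. Recall $\Omega_-\subset U$, so by item 3 of Definition~\ref{defi:9xa,1} we have $\Omega_-\subset {\cal D}(x_0,x_0/\tau)$, and in fact $\overline{\Omega}_-\cap\RR = [x_0,x_0/\tau]$ since $\phi$ maps $(x_0,x_0/\tau)$ onto $(0,\tau x_0)$ and these are the extreme real values giving range $\overline{{\cal D}(x_0,\tau x_0)}$. Similarly, after rescaling by $\tau$, $\Omega_+ = \tau\Omega_-'$ where $\Omega_-'$ is the $\phi$-preimage of ${\cal D}(x_0,\tau x_0)\setminus\{0\}$ pushed to the appropriate scale; the key computation, already recorded in Definition~\ref{defi:9xa,1}, is that $\phi_{-1}(x_0\tau)=0$ and $\phi_{-1}(x_0/\tau)=x_0$, so $\overline{\Omega}_+\cap\RR = [x_0/\tau,x_0\tau]$. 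Thus on the real line the two closures meet exactly at the single point $x_0/\tau$, which is the common endpoint $\overline{\Omega}_-\cap\RR$ shares with $\overline{\Omega}_+\cap\RR$. This gives the "$\supseteq$" inclusion immediately and reduces the lemma to showing no non-real point lies in both closures.

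For the non-real part, I would use the Poincar\'e-neighborhood containment from Lemma~\ref{lem:28ua,1} together with item 3 of the definition. The point is that $\Omega_-\subset U\subset {\cal D}(x_0,x_0/\tau)$, which is a disk sitting on the interval $[x_0,x_0/\tau]$; while $\Omega_+$, being contained in $\tau U$, sits inside ${\cal D}(x_0\tau, ?)$-type region but crucially its real trace is $[x_0/\tau,x_0\tau]$ and it lies in the Poincar\'e neighborhood ${\cal D}(I^+,\pi)$ of that interval (by item 4 applied to the inverse branch, since $\Omega_+$ is a preimage under $\phi$ of a subset of the doubly-slit plane). Two geometric disks ${\cal D}(I_1)$ and ${\cal D}(I_2)$ erected on intervals $I_1=[x_0,x_0/\tau]$ and $I_2=[x_0/\tau,x_0\tau]$ that share only the endpoint $x_0/\tau$ are internally tangent-free: they intersect only along the common boundary point $x_0/\tau$ if the disks are the round disks with those diameters, and more generally two ${\cal D}(I,\alpha)$ regions on abutting intervals meet only at the shared endpoint. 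So I would verify that $\Omega_-$ and $\Omega_+$ lie in such disjoint-except-for-a-point Poincar\'e regions and conclude.

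The main obstacle I anticipate is the behavior of $\Omega_+$ near $x_0$: the excerpt explicitly warns that "$\Omega_+$ has no reason to be contained in ${\cal D}(x_0/\tau,x_0\tau)$", so the naive disk-on-$I^+$ argument fails, and one must instead control $\Omega_+$ near the point $x_0$ where it could conceivably curl back toward $\Omega_-$. Here I would invoke Lemma~\ref{lem:28ua,1}: near $x_0$, any point of $U$ (and hence, by the functional equation $\phi\circ G=\tau^{-2}\phi$ transported to the $\Omega_+$ side, any point of $\Omega_+$ near its own limit point $x_0 = \phi_{-1}(x_0/\tau)$) lies in the narrow cone $|\arg(z-x_0)|<\pi/4+\epsilon$. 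Two such cones emanating from $x_0$ — one for $\Omega_-$ opening toward $x_0/\tau>x_0$ and one for $\Omega_+$ also opening toward the right (since $x_0/\tau>x_0$) — must be reconciled: actually both $\Omega_-$ and $\Omega_+$ approach $x_0$ from within the same right-opening half-cone, so disjointness near $x_0$ is \emph{not} automatic from the cone alone and needs the finer separation. The resolution is that $\overline{\Omega}_-$ near $x_0$ lies in ${\cal D}(x_0,x_0/\tau)$ which near $x_0$ is a half-plane-like region bounded by the circle through $x_0$ and $x_0/\tau$, whereas $\Omega_+$ near $x_0$ maps under $\phi_{-1}^{-1}=\phi\circ\tau^{-1}\cdots$ into a region bounded away from that circle; chasing this through the functional equation pins down that the only common point is $x_0/\tau$ itself, completing the proof.
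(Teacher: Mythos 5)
Your reduction to the real traces is fine: $\overline{\Omega}_-\cap\RR=[x_0,x_0/\tau]$ and $\overline{\Omega}_+\cap\RR=[x_0/\tau,\tau x_0]$, which gives the inclusion $\supseteq$ and correctly isolates the hard direction. But the hard direction is not actually proved. Your final paragraph ends with ``chasing this through the functional equation pins down that the only common point is $x_0/\tau$'' --- that is the entire content of the lemma, and it is left as a gesture. Worse, the place where you localize the difficulty is off target. The point $x_0$ is not in $\overline{\Omega}_+$ at all: $x_0$ lies in the interior of $\tau U$, $\phi_{-1}$ is analytic there, and $\phi_{-1}(x_0)=\tau\phi(x_0/\tau)=\tau^2x_0$, which lies outside $\overline{{\cal D}(x_0,\tau x_0)}$, so a whole neighborhood of $x_0$ misses $\Omega_+$. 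The genuine danger --- the reason the paper warns that $\Omega_+\not\subset{\cal D}(x_0/\tau,\tau x_0)$ --- is that $\Omega_+$ is an infinite-degree covering domain whose non-real lobes could in principle sit anywhere over the open interval $(x_0,x_0/\tau)$, where neither the cone estimate of Lemma~\ref{lem:28ua,1} at $x_0$ nor the inclusion $\Omega_+\subset{\cal D}(I^+,\pi)$ (the doubly-slit plane, which excludes nothing off $\RR$) gives any separation from $\Omega_-$.

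The missing idea is the one-step use of the continued functional equation (Lemma~\ref{lem:9xa,1} in the form $\phi=\phi_{-2}\circ\phi_{-1}$). If $z\in\overline{\Omega}_-\cap\overline{\Omega}_+$, set $y=\phi_{-1}(z)$; then $y\in\overline{{\cal D}(x_0,\tau x_0)}$ and also $\phi_{-2}(y)=\phi(z)\in\overline{{\cal D}(x_0,\tau x_0)}$. Item 3 of Definition~\ref{defi:9xa,1}, rescaled, places $\phi_{-2}^{-1}\bigl({\cal D}(x_0,\tau x_0)\bigr)$ inside ${\cal D}(\tau^2x_0,x_0)$, and the round disks on the abutting intervals $[\tau^2x_0,x_0]$ and $[x_0,\tau x_0]$ meet only at $x_0$; hence $y=x_0$. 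So $z$ is a $\phi_{-1}$-preimage of $x_0$, and item 3 again puts all such preimages in $\overline{{\cal D}(x_0/\tau,\tau x_0)}$, which meets $\overline{{\cal D}(x_0,x_0/\tau)}\supset\overline{\Omega}_-$ only at $x_0/\tau$. Without some version of this two-stage constraint on $y=\phi_{-1}(z)$, the disjointness off the real axis does not follow from the geometry you have assembled.
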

\begin{proof}
We will prove that 
\[ \overline{\phi^{-1}\left(\partial {\cal D}(x_0,\tau
  x_0)\setminus\{0\}\right)} \cap  
\overline{\phi^{-1}_{-1}\left(\partial {\cal D}(x_0,\tau
  x_0)\setminus\{0\}\right)} = \{ \frac{x_0}{\tau} \} \; .\]

Suppose that $z$ belongs to the intersection of the boundaries of 
\[ \overline{\phi^{-1}\left(\partial {\cal D}(x_0,\tau x_0)\setminus\{0\}\right)}\]
 and 
\[ \overline{\phi^{-1}_{-1}\left(\partial {\cal D}(x_0,\tau
  x_0)\setminus\{0\}\right)} . \] 

If $y=\phi_{-1}(z)$, then 
\[ y\in \partial{\cal D}(x_0,\tau x_0)\; .\] 
By Lemma~\ref{lem:9xa,1}, $\phi_{-2}(y) = \phi(z) \in \partial{\cal D}(x_0,\tau x_0)$. But the preimage of ${\cal D}(x_0,\tau
x_0)$ under $\phi_{-2}$ is contained 
in 
\[ {\cal D}(\tau^2x_0,\tau^2\phi^{-1}(\frac{x_0}{\tau})) = {\cal D}(\tau^2 x_0, x_0) \]
by Definition~\ref{defi:9xa,1}. Hence, the only possibility is for $y$ to be equal to $x_0$. In other words, $z$ must be 
a preimage of $x_0$ by $\phi_{-1}$. 

Again by item 3 of Definition~\ref{defi:9xa,1} such preimages belong to the closure of ${\cal D}(\frac{x_0}{\tau},\tau x_0)$ which intersects 
the closure of
\[ \overline{\phi^{-1}\left(\partial {\cal D}(x_0,\tau x_0)\setminus\{0\}\right)} \subset {\cal D}(x_0,\frac{x_0}{\tau}) \]
only at $\frac{x_0}{\tau}$. 
\end{proof}

\paragraph{Dynamics on the circle.}
The circle is formed by restricting $H$ to the interval $(x_0,\tau x_0)$ and identifying $x_0$ with $\tau x_0$. 
Since $\phi(\tau^{-1}x_0) =\tau x_0$ and $\phi_{-1}(\tau^{-1} x_0) = x_0$ while $\phi(x_0) = 0 = \phi_{-1}(\tau x_0)$ this gives a degree $1$ circle homeomorphism which is
smooth except at $\tau^{-1} x_0$. The rotation number of the circle dynamics generated by any ${\cal EWF}$-map equals the golden mean by definition.  
 
\paragraph{The presentation function.}
The presentation function $\Pi$ is defined 
on the interval $[x_0,\tau x_0]$ as follows. 
For $x\leq x_0/\tau$, $\Pi(x) = \tau x$. For $x>x_0/\tau$, 
$\Pi(x) = \phi_{-1}(x)$. Note that $\Pi$ is not continuously defined on 
the circle.

\subsection{Dynamics of the real presentation function}
The presentation function is useful in the study of the dynamics of
$H$ because it is much simpler dynamically, but contains the full
information about the post-critical set of $H$.  Unlike $H$, $\Pi$ is
post-critically finite, since $\Pi(x_0\tau) = 0$ is the repelling fixed 
point of $\Pi$. Note also that
the points
$1,\tau^{-1}$ form a periodic orbit under $\Pi$. This will be used later to construct the conjugacy 
between presentation functions, but in this section we limit ourselves to the real dynamics of presentation functions.

As usual for post-critically finite maps, we get the following:

\begin{lem}\label{lem:8xp,2}
The first return  map of $\Pi$ into $(x_0,x_0/\tau^2)$ is defined
everywhere expect for a countable set and consists of diffeomorphic
branches with negative Schwarzian derivative, all extendable to a
fixed interval which contains $[x_0,x_0\tau^2]$ in its interior.
\end{lem}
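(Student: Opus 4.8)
The plan is to analyze the orbit structure of the presentation function $\Pi$ on $[x_0,\tau x_0]$ and to read off the first return map to $(x_0,x_0/\tau^2)$ from the combinatorics of the golden mean rotation. First I would observe that, by the defining relations in Definition~\ref{defi:9xa,1}, $\Pi$ restricted to $[x_0,x_0/\tau]$ is multiplication by $\tau$ — an affine (hence negative-Schwarzian, in fact zero-Schwarzian) diffeomorphism onto $[\tau x_0, x_0]$ — while on $(x_0/\tau,\tau x_0]$ it is $\phi_{-1}=\tau\circ\phi\circ\tau^{-1}$, which by item~1 and item~5 of Definition~\ref{defi:9xa,1} (the analytic continuation of $\phi$ and the fact that $G$ is analytic near $x_0$, so $\phi$ continues past $x_0$) extends to a real-analytic diffeomorphism on a neighborhood of $[x_0/\tau,\tau x_0]$. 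Since $\phi$ is a restriction of a conformal map — indeed $\log\phi$ is univalent by Lemma~\ref{lem:29up,2} and the discussion preceding it, so $\phi$ itself is locally univalent and, being a covering of a punctured disk by the simply connected $U$, has no critical points on $(x_0,R')$ — the branch $\phi_{-1}$, and more generally every branch obtained by composing inverse branches of $\Pi$, is a composition of univalent maps and thus has negative Schwarzian derivative. This is the source of the negative-Schwarzian claim: negativity of the Schwarzian is preserved under composition.

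Next I would set up the return-map combinatorics. The map $\Pi$ is post-critically finite: $\Pi(\tau x_0)=\phi_{-1}(\tau x_0)=0=\phi(x_0)$, and $0$ lies on the orbit of the repelling fixed point $\tau x_0$ of $\Pi$ (note $\Pi(\tau x_0)=0$ and one checks $\Pi(0)=\tau\cdot 0=0$ since $0<x_0/\tau$ is false — rather $0>x_0/\tau$, so actually $\Pi(0)=\phi_{-1}(0)$; the point is simply that the critical value $1$ of the underlying $\phi$ is not needed because $\Pi$ has no critical point). The key structural input is that the identification circle carries the golden mean rotation number, which is exactly the statement built into the ${\cal EWF}$-class via the functional equation~(\ref{equ:27up,1}) holding on $[x_0/\tau^2,x_0/\tau]$ together with the assumption on the rotation number of ${\cal H}$. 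I would use this to describe the successive first-return maps of $\Pi$ to the nested intervals $I_n$ between the closest $q_n$-preimages of the relevant endpoint, exactly as in the renormalization construction described earlier for $\psi$, and to verify by a Fibonacci-induction argument that the first return map to $(x_0,x_0/\tau^2)$ — which is $I_n$ for the appropriate small index — is a countable union of monotone branches, each of which is an iterate of $\Pi$ of the form (a power of $\tau$) composed with finitely many applications of $\phi_{-1}$ restricted to suitable subintervals.

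The extendability statement — that every branch of the first return map extends to a fixed interval containing $[x_0,x_0\tau^2]$ in its interior — is the step I expect to be the main obstacle. The plan is to exploit the analytic continuations guaranteed by the ${\cal EWF}$-axioms: item~5 gives that $G$ and hence $\phi$, and hence $\phi_{-1}=\tau\circ\phi\circ\tau^{-1}$, continue real-analytically across $x_0$ and $\tau x_0$ respectively; iterating the functional equation~(\ref{equ:14xp,1}), $\phi\circ G=\tau^{-2}\phi$, lets one pull the domain of $\phi$ back under $G$ indefinitely, so $\phi$ (and all its rescalings $\phi_k$) extend to increasingly large intervals shrinking toward — but landing correctly around — the fixed point $x_0$ of $G$, which is topologically attracting there. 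Concretely, because the branches of the return map are obtained by composing such continuations along orbits that stay a definite distance from the critical obstruction, the complex continuations overlap a common neighborhood; the negative Schwarzian (Koebe-type control on each univalent inverse branch) then prevents the extended domains from collapsing, giving the uniform interval $[x_0,x_0\tau^2]$ in the interior. I would make this precise by a fixed-scale argument: choose the target interval once and for all using the continuation of $\phi_{-2}$ furnished in the proof of Lemma~\ref{lem:28ua,2} (where $\phi_{-2}$ is shown to have domain reaching into ${\cal D}(\tau^2 x_0, x_0)$), pull it back along the finitely many presentation-function steps making up each return branch, and check the pullbacks always contain a fixed real interval around the relevant scale thanks to the Epstein/negative-Schwarzian distortion bounds.
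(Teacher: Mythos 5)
Your plan never pins down what the branches of the first return map actually \emph{are}, and that is the whole content of the lemma. Since $x_0<0$ and $\tau<-1$, one has $x_0<x_0/\tau^2<0<x_0/\tau<1$, so on $[x_0,x_0/\tau^2]$ the map $\Pi$ is $x\mapsto\tau x$ (landing in $(x_0/\tau,\tau x_0)$) followed by $\phi_{-1}$; hence $\Pi^2=\phi_{-1}\circ\tau$ there, carrying $[x_0,x_0/\tau^2]$ onto $[x_0,0]$, while on $[x_0/\tau^2,0)$ one simply has $\Pi^2=\tau^2$. Consequently every return branch is $\tau^{2n}\circ\phi_{-1}\circ\tau$ restricted to one element of a countable partition, i.e.\ a \emph{single} application of $\phi_{-1}$ sandwiched between linear maps. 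From this explicit form the three claims are immediate: the exceptional set is the countable set of preimages of the partition endpoints; the Schwarzian is negative because $\phi_{-1}$ has non-positive Schwarzian (the Epstein-type item 4 of Definition~\ref{defi:9xa,1}) and linear maps have zero Schwarzian; and each branch extends to a diffeomorphism onto the fixed interval $(\tau,0)\supset[x_0,x_0/\tau^2]$ because $\phi_{-1}\circ\tau$ already extends to a diffeomorphism onto $(\tau R,0)$ with $R>\tau x_0>1$, so the pullback of $\tau^{-2n}(\tau,0)$ always exists. No golden-mean or Fibonacci combinatorics enters: you are conflating $\Pi$ with the circle map ${\cal H}$; the rotation number is irrelevant to the return structure of the presentation function, and your proposed induction over the renormalization intervals $I_n$ is aimed at the wrong dynamical system.

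Two further problems. First, your extendability argument (iterating $\phi\circ G=\tau^{-2}\phi$, pulling domains back under $G$, and invoking Koebe to keep them from ``collapsing'') does not, as described, produce a single \emph{fixed} extension interval valid for all infinitely many branches simultaneously; without the explicit branch form above you have no uniform target to pull back. Second, there are sign errors that indicate a misreading of the geometry: since $x_0<0$ and $\tau<-1$ one has $x_0/\tau>0$, so $\Pi(0)=\tau\cdot 0=0$ and the repelling fixed point of $\Pi$ is $0$ itself (with $\Pi(\tau x_0)=\phi_{-1}(\tau x_0)=0$ landing on it); $\tau x_0$ is not a fixed point.
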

\begin{proof} 
By $\phi_{-1} \tau = \Pi^2$ interval $[x_0,x_0/\tau^2]$ is mapped onto $[x_0,0]$ and extendable to $(\tau,0)$ as a diffeomorphism. The first return map is obtained by 
composing this map piecewise with iterates of $\Pi^2 = \tau^2$. This results in a map defined except on a countable set with all branch extendable dynamically 
to $(\tau,0)$. These extended branches are all compositions of $\phi_{-1}$ and $\tau$ and thus have negative Schwarzian derivative by the setup of the ${\cal EWF}$-class. 
\end{proof}

\begin{lem}\label{lem:28ua,4}
Preimages of $1$ under $\Pi$ are dense in $[x_0,\tau x_0]$. 
\end{lem}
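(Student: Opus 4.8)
\textbf{Proof proposal for Lemma~\ref{lem:28ua,4}.}

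The plan is to exploit the expanding structure of the first return map established in Lemma~\ref{lem:8xp,2} together with the fact that the periodic point $1$ (with orbit $\{1,\tau^{-1}\}$ under $\Pi$, the latter point lying in $[x_0,x_0/\tau^2]$ once we iterate) is accessible from the return dynamics. First I would observe that since preimages of $x_0$ under $\tau$ (that is, the points $\tau^{-k}x_0$) accumulate at $0$ and $\Pi = \tau$ on $[x_0,x_0/\tau]$, while $\Pi^2 = \phi_{-1}\circ\tau$ maps $[x_0,x_0/\tau^2]$ onto $[x_0,0]$, the whole interval $[x_0,\tau x_0]$ is covered by backward images of the central interval $[x_0,x_0/\tau^2]$. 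Hence it suffices to prove that preimages of $1$ are dense in $[x_0,x_0/\tau^2]$, because a backward orbit of a dense set under the (piecewise) branches of $\Pi$ — whose inverse branches are defined off a countable set and jointly cover the whole interval — is dense everywhere.

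Next I would invoke Lemma~\ref{lem:8xp,2}: the first return map $F$ of $\Pi$ into $L:=(x_0,x_0/\tau^2)$ has diffeomorphic branches with negative Schwarzian, each extending onto a fixed interval strictly containing $[x_0,x_0\tau^2]\supset \overline{L}$. This ``big range / small domain'' property gives a definite Koebe-type distortion bound for all branches simultaneously, so that the branches of $F$, and hence of all iterates $F^n$, are uniformly quasi-symmetric onto their images; in particular the diameters of the domains of $F^n$ shrink geometrically (no wandering behavior is possible since each branch expands the relevant cross-ratio). Therefore the partition of $L$ into domains of $F^n$ refines to points: every subinterval of $L$ eventually contains a full domain of some $F^n$, which is mapped homeomorphically onto (a subinterval containing) $\overline{L}$. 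Now it remains to locate a single preimage of $1$ inside $\overline L$: since $\Pi^2(x_0/\tau^2)=\phi_{-1}(\tau\cdot x_0/\tau^2)=\phi_{-1}(x_0/\tau)=x_0$ is the repelling fixed point, and $\Pi^2$ maps $[x_0,x_0/\tau^2]$ \emph{onto} $[x_0,0]$, while $1>0$, one more application: $1 = \phi(0) = \Pi(\text{something})$ is not directly in range, but $\tau^{-1}\in (x_0/\tau,0)$ is the image $\phi_{-1}(\ )$ of a point of $L$, and $\tau^{-1}$ maps to $1$ under $\Pi$ backwards along $\{1,\tau^{-1}\}$; tracing this periodic orbit back into $L$ produces a point $p\in \overline L$ with $\Pi^N(p)=1$ for some $N$. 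Pulling $p$ back by the branches of $F^n$ that cover arbitrary subintervals of $L$ then yields preimages of $1$ in every subinterval of $L$, and the first paragraph's covering argument spreads this to all of $[x_0,\tau x_0]$.

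The main obstacle I anticipate is the bookkeeping needed to guarantee that a preimage of $1$ genuinely lies in the \emph{interior} of a domain of some $F^n$ (not at a branch endpoint or in the excluded countable set), and more seriously, justifying that the domains of $F^n$ shrink to points — i.e. ruling out a nontrivial interval on which $F$-iteration is never defined or never expands. This is where negative Schwarzian plus the fixed extendability interval from Lemma~\ref{lem:8xp,2} must be used carefully: one applies the cross-ratio (Koebe) inequality to the extension of each branch over $[x_0,x_0\tau^2]$ to get a uniform expansion constant $\lambda>1$ for the induced map on a suitable scale, so that $|{\rm dom}(F^n)|\le C\lambda^{-n}$, forcing the refinement to points and eliminating wandering intervals. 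Once that expansion estimate is in hand the density conclusion is immediate.
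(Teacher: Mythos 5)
Your proposal follows essentially the same route as the paper: density of preimages inside the fundamental domain $[x_0,x_0/\tau^2]$ via the uniformly extendable, negative-Schwarzian branches of the first return map from Lemma~\ref{lem:8xp,2}, spread to all of $[x_0,\tau x_0]$ by the fundamental-domain property, using $\tau^{-1}\in(x_0,x_0/\tau^2)$ as a preimage of $1$. Only a minor slip: $\tau^{-1}$ does not lie in $(x_0/\tau,0)$ (note $x_0/\tau>0$); since $\tau^{-1}<0<x_0/\tau$ it sits in the linear branch's domain and $\Pi(\tau^{-1})=\tau\cdot\tau^{-1}=1$ directly, so no tracing of the periodic orbit is needed.
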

\begin{proof}
From Lemma~\ref{lem:8xp,2}, preimages of any point in $[x_0,x_0/\tau^2]$ by iterates of $\Pi$ are dense in that interval. 
But $[x_0,x_0/\tau^2]$ is a fundamental domain for the dynamics: every orbit under $\Pi$ passes through that interval. Hence, the preimages of any
point of $[x_0,x_0/\tau^2]$ are dense in $[x_0,\tau x_0]$. That includes preimages of point $\tau^{-1} = \Pi^{-1}(1)$. 
\end{proof} 

The connection of $\Pi$ with $H$ is summarized by this lemma. 

\begin{lem}\label{lem:8xp,1}
A point $x\in (x_0,\tau x_0)$ is equal to $H^j(0)$, $j>0$, if and only
if $\Pi^k(x)=1$ for some $k\geq 0$. 
\end{lem}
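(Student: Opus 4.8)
The plan is to exploit the semi-conjugacy between the circle dynamics $\mathcal H$ and the golden-mean rotation, together with the fact that the presentation function $\Pi$ records, via its iterates, precisely which dynamical intervals of the renormalization tower contain a given point. First I would set up notation: write the forward orbit of the critical value as $c_j = H^j(0)$, $j\ge 0$, with $c_0 = 0 = \phi(x_0)$, and recall from the definition of $\mathcal H$ and the fact that its rotation number is the golden mean that the points $c_j$ for $j$ running through Fibonacci indices are exactly the closest returns to $x_0$ (equivalently to the identification point $x_0\sim\tau x_0$), landing alternately on the two sides. In these terms, $\Pi$ acts on $[x_0,\tau x_0]$ as the ``inverse'' of the first-return picture: on $[x_0,x_0/\tau]$ it is just multiplication by $\tau$, which on the circle represents the return map to the rescaled level, and on $(x_0/\tau,\tau x_0)$ it is $\phi_{-1}$, i.e. the off-central/``identity'' branch under the fixed-point equation~(\ref{equ:27up,1}).

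The forward implication ($x = H^j(0) \Rightarrow \Pi^k(x) = 1$ for some $k\ge 0$) I would prove by induction on $j$ using the combinatorics of Fibonacci closest returns. The base case is $j$ such that $H^j(0)$ is the point $1 = \phi(0)$; more generally one checks directly that $\Pi(1)=\tau^{-1}$ and $\Pi(\tau^{-1})=1$, so the orbit $\{1,\tau^{-1}\}$ is $\Pi$-periodic, hence all of it satisfies $\Pi^k(\cdot)=1$. For the inductive step, given $c_j$ with $j>0$, I would locate $c_j$ relative to the cut point $x_0/\tau$: if $c_j\le x_0/\tau$ then $\Pi(c_j)=\tau c_j$, and the rescaling-by-$\tau$ together with the renormalization structure identifies $\tau c_j$ with an earlier critical-orbit point $c_{j'}$ at the coarser scale (this is where the functional equation $\phi_1 = \phi_{-1}\circ\phi$ and the closest-return combinatorics enter); if $c_j>x_0/\tau$ then $\Pi(c_j) = \phi_{-1}(c_j)$, and one uses that $\phi_{-1}$ maps the critical orbit to itself shifted by an appropriate Fibonacci amount, again landing on some $c_{j''}$ with $j''<j$ or a point already handled. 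Either way $\Pi$ strictly decreases the relevant combinatorial complexity, and by induction $\Pi^k(c_j)$ eventually reaches $1$.

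For the converse ($\Pi^k(x)=1 \Rightarrow x = H^j(0)$ for some $j>0$), I would argue that the set $\{H^j(0):j>0\}$ is forward-invariant under the ``branches'' of $\Pi^{-1}$ in the sense that every $\Pi$-preimage in $(x_0,\tau x_0)$ of a critical-orbit point is again a critical-orbit point — essentially reversing the case analysis above, using that $\tau^{-1}$ and $\phi_{-1}^{-1}$ both pull critical-orbit points back to critical-orbit points because of the exact correspondence between $\Pi$ and the renormalization return maps. Since $1 = H^{q_n}(0)$ for a suitable Fibonacci time, pulling back $k$ times keeps us inside the critical orbit. The main obstacle, and the step I would spend the most care on, is making the bookkeeping of ``which Fibonacci shift'' precise: one must track exactly how $\Pi$, alternately via the linear branch $\tau$ and the branch $\phi_{-1}$, implements the passage between consecutive levels $I_n \to I_{n-1}$ of the renormalization tower and correspondingly shifts the index $j$ of $c_j = \psi^j(0)$. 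This is bookkeeping of the same flavor as the standard ``$\Pi$ is post-critically finite'' arguments for Feigenbaum-type presentation functions, so I would phrase it by first establishing the identity $\Pi$ restricted to each fundamental subinterval equals a specific iterate of $\mathcal H$ transported by the renormalization conjugacy, and then the lemma becomes immediate.
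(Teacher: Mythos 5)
Your overall strategy is the same as the paper's: show that $\Pi$ (or $\Pi^2$) sends $H^j(0)$ to some $H^k(0)$ with $k<j$ until one reaches $1$, and conversely that $\Pi$-preimages of critical-orbit points are critical-orbit points. But the step you yourself flag as ``the main obstacle'' -- making precise how the rescaling identifies $\tau c_j$ with an earlier orbit point -- is exactly where the whole content of the proof lies, and as written your proposal asserts it rather than proves it. The mechanism the paper uses is a symbolic one, and it is worth spelling out because no appeal to ``closest-return combinatorics'' or to the semi-conjugacy with the rotation is actually needed. Write $H^j(0)=\phi_{\epsilon_j}\circ\cdots\circ\phi_{\epsilon_1}(0)$ with $\epsilon_m\in\{0,-1\}$ and (by the Fibonacci combinatorics) no two consecutive $-1$'s, where $\phi_k=\tau^{-k}\phi\tau^k$. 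If $H^j(0)<x_0/\tau$, then $\Pi(H^j(0))=\tau H^j(0)=\phi_{\epsilon_j-1}\circ\cdots\circ\phi_{\epsilon_1-1}(0)$, since conjugating by $\tau$ lowers every subscript by one and $\tau\cdot 0=0$. The functional equation~(\ref{equ:27up,1}) in the form $\phi_{-2}\circ\phi_{-1}=\phi_0$ then rewrites this shifted word back into an admissible word of length $k$, with $j-k$ equal to the number of $-1$'s in the original word; so $k<j$ unless the word is all zeros, which by the combinatorics forces $j=1$. This explicit rewriting is the ``bookkeeping'' you deferred.

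Two further points of caution. First, in your case $c_j>x_0/\tau$ you claim $\Pi(c_j)=\phi_{-1}(c_j)$ lands on some $c_{j''}$ with $j''<j$; in fact $\phi_{-1}$ is there the branch of $H$ itself, so $\Pi(c_j)=c_{j+1}$ and the index \emph{increases}; one must then observe $c_{j+1}<x_0/\tau$ and apply the linear case to $\Pi^2(c_j)$, using $\epsilon_{j+1}=-1$ to see that the net effect still strictly decreases the index. Second, for the converse your phrase ``$\tau^{-1}$ and $\phi_{-1}^{-1}$ both pull critical-orbit points back to critical-orbit points'' needs the same mechanism run backwards: $\tau^{-1}H^j(0)=H_1^j(0)$ where $H_1=\tau^{-1}H\tau$ has branches $\phi_0$ and $\phi_1=\phi_{-1}\circ\phi_0$, so expanding each $\phi_1$ exhibits $\tau^{-1}H^j(0)$ as $H^k(0)$ with $k\geq j$; whereas $\phi_{-1}^{-1}H^j(0)$, when it exists in $(x_0/\tau,\tau x_0)$, is just $H^{j-1}(0)$. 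Without identifying $\tau^{-1}H^j(0)$ with an iterate of the \emph{rescaled} dynamics and invoking the fixed-point equation, the forward invariance you claim does not follow.
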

\begin{proof}
In this proof we adopt the following notation which is consistent with a later use for towers: $\phi_k = \tau^{-k} \phi \tau^k$ for $k\in\ZZ$.  
We can always write for $j>0$: 
\[ H^j(0) = \phi_{\epsilon_j} \circ \cdots \circ \phi_{\epsilon_1} (0) \]
for some $\epsilon_m =0,-1$ for $m=1,\cdots,k$. By Fibonacci combinatorics, this sequence does not contain two $-1$'s in a row.

Assume first the $H^j(0) < x_0/\tau$. Then 
\[ \Pi(H^j(0)) = \tau H^j(0) = \phi_{\epsilon_j-1}\circ\cdots\circ \phi_{\epsilon_1-1}(0) = \phi_{\epsilon'_{k}}\circ \cdots \phi_{\epsilon'_1}(0) = H^k(0) \]
where the final representation is obtained from the functional equation~(\ref{equ:27up,1}) in the form $\phi_{-2}\circ \phi_{-1} = \phi_0$. 
Combinatorially, the whole process is equivalent 
to taking the sequence $(\epsilon_m)_{m=1}^j$ and whenever $\epsilon_m=0$ and $\epsilon_{m+1}=-1$ replacing it with $0$, while
any occurrence of $\epsilon_m=0$ which is not followed by $\epsilon_{m+1}=-1$ is replaced with $-1$. Observe that $k\leq j$ and $j-k$ is 
equal to the number of occurrences of $-1$ in the sequence $(\epsilon_m)_{m=1}^j$. 
If $H^j(0)>x_0/\tau$, then $\Pi^2(H^j(0)) = \Pi(H^{j+1}(0))$ 
where $H^{j+1}(0) < x_0/\tau$. Since $\epsilon_{j+1}=-1$, by applying the previous case we see that 
$\Pi^2(H^j(0)) = H^k(0)$ where $k\leq j$ and again $j-k$ equals the number of occurrences of $-1$ in the sequence $(\epsilon_m)_{m=1}^j$.  

We conclude that the first or the second iteration of $\Pi$ maps $H^j(0)$ to $H^k(0)$ where $k<j$ unless the sequence $(\epsilon_m)$ consists only of zeros. That is only 
possible for $j=1$ and thus every $H^j(0)$ is a preimage of $H(0)=1$ under the iterates of $\Pi$. 

Conversely, $\Pi^{-1}(H^j(0))$ could either be $\tau^{-1} H^j(0) = H_1^j(0)$ 
or $\phi_{-1}^{-1} H^j(0)$. In the first case, $H_1$ has two branches, one of which is $\phi_0$ and another $\phi_1 = \phi_{-1}\circ \phi_0$ 
by the functional equation~(\ref{equ:27up,1}). Consequently, in this case $\Pi^{-1}(H^j(0)) = H^k(0)$ for $k>j$. In the second case $\Pi^{-1}(H^j(0)) = H^{j-1}(0)$. Hence, it 
follows by induction that every preimage of $1=H(0)$ by $\Pi$ is $H^k(0)$ for some $k$.
\end{proof}

It follows that the circle dynamics of $H$ has a dense orbit $\{H^j(0)\}_{j=0}^{\infty}$. This orbit is equal to 
$\{\Pi^{-k}(1) :\: k\geq 0\}$ by Lemma~\ref{lem:8xp,1}
and hence dense by Lemma~\ref{lem:28ua,4}. This yields:

\begin{prop}\label{prop:28ua,1}
The circle dynamics of $H$ is conjugate to the linear rotation.
\end{prop}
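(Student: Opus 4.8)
The plan is to invoke the standard fact that an orientation-preserving circle homeomorphism with irrational rotation number and a dense orbit is topologically conjugate to the rigid rotation by that number (this is essentially Poincar\'e's classification theorem in the case where the rotation set acts minimally). Since $H$ restricted to $(x_0,\tau x_0)$ with $x_0$ identified to $\tau x_0$ is, by the construction in Definition~\ref{defi:9xa,1} and the paragraph ``Dynamics on the circle'', a genuine degree-$1$ circle homeomorphism whose rotation number is the golden mean by hypothesis, the only thing that needs verification is that some orbit is dense; then minimality follows because for an irrational circle homeomorphism the closure of any orbit is a closed invariant set, hence either the whole circle or a Cantor set, and the existence of one dense orbit rules out the Cantor case.

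So the key step is to establish density of the orbit $\{H^j(0)\}_{j=0}^\infty$. First I would recall from Lemma~\ref{lem:8xp,1} that this orbit coincides exactly with the set $\{\Pi^{-k}(1):k\ge 0\}$ of backward iterates of the point $1$ under the presentation function $\Pi$. Then I would apply Lemma~\ref{lem:28ua,4}, which asserts precisely that the preimages of $1$ under $\Pi$ are dense in $[x_0,\tau x_0]$. Combining these two lemmas gives density of $\{H^j(0)\}$ in the circle, and hence minimality of $H$.

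With minimality in hand, I would conclude by the Poincar\'e conjugacy theorem: define $\Phi$ on the orbit of $0$ by sending $H^j(0)$ to $j\theta \bmod 1$, where $\theta$ is the golden mean; this map is well defined and order-preserving because the combinatorial order of the orbit of $0$ under $H$ matches that of the orbit of $0$ under the rotation $R_\theta$ (both realize the golden-mean combinatorics), and by density it extends continuously and monotonically to a homeomorphism of the circle conjugating $H$ to $R_\theta$.

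The only genuine obstacle is really just the bookkeeping in the density argument, and that has already been done in Lemmas~\ref{lem:8xp,1} and~\ref{lem:28ua,4}; granting those, the proposition is immediate. If one wanted a self-contained argument one would have to check the order-preservation of the candidate conjugacy directly from the Fibonacci combinatorics, but since the rotation number is assumed to be the golden mean and $H$ is a bona fide circle homeomorphism, this is exactly the content of Poincar\'e's theorem and need not be reproved here.
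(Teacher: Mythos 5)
Your proposal is correct and follows the paper's argument exactly: the paper also derives density of the orbit $\{H^j(0)\}$ by combining Lemma~\ref{lem:8xp,1} (identifying this orbit with $\{\Pi^{-k}(1):k\ge 0\}$) with Lemma~\ref{lem:28ua,4}, and then concludes by Poincar\'e's classification, using that the rotation number is the golden mean by the definition of the ${\cal EWF}$-class.
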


\begin{lem}\label{lem:28ua,3}
Suppose that $\Pi$ and $\hat{\Pi}$ are presentation functions for $H$ and $\hat{H}$, respectively, both generated by maps from the ${\cal EWF}$-class. 
Then there is a unique order-preserving topological conjugacy $\upsilon$ between them which fixes $1$. 
This conjugacy is equal to the topological conjugacy between the corresponding circle maps. 
\end{lem}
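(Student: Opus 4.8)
The plan is to build $\upsilon$ as a limit of finite combinatorial correspondences and then verify it conjugates $\Pi$ to $\hat\Pi$. First I would invoke Lemma~\ref{lem:8xp,1}, which identifies the backward orbit $\{\Pi^{-k}(1):k\ge 0\}$ with the critical forward orbit $\{H^j(0):j>0\}$, together with Lemma~\ref{lem:28ua,4}, giving density of this set in $[x_0,\tau x_0]$. For both $\Pi$ and $\hat\Pi$ the combinatorial labeling of these preimages is the same: by Proposition~\ref{prop:28ua,1} (or directly, since both circle maps have golden mean rotation number) the cyclic order of $\{H^j(0)\}$ on the circle is prescribed by the rotation combinatorics and does not depend on which ${\cal EWF}$-map we started from. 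Hence the natural bijection $H^j(0)\mapsto \hat H^j(0)$ is order-preserving on a dense set, and I would define $\upsilon$ as its continuous (hence monotone homeomorphic) extension to $[x_0,\tau x_0]$, normalized so that $\upsilon(1)=1$ automatically since $1=H(0)$ corresponds to $1=\hat H(0)$.

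Next I would check that $\upsilon$ conjugates the dynamics. This is where Lemma~\ref{lem:8xp,1} does the real work: the map $\Pi$ acts on the indexed set $\{H^j(0)\}$ by an explicit combinatorial rule — the rule described in the proof of Lemma~\ref{lem:8xp,1}, namely applying one or two iterates of $\Pi$ sends $H^j(0)$ to $H^k(0)$ where the passage $j\mapsto k$ is dictated purely by the Fibonacci symbolic sequence $(\epsilon_m)$ and is identical for $\hat\Pi$. Equivalently, $\Pi^{-1}$ acts on indices by the combinatorial rule in the converse part of that proof. Since this rule is the same for both presentation functions, $\upsilon\circ\Pi=\hat\Pi\circ\upsilon$ holds on the dense set $\{H^j(0)\}$ (away from the countable set of discontinuities of $\Pi$, i.e.\ preimages of the breakpoint $x_0/\tau$), and by continuity it holds wherever both sides are defined. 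The fact that $\upsilon$ agrees with the circle conjugacy follows because the circle conjugacy of Proposition~\ref{prop:28ua,1} is also the continuous extension of the same index bijection on the dense critical orbit; two order-preserving homeomorphisms of the circle that agree on a dense set coincide.

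Uniqueness is then a soft argument: any order-preserving topological conjugacy $\upsilon'$ between $\Pi$ and $\hat\Pi$ fixing $1$ must send $\Pi^{-k}(1)$ to $\hat\Pi^{-k}(1)$ for every $k$ (because $\upsilon'$ intertwines the maps and is order-preserving, so it respects the labeling of the backward orbit of the fixed reference point $1$), hence agrees with $\upsilon$ on the dense set $\{\Pi^{-k}(1)\}$ and therefore everywhere.

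The main obstacle I anticipate is the bookkeeping around the discontinuities: $\Pi$ is only piecewise defined (it is not continuous on the circle, as noted after its definition), so one must be careful that the combinatorial rule of Lemma~\ref{lem:8xp,1} genuinely transports across to $\hat\Pi$ on the matching countable exceptional set, and that the monotone extension $\upsilon$ is actually a homeomorphism rather than merely monotone — this requires knowing the critical orbit $\{H^j(0)\}$ has no gaps, i.e.\ is dense with dense complement is not an issue since we only need density, which Lemma~\ref{lem:28ua,4} supplies. Making the "same combinatorial rule" statement fully precise, rather than appealing to it informally, is the one place where care is needed; everything else is a routine density-and-continuity argument.
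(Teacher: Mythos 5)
Your proposal is correct and follows essentially the same route as the paper: both arguments rest on the combinatorial rule of Lemma~\ref{lem:8xp,1} (the index map $j\mapsto k$ under $\Pi$ is the same for both dynamics), the density from Lemma~\ref{lem:28ua,4}, and the order-preserving correspondence $H^j(0)\mapsto\hat H^j(0)$ on the backward orbit of $1$ for uniqueness. The only cosmetic difference is direction: the paper starts from the already-constructed circle conjugacy of Proposition~\ref{prop:28ua,1} and verifies it conjugates the presentation functions, whereas you build $\upsilon$ as a monotone extension and then identify it with the circle conjugacy — which also quietly disposes of your worry about the extension being a homeomorphism.
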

\begin{proof}
Let us first show that the conjugacy $\upsilon$ between the circle mappings also conjugates between the presentation functions. On a dense subset $H^j(0)$, $\upsilon$ is 
determined by $\upsilon(H^j(0)) = {\hat H}^j(0)$ for $j>0$. But by considerations of the proof of Lemma~\ref{lem:8xp,1}, $\Pi(H^j(0)) = H^k(0)$ and 
$\hat{\Pi}(\hat{H}^j(0))=\hat{H}^k(0)$  where $k$ is the same in both cases. Then
\[ \upsilon \circ \Pi (H^j(0)) = \upsilon(H^k(0)) = {\hat H}^k(0) = \hat{\Pi}(\hat{H}^j(0)) = \hat{\Pi}\upsilon(H^j(0)) \; .\]
Since it holds on a dense set, $\upsilon$ conjugates $\Pi$ to $\hat{\Pi}$. 

For the uniqueness, observe that any conjugacy between the presentation functions normalized by $H(1)=1$ maps the set $H^{-k}(1)$ onto $\hat{H}^{-k}(0)$ for $k\geq 0$. 
For an order preserving conjugacy there is a unique such mapping. By Lemma~\ref{lem:8xp,1} it follows that the conjugacy is uniquely determined on a dense set.
\end{proof}

\subsection{Properties of orbits under complex dynamics.}
\begin{lem}\label{lem:28up,1}
For $n$ even and non-negative, define $u_n$ to be $\phi^{-1}(x_0\tau^{-n+1})$. 
For $n$ odd and positive , define $u_n$ to be $\phi_{-1}^{-1}(x_0\tau^{-n+1})$. 
For $n$ even and non-negative, consider
\[ D_n = {\cal D}(x_0,u_n) \cup {\cal D}(u_{n+1},\tau x_0)\; .\] 
Suppose that $z$ is in the domain of $H$ and $k$ is the smallest non-negative 
iterate for which $H^k(z)\in D_n$, for some even $n$. 
Then, there exists an inverse branch $H^{-k}$ defined on the connected component of $D_n$ 
which contains $H^{k}(z)$, which sends $H^k(z)$ to $z$.
\end{lem}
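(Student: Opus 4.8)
The plan is to prove that the $H$-orbit from $z$ to $H^k(z)$ can be followed backwards by univalent inverse branches, because each point of the partial orbit avoids the critical value $0$ and the relevant topological discs, in their successive preimages, do not shrink or wrap around $0$. First I would unwind what it means for $k$ to be the \emph{first} entry time of the orbit of $z$ into $D_n$: for every $j$ with $0\le j<k$, the point $H^j(z)$ lies outside $D_n$. Because $D_n$ is a neighbourhood of both identified endpoints $x_0\sim\tau x_0$ of the circle (it contains ${\cal D}(x_0,u_n)$ and ${\cal D}(u_{n+1},\tau x_0)$, i.e.\ Poincar\'e discs based at intervals with endpoints at $x_0$ and $\tau x_0$ respectively), staying outside $D_n$ on the real line means staying inside $[u_n,u_{n+1}]$, which in particular is bounded away from $0$ from both sides: note $0=\phi(x_0)\in(u_n,u_{n+1})$ is exactly the point we must track, so I should instead observe that along the backward orbit the successive preimages of $D_n$ are bounded away from $0$. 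The key point is that $D_n$ has been chosen so that $\phi^{-1}(D_n)$ and $\phi_{-1}^{-1}(D_n)$, as components, are again of the same shape — nested Poincar\'e discs based at subintervals not containing $0$ — so that inverse branches of the two generators $\phi,\phi_{-1}$ act univalently on them. This is precisely the content of items 3 and 4 of Definition~\ref{defi:9xa,1} (inverse branches of $\phi$ shrink Poincar\'e neighbourhoods ${\cal D}(I,\alpha)$ into ${\cal D}(\phi^{-1}(I),\alpha)$ provided $0\notin I$) together with Lemma~\ref{lem:28ua,2}, which guarantees that the two domains $\Omega_-,\Omega_+$ meet only at $x_0/\tau$ and so the component of $H^{-1}$ of any disc is governed by a single generator.

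The heart of the argument is an induction on $j=k,k-1,\dots,0$, showing: the component $W_j$ of $H^{-(k-j)}(D_n)$ containing $H^j(z)$ is a topological disc on which $H^{k-j}$ restricts to a univalent map onto the component of $D_n$ containing $H^k(z)$, and moreover $W_j$ is disjoint from the critical value $0$ (equivalently $0\notin\overline{\phi^{-1}(W_j)}$ when we go one more step back), or more precisely $W_j$ is contained in a Poincar\'e disc ${\cal D}(I_j,\pi/2)$ with $I_j$ a real interval not containing $0$. The base case $j=k$ is trivial. For the inductive step, $H$ acts near $H^{j}(z)$ by one of the two branches $\phi^{\mp1}$ restricted to $\Omega_{\mp}$; by Lemma~\ref{lem:28ua,2} the relevant generator is unambiguous once we know $H^j(z)\notin D_n$ places it in the interior of one of $\Omega_-,\Omega_+$; then items 2--4 of Definition~\ref{defi:9xa,1} (unbranched covering of the punctured disc, inclusion of preimages of round discs into Poincar\'e discs, and the Poincar\'e-neighbourhood contraction property) give that the preimage component $W_{j-1}$ is a disc of the required type and the inverse branch is univalent there. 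The closing-up of the induction at $j=0$ produces the required $H^{-k}$ on the component of $D_n$ containing $H^k(z)$.

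The main obstacle I anticipate is controlling the critical value: a priori some preimage disc $W_j$ could contain $0$, in which case $\phi^{-1}$ would not be univalent on it and the inductive structure collapses. This is exactly why the specific radii $u_n=\phi^{-1}(x_0\tau^{-n+1})$, $u_{n+1}=\phi_{-1}^{-1}(x_0\tau^{-n+1})$ are chosen and why $n$ is taken \emph{even}: one checks that $D_n$ pulls back under the two generators into a disc of the same combinatorial type but ``one level deeper'' (the radius exponent $-n+1$ decreasing), using the functional equation~(\ref{equ:27up,1}) in the form $\phi_{-2}\circ\phi_{-1}=\phi_0$ exactly as in the proof of Lemma~\ref{lem:28ua,2}, and this never lets $0$ enter. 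Concretely, the one delicate verification is that $\phi^{-1}(D_n)$ and $\phi_{-1}^{-1}(D_n)$, which a priori could straddle the critical point, are in fact contained in Poincar\'e discs ${\cal D}(I,\pi/2)$ with $I$ on one side of $0$; this follows from item 3 of Definition~\ref{defi:9xa,1} for $\phi$, and for $\phi_{-1}$ from the same item after rescaling by $\tau$ together with Lemma~\ref{lem:9xa,1}. Once that single ``no-critical-value'' step is in place, everything else is the routine propagation of Poincar\'e neighbourhoods under inverse branches of Epstein-type maps, and the induction runs to completion.
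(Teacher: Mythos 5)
Your overall strategy (pull $D_n$ back along the orbit and check that the omitted value $0$ is never met) is the right one, and your use of items 3--4 of Definition~\ref{defi:9xa,1} to propagate Poincar\'e neighbourhoods under inverse branches matches the paper. But the mechanism you propose for the one step you yourself single out as delicate does not work. You claim that the choice of the radii $u_n$, the evenness of $n$ and the functional equation make the successive preimages of $D_n$ ``never let $0$ enter,'' as if this were an unconditional property of $D_n$. It cannot be: the forward orbit of the critical value $0$ is dense in $[x_0,\tau x_0]$ (Lemma~\ref{lem:28ua,4}, Proposition~\ref{prop:28ua,1}), so it enters $D_n$, and consequently pullbacks of $D_n$ along the critical orbit certainly do contain $0$. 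Whether $0$ appears in the pullback $W_j$ along the orbit of $z$ depends on that orbit, not merely on the shape of $D_n$; the only thing that excludes it is the minimality of $k$, which you state at the outset but never actually use in your induction.

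The paper's proof is a contradiction argument built precisely on that minimality. Suppose the partial inverse branch $\zeta$ of $H^{k-k'}$ on the component $D'$ of $D_n$ has $0$ in its image. Then $H^{k-k'}(0)\in D'\cap\RR$, so $H^{k-k'+1}(0)$ lies in $(0,x_0\tau^{-n+1})$ (or in $(x_0\tau^{-n},0)$ for the other component). By the Fibonacci combinatorics the first entry of the critical orbit into that interval is realized by $\tau^{-n}\circ\phi\circ\tau^{n}$, whose pullback of the interval is $(x_0\tau^{-n},x_0\tau^{-n-1})$; item 4 of the definition then gives $\zeta(D')\subset{\cal D}(x_0\tau^{-n},x_0\tau^{-n-1})$, and item 3 shows that both components of $H^{-1}\left({\cal D}(x_0\tau^{-n},x_0\tau^{-n-1})\right)$ land back inside $D_n$. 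Hence $H^{k-k'-1}(z)\in D_n$, contradicting that $k$ is the first entry time. You need to replace your ``$0$ never enters'' claim with this contradiction step; without it the induction does not close.
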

\begin{proof}
Since the Poincar\'{e} neighborhood is simply connected, the only
obstacle to constructing the inverse branch may be if the omitted
value $0$ is encountered. Thus suppose that for some $k' > 0$,
$\zeta$, which is an inverse branch of $H^{k-k'}$ well defined on 
the connected component $D'$ of $D_n$ 
which contains $H^{k}(z)$, maps
$H^k(z)$ to $H^{k'}(z)$ and its image contains $0$. 
First, consider the case when $D'={\cal D}(x_0,u_n)$. 
Then $H^{k-k'+1}(0)\in (0, x_0\tau^{-n+1})$.
Observe that the first entry of the iterates of $0$ by $H$ to the
interval $(0, x_0\tau^{-n+1})$ occurs by a composition of branches of $H$, which
is equal to the map $\tau^{-n}\circ \phi\circ \tau^n$, and the pullback of
$(0, x_0\tau^{-n+1})$ by this map is the interval $(x_0\tau^{-n}, x_0\tau^{-n-1})$.
Therefore, $\zeta((x_0,u_n))\subset (x_0\tau^{-n}, x_0\tau^{-n-1})$.
Using the property 4 of the definition
of ${\cal EWF}$ class, we get that $\zeta({\cal D}(x_0,u_n))\subset 
{\cal D}(x_0\tau^{-n}, x_0\tau^{-n-1})$.
In turn, $H^{-1}({\cal D}(x_0\tau^{-n}, x_0\tau^{-n-1}))=
\phi^{-1}({\cal D}(x_0\tau^{-n}, x_0\tau^{-n-1}))\cup \phi_{-1}^{-1}({\cal D}(x_0\tau^{-n}, x_0\tau^{-n-1}))$.
Now, by the property 3 of the same definition,
$$\phi^{-1}({\cal D}(x_0\tau^{-n}, x_0\tau^{-n-1}))\subset 
\phi^{-1}({\cal D}(-x_0\tau^{-n+1}, x_0\tau^{-n+1}))
\subset {\cal D}(x_0,u_n),$$ 
and
\[ \phi_{-1}^{-1}({\cal D}(x_0\tau^{-n}, x_0\tau^{-n-1}))=\tau
\phi^{-1}(({\cal D}(x_0\tau^{-n}, x_0\tau^{-n+1})) \subset \]
\[ \subset \tau {\cal D}(x_0,u_{n+2})={\cal D}(u_{n+1},\tau x_0)\; .\]
It means that $H^{k-k'-1}(z)\in D_n$, contrary to the hypothesis of the lemma. 
The remaining case $D'={\cal D}(u_{n+1}, \tau x_0)$ is very similar.
The first entry of the iterates of $0$ by $H$ to the
$(x_0\tau^{-n}, 0)$ occurs by a composition of branches of $H$, which
is equal to the map $\tau^{-n+1}\circ \phi\circ \tau^{n-1}$, 
and the pullback of
$(0, x_0\tau^{-n+1})$ by this map is the interval 
$(x_0\tau^{-n-2}, x_0\tau^{-n-1})$.
Therefore, 
$\zeta({\cal D}(u_{n+1}, \tau x_0))\subset 
{\cal D}(x_0\tau^{-n-2}, x_0\tau^{-n-1})$.
In turn, 
$$\phi^{-1}({\cal D}(x_0\tau^{-n-2}, x_0\tau^{-n-1}))
\subset \phi^{-1}({\cal D}(-x_0\tau^{-n-1}, x_0\tau^{-n-1}))
\subset {\cal D}(x_0,u_{n}),$$ 
and
$$\phi_{-1}^{-1}({\cal D}(x_0\tau^{-n-2}, x_0\tau^{-n-1}))=
\tau \phi^{-1}(({\cal D}(x_0\tau^{-n-2}, x_0\tau^{-n-3}))\subset$$
$$\tau \phi^{-1}(({\cal D}(-x_0\tau^{-n-1}, x_0\tau^{-n-1}))
\subset \tau {\cal D}(x_0,u_{n+2})={\cal D}(u_{n+1},\tau x_0).$$
As in the first case, 
it is a contradiction.
\end{proof}

\begin{defi}\label{defi:26xp,1}
Define $\Omega_{-,c}$ to be the range of the principal inverse branch of $\phi$ from the set ${\cal D}(x_0,x_0\tau)\setminus \RR_-$. 
Similarly, $\Omega_{+,c}$ is the range of the principal inverse branch of $\phi_{-1}$ from ${\cal D}(x_0,\tau x_0) \setminus \RR_+$. 
Equivalently, $\Omega_{-,c}$ is the preimage of the strip $|\Im w| < \pi$ by $\log\phi$ and likewise $\Omega_{+,c}$ is the preimage of the same strip 
by $\log\phi_{-1}$. 
\end{defi}

\begin{lem}\label{lem:28up,2}
Take a point $z$ with an infinite orbit under $H$. 
Moreover, for each $k\geq 0$, $H^k(z) \in \Omega_{-,c} \cup \Omega_{+,c}$. 
Also, suppose that the distance from the set 
$\{ H^k(z) :\: k\geq 0\}$ to $\RR$ is $0$. Then, $z\in \RR$. 
\end{lem}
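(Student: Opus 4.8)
The plan is to argue by contrapositive and by a ``shadowing'' argument on the real line, exploiting the control on Poincar\'e neighborhoods provided by items 3 and 4 of Definition~\ref{defi:9xa,1} together with Lemma~\ref{lem:28up,1}. Suppose $z\notin\RR$; since $z$ and all its forward iterates lie in $\Omega_{-,c}\cup\Omega_{+,c}$, each iterate $H^k(z)$ lies in a definite Poincar\'e-type neighborhood of some real interval, and the key is to show that the ``opening angle'' $|\Im\log\phi|$ (respectively $|\Im\log\phi_{-1}|$) of the iterates cannot shrink to $0$. Equivalently, writing $w_k$ for the appropriate logarithmic coordinate of $H^k(z)$, one wants a lower bound on $|\Im w_k|$ that is bounded away from $0$ along the orbit, which forces the Euclidean distance from $\{H^k(z)\}$ to $\RR$ to stay bounded below, contradicting the hypothesis.

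The mechanism for the non-shrinking of the angle should be the following. By Lemma~\ref{lem:28up,1}, whenever an iterate enters one of the sets $D_n={\cal D}(x_0,u_n)\cup{\cal D}(u_{n+1},\tau x_0)$, there is an inverse branch of $H^k$ defined on the whole connected component; moreover item~4 of the definition of ${\cal EWF}$ says precisely that pulling back by an inverse branch of $\phi$ a set ${\cal D}(I,\alpha)$ lands inside ${\cal D}(\phi^{-1}(I),\alpha)$ --- so the angle $\alpha$ is non-increasing under inverse iteration and, correspondingly, non-decreasing under forward iteration wherever the branch is the relevant inverse branch. Since $z\notin\RR$ it has a strictly positive angle $\alpha_0>0$ with respect to the appropriate interval, and applying $H$ (i.e.\ running the inverse branches backwards) the angle at $H^k(z)$ is at least $\alpha_0$ for every $k$. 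First I would make this precise: show that being in $\Omega_{-,c}\cup\Omega_{+,c}$ with a prescribed positive angle is a forward-invariant condition (with the angle only growing), and that a point at angle $\ge\alpha_0$ in ${\cal D}(I,\alpha_0)$ for an interval $I$ that itself stays in a bounded region (here everything lives inside ${\cal D}(x_0,\tau x_0)$, so $|I|$ is bounded above, and the lengths $|I|$ of the relevant real intervals are bounded below along the orbit --- this last point needs the recurrence structure) has Euclidean distance to $\RR$ bounded below by a constant depending only on $\alpha_0$ and those geometric bounds.

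The main obstacle is controlling the real intervals $I_k$ underlying the neighborhoods containing $H^k(z)$: the angle bound alone only gives distance $\gtrsim \alpha_0\,|I_k|$ from the real line, so one must rule out $|I_k|\to 0$. This is where the hypothesis that the orbit is infinite and the combinatorial/dynamical structure of $H$ (Fibonacci combinatorics, the role of the presentation function, Lemmas~\ref{lem:8xp,1} and~\ref{lem:28ua,4}) must enter: an infinite orbit cannot be eventually trapped in arbitrarily small intervals near the neutral point $x_0$ because the associated dynamics $G$ is only neutral at $x_0$ and the orbit, projecting to a dense rotation-type orbit on the circle, must repeatedly return to a fixed interval such as $(x_0,x_0/\tau^2)$ bounded away from degenerate scales --- alternatively, one invokes Lemma~\ref{lem:28up,1} to pull back a \emph{definite} neighborhood $D_n$ for the smallest $n$ actually visited, and the point is that $n$ cannot be forced to $+\infty$ along the whole forward orbit without the orbit being finite (it would have to converge to $x_0$, but $x_0$ is not in the domain and $G$ has no attracting behavior allowing a genuine infinite orbit to do this). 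Once a definite scale is secured, combine it with the angle lower bound to conclude $\mathrm{dist}(\{H^k(z)\},\RR)\ge c(\alpha_0)>0$, contradicting the hypothesis; hence $z\in\RR$.
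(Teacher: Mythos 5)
There is a genuine gap, and it sits exactly where you flag ``the main obstacle'': your argument needs the lengths $|I_k|$ of the real intervals underlying the Poincar\'e neighborhoods to stay bounded below along the orbit, and this is false. Neither of your proposed fixes works. Returning repeatedly to a fixed interval such as $(x_0,x_0/\tau^2)$ is irrelevant, because the quantity to control is an infimum over all $k$, and only the bad subsequence of times matters. Your claim that ``$n$ cannot be forced to $+\infty$ along the whole forward orbit without the orbit being finite'' addresses the wrong quantifier: nothing has to converge to $x_0$; it is enough that a subsequence $H^{k_j}(z)$ tends to $x_0$, which is perfectly compatible with an infinite orbit and is in fact forced by the hypothesis $\dist(\{H^k(z)\},\RR)=0$ combined with the minimality of the circle dynamics (Proposition~\ref{prop:28ua,1}). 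Along that subsequence the relevant scales shrink to $0$, so the forward ``angle at least $\alpha_0$'' estimate only yields $\dist(H^{k_j}(z),\RR)$ of order $\alpha_0\,|I_{k_j}|\to 0$, and no contradiction with the hypothesis can be reached by pushing the angle forward.

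The paper's proof runs the information in the opposite direction, backwards from the late iterates to $z$. Assuming the distance is $0$, minimality gives a subsequence $H^{k_j}(z)\to x_0$; by Lemma~\ref{lem:28ua,1} these iterates lie in $D_{n(j)}$ with $n(j)\to\infty$; Lemma~\ref{lem:28up,1} supplies an inverse branch of $H^{k_j}$ on the whole component of $D_{n(j)}$ sending $H^{k_j}(z)$ back to $z$, and this branch preserves $\RR$ because the orbit never leaves $\Omega_{-,c}\cup\Omega_{+,c}$. Item 4 of Definition~\ref{defi:9xa,1} then confines $z$ to the geometric disk over the real pullback interval $H^{-k_j}\bigl((x_0,u_{n(j)})\bigr)$, whose length tends to $0$ uniformly in $k$ by the uniform continuity of the conjugacy to the rotation. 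Hence $z$ lies in disks of shrinking radius centered on $\RR$, so $z\in\RR$. In short, the correct use of item~4 is to contract Poincar\'e disks under pullback of a shrinking target near $x_0$, not to propagate a positive angle forward at scales that cannot be bounded below.
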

\begin{proof}
By Proposition~\ref{prop:28ua,1} the dynamics of $H$ on the interval $[x_0,\tau x_0]$ is transitive and so if the orbit of $z$ accumulates on it, then 
for a subsequence $H^{k_j}(z) \rightarrow x_0$. Recall Lemma~\ref{lem:28up,1} and in particular points $u_n$ defined there. 
By Lemma~\ref{lem:28ua,1} an intersection of $\Omega_-$ with $D(x_0,\epsilon)$ for a small $\epsilon$ is contained in
the set  ${\cal D}(x_0,u_n)$ where $n$ depends on $\epsilon$ and may be made tend to infinity as $\epsilon$ tends to $0$. As a consequence, there is a sequence 
$n(j)$ tending to $\infty$ such that $H^{k_j}(z) \in D_{n(j)}$. Then by Lemma~\ref{lem:28up,1} an inverse branch of $H^{k_j}$ exists which tracks back the orbit of $z$. 
Since the orbit $z$ stays in the set $\Omega_{-,c}\cup \Omega_{+,c}$, where the only preimage of $\RR$ is inside $\RR$, this inverse branch fixes the real line. 
As the consequence of the Epstein property postulated for the ${\cal EWF}$-class, the orbit is confined to set ${\cal D}\left( H^{-k}((x_0) ,u_n(j)))\right)$ where the 
preimages are 
taken by the real dynamics. Since the real dynamics is conjugated the the golden mean circle rotation by Proposition~\ref{prop:28ua,1}, the lengths of the intervals 
$H^{-k}((x_0,u_{n(j)}))$ tend to $0$ with $j$ uniformly with respect to $k$. Thus, $z$ is in the intersection of the sequence of disks with radii tending to $0$ and centers 
on $\RR$. 
\end{proof}

\section{Quasiconformal equivalence}
\subsection{Tempering.}
Given the mappings $\phi$ and $\hat{\phi}$ from the ${\cal EWF}$ class, we would like to conjugate between their complex dynamics by a quasiconformal map. 
For technical reason, such a conjugation is difficult to obtain between the corresponding complex extensions $H$ and $\hat{H}$. We will use modified 
extensions which differ by the range, which is made smaller. 

The range of {\em tempered} $H$ will be contained in ${\cal
  D}(x_0,\tau x_0)$, but for technical reasons having to do with
quasiconformal constructions, we prefer the border to intersect the
real line at angles less than $\pi/2$. To this end, choose 
$\frac{\pi}{2} > \beta_l > \beta_r > \frac{\pi}{4}$ where $\beta_r$ is
close to $\pi/2$ and will be specified shortly.  Let $A_l$ denote
the angle $\{ z :\: |\arg (z-x_0)| < \beta_l\}$ and similarly
$A_r = \{ z:\: |\arg(z-\tau x_0) - \pi| < \beta_r\}$.   

Then 
\[ V' = {\cal D}(x_0,x_0\tau)  \cap A_l \cap A_r\; , V := V'
\setminus\{0\}\; .\]
The left branch of $H$ is $\phi$ restricted to the the preimage of
$V$, denoted $U_-$. The right branch is the preimage of $V$ by
$\phi_{-1}$ and its domain will be called $U_+$. Since $V' \subset
D(0, |\tau x_0|)$, then from the definition
of the ${\cal EWF}$-class, $U_- \subset {\cal D}(x_0,x_0/\tau)$.  

\begin{figure}
\epsfig{figure=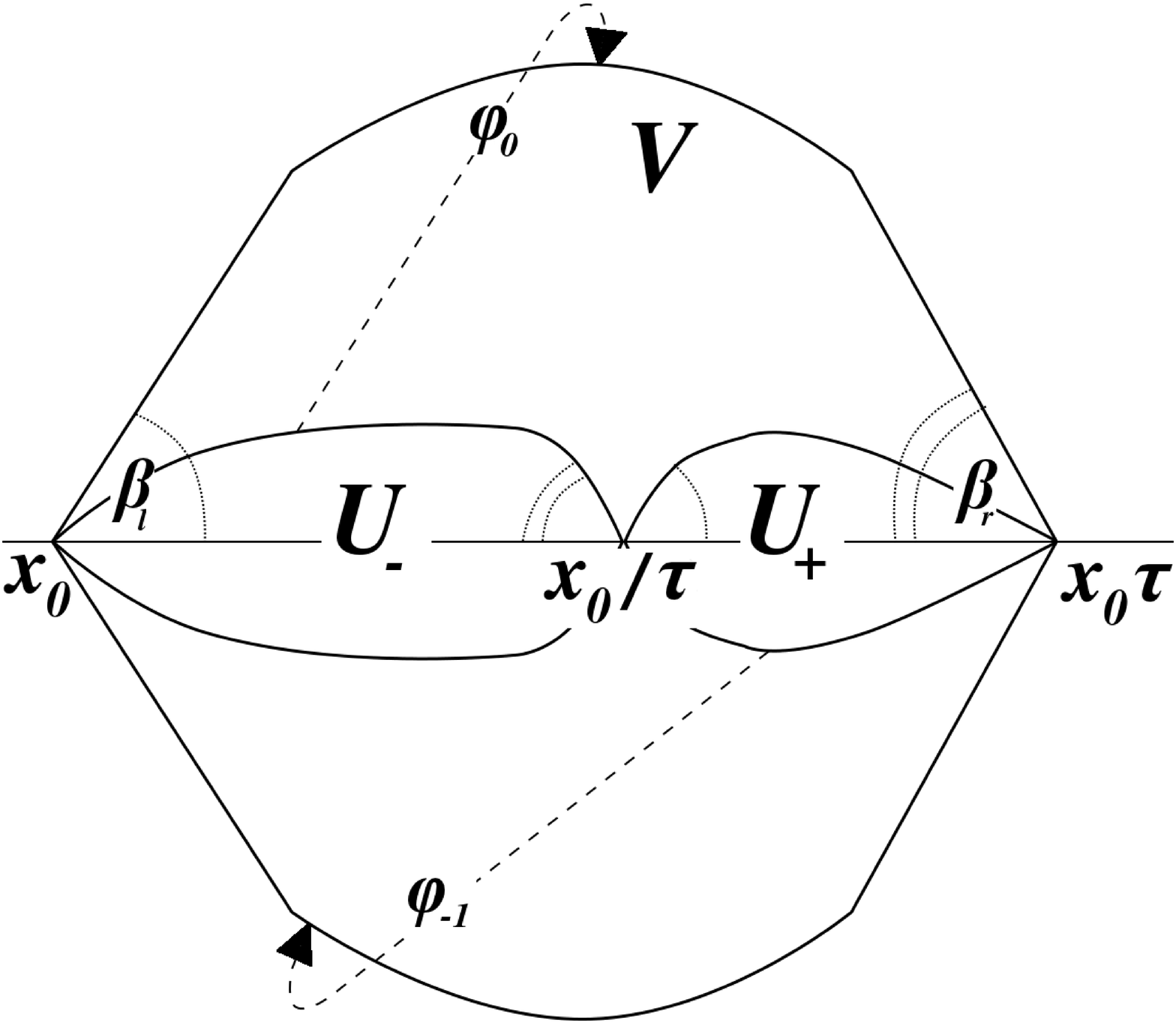, height=12cm, width=15cm}
\caption{The tempered mapping.}
\end{figure}

$H$ extends analytically through the boundary of $U_-$ at any point
with the exception of $x_0$. At $x_0$, we refer to Lemma~\ref{lem:28ua,1}.
It implies that for $\beta_r$ close enough to $\frac{\pi}{2}$, 
$U_- \subset U \subset V'$. In particular, since $\Omega_-\subset U$, the range $V'$ of the tempered map still contains 
$\Omega_-$. 

To analyze $U_+$, we need to view $\phi_{-1}$ as a rescaled version of
$\phi$. Because of that, $U_+\subset \tau U \subset {\cal D}(x_0,
x_o\tau)$. However, additionally we have Lemma~\ref{lem:28ua,1} whose
statement can be applied to $\tau U$ and means that locally $U_+$ fits
the angle $A_r$. On the other side, $U_+ \cap \RR = (x_0/\tau,x_0\tau)$ so that 
it avoids a neighborhood of $x_0$. It follows that if $\beta_r$ is
close enough to $\pi/2$, then $U_+ \subset V'$. It also follows that $\Omega_+\subset V'$. 

Finally, $\overline{U}_- \cap \overline{U}_+ = \{ x_0\tau^{-1}\}$ 
as the consequence of Lemma~\ref{lem:28ua,2}. Additionally, as a corollary from the construction,
\begin{equation}\label{equ:6xp,1}
V' \supset \Omega_-\cup\Omega_+\; .
\end{equation}

\begin{theo}\label{theo:28up,2}
Suppose that $\phi,\hat{\phi} \in {\cal EWF}$ and let $H, \hat{H}$ be their tempered complex dynamics, defined on affinely similar ranges 
$V', \hat{V}'$. Then, there is a quasiconformal automorphism $\Psi$ of the plane, symmetric w.r.t. $\RR$ and fixing $0$, $1$ which conjugates
tempered $H$ and $\hat{H}$ on their respective domains $U_-\cup U_+$, $\hat{U}_-\cup \hat{U}_+$. 
\end{theo}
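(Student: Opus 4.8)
The plan is to construct the quasiconformal conjugacy $\Psi$ by the standard pullback procedure, starting from a well-chosen initial quasiconformal map that already conjugates the boundary dynamics, and then improving it on the nested dynamical partitions. First I would observe that both tempered systems $H$ and $\hat H$ are, after passing to logarithmic coordinates via $\log\phi$ and $\log\phi_{-1}$, covering maps onto affinely equivalent ranges $V',\hat V'$, and that by Lemma~\ref{lem:28ua,2} (in the tempered form $\overline{U}_-\cap\overline{U}_+=\{x_0/\tau\}$) the two branch domains $U_-,U_+$ touch only at the single point $x_0/\tau$, so their union $U_-\cup U_+$ is a nice ``polynomial-like'' domain sitting compactly inside $V'$ except along the real axis near $x_0$ and $\tau x_0$. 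The first step is therefore to build a quasiconformal homeomorphism $\Psi_0$ of the sphere, symmetric with respect to $\RR$, fixing $0$ and $1$, which sends $V'$ to $\hat V'$ affinely (possible since the ranges are affinely similar), sends $x_0,\tau x_0,x_0/\tau$ to their hatted counterparts, sends $\partial U_-\cup\partial U_+$ to $\partial\hat U_-\cup\partial\hat U_+$, and already conjugates $H$ to $\hat H$ on the boundary $\partial(U_-\cup U_+)$. The boundary conjugacy on the real trace is forced to be the topological conjugacy $\upsilon$ of Lemma~\ref{lem:28ua,3} (restricted appropriately), which by Proposition~\ref{prop:28ua,1} is the conjugacy of the golden-mean circle dynamics; on the curved parts of the boundary one uses that $\log\phi$ and $\log\phi_{-1}$ identify them with vertical lines $\{\Re w=\log R\}$ so the boundary correspondence is essentially explicit.

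Second, I would run the pullback: having $\Psi_n$ defined and quasiconformal on the sphere agreeing with $\Psi_0$ outside $V'$ and conjugating the boundary dynamics, define $\Psi_{n+1}$ to equal $\Psi_n$ outside $U_-\cup U_+$ and, inside each branch domain, to be the lift $\hat H^{-1}\circ\Psi_n\circ H$ using the appropriate univalent inverse branch of $\hat H$ (which exists because $\Psi_n$ maps $V'\setminus\{0\}$ to $\hat V'\setminus\{0\}$ and the branches $\phi,\phi_{-1}$ are unbranched coverings of the punctured range, by item~2 of Definition~\ref{defi:9xa,1}). One checks $\Psi_{n+1}$ is continuous across $\partial U_\pm$ because $\Psi_n$ already conjugates there, and across the touching point $x_0/\tau$ by the single-point intersection. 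The crucial point is the \emph{uniform} bound on the maximal dilatation: since $\phi,\hat\phi$ and their branch inverses are conformal, each pullback step does not increase the dilatation, so $\|\mu_{\Psi_n}\|_\infty\le\|\mu_{\Psi_0}\|_\infty<1$ for all $n$; extracting a limit (the dilatations are supported on shrinking sets, the maps are uniformly quasiconformal hence normal) gives a quasiconformal $\Psi$ that conjugates $H$ to $\hat H$ on $U_-\cup U_+$, is symmetric and fixes $0,1$.

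The main obstacle I expect is controlling the geometry near the flat critical point $x_0$ and the glued point $\tau x_0$: there the domain $U_-$ pinches toward $x_0$, $\log\phi$ blows up, and one must check that the initial map $\Psi_0$ can be chosen quasiconformal despite this degeneration, and that the pieces fit together without creating a non-quasiconformal singularity. This is exactly where Lemma~\ref{lem:28ua,1} (the $\pi/4$-angle estimate on $U$ near $x_0$) and the tempering construction (which forces $U_-\subset V'$, $U_+\subset V'$ with the boundary meeting $\RR$ at angles strictly less than $\pi/2$) are used: they guarantee that $U_-\cup U_+$ sits inside $V'$ with uniformly bounded geometry except at the cusps $x_0,\tau x_0$, and near those cusps one uses the Fatou-coordinate description (Fact~\ref{fa:29up,3}: $\log\phi$ extends quasiconformally sending $x_0\to\infty$) to build $\Psi_0$ as a quasiconformal map in the $\log$-coordinate that is linear at infinity. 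A secondary technical point is that the limit $\Psi$ conjugates on the whole of $U_-\cup U_+$ and not just on a dense dynamically defined set — this follows from continuity of $\Psi$ together with density of the preimages of $1$ (Lemma~\ref{lem:28ua,4}, via Lemma~\ref{lem:8xp,1}) and the fact that the nonlinearity is pushed onto sets of measure (indeed diameter) tending to zero, by the real bounds already established.
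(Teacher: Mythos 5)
Your overall skeleton (an initial quasiconformal map conjugating the boundary dynamics, followed by lifting/pullback and a normal-families limit) is the right one and matches the paper's, but there is a genuine gap at the heart of the argument: you need the initial map to conjugate $H$ and $\hat H$ not only on $\partial U_-\cup\partial U_+$ but also on the entire real trace $(x_0,\tau x_0)$, and you need that real conjugacy to be quasi-symmetric. The pullback by $H$ itself cannot produce this: the interval $[x_0,\tau x_0]$ is contained in the filled Julia set $\tilde K_H$, so the sequence $\Psi^n$ never stabilizes there, and the circle dynamics of $H$ (a homeomorphism of golden-mean rotation number with a flat critical point) is not expanding, so density of the critical orbit gives you a topological conjugacy $\upsilon$ but no control on its dilatation. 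You assert that $\Psi_0$ ``is forced to be the topological conjugacy $\upsilon$ of Lemma~\ref{lem:28ua,3}'' on the real trace and can still be chosen quasiconformal; that is precisely the statement that $\upsilon$ is quasi-symmetric, which is not known a priori (the classical Herman--Yoccoz theory does not apply to flat critical points) and is the main thing to be proved.

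The paper closes this gap by inserting an intermediate dynamical system, the presentation function $\Pi$ (linear branch $\tau x$ on $[x_0,x_0/\tau]$, nonlinear branch $\phi_{-1}$ on $[x_0/\tau,\tau x_0]$). Unlike $H$, the map $\Pi$ is post-critically finite --- $\Pi(\tau x_0)=0$ is a repelling fixed point --- so a pullback scheme for $\Pi,\hat\Pi$ (Propositions~\ref{prop:29ua,1} and~\ref{prop:29up,1}) does converge on the real line with uniformly bounded dilatation, and by Lemmas~\ref{lem:8xp,1} and~\ref{lem:28ua,3} its real restriction is exactly $\upsilon$. Even there, quasi-symmetry of the seed map at the parabolic point $\tau x_0$ is delicate and requires comparing the Fatou coordinates of $G_{-1}$ and $\hat G_{-1}$ (Lemma~\ref{lem:29up,3}), i.e.\ matching the two parabolic germs, not merely controlling the geometry of one domain near its cusp as in your use of Lemma~\ref{lem:28ua,1} and Fact~\ref{fa:29up,3}. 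Only after lifting the $\Pi$-conjugacy through the universal covers $\phi,\hat\phi$ and $\phi_{-1},\hat\phi_{-1}$ to obtain $\Psi_3$, which conjugates on $\partial U_\pm$ \emph{and} on $(x_0,\tau x_0)$, does the $H$-pullback you describe go through: the iterates then stabilize off $\tilde K_H=K_H$, and the conjugacy extends to $K_H$ by continuity because $K_H$ has empty interior (Lemma~\ref{lem:29up,6}), not because of density of preimages of $1$ alone.
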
 

\subsection{Conjugacy between presentation functions.}
If two maps from ${\cal EWF}$ class are given, then by Lemma~\ref{lem:28ua,3}, the conjugacy between their circle dynamics 
is equal to the conjugacy between their real presentation functions. This is the overall strategy of the proof of Theorem~\ref{theo:28up,2}.
We will first construct the conjugacy between the presentation functions. 

\paragraph{Complex presentation function $\Pi$.}
Recall the definition of the presentation function on the segment
$[x_0,\tau x_0]$. 
If  $x\in [x_0,\tau^{-1} x_0]$, set $\Pi(x) = \tau x$. If $x\in
[\tau^{-1}x_0,\tau x_0]$, set $\Pi(x)=\phi_{-1}(x)$. 

$\Pi$ can also be extended to a complex map with range $V'$. The
linear branch maps $W_-:=\tau^{-1}V'$ onto $V'$. Because $\beta_r <
\beta_l$, we get $W_- \subset V'$. The non-linear branch will map only
onto $W_-\setminus \{0\}$. Let $W_+$ be $\phi^{-1}_{-1}(W_-\setminus\{0\})$. 
Taking into account the rescaling, 
\[ W_+ = \tau \phi^{-1} (\tau^{-2} V)\; .\] 
Since $\tau^{-2} V \in D(0,x_0/\tau)$, by the defining properties of
class ${\cal EWF}$, 
\[ \phi^{-1}(\tau^{-2}V) \subset {\cal D}(x_0,x_0/\tau^{2})\; .\]
Finally $W_+ \subset {\cal D}(x_0/\tau, x_0\tau)$. Invoking
Lemma~\ref{lem:28ua,1}, we see that for $\beta_r$ close enough to
$\pi/2$, $W_+\subset V'$. Moreover, $\overline{W}_+ \cap
\overline{W}_- = \{\frac{x_0}{\tau}\}$. 

\paragraph{Notational convention.}
We suppose that $\phi$ and $\hat{\phi}$ are given as in the statement of Theorem~\ref{theo:28up,2}. 
Moreover, all objects constructed from $\hat{\phi}$ (complex dynamics, presentation function, etc) will also be marked by $\hat{}$.  

\paragraph{Technical tools.}
By an {\em arc} we will mean a homeomorphic image of either an open interval or a circle. The arc is term {\em quasiconformal} if the homeomorphism can be extended to a 
quasiconformal homeomorphism of the plane. 

\begin{defi}\label{defi:29up,1} 
Let $w$ be an arc and $y \in w$. We say that $y$ is {\em sectorially accessible} 
iff there is $\epsilon>0$ and a pair of vertical angles of positive measure with the vertex at $y$ and interior denoted with $A$ so that 
$w \cap A \cap D(y,\epsilon) = \emptyset$. 
\end{defi}

\begin{fact}\label{fa:29up,1}
Let $w$ be an arc and $Y=\{y_1,\cdots,y_n\}$ a finite collection of sectorially accessible points of $w$. Then, if each connected component
of $w\setminus Y$ is a quasiconformal arc, then $w$ is quasiconformal as well.   
\end{fact}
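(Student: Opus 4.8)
The statement to be proved is Fact~\ref{fa:29up,1}: given an arc $w$ and a finite set $Y=\{y_1,\dots,y_n\}$ of sectorially accessible points of $w$, if each component of $w\setminus Y$ is a quasiconformal arc, then $w$ itself is quasiconformal. The plan is to proceed by induction on $n$, so it suffices to treat the case of a single cut point $y\in w$: if $w\setminus\{y\}$ splits into two quasiconformal arcs $w_1,w_2$ (or, when $w$ is a topological circle, into a single arc), and $y$ is sectorially accessible, then $w$ is quasiconformal. The reduction works because removing one point at a time keeps the remaining pieces quasiconformal arcs by hypothesis, and the sectorial accessibility of the other $y_i$'s is inherited by sub-arcs.

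\textbf{Key steps.} First I would set up coordinates: by applying a preliminary quasiconformal (indeed affine) map, place $y$ at the origin and arrange that the pair of vertical angles $A$ witnessing sectorial accessibility is symmetric about, say, the imaginary axis, so that near $0$ the arc $w$ lies in the complementary pair of horizontal sectors $\{|\arg z|<\pi/2-\eta\}\cup\{|\arg z - \pi|<\pi/2-\eta\}$ for some $\eta>0$ and within $D(0,\epsilon)$. Second, by hypothesis there are quasiconformal homeomorphisms $F_i$ of the plane with $F_i$ mapping $w_i$ onto a straight segment (or a standard model arc); the goal is to glue $F_1$ and $F_2$ into a single quasiconformal homeomorphism of the plane sending $w$ to a segment. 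The standard device here is to localize: near $0$, replace each $F_i$ by a map that is the identity outside a small disk $D(0,r)$ and still quasiconformal, using the fact that one can always "dampen" a quasiconformal map toward the identity on an annulus $r\le |z|\le 2r$ at the cost of a bounded increase of the dilatation (a log-interpolation / radial interpolation argument). Third, with both $F_i$ equal to the identity outside $D(0,r)$ and with $w$ entering $D(0,r)$ only through the two horizontal sectors — two disjoint "prongs" — one constructs by hand a quasiconformal homeomorphism of $D(0,r)$, identity on the boundary circle, that carries the two prongs of $w$ onto the two halves of the real diameter $[-r,r]$; this is possible precisely because sectorial accessibility guarantees the two prongs are separated by a definite angle, so the target configuration (two radial segments) is quasiconformally equivalent to the source configuration rel boundary. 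Finally, one composes: the resulting map agrees with suitably modified $F_1$ on one side, with modified $F_2$ on the other, is quasiconformal on each piece and across the gluing curves (the boundary circles of the small disks, where all maps are the identity and hence match smoothly), and so is globally quasiconformal by the standard gluing lemma for quasiconformal maps along rectifiable curves. Symmetry with respect to $\RR$ can be preserved throughout by averaging with complex conjugation if needed, though Fact~\ref{fa:29up,1} as stated does not require it.

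\textbf{Main obstacle.} The delicate point is the local gluing in step three: straightening the two prongs of $w$ inside $D(0,r)$ to two collinear radii by a map that is the identity on $\partial D(0,r)$, with controlled dilatation. This is where sectorial accessibility is essential and cannot be dropped — without a definite angular gap between the two sides of $w$ at $y$, the two prongs could be tangent to each other, and then no quasiconformal map rel boundary can pry them apart onto opposite radii (quasiconformal maps cannot create or destroy a cusp). So the heart of the argument is the elementary but careful observation that two disjoint arcs landing at $0$ from within two fixed disjoint sectors can be quasiconformally combed, rel a surrounding circle, onto $[-r,0]$ and $[0,r]$; once that local model map is in hand, the rest is routine bookkeeping with the gluing lemma and the interpolation-to-identity trick. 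I would also remark that the damping-to-identity step requires the pieces $w_i$ to stay, near $y$, inside the sectors — which again is exactly sectorial accessibility — so that the modification of $F_i$ on the annulus does not move points of $w_i$ across the region where the gluing map acts.
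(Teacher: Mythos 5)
The paper states Fact~\ref{fa:29up,1} as a technical tool and gives no proof of it, so there is no argument of the authors to compare yours with. Your overall strategy -- reduce by induction to a single junction point $y$, localize, comb the two prongs of $w$ inside a small disk $D(y,r)$ onto two opposite radii by a quasiconformal map that is the identity on $\partial D(y,r)$, and then glue with the maps straightening the individual components -- is the standard and, for the intended statement, a correct route. (An alternative, somewhat more elementary route is via the Ahlfors three-point / bounded-turning characterization of quasiarcs: each component satisfies it by hypothesis, and one only has to check the condition for pairs of points lying on opposite sides of a junction, which again comes down to the same angular-separation property you isolate.)

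There is, however, one genuine gap, and it sits exactly at the point you yourself call ``the heart of the argument.'' You assert that sectorial accessibility guarantees that the two prongs of $w$ at $y$ are separated by a definite angle. Definition~\ref{defi:29up,1} does not say this: it only says that $w$ avoids the pair of vertical angles $A$ near $y$, i.e.\ that both local branches of $w$ lie in the union of the two \emph{complementary} sectors. Nothing prevents both branches from lying in the \emph{same} complementary sector and being mutually tangent at $y$. Concretely, take $w_1=(0,1)\subset\RR$ and $w_2=\{t+it^2:\,t\in(0,1)\}$, joined at $y=0$; each component is an analytic embedded arc, hence quasiconformal, and $0$ is sectorially accessible (take $A$ to be a vertical angle pair around the imaginary axis). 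Yet for $z_1=t\in w_1$ and $z_2=t+it^2\in w_2$ one has $|z_1-z_2|=t^2$ while the subarc of $w$ joining them has diameter at least $t$, so the bounded-turning condition fails and $w$ is not a quasiconformal arc. Thus the step ``two disjoint arcs landing at $y$ from within two fixed disjoint sectors'' uses a hypothesis strictly stronger than the one available, and your local combing map cannot be built without it (as you correctly observe, a cusp cannot be undone quasiconformally). The repair is to use the reading of sectorial accessibility in which $A$ is required to \emph{separate} the two local branches of $w$ at $y$ -- one branch in each complementary sector -- which is exactly what holds at the junction points $x_0$, $\tau^{-1}x_0$, $\tau x_0$ in Lemmas~\ref{lem:29up,1} and~\ref{lem:29up,4}, where one prong is a real segment and the other meets $\RR$ at a definite positive angle. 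With that strengthened hypothesis your argument goes through; without it, both the proof and the literal statement fail.
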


\begin{defi}\label{defi:29up,2}
Now, suppose that $h$ is an orientation-preserving real homeomorphism of an open interval in $\RR$ (perhaps unbounded) onto its image. 
If $y$ is a point in the domain of $h$, then $h$ is said to be {\em quasi-symmetric at $y$} provided that there are $\epsilon>0$ and $K$ so that whenever
$0<|\eta|<\epsilon$, then 
\[ \frac{|h(y+\eta)-h(y)|}{|h(y-\eta)-h(y)|} \leq K \; .\]
\end{defi}

\begin{fact}\label{fa:29up,2}
If $h$ is a real orientation-preserving homeomorphism of an open
possibly unbounded interval $I$ in $\RR$ onto its image,
$Y=\{y_1,\cdots,y_n\}$ is a finite collection of points from $I$, 
$h$ is quasi-symmetric at each point from $Y$ and $h$
is quasi-symmetric on each connected component of $I\setminus Y$, 
then $h$ is quasi-symmetric on its entire domain. 
\end{fact}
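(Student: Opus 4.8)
The plan is to reduce, by induction on the number $n$ of exceptional points, to the case $n=1$, and then to verify the quasi-symmetry estimate on $I$ directly by a case analysis according to where a triple of points sits relative to the single exceptional point and to the scale $\epsilon$ supplied by quasi-symmetry at that point. For the inductive step, write $Y=\{y_1<\cdots<y_n\}$ and $I=(a,b)$. The restriction of $h$ to $(a,y_n)$ satisfies the hypotheses with the $(n-1)$-element exceptional set $\{y_1,\dots,y_{n-1}\}$, since the components of $(a,y_n)\setminus\{y_1,\dots,y_{n-1}\}$ are exactly the components of $I\setminus Y$ lying to the left of $y_n$; hence $h$ is quasi-symmetric on $(a,y_n)$. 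Now $h$ is quasi-symmetric on $(a,y_n)$, quasi-symmetric at $y_n$, and quasi-symmetric on $(y_n,b)$, so it suffices to treat $Y=\{c\}$. Post-composing $h$ with a translation of the line (which changes nothing) we assume $h(c)=0$; let $K_-,K_+$ be the quasi-symmetry constants on the left and right components and $\epsilon_0,K_0$ the data of Definition~\ref{defi:29up,2} at $c$. I will also use the standard fact that on an interval the three-point quasi-symmetry bound upgrades to a modulus of quasi-symmetry, and that the estimates on $(a,c)$ and on $(c,b)$ extend, by continuity of $h$, to triples having an endpoint at $c$.

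Fix a triple $x<y<z$ in $I$. If $c\notin[x,z]$ the triple lies inside a single component and the corresponding hypothesis applies with constant $\max(K_-,K_+)$. So assume $x<c<z$, and by symmetry take $c\le y$, whence $h(y)\ge 0$. Split into two regimes. First, if $y-c$ is at least half of $z-c$, so that $y$ sits deep on the right of $c$: then $z-c$ and $y-c$ are comparable, so $h(z)-h(y)$ is comparable to $h(z)=h(z)-h(c)$ and to $h(y)$ (a bounded number of doublings on $(c,b)$), while $h(y)-h(x)\ge h(y)-h(c)=h(y)$; this bounds the ratio in terms of $K_+$ alone. Second, if $y-c$ is less than half of $z-c$, so $y$ is close to $c$ compared with $z-c$ and with $c-x$, the latter two now being comparable to each other: then $h(z)-h(y)$ is comparable to $h(z)$ and $h(y)-h(x)$ is comparable to $|h(x)|$, so it remains to bound $h(z)/|h(x)|$ and its reciprocal with $z-c$ and $c-x$ comparable. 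When this common scale is at most $\epsilon_0$ this follows from quasi-symmetry at $c$ together with a bounded number of doublings on each side; when the scale exceeds $\epsilon_0$ one uses that $h(z)$ and $|h(x)|$ are both trapped in a fixed interval of positive reals, namely between the values of $h$ at distance $\epsilon_0$ from $c$ and the one-sided boundary limits of $h$ at the ends of $I$, the latter being finite because $h$ is three-point quasi-symmetric on each component.

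The delicate point is exactly this last one: extracting a single uniform constant in the large-scale straddling regime. For $I$ bounded it is immediate from the above, since $h(b^-)$ and $h(a^+)$ are forced to be finite by the componentwise hypotheses, so $h(z)/|h(x)|$ and its reciprocal stay in a fixed compact subinterval of $(0,\infty)$; for unbounded $I$ one needs in addition that the two sides of $h$ grow comparably near the ends, which does not follow from the hypotheses in full generality, so in that generality the statement is really intended, and is only applied, for bounded $I$. Alternatively, one can obtain the statement from Fact~\ref{fa:29up,1}: the interval $I\subset\RR$ is an arc each of whose points is sectorially accessible via the vertical cones, being a quasiconformal arc for a subinterval of $\RR$ is the same as the restriction of $h$ being quasi-symmetric, and quasi-symmetry at the points $y_i$ is precisely what lets the two one-sided quasiconformal extensions be glued across the seam over $y_i$.
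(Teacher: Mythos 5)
The paper offers no proof of this Fact --- it is invoked as a standard gluing lemma for quasi-symmetric maps --- so your proposal can only be judged on its own terms. Your observation that the statement is literally false for unbounded $I$ is correct and worth making precise: take $I=\RR$, $Y=\{0\}$, $h(x)=x$ for $x\le 0$ and $h(x)=\max(x,x^2)$ for $x\ge 0$. Each one-sided restriction is quasi-symmetric (the right-hand branch is the restriction to $(0,\infty)$ of the boundary values of the $2$-quasiconformal radial stretch $z\mapsto z\max(1,|z|)$ of $\HH$), $h$ is the identity near $0$ so it is quasi-symmetric at $0$ in the sense of Definition~\ref{defi:29up,2}, and yet $\frac{h(R)-h(0)}{h(0)-h(-R)}=R$ is unbounded. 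However, your conclusion that the Fact ``is only applied for bounded $I$'' is not accurate: Lemma~\ref{lem:29up,3} applies it to $u$ on all of $\RR$ with $Y=\{x_0,\tau^{-1}x_0,\tau x_0\}$; that application is legitimate only because $u$ is affine on the two unbounded components, which is the extra normalization the statement tacitly assumes.

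The genuine gap is in the case analysis for $Y=\{c\}$. For an equally spaced triple $x<y<z$ with $y-x=z-y=t$ and $x<c\le y$, one has $y-c\in[0,t)$ and $z-c=t+(y-c)$, hence $y-c<(z-c)/2$ always: your first case is vacuous, and the entire burden falls on the second case, where you assert that $c-x$ and $z-c$ are comparable. They need not be: $c-x=t-(y-c)$ can be arbitrarily small relative to $z-c\in[t,2t)$, namely when $x$ sits just to the left of $c$ while $y$ and $z$ lie at distances about $t$ and $2t$ to its right. That configuration --- the genuinely asymmetric one --- is covered by neither of your cases. It can be handled (when $c-x\le t/2$ one compares $z$ with $2y-c$ and uses one doubling step of the quasi-symmetry on $(c,b)$ to get $h(z)-h(y)\le (1+K_+)K_+\bigl(h(y)-h(c)\bigr)\le (1+K_+)K_+\bigl(h(y)-h(x)\bigr)$, with a similar chain for the reciprocal in which quasi-symmetry at $c$ controls $h(c)-h(x)$ by $h(c+(c-x))-h(c)$), but the correct split is on $c-x$ versus $t$, not on $y-c$ versus $(z-c)/2$. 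Finally, the proposed shortcut through Fact~\ref{fa:29up,1} is not available as stated: that Fact concerns arcs in the plane, and $h(I)\subset\RR$ is a quasiconformal arc regardless of $h$; to extract quasi-symmetry of $h$ from it you would need to glue quasiconformal extensions of the one-sided restrictions across $c$, and justifying that glue is exactly the three-point estimate you set out to prove.
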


\paragraph{Initial pre-conjugacy.}
\begin{prop}\label{prop:29ua,1}
There exists a quasiconformal mapping $\Psi_1$ of the plane, symmetric with respect to $\RR$, affine outside of $V'$, which satisfies the conjugacy condition 
\[ \Psi_1 \circ \Pi(z) = \hat {\Pi} \circ \Psi_1(z) \]
for $z\in \partial W_- \cup \partial W_+$. 
\end{prop}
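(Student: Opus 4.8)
The plan is to build $\Psi_1$ as an explicit piecewise interpolation dictated by the requirement that it conjugate $\Pi$ to $\hat\Pi$ on the two boundary curves $\partial W_-$ and $\partial W_+$, then extend by compactness. First I would fix the conjugacy $\upsilon$ between the real presentation functions $\Pi$ and $\hat\Pi$ supplied by Lemma~\ref{lem:28ua,3}; this already forces the restriction of $\Psi_1$ to $[x_0,\tau x_0]$ up to how one treats the endpoints. The key observation is that $\partial W_-=\tau^{-1}\partial V'$ is an affine image of $\partial V'$, and since $\Psi_1$ is required to be affine outside $V'$ (hence on $\partial V'$ it is the given affine map matching $V'$ to $\hat V'$), the conjugacy relation $\Psi_1\circ\Pi=\hat\Pi\circ\Psi_1$ on $\partial W_-$ reads $\Psi_1\circ\tau = \hat\tau\circ\Psi_1$ there, which is automatically consistent with the affine normalization and determines $\Psi_1$ on $\partial W_-$. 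On $\partial W_+$, where $\Pi=\phi_{-1}$, the relation becomes $\Psi_1\circ\phi_{-1} = \hat\phi_{-1}\circ\Psi_1$, and since $\phi_{-1}$ maps $\partial W_+$ homeomorphically onto $\partial W_- \setminus\{0\}$ (a curve on which $\Psi_1$ is already specified), this defines $\Psi_1$ on $\partial W_+$ uniquely as $\hat\phi_{-1}^{-1}\circ\Psi_1\circ\phi_{-1}$, using the principal inverse branch.

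Next I would check that the map so defined on the curve $w:=\partial W_-\cup\partial W_+$ is a quasiconformal arc in the sense of the technical tools above. By Lemma~\ref{lem:28ua,1} (applied to $U$ and to $\tau U$, exactly as in the tempering discussion preceding the proposition), the point $x_0/\tau$, where $\overline W_-$ meets $\overline W_+$, is sectorially accessible on $w$: the two pieces $\partial W_-$ and $\partial W_+$ approach $x_0/\tau$ inside complementary angles of positive opening, because $\phi_{-1}$ is conformal and non-degenerate there. Away from $x_0/\tau$ and the points where the curves hit $\RR$, $\Psi_1$ restricted to each arc is a composition of analytic maps ($\hat\tau^{\pm1}$, $\hat\phi_{-1}^{\mp1}$) with the affine boundary map, hence real-analytic and quasiconformal on each closed subarc; the boundary points where the curves meet $\RR$ are handled by symmetry and are sectorially accessible as well. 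Invoking Fact~\ref{fa:29up,1} with $Y$ the finite set of these junction and real-axis points, $\Psi_1|_w$ extends to a global quasiconformal homeomorphism of the plane. I would take this extension, then symmetrize it with respect to $\RR$ (replace $\Psi_1(z)$ by the standard average of $\Psi_1(z)$ and $\overline{\Psi_1(\bar z)}$, which preserves quasiconformality and the conjugacy since $w$, $\Pi$, $\hat\Pi$ are all $\RR$-symmetric) and modify it outside $V'$ to agree with the prescribed affine map there, gluing along $\partial V'$ — again a finite union of sectorially accessible arcs, so Fact~\ref{fa:29up,1} applies once more.

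The main obstacle I anticipate is the regularity of $\Psi_1$ at the corner $x_0/\tau$ and, more delicately, at $x_0$ itself if it lies on $\overline W_-$. The curve $w$ is not smooth at $x_0/\tau$ — two analytic arcs meet there at some definite angle — and one must be sure the two prescriptions for $\Psi_1$ coming from $\partial W_-$ and from $\partial W_+$ actually agree at that common point and that the resulting corner maps to a comparable corner of $\hat w$, so that Fact~\ref{fa:29up,1} is genuinely applicable; this is where the sectorial-accessibility framework earns its keep, since it lets one ignore the precise corner geometry as long as each side is individually quasiconformal and the vertex is accessible from outside $w$. The role of Lemma~\ref{lem:28ua,1} is exactly to guarantee this accessibility near $x_0$ (the cusp-like behavior of the flat critical point is controlled by the $\pi/4$-bound on the argument), and the condition "$\beta_r$ close enough to $\pi/2$" from the construction of $W_+$ ensures $W_+\subset V'$ so that no unexpected crossings occur. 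Once those geometric checks are in place, the rest is routine gluing.
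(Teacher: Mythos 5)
Your definition of $\Psi_1$ on the two boundary curves is the same as the paper's: affine (conjugated by $\tau,\hat\tau$) on $\partial W_-$, and the lifting $\hat\phi_{-1}^{-1}\circ\Psi_1\circ\phi_{-1}$ on $\partial W_+$. But the extension step has a genuine gap. Fact~\ref{fa:29up,1} only asserts that the \emph{arc} $w$ is quasiconformal (a quasi-arc); it says nothing about extending a given homeomorphism $\Psi_1|_w:w\to\hat w$ to a quasiconformal homeomorphism of the plane. Such an extension requires the boundary correspondence itself to be quasi-symmetric along the arc, and that is exactly the point your argument skips. The paper avoids this trap by never extending from the curve alone: it first defines the map on two-dimensional regions covering $\overline{\HH}_+$ minus the quasi-disk bounded by $w$ (using $A$, $A_-$, $A_+$, each already quasiconformal on an open set), proves separately that the induced real boundary map $u$ is quasi-symmetric (Lemma~\ref{lem:29up,3}), extends to $\HH_-$ by Beurling--Ahlfors, and only then fills the quasi-disk by quasiconformal reflection across $w$. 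The hard analytic content is precisely the quasi-symmetry of $u$ at $\tau x_0$, where the Fatou-coordinate-type map $A_+$ from one side meets the affine map $A$ from the other; the paper handles this by comparing the orbits $y_n=G_{-1}^n(\tau^{-1}x_0)$ in the two spaces using the $a(y-\tau x_0)^{-2}$ asymptotics of the Fatou coordinate. Your proposal never confronts this point.

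Two further issues. First, invoking Lemma~\ref{lem:28ua,3} to prescribe $\Psi_1$ on $[x_0,\tau x_0]$ is both unnecessary (the proposition requires conjugacy only on $\partial W_-\cup\partial W_+$; the real-line conjugacy is the job of $\Psi_2$, obtained later by pullback) and dangerous: that lemma gives only a \emph{topological} conjugacy, with no quasi-symmetry available at this stage, so it cannot serve as boundary data for a quasiconformal extension. Second, symmetrizing by averaging $\Psi_1(z)$ with $\overline{\Psi_1(\bar z)}$ does not preserve quasiconformality (or even injectivity) in general; the correct procedure, and the one the paper uses, is to build the map on $\overline{\HH}_+$ and define it on $\HH_-$ by reflection.
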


Let $A$ denote the real and orientation-preserving affine map which transforms $V'$ onto
$\hat{V}'$. Then $A'$ is a quasiconformal map equal to $A$ outside of
$V'$ and fixing $0$. Then, on $W_-$ consider the mapping $A_- =
\hat{\tau}^{-1} \circ A' \circ \tau$. $A_1$ is quasiconformal on $W_-$
and fixes $0$. Then, $A_+$ is defined on $W_+$ as the lifting of $A_-$
to the universal covers $\phi_{-1},\hat{\phi}_{-1}$. $A_+$ is also
quasiconformal. 

Consider also the Jordan arc $w$ which consists of the boundary arcs
of $V'$, $W_-$ and $W_+$ intersected with
$\overline{\HH}_+$. $\hat{w}$ is analogous.
Additionally, a homeomorphism $u$ of $\RR$ onto
itself has been defined, which consists of $A$ outside of $(x_0,\tau
x_0)$, is equal to $A_-$ on $(x_0,\tau^{-1}x_0)$ and to $A_+$ on
$(\tau^{-1}x_0,\tau x_0)$. 

\begin{lem}\label{lem:29up,1}
$w,\hat{w}$ are quasiconformal.
\end{lem}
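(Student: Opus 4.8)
The plan is to verify that the arc $w$ (and by the obvious symmetry, $\hat w$) decomposes into finitely many pieces, each of which is manifestly quasiconformal, joined at a finite set of points each of which is sectorially accessible, and then invoke Fact~\ref{fa:29up,1}. First I would enumerate the pieces of $w$: by construction $w$ is made up of the portions in $\overline{\HH}_+$ of $\partial V'$, $\partial W_-$ and $\partial W_+$. The arc $\partial V'$ is built from a circular arc of ${\cal D}(x_0,x_0\tau)$ together with two straight rays bounding the angles $A_l$ and $A_r$; these are real-analytic (indeed circular or linear) curves, hence quasiconformal arcs, and they meet each other and the real line at nonzero angles, so the meeting points are sectorially accessible. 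The boundary $\partial W_- = \tau^{-1}\partial V'$ is an affine image of $\partial V'$, so the same remarks apply verbatim.

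The only genuinely non-trivial piece is $\partial W_+$. Recall $W_+ = \tau\phi^{-1}(\tau^{-2}V)$, so up to the affine rescaling by $\tau$ its boundary is the preimage under $\phi$ of the boundary of $\tau^{-2}V$, which is a piecewise real-analytic Jordan curve passing through $0$. Away from the critical point $x_0$ the map $\phi$ is a local real-analytic diffeomorphism, so the image of each smooth sub-arc of $\partial(\tau^{-2}V)$ that avoids the value $0$ is again a smooth arc, hence quasiconformal; and the finitely many corners of $\partial(\tau^{-2}V)$ (where the circular part meets the slits) pull back to finitely many points where $\partial W_+$ still has well-defined one-sided tangents meeting at a positive angle, hence sectorially accessible. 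The preimages of the omitted value $0$ are handled by Lemma~\ref{lem:28ua,1}: near each such preimage point $p$ (which lies over $x_0$ after rescaling, i.e. $p = x_0/\tau$ is the relevant accumulation point singled out in Lemma~\ref{lem:28ua,2} and the construction of $W_+$), $\phi$ behaves like a flat map with Fatou-coordinate asymptotics, and Lemma~\ref{lem:28ua,1} gives exactly the statement that the preimage is contained in an angle $|\arg(z-x_0)|<\tfrac{\pi}{4}+\epsilon$, so $\partial W_+$ enters $p$ inside such an angle and $p$ is sectorially accessible (there is a vertical pair of sectors at $p$, roughly the complementary angles, missed by the arc).

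Then I would also record that $\overline{W}_+\cap\overline{W}_- = \{x_0/\tau\}$ and that at the junction points where $\partial V'$, $\partial W_-$, $\partial W_+$ are glued to form the single arc $w$ — namely $x_0\tau$, $\tau^{-1}x_0$, $x_0/\tau$, $0$, and the two corner points of the circular boundary of $V'$ — the combined arc is still sectorially accessible: at $0$ this is because $\partial V'$ meets $\RR$ transversally and the two branches of $\partial W_+$ at $0$ are the $\phi$-preimages of the two slits, which land in the doubly-slit complement, again leaving a positive sector free; at $\tau^{-1}x_0$ and $x_0/\tau$ one uses that the local angles of the circular/linear arcs and of $\partial W_+$ (via Lemma~\ref{lem:28ua,1}) are strictly positive and do not fill a full neighborhood. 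Assembling: $w\setminus Y$, with $Y$ this finite set, is a finite union of quasiconformal arcs, each point of $Y$ is sectorially accessible, so Fact~\ref{fa:29up,1} yields that $w$ is quasiconformal; the identical argument with hats gives $\hat w$.

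The main obstacle I expect is the behavior of $\partial W_+$ at the preimages of the critical value $0$ — equivalently, at $x_0/\tau$ where $W_+$ abuts the flat critical point after rescaling. This is precisely the point where $\phi$ is not a diffeomorphism and one cannot simply say ``analytic image of a smooth arc''; controlling the geometry of the arc there, and in particular checking it enters the point inside a genuine sector, is exactly what Lemma~\ref{lem:28ua,1} (the $\tfrac{\pi}{4}$-angle estimate coming from the Fatou-coordinate interpretation of $\log\phi$ in Lemma~\ref{lem:29up,2}) is designed to supply, so the proof essentially reduces to quoting it correctly. Everything else is routine, since circular and linear arcs and their affine and local-diffeomorphic images are patently quasiconformal.
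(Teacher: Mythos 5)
Your overall skeleton is the same as the paper's: decompose $w$ into the boundary arcs of $V'$, $W_-$, $W_+$ in $\overline{\HH}_+$, check that the junction points are sectorially accessible (via the choice of the angles $\beta_l,\beta_r$ and Lemma~\ref{lem:28ua,1}), and invoke Fact~\ref{fa:29up,1}. The arcs of $\partial V'$ and $\partial W_-$ are indeed trivially quasiconformal. But your treatment of $\partial W_+$ has a genuine gap. First, a factual slip: the preimage of the omitted value $0$ under $\phi_{-1}$ is its flat critical point $\tau x_0$, not $x_0/\tau$; the point $x_0/\tau$ is a regular boundary point of $W_+$ (it maps to $x_0\in\partial W_-$ and is where $\overline{W}_+$ touches $\overline{W}_-$), while it is at $\tau x_0$ that $\partial W_+$ accumulates through the infinite covering $\phi_{-1}\colon W_+\to W_-\setminus\{0\}$. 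Second, and more importantly, your argument for quasiconformality of this piece — ``local real-analytic diffeomorphism away from the bad point, plus containment in an angle at the bad point'' — does not suffice. Sectorial accessibility of the endpoint is only what Fact~\ref{fa:29up,1} needs to \emph{glue} quasiconformal pieces; it does not make the piece terminating at $\tau x_0$ a quasiconformal arc. An arc can be smooth away from its endpoint and confined to a sector of opening $\pi/4+\epsilon$ there and still violate the three-point (Ahlfors) condition by oscillating with unbounded relative amplitude as it approaches the tip; and here the approach to $\tau x_0$ is an intrinsically infinite process (the preimage of $\partial W_-$ unwinds infinitely in the Fatou coordinate), so it cannot be reduced to finitely many smooth sub-arcs joined at finitely many accessible points.

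The paper closes exactly this gap with Fact~\ref{fa:29up,3}: $\log\phi_{-1}$ is (up to normalization) the Fatou coordinate of the rescaled $G$ and extends to a quasiconformal homeomorphism of the Riemann sphere sending $\tau x_0$ to $\infty$. The boundary of $W_+$ is then the preimage under this globally quasiconformal map of the $2\pi i$-periodic curve $\log(\partial W_-)$, which is itself quasiconformal; since quasiconformal sphere maps carry quasiconformal arcs to quasiconformal arcs, $\partial W_+$ is quasiconformal all the way into $\tau x_0$. You should replace your local-diffeomorphism-plus-sector argument for the $\partial W_+$ piece by this global Fatou-coordinate argument (and drop $0$ from your list of junction points — it lies in the interior of $W_-$ and is not on $w$ at all).
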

\begin{proof}
We will rely on Fact~\ref{fa:29up,1} and only give a proof for $w$.  Points $x_0,\tau^{-1}x_0,\tau
x_0$ are sectorially accessible by the choice of angles in the
construction of tempered dynamics and Lemma~\ref{lem:28ua,1}. The
boundary arcs of $V'$ and $W_-$ are clearly
quasiconformal. $\log\phi_{-1}$ is the Fatou
coordinate of rescaled $G$. Then, the boundary of $W_+$ is the
preimage by the Fatou coordinate of the $2\pi i$-periodic curve which
is the image of the boundary of $W_-$ under the $\log$. This curve is
quasiconformal and so is its preimage based on Fact~\ref{fa:29up,3}.    
\end{proof}

\begin{lem}\label{lem:29up,3}
The mapping $u$ is quasi-symmetric on $\RR$. 
\end{lem}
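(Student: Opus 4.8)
The plan is to apply Fact~\ref{fa:29up,2} with the finite set $Y = \{x_0, \tau^{-1}x_0, \tau x_0\}$, so that it suffices to verify quasi-symmetry of $u$ at each of these three points and on each of the open intervals $(-\infty, x_0)$, $(x_0, \tau^{-1}x_0)$, $(\tau^{-1}x_0, \tau x_0)$, $(\tau x_0, +\infty)$ into which they partition $\RR$. On the two unbounded intervals and, after composing with the relevant affine changes, on $(x_0, \tau^{-1}x_0)$ and $(\tau^{-1}x_0, \tau x_0)$, the map $u$ is built out of the affine map $A$, the linear rescalings $\tau, \hat\tau$, and the lifts $A_\pm$ of $A_-$ through the analytic covering maps $\phi_{-1}, \hat\phi_{-1}$. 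Away from $x_0$ these are compositions of analytic diffeomorphisms with nonvanishing derivative and an affine map, hence $C^1$ with nonzero derivative, so quasi-symmetry on each such interval is immediate. So the real content is quasi-symmetry at the three points of $Y$.

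First I would treat $\tau x_0$ and $\tau^{-1}x_0$. At $\tau^{-1}x_0$ the map $u$ agrees with $A_-$ on the left and with $A_+$ on the right; both $A_-$ and $A_+$ extend analytically with nonzero derivative across $\tau^{-1}x_0$ (for $A_-$ this is clear since it is affine conjugate to $A$; for $A_+$ one uses that $\phi_{-1}$ and $\hat\phi_{-1}$ are local diffeomorphisms near $\tau^{-1}x_0$, as $\tau^{-1}x_0$ is an interior point of the domain of $\phi$ away from the critical point $x_0$). Hence the one-sided derivatives of $u$ at $\tau^{-1}x_0$ are finite and nonzero, which gives quasi-symmetry there. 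At $\tau x_0$, on the left $u = A_+$ and on the right $u = A$; again $A_+$ is analytic with nonzero derivative near $\tau x_0$ for the same reason, and $A$ is affine, so the same argument applies.

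The main obstacle is quasi-symmetry at $x_0$, the flat critical point, where the naive derivative argument fails because $\phi$ (and, via the conjugacy $A_-$ or its lift, the relevant piece of $u$) has a flat singularity there. The key is Lemma~\ref{lem:28ua,1} together with Fact~\ref{fa:29up,3}: $\log\phi$ and $\log\hat\phi$ extend to quasiconformal self-maps of the sphere sending $x_0$, $\hat x_0$ to $\infty$, and the Fatou-coordinate description gives precise control of how $u$ distorts small symmetric neighborhoods of $x_0$. Concretely, near $x_0$ the map $u$ restricted to one side is affinely conjugate (through $\tau, \hat\tau$) to $A$, and the other side is obtained by pulling back through $\phi_{-1}$ and pushing forward through $\hat\phi_{-1}$; passing to Fatou coordinates via $h = \tfrac{\log\phi}{\log\tau^{-2}}$ and its hatted analogue turns $u$ near $x_0$ into a map close to a real affine map of a right half-plane (an ``almost translation-plus-scaling'' by uniqueness of the Fatou coordinate, as in Lemma~\ref{lem:28ua,1}), which is trivially quasi-symmetric, and the quasiconformal regularity of $\log\phi$, $\log\hat\phi$ on the sphere guarantees that conjugating this back to the $z$-coordinate preserves the quasi-symmetry bound. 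Once quasi-symmetry at $x_0$ is established, Fact~\ref{fa:29up,2} assembles the four intervals and three points into quasi-symmetry of $u$ on all of $\RR$, completing the proof.
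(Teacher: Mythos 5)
Your decomposition via Fact~\ref{fa:29up,2} with $Y=\{x_0,\tau^{-1}x_0,\tau x_0\}$ matches the paper's, but you have located the difficulty at the wrong point, and the step you use to dispose of the actual hard point is false. The flat critical point of $\phi$ sits at $x_0$, so the flat critical point of $\phi_{-1}=\tau\circ\phi\circ\tau^{-1}$ sits at $\tau x_0$ (indeed $\phi_{-1}(\tau x_0)=0$). Consequently $A_+$, being the lift of $A_-$ through the coverings $\phi_{-1},\hat\phi_{-1}$, is \emph{not} ``analytic with nonzero derivative near $\tau x_0$'': that is exactly the endpoint where $\phi_{-1}$ is flat, and no one-sided derivative argument applies there. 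Conversely, at $x_0$ the map $u$ is $A$ on one side and $A_-$ on the other, both essentially affine; there is no flat singularity of $u$ at $x_0$ to worry about, so your entire Fatou-coordinate discussion at $x_0$ is addressing a non-existent obstacle (and is moreover phrased in terms of $\phi$, which does not enter the definition of $u$ near $x_0$ at all).

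The missing content is precisely the quasi-symmetry of $u$ at $\tau x_0$. The paper's argument there is dynamical: set $G_{-1}=\tau G\tau^{-1}$, whose parabolic fixed point is $\tau x_0$, and consider the orbit $y_n=G_{-1}^n(\tau^{-1}x_0)$ (and its hatted analogue) accumulating at $\tau x_0$. Quasi-symmetry at $\tau x_0$ reduces to two estimates: the ratios $(\tau x_0-y_n)/(\tau x_0-y_{n+1})$ are bounded (immediate from parabolic dynamics, since they tend to $1$), and the cross-space ratios $(\tau x_0-y_n)/(\hat\tau\hat x_0-\hat y_n)$ are bounded above and below uniformly in $n$; the latter uses the asymptotic form $a/(y-\tau x_0)^2$ of the Fatou coordinate together with the fact that $\log A_-$ moves points a bounded distance. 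Your proposal contains none of this, so as written it does not prove the lemma.
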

\begin{proof}
This will be based on Fact~\ref{fa:29up,2}. After removing points
$x_0,\tau^{-1}x_0,\tau x_0$, $u$ is quasi-symmetric on each connected
component. This is obvious except on $(\tau^{-1}x_0,\tau x_0)$, where
one can invoke Fact~\ref{fa:29up,3}. The quasi-symmetry at $x_0$ and
$\tau^{-1}x_0$ is also clear, so the mapping is analytic on one-sided
neighborhoods of those points. It remains to consider the
quasi-symmetry at $\tau x_0$. 

Let $G_{-1}$ denote $\tau G \tau^{-1}$. Denote $y_n =
G_{-1}^n(\tau^{-1}x_0)$, likewise in the ``hatted'' space. The
quasi-symmetry at $\tau x_0$ follows if we can show that 
\begin{enumerate}
\item
\[ \exists K>1  \forall n\geq 0\; K^{-1} \leq \frac{\tau x_0 -
  y_n}{\hat{\tau}\hat{x}_0-\hat{y}_n} \leq K\; ,\]
\item
\[ \forall n\geq 0\; \frac{\tau x_0 - y_n}{\tau x_0 - y_{n+1}} \leq
K \] and the same holds in the ``hatted'' space.
\end{enumerate}

The second statement is obvious, since the ratios tend to $1$ by the
dynamics of a neutral point of $G_{-1}$. The first statement follows
from the form of the Fatou coordinate, namely that it is
$\frac{a}{(y-\tau x_0)^{2}}$ followed by a map whose distance from the
identity is bounded. Also, $\log A_-$ moves points by bounded
distances. 
\end{proof}
 
\paragraph{Construction of $\Psi_1$.}
The union of $A$, $A_-$ and $A_+$ and $u$ already defines $\Psi_1$ in
the closure of $\HH_+$ except for the quasiconformal disk bounded by $w$.  
We can also extend the definition to the lower half-plane using the
Beurling-Ahlfors theorem, see~\cite{bahl}, based on Lemma~\ref{lem:29up,3}. Then,
extension for the region bounded by $w$ is achieved by a
quasiconformal reflection based on Lemma~\ref{lem:29up,2}. Finally,
$\Psi_1$ is defined in the lower-half plane by reflection from
$\HH_+$. The properties claimed for it in
Proposition~\ref{prop:29ua,1} are clear from the construction, so the
proof of this Proposition is finished.   

\paragraph{Conjugacy between presentation functions.}
\begin{prop}\label{prop:29up,1}
There exists a quasiconformal mapping $\Psi_2$ of the plane, symmetric with respect to $\RR$, affine outside of $V'$, which satisfies the conjugacy condition 
\[ \Psi_2 \circ \Pi(z) = \hat {\Pi} \circ \Psi_2(z) \]
for $z\in \partial W_- \cup \partial W_+$ and for $z\in (x_0,\tau
x_0)$.  
\end{prop}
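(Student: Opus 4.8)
The plan is to obtain $\Psi_2$ by correcting $\Psi_1$ so that, beyond conjugating $\Pi$ to $\hat\Pi$ on $\partial W_-\cup\partial W_+$, it agrees on the whole interval $(x_0,\tau x_0)$ with the topological conjugacy $\upsilon$ between the real presentation functions provided by Lemma~\ref{lem:28ua,3}. Extend $\upsilon$ to a homeomorphism of $\RR$ by setting it equal to the affine map $A$ outside $[x_0,\tau x_0]$. The point to notice is that this $\upsilon$ coincides with $u=\Psi_1|_\RR$ at the three points $x_0$, $x_0/\tau$, $\tau x_0$, which are exactly where $\RR$ meets $\partial W_-\cup\partial W_+$: $x_0$ and $\tau x_0$ are the endpoints of the presentation interval and $x_0/\tau$ is both the break point of $\Pi$ and the image of the flat critical point, so the values of $u$ and $\upsilon$ there are forced by the normalizations $A$, $A_-=\hat\tau^{-1}\circ A'\circ\tau$, $A_+$ together with $\Pi(x_0/\tau)=x_0$ and $\phi_{-1}(\tau x_0)=0$; explicitly $\upsilon(x_0)=u(x_0)=\hat x_0$, $\upsilon(x_0/\tau)=u(x_0/\tau)=\hat x_0/\hat\tau$, $\upsilon(\tau x_0)=u(\tau x_0)=\hat\tau\hat x_0$. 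Hence $\Psi_1$ may be altered on $W_-\cap\RR=(x_0,x_0/\tau)$ and on $W_+\cap\RR=(x_0/\tau,\tau x_0)$, replacing it there by $\upsilon$, without disturbing its values on $\partial W_-\cup\partial W_+$ or outside $V'$.

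The main step, and the one I expect to be the real obstacle, is to show that $\upsilon$ is quasi-symmetric on $\RR$. By Fact~\ref{fa:29up,2} it is enough to check quasi-symmetry at $x_0$, $x_0/\tau$, $\tau x_0$ and on each complementary interval. On the two unbounded ones $\upsilon=A$ is affine. On $(x_0,x_0/\tau)$ and $(x_0/\tau,\tau x_0)$ one uses that $\Pi$ has a finite post-critical set — only the repelling fixed point $0$, since $\Pi(\tau x_0)=\phi_{-1}(\tau x_0)=0$ and $\Pi(0)=0$ — and that by Lemma~\ref{lem:28ua,4} every orbit reaches, after at most two analytic steps, the fundamental interval $(x_0,x_0/\tau^2)$, on which Lemma~\ref{lem:8xp,2} presents the first return map of $\Pi$ as a countable union of diffeomorphic branches of negative Schwarzian derivative extending to a fixed interval that contains $[x_0,x_0\tau^2]$ in its interior. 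The Koebe principle then yields uniformly bounded distortion for all inverse branches of this return map, hence bounded geometry for the dynamical partitions of $\Pi$ and $\hat\Pi$; since $\upsilon$ carries one partition combinatorially onto the other and is on each piece a composition of boundedly distorted inverse branches, the standard argument for maps with negative Schwarzian and finite post-critical set gives quasi-symmetry on those intervals. At $x_0/\tau$, which lands on $0$ after finitely many iterates, one linearizes $\Pi$ and $\hat\Pi$ at $0$ and pulls back the local quasi-symmetry; near $x_0$ and $\tau x_0$ the map $\Pi$ is analytic (linear on one side, $\phi_{-1}$ on the other), so $\upsilon$ inherits quasi-symmetry there from the analytic category.

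With $\upsilon$ quasi-symmetric and matching $\Psi_1$ at the three marked points, one assembles $\Psi_2$ exactly as $\Psi_1$ was assembled in the proof of Proposition~\ref{prop:29ua,1}, with only the real boundary data of the branch domains $W_-$, $W_+$ changed to $\upsilon$. Keep $\Psi_2=\Psi_1$ on $\overline{\HH}_+\setminus V'$, on $\partial W_-\cup\partial W_+$, and on the quasiconformal disk bounded by the arc $w$ (its boundary values, being $A$ on $\partial V'$, $A_-$ on $\partial W_-$, $A_+$ on $\partial W_+$, are untouched). On $W_-\cap\HH_+$ and on $W_+\cap\HH_+$ construct quasiconformal maps onto $\hat W_-\cap\HH_+$, resp.\ $\hat W_+\cap\HH_+$, with boundary values $A_-$, resp.\ $A_+$, on the non-real boundary arc and $\upsilon$ on the real trace; this is possible because the two pieces of data agree at the corners, the corners are sectorially accessible by Lemma~\ref{lem:28ua,1} and the choice $\beta_l>\beta_r$ (as in Lemma~\ref{lem:29up,1}), and the resulting boundary homeomorphism is then quasi-symmetric by Facts~\ref{fa:29up,1}--\ref{fa:29up,2}, so one may uniformize, transport the boundary map, extend by the Beurling--Ahlfors theorem~\cite{bahl}, and map back, just as for $\Psi_1$. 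Finally extend to the lower half-plane by reflection across $\RR$.

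It then remains to read off the conclusion. $\Psi_2$ is a quasiconformal automorphism of the plane, symmetric with respect to $\RR$, and affine outside $V'$ because $\Psi_1$ was and nothing changed there. It conjugates $\Pi$ to $\hat\Pi$ on $\partial W_-\cup\partial W_+$ since it coincides with $\Psi_1$ there, and for $z\in(x_0,\tau x_0)$ we get $\Psi_2(\Pi(z))=\upsilon(\Pi(z))=\hat\Pi(\upsilon(z))=\hat\Pi(\Psi_2(z))$, using $\Pi(z)\in[x_0,\tau x_0]$ and the conjugacy property of $\upsilon$. This establishes Proposition~\ref{prop:29up,1}.
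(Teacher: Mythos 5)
Your route is genuinely different from the paper's. The paper does not attempt to prove directly that the real conjugacy is quasi-symmetric: it first adjusts $\Psi_1$ so that it matches on the post-critical set $\{0,1,\tau^{-1}\}$ of $\Pi$, and then runs a holomorphic pull-back, $\hat{\Pi}\circ\Psi_1^{n+1}=\Psi_1^n\circ\Pi$. Since the liftings are by holomorphic coverings, all $\Psi_1^n$ have the same maximal dilatation, a normal-families limit $\Psi_2$ exists, and it conjugates on the dense set $\bigcup_k\Pi^{-k}(1)$, hence on all of $(x_0,\tau x_0)$ by Lemma~\ref{lem:28ua,4}. The quasi-symmetry of the real conjugacy $\upsilon$ is thereby obtained as a \emph{corollary}, never as an input. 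You instead take $\upsilon$ from Lemma~\ref{lem:28ua,3}, try to prove its quasi-symmetry directly, and then extend by Beurling--Ahlfors and quasiconformal gluing.

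The gap is exactly at the step your plan identifies as the main one. The ``standard argument for maps with negative Schwarzian and finite post-critical set'' does not apply to the first return map $R$ of Lemma~\ref{lem:8xp,2}: that map has countably many full branches on intervals $K^n=\phi^{-1}\bigl((x_0\tau^{-2n-1},x_0\tau^{-2n-3})\bigr)$ which accumulate at the flat critical point $x_0$. Bounded distortion of each branch (Koebe) controls the geometry \emph{inside} cylinders, but quasi-symmetry of $\upsilon$ additionally requires that the two base partitions $\{K^n\}$ and $\{\hat{K}^n\}$ be quasi-symmetrically equivalent, i.e.\ that the tails $d_n=\mathrm{dist}(K^n,x_0)$ and $\hat{d}_n$ be comparable under doubling of the index. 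These tails decay only polynomially ($d_n\asymp n^{-1/2}$, governed by the parabolic fixed point of $G$ via item 5 of Definition~\ref{defi:9xa,1}), so the partition has no bounded geometry in the usual sense, and the needed comparison is precisely the Fatou-coordinate estimate that the paper carries out at $\tau x_0$ in Lemma~\ref{lem:29up,3}; you would have to prove its analogue at $x_0$ and propagate it to all (densely many) preimages of $x_0$ inside $(x_0,x_0/\tau^2)$, where orbits can spend arbitrarily long excursions in high-index branches. As written, this central claim is asserted rather than proved. Two smaller points: the agreement $u(x_0/\tau)=\upsilon(x_0/\tau)$ is not automatic from the construction of $\Psi_1$ (the map $A'$ is only specified outside $V'$ and at $0$), so a normalization step analogous to the one the paper performs at $0,1,\tau^{-1}$ is needed; and at the corners of $W_\pm\cap\HH_+$ the welding of the boundary data $A_\pm$ (non-real arc) with $\upsilon$ (real trace) into a single quasi-symmetric boundary parametrization requires a compatibility argument of the same Fatou-coordinate type, not merely Facts~\ref{fa:29up,1}--\ref{fa:29up,2}. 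The pull-back argument of the paper is designed precisely to sidestep all of these degeneracies.
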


To construct $\Psi_2$, we first modify $\Psi_1$ by imposing additional
conditions $\Psi_1(0)=0, \Psi_1(1)=1,
\Psi_1(\tau^{-1})=\hat{\tau}^{-1}$ without losing the properties
claimed in Proposition~\ref{prop:29ua,1}. This is easy to do, since we
can move finitely many points in side $W_-, W_+$ by quasiconformal
mappings which remain identities on the boundary.  Dynamically these
conditions mean that  $\Psi_1$ conjugates on the post-critical set of
the presentation function. We can then construct a sequence of pull-backs
$\Psi_1^n$, with $\Psi_1^0 = \Psi_1$ and $\hat{\Pi} \circ \Psi_1^{n+1}
= \Psi_1^n \circ \Pi$, defined uniquely by the condition that they map
the real domain onto itself preserving the orientation. 

By construction, $\Psi_1^n$ maps all preimages of $1$ by $\Pi$ of order not
exceeding $n$ onto the corresponding preimages of $1$ by
$\hat{\Pi}$. We can define $\Psi_2$ as any limit of the sequence
$\Psi_1^{n_k}$ using normality of quasiconformal mappings with the
same maximal dilatation. Then, $\Psi_2$ is the same as $\Psi_1$ on the
outside of the domain of $\Pi$ and additionally maps all preimages of
$1$ by $\Pi$ onto the corresponding preimages of $1$ by $\hat{\Pi}$. 
By Lemma~\ref{lem:28ua,4} this implies conjugacy on the entire real
domain $(x_0,\tau x_0)$. Proposition~\ref{prop:29up,1} has been
proved. 

\subsection{Conjugacy between tempered complex dynamics of $H,\hat{H}$.}
\paragraph{Construction of the pre-conjugacy.}
\begin{prop}\label{prop:29up,2}
There exists a quasiconformal mapping $\Psi_3$ of the plane, symmetric with respect to $\RR$, affine outside of $V'$, which satisfies the conjugacy condition 
\[ \Psi_3 \circ H(z) = \hat {H} \circ \Psi_3(z) \]
for $z\in \partial U_- \cup \partial U_+$ and for $z\in (x_0,\tau x_0)$.  
\end{prop}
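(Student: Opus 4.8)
\emph{Proof proposal.} The plan is to promote the conjugacy $\Psi_2$ of Proposition~\ref{prop:29up,1} to a conjugacy $\Psi_3$ for the complex branches of $H$. The first point to notice is that $\Psi_2$ already supplies most of what is needed: by Lemma~\ref{lem:28ua,3} the restriction of $\Psi_2$ to $(x_0,\tau x_0)$ coincides with the topological conjugacy between the circle maps generated by $\phi$ and $\hat\phi$, and those circle maps are exactly $H$ and $\hat H$ restricted to $(x_0,\tau x_0)$, so $\Psi_2\circ H=\hat H\circ\Psi_2$ on the whole real interval. Moreover $\Psi_2$ equals the real affine identification $A$ (which we may and do normalize so that $A(0)=0$) on $\CC\setminus V'$, hence on $\partial V'$. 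Thus only the conjugacy on $\partial U_-\cup\partial U_+$ remains to be built in.

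Next I would pin down what the conjugacy must be on those arcs. Since $\partial U_-\subset\phi^{-1}(\partial V')\cup\{x_0\}$ and $\partial U_+\subset\phi_{-1}^{-1}(\partial V')\cup\{\tau x_0\}$, and $\Psi_3$ is required to be affine (equal to $A$) outside $V'$, the relation $\hat H\circ\Psi_3=\Psi_3\circ H$ forces $\Psi_3|_{\partial U_-}=\theta_-:=\hat\phi^{-1}\circ A\circ\phi$, where $\hat\phi^{-1}$ is the inverse branch carrying $\partial\hat V'$ back onto $\partial\hat U_-$, and symmetrically $\Psi_3|_{\partial U_+}=\theta_+:=\hat\phi_{-1}^{-1}\circ A\circ\phi_{-1}$. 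One then checks that $\theta_-$, $\theta_+$ and the circle conjugacy on $(x_0,\tau x_0)$ fit together continuously at the three points where their closures abut --- $x_0$ and $\tau x_0$ on $\partial V'$, and $x_0/\tau\in\overline U_-\cap\overline U_+$ --- using the normalizations $\phi(x_0)=0$, $\phi(x_0/\tau)=\tau x_0$, $\phi_{-1}(x_0/\tau)=x_0$ together with $A(0)=0$, and that everything is symmetric with respect to $\RR$.

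The construction itself would be a quasiconformal gluing, carried out in $\overline\HH$ and then extended to the lower half-plane by reflection. I would split $\overline\HH$ into three closed pieces: $(\CC\setminus V')\cap\overline\HH$, on which $\Psi_3:=A$; the ``annular'' piece $\bigl(\overline{V'}\setminus(U_-\cup U_+)\bigr)\cap\overline\HH$, a Jordan domain bounded by an arc of $\partial V'$, an arc of $\partial U_+$ and an arc of $\partial U_-$; and the two Jordan domains $\overline U_\pm\cap\overline\HH$. On each piece the boundary values of $\Psi_3$ are now prescribed ($A$ on $\partial V'$, $\theta_\pm$ on $\partial U_\pm$, the circle conjugacy on $[x_0,x_0/\tau]$ and $[x_0/\tau,\tau x_0]$), they agree along the shared arcs, and one extends quasiconformally inside each piece. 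For this one needs: (i) the bounding Jordan curves are quasiconformal circles, which follows from Fact~\ref{fa:29up,1} once $x_0$, $x_0/\tau$, $\tau x_0$ are seen to be sectorially accessible --- at $x_0$ by Lemma~\ref{lem:28ua,1}, at $x_0/\tau$ and $\tau x_0$ by the tempering angles $\beta_l,\beta_r<\pi/2$ and, for $U_+$, Lemma~\ref{lem:28ua,1} applied to $\tau U$, together with Lemma~\ref{lem:28ua,2} to keep $\overline U_-$ and $\overline U_+$ from overlapping; and (ii) the boundary correspondences are quasisymmetric, which away from the three special points is immediate because $\phi$, $\phi_{-1}$ and $A$ are analytic there, and on the real intervals is inherited from the quasiconformality of $\Psi_2$ via Fact~\ref{fa:29up,2} and the Beurling--Ahlfors extension~\cite{bahl}.

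The one genuinely delicate step --- and the place where the flat critical point intervenes --- will be the quasisymmetry of $\theta_-$ at $x_0$ and of $\theta_+$ at $\tau x_0$, where these maps are not analytic because $\phi$ (resp. $\phi_{-1}$) has a flat critical point. I would handle this exactly as in Lemma~\ref{lem:29up,3}: composing with the Fatou coordinate $\log\phi$, which by Fact~\ref{fa:29up,3} extends to a quasiconformal homeomorphism of the sphere sending $x_0$ to $\infty$, and similarly with $\log\hat\phi$, converts $\theta_-$ near $x_0$ into $\log\hat\phi\circ\hat\phi^{-1}\circ A\circ\exp\circ\log\phi$, i.e. (since $A(0)=0$) the composition of the globally quasiconformal map $\log\phi$ with a translation --- hence quasisymmetric near $x_0$; the same argument with $\phi_{-1}=\tau\circ\phi\circ\tau^{-1}$ works at $\tau x_0$. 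Once these estimates are in place the pieces assemble to a quasiconformal automorphism of the plane with the required symmetry, normalization and conjugacy properties, giving the Proposition. I expect this quasisymmetry check at the flat critical points to be the main obstacle; the rest is a routine assembly of the standard quasiconformal surgery lemmas already set up in the paper.
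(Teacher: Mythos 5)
Your proposal is correct in substance, but it takes a different route from the paper at the one step where the two could diverge: how the conjugacy is produced \emph{on} $U_\pm$. The paper does not prescribe boundary data on $\partial U_\pm$ and interpolate inward; instead it defines maps $B_\pm$ on all of $U_\pm$ as the \emph{liftings} of $\Psi_2$ through the holomorphic universal coverings $\phi,\hat\phi$ (resp.\ $\phi_{-1},\hat\phi_{-1}$). Because a lifting of a quasiconformal map through holomorphic coverings is quasiconformal with the same dilatation, $B_\pm$ are automatically quasiconformal up to $\partial U_\pm$, automatically agree with $\Psi_2$ on the real traces, and automatically satisfy the conjugacy relation; the only region left to fill is the single Jordan domain $V'\cap\HH_+\setminus(U_-\cup U_+)$, which is handled by one quasiconformal reflection across the quasicircle $v$ (Lemma~\ref{lem:29up,4}). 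Your decomposition into three Jordan pieces with prescribed boundary correspondences buys nothing extra and costs you the hardest estimate: you must prove directly that $\theta_\pm$ is quasisymmetric at the flat points $x_0$ and $\tau x_0$, which the lifting construction gets for free. Your proposed treatment of that estimate (transport to the Fatou coordinate via Fact~\ref{fa:29up,3}, as in Lemma~\ref{lem:29up,3}) is the right tool and would work, so this is a legitimate, if more laborious, alternative.

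Two small inaccuracies to fix if you pursue your version. First, there is no single-valued ``inverse branch of $\hat\phi$ carrying $\partial\hat V'$ onto $\partial\hat U_-$'': as $z\to x_0$ along $\partial U_-\cap\HH_+$ the image $\phi(z)$ winds around $\partial V'$ infinitely often, so $\theta_-$ must be defined as the lifting of $A\circ\phi$ through the covering $\hat\phi$, continuity at $x_0$ then following because the lift escapes to $\hat x_0$. Second, you cannot normalize $A(0)=0$: $A$ is pinned down as the affine map taking $V'$ onto $\hat V'$, hence $(x_0,\tau x_0)$ onto $(\hat x_0,\hat\tau\hat x_0)$, and in general $A(0)\neq 0$. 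This does not hurt you --- on $\phi(\partial U_-)\subset\partial V'$ the quantity $|A\circ\phi|$ is bounded away from $0$ and $\infty$, so in the logarithmic coordinate $A$ still moves points by a bounded distance (this is exactly the observation used in Lemma~\ref{lem:29up,3}) --- but the clean ``translation'' identity you invoke is not literally available.
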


Start by defining maps $B_{\pm}$. Let $B_-$ be defined on $U_-$ as the lifting of $\Psi_2$
by the universal covers $\phi, \hat{\phi}$. Since $\Psi_2$ was the
conjugacy between real presentation functions of $(x_0,\tau x_0)$ and
by Lemma~\ref{lem:28ua,3} also the conjugacy between the real dynamics
of $H,\hat{H}$, $B_-$ remains the same as $\Psi_2$ on
$(x_0,\tau^{-1}x_0)$. Additionally, it satisfies the conjugacy
condition on the boundary. Map $B_+$ is defined similarly on $U_+$ as
the lifting of $\Psi_2$ to the universal covers
$\phi_{-1},\hat{\phi}_{-1}$. By the same arguments, it is equal
to $\Psi_2$ on the real trace and satisfies the conjugacy condition on
the boundary. 

We can extend the union of $B_-$ and $B_+$ restricted to $\HH_+$ to
the lower half-plane by $\Psi_2$ and to the complement of $V'$ in the
upper half plane also by $\Psi_2$. To complete the proof of
Proposition~\ref{prop:29up,2}, it remains to extend the mapping to the
set $V'\cap\HH_+ \setminus (U_-\cup U_+)$. 

This follows by the same method as in the proof of
Proposition~\ref{prop:29ua,1}. We first consider the Jordan arc $v$
which consists of the boundary arcs of $V',U_-,U_+$ in the upper half
plane and the analogous arc $\hat{v}$. 

\begin{lem}\label{lem:29up,4}
Arcs $v,\hat{v}$ are quasiconformal.
\end{lem}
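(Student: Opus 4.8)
The plan is to prove Lemma~\ref{lem:29up,4} by reducing the quasiconformality of the arcs $v,\hat v$ to that of their constituent boundary arcs via Fact~\ref{fa:29up,1}, exactly as was done for the arcs $w,\hat w$ in Lemma~\ref{lem:29up,1}. I will give the argument for $v$; the case of $\hat v$ is identical. Recall that $v$ is the Jordan arc formed by the portions of $\partial V'$, $\partial U_-$ and $\partial U_+$ lying in $\overline{\HH}_+$. The three arcs meet the real line at the points $x_0$, $x_0/\tau$ and $\tau x_0$, and these are the only points where $v$ fails to be locally a smooth (in fact real-analytic) arc: indeed $H$ extends analytically through $\partial U_-$ and $\partial U_+$ away from $x_0$, and $\partial V'$ is piecewise real-analytic by construction. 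So the strategy is: (i) check that $x_0$, $x_0/\tau$, $\tau x_0$ are sectorially accessible points of $v$; (ii) check that each connected component of $v\setminus\{x_0,x_0/\tau,\tau x_0\}$ is a quasiconformal arc; then Fact~\ref{fa:29up,1} finishes the proof.

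For step (i), sectorial accessibility at $x_0$ and at $x_0/\tau$ follows from the construction of the tempered dynamics: the boundary $\partial V'$ meets $\RR$ at $x_0$ at angle $\beta_l<\pi/2$, and by Lemma~\ref{lem:28ua,1} the arc $\partial U_-$ lands at $x_0$ within the angle $|\arg(z-x_0)|<\pi/4+\epsilon$, which for $\beta_r$ chosen close enough to $\pi/2$ is strictly inside $A_l$; hence there is a full open sector at $x_0$ disjoint from $v$ near $x_0$. At $x_0/\tau$ we use that $\overline U_-\cap\overline U_+=\{x_0/\tau\}$ (the consequence of Lemma~\ref{lem:28ua,2}) together with the fact that both $\partial U_-$ and $\partial U_+$ extend analytically through $x_0/\tau$ (since $x_0/\tau\neq x_0$ and $\phi$, $\phi_{-1}$ are analytic there), so the two arcs leave $x_0/\tau$ along definite directions into $\HH_+$ and there is an open sector on the real-line side disjoint from $v$. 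At $\tau x_0$ the relevant arc of $v$ is the $\partial U_+$ piece; here we invoke Lemma~\ref{lem:28ua,1} applied to $\tau U$ (as in the discussion of $U_+$ preceding Theorem~\ref{theo:28up,2}), which says $U_+$ fits locally inside the angle $A_r$ of opening $\beta_r<\pi/2$ at $\tau x_0$, again leaving an open sector free.

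For step (ii), each component of $v$ with its endpoints among $\{x_0,x_0/\tau,\tau x_0\}$ removed is a real-analytic arc: the relevant pieces of $\partial U_-$ and $\partial U_+$ are analytic because $H$ continues analytically across them away from $x_0$, and the pieces of $\partial V'$ are analytic by the explicit description of $V'$ as an intersection of a Poincar\'e disk with two angles. A real-analytic arc of finite length is trivially quasiconformal, so every component is quasiconformal. Applying Fact~\ref{fa:29up,1} with $Y=\{x_0,x_0/\tau,\tau x_0\}$ then gives that $v$, and likewise $\hat v$, is quasiconformal. I expect the only delicate point to be the verification at $\tau x_0$, where one must make sure the same choice of $\beta_r$ (close to $\pi/2$) that was needed to force $U_-\subset V'$ and $U_+\subset V'$ also suffices to keep the landing sector of $\partial U_+$ inside $A_r$; this is exactly the content of Lemma~\ref{lem:28ua,1} transported to the rescaled picture $\tau U$, and it is the same estimate already used in the construction of the tempered map, so no new work is required.

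\begin{proof}
We argue for $v$, the case of $\hat v$ being identical, and appeal to Fact~\ref{fa:29up,1} with the set $Y=\{x_0, x_0/\tau, \tau x_0\}$.

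First, these three points are sectorially accessible on $v$. At $x_0$ the arc $v$ consists locally of a piece of $\partial V'$, which meets $\RR$ at angle $\beta_l<\pi/2$, and a piece of $\partial U_-$; by Lemma~\ref{lem:28ua,1} the latter lands at $x_0$ inside the angle $|\arg(z-x_0)|<\frac{\pi}{4}+\epsilon$, which lies strictly inside $A_l$ once $\beta_r$ is close enough to $\pi/2$. Hence an open sector at $x_0$ on the complementary side is disjoint from $v$ near $x_0$. At $x_0/\tau$ we use $\overline U_-\cap\overline U_+=\{x_0/\tau\}$, a consequence of Lemma~\ref{lem:28ua,2}; since $\phi$ and $\phi_{-1}$ are analytic near $x_0/\tau$, the arcs $\partial U_-$ and $\partial U_+$ extend analytically through $x_0/\tau$ and leave it along definite directions into $\HH_+$, leaving an open sector free. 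At $\tau x_0$ the relevant piece of $v$ is part of $\partial U_+$; applying Lemma~\ref{lem:28ua,1} to the rescaled domain $\tau U$, as in the construction of the tempered map, $U_+$ fits locally inside the angle $A_r$ of opening $\beta_r<\pi/2$ at $\tau x_0$, so again an open sector is disjoint from $v$.

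Second, each connected component of $v\setminus Y$ is a real-analytic arc of finite length, hence quasiconformal: the pieces of $\partial U_-$ and $\partial U_+$ are analytic because $H$ continues analytically across $\partial U_-$ and $\partial U_+$ at every point other than $x_0$, and the pieces of $\partial V'$ are analytic since $V'$ is the intersection of a geometric disk with the two angles $A_l, A_r$.

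By Fact~\ref{fa:29up,1}, $v$ is quasiconformal, and the same argument gives that $\hat v$ is quasiconformal.
\end{proof}
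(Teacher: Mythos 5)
Your overall framework (decompose $v$ at the three landing points $x_0$, $x_0/\tau$, $\tau x_0$ and apply Fact~\ref{fa:29up,1}) matches the paper's intent, and your sectorial-accessibility checks in step (i) are fine. But step (ii) contains a genuine gap: the components of $v\setminus Y$ lying on $\partial U_-$ and $\partial U_+$ are \emph{not} real-analytic arcs of finite length. Recall that $U_-=\phi^{-1}(V)$ and $U_+=\phi_{-1}^{-1}(V)$, where $V=V'\setminus\{0\}$ is a punctured neighbourhood of $0$ and $\phi$, $\phi_{-1}$ are universal coverings of the punctured range. Hence $\partial U_-\setminus\{x_0\}$ is the preimage under $\log\phi$ of the $2\pi i$-periodic curve $\exp^{-1}(\partial V')$: an infinite concatenation of analytic pieces accumulating at $x_0$, and likewise $\partial U_+$ accumulates at $\tau x_0$. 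Local analyticity at each interior point (which is all your appeal to analytic continuation of $H$ gives) says nothing about whether such an arc admits a quasiconformal parametrization up to the accumulation point; that is precisely the nontrivial content of the lemma.

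The paper's route (the proof of Lemma~\ref{lem:29up,1}, to which the proof of Lemma~\ref{lem:29up,4} defers) closes exactly this gap: $\log\phi$ is, up to normalization, a Fatou coordinate for $G$, and by Fact~\ref{fa:29up,3} it extends to a quasiconformal homeomorphism of the sphere sending $x_0$ to $\infty$; the boundary arc of $U_-$ is then the image under this quasiconformal map of a $2\pi i$-periodic quasiconformal curve, hence quasiconformal. The same argument, applied to the rescaled map $\phi_{-1}$, handles $\partial U_+$ near $\tau x_0$. Note that in Lemma~\ref{lem:29up,1} only one of the three constituent arcs ($\partial W_+$) required this Fatou-coordinate argument, whereas for $v$ both $\partial U_-$ and $\partial U_+$ do; your proof as written supplies it for neither. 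Incorporating Fact~\ref{fa:29up,3} at these two arcs would repair the argument and bring it in line with the paper's.
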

\begin{proof}
The proof is very similar to the proof of Lemma~\ref{lem:29up,1} and
omitted.
\end{proof}

Now $\Psi_3$ is constructed on the region bounded by $v$ by the
quasiconformal reflection and finally extended to the lower
half-plane by reflection. 

\paragraph{Proof of Theorem~\ref{theo:28up,2}.}
Thus, we construct a sequence of
quasi-conformal homeomorphisms $\Psi^n$ of the plane, by setting
$\Psi^0 = \Psi_3$ and defining $\Psi^n$ for $n>0$ as $\Psi^{n-1}$
outside of $U_+\cup U_-$ and to be the lifting of $\Psi^{n-1}$ to the
universal covers $H_{|U_+}, \hat{H}_{|\hat{U}_+}$ and $H_{|U_-},
\hat{H}_{|\hat{U}_-}$. Both liftings are uniquely defined by the
requirement that $\Psi^n$ should fix the real line with its
orientation. 

The sequence $\Psi^n(z)$ actually stabilizes for every $z\notin
\tilde{K}_H$. By Lemma~\ref{lem:28up,7}, $\tilde{K}_H = K_H$. 
So $\Psi^n$ converge on the complement of $K_H$ and by taking a
subsequence can be made to converge globally to some map
$\Psi^{\infty}$. Outside of $K_H$, $\Psi^{\infty}$ satisfies the
functional equation $\Psi^{\infty} H = \hat{H} \Psi^{\infty}$ and then
it also satisfies it on $K_H$ by continuity, since $K_H$ has an empty
interior by Lemma~\ref{lem:29up,6}. So we can set $\Psi := \Psi^{\infty}$ and this
concludes the proof of Theorem~\ref{theo:28up,2}.

\section{Julia sets and Towers} 
\subsection{Julia sets.}
We consider the filled-in Julia set for the untempered dynamics $H$. It can be constructed in several stages. 
First $K'_H = \{ z:\: \Omega_-\cup\Omega_+ :\: \forall j>0\; H^j(z) \in \Omega_-\cup \Omega_+ \}$. 
Then, $K''_H = \{ x_0, \tau x_0 \}$. 
Then, $K'''_H = \{ x\in \Omega_-\cup\Omega_+ :\: \exists j\geq 0\; 
H^j(x) = x_0/\tau \}$. 
By definition, $K_H = K'_H \cup K''_H \cup K'''_H$. 

At the end of this section we will prove the following theorem:
\begin{theo}\label{theo:28up,1}
\[ K_H = \overline{\{ H^{-j}(x_0/\tau) :\: j\geq 0\}} \; .\]
\end{theo}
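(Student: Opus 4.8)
The plan is to show the two inclusions separately, with the easier containment $\overline{\{H^{-j}(x_0/\tau):j\ge 0\}}\subset K_H$ handled first. Every backward iterate of $x_0/\tau$ lies in $K'''_H$ by definition (taking $j$ to be the order of the preimage), so the orbit $\{H^{-j}(x_0/\tau)\}$ is contained in $K_H$; since $K_H$ is closed (each of $K'_H$, $K''_H$, $K'''_H$ is closed — $K'_H$ as a nested intersection of closed sets, $K'''_H$ because it is the closure of the grand orbit intersected with the compact $\overline{\Omega_-\cup\Omega_+}$, or directly from the fact that preimages accumulate only on $K'_H\cup K''_H$), the closure of the orbit is still inside $K_H$. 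First I would make this precise, being careful that $x_0/\tau$ itself is in the domain and that its backward orbit under the two branches $\phi^{-1}$, $\phi_{-1}^{-1}$ stays in $\Omega_-\cup\Omega_+$.

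For the reverse inclusion $K_H\subset\overline{\{H^{-j}(x_0/\tau):j\ge 0\}}$ I would treat the three pieces. The pieces $K''_H=\{x_0,\tau x_0\}$ and $K'''_H$ are immediate: $\tau x_0$ is identified with $x_0$ on the circle and $\Pi(\tau x_0)=0$, while by Lemma~\ref{lem:28ua,4} preimages of $1$ under the presentation function $\Pi$ are dense in $[x_0,\tau x_0]$, and since $1=H(0)$ and the post-critical orbit $\{H^j(0)\}$ equals $\{\Pi^{-k}(1)\}$ (Lemma~\ref{lem:8xp,1}), in particular $x_0=\lim H^{k_j}(0)$ along a subsequence, so $x_0$ — and hence $\tau x_0$ — is a limit of points that are themselves forward images of $0$; one then checks $0$ and all $H^j(0)$ are backward iterates of $x_0/\tau$ using $\Pi^{-1}(1)=\tau^{-1}$ and $H(x_0/\tau)=\tau x_0$ resp. $\phi_{-1}(x_0/\tau)=x_0$, i.e. $x_0/\tau$ lies on the grand orbit of the post-critical set. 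For $K'''_H$ note directly that if $H^j(x)=x_0/\tau$ then $x=H^{-j}(x_0/\tau)$, so $K'''_H$ is literally the orbit, not even its closure.

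The substantive part is $K'_H\subset\overline{\{H^{-j}(x_0/\tau)\}}$. Here I would argue by contradiction: take $z\in K'_H$, so $H^j(z)\in\Omega_-\cup\Omega_+=\Omega_{-,c}\cup\Omega_{+,c}$ for all $j\ge 0$ (the subscript-$c$ domains agreeing with $\Omega_\pm$ since the tempering only shrinks the range), and suppose $z$ is not in the closure of the backward orbit of $x_0/\tau$. The key input is Lemma~\ref{lem:28up,2}: if the forward orbit of $z$ accumulates on $\RR$ then $z\in\RR$, and then on $\RR$ the dynamics is the circle homeomorphism, conjugate to the golden rotation by Proposition~\ref{prop:28ua,1}, whose orbits are dense — in particular the orbit of $z$ would accumulate on $x_0$, whence $z$ lies in the closure of the post-critical set, which we have just identified with (a subset of) the closure of the grand orbit of $x_0/\tau$. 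So the remaining case is that the forward orbit of $z$ stays a definite distance from $\RR$; I would use Lemma~\ref{lem:28up,1} together with the Epstein/Poincaré-neighborhood shrinking (properties 3 and 4 of the ${\cal EWF}$ class) to pull back a fixed-size disk around accumulation points of the orbit along an infinite subsequence, obtaining a definite-modulus annulus around $z$ inside the complement of the orbit closure, contradicting that $z\in\partial\Omega$-type configurations force $z$ to be eventually mapped near $0$ — more precisely, a point whose whole forward orbit avoids a neighborhood of $\{0,x_0,\tau x_0\}$ and stays in $\Omega_{-,c}\cup\Omega_{+,c}$ must, by the contraction of the inverse branches, itself be a preimage of the repelling behavior at $x_0/\tau$. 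I expect this last step to be the main obstacle: one has to rule out a hypothetical "wandering" complex orbit, and the tool is the uniform shrinking of $\phi^{-1}$, $\phi_{-1}^{-1}$ on Poincaré boxes (Definition~\ref{defi:9xa,1}, items 3–4) combined with the real rigidity from Proposition~\ref{prop:28ua,1}, exactly as in Lemma~\ref{lem:28up,2}; assembling these into a clean contradiction, and making sure the subscript-$c$ domains in Lemma~\ref{lem:28up,2} match $K'_H$'s $\Omega_\pm$, is where the care is needed.
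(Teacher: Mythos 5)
Your overall case split (orbit hits $\RR$ / stays away from $\RR$ / accumulates on $\RR$ without entering it) matches the paper's, and the easy inclusion plus the treatment of $K''_H$ and $K'''_H$ are fine. But there are two genuine problems in the substantive part concerning $K'_H$.

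First, you assert that $\Omega_{\pm,c}$ agree with $\Omega_{\pm}$ ``since the tempering only shrinks the range.'' This is false, and it is not the tempering that is at issue: by Definition~\ref{defi:26xp,1}, $\Omega_{-,c}$ is the image of the \emph{principal} inverse branch only, i.e.\ the preimage of the strip $|\Im w|<\pi$ under $\log\phi$, whereas $\Omega_-$ is the full preimage of ${\cal D}^*(x_0,\tau x_0)$ under the universal covering $\phi$ and consists of infinitely many such sheets. A point of $K'_H$ has its forward orbit in $\Omega_-\cup\Omega_+$ but by no means in $\Omega_{-,c}\cup\Omega_{+,c}$, so Lemma~\ref{lem:28up,2} cannot be invoked to conclude that every $z\in K'_H$ whose orbit accumulates on $\RR$ is itself real. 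The correct reading of that lemma is the contrapositive: if $z\notin\RR$ and its orbit accumulates on $\RR$, then the orbit must leave $\Omega_{-,c}\cup\Omega_{+,c}$ along a sequence of times $m_n$. That escaping sequence is exactly what the paper's proof then feeds into Lemma~\ref{lem:8xa,1} (after rescaling, with any fixed $M$) to conclude that $\rho_0\bigl(H^{m_n}(z_0),H^{-1}((x_0,\tau x_0))\bigr)$ is bounded independently of $n$. Your version collapses this case into the real case and thereby skips the half of the argument where Lemma~\ref{lem:8xa,1} is indispensable.

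Second, your closing step --- pulling back ``a definite-modulus annulus'' to reach a contradiction with a hypothetical wandering complex orbit --- is not the right shape of argument and does not close the gap. The paper's proof is direct, not by contradiction: once one knows that at the times $m_n$ (all $n$ in the positive-distance case, the escape times otherwise) the point $H^{m_n}(z_0)$ lies within a uniformly bounded $\rho_0$-distance of $H^{-1}((x_0,\tau x_0))$, one applies the uniform expansion of the hyperbolic metric (Lemma~\ref{lem:5ga,1} together with Fact~\ref{fa:20gp,2}, as in Lemma~\ref{lem:5gp,2}) to conclude that the inverse branch tracking the orbit back to $z_0$ contracts $\rho_0$ at a definite rate, so $\rho_0\bigl(z_0,H^{-m_n}((x_0,\tau x_0))\bigr)\to 0$; density of the preimages of $x_0/\tau$ in $(x_0,\tau x_0)$ (Proposition~\ref{prop:28ua,1}) then finishes the proof. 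The hyperbolic-metric expansion machinery is the missing idea in your proposal; items 3--4 of Definition~\ref{defi:9xa,1} alone do not give the uniform contraction you need.
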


As a corollary, we get
\begin{lem}\label{lem:29up,6}
Set $K_H$ has an empty interior.
\end{lem}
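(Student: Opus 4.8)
The plan is to derive Lemma~\ref{lem:29up,6} directly from Theorem~\ref{theo:28up,1}, which asserts that $K_H$ is the closure of the backward orbit $\{H^{-j}(x_0/\tau):j\ge 0\}$. The key observation is that this backward orbit, while dense in $K_H$, is a \emph{countable} set, and a countable set cannot have nonempty interior in the plane. So the whole difficulty is pushed into Theorem~\ref{theo:28up,1}, and Lemma~\ref{lem:29up,6} becomes essentially a one-line corollary. Let me spell out the countability point, since that is the only thing to verify.

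First I would note that $H$ has exactly two branches, $\phi$ on $\Omega_-$ and $\phi_{-1}$ on $\Omega_+$, each a covering of the punctured disk ${\cal D}(x_0,\tau x_0)\setminus\{0\}$ (by the construction of the complex dynamics and Definition~\ref{defi:9xa,1}). Thus each branch is a local homeomorphism away from the omitted value $0$, so for any point $w$ the full preimage $H^{-1}(w)$ is a discrete, hence countable, set — indeed in each of $\Omega_-$, $\Omega_+$ the preimages of $w$ under the corresponding covering form a countable set (the fiber of a covering of a punctured disk over a point is countable). By induction, $H^{-j}(x_0/\tau)$ is countable for every $j\ge 0$, and therefore $\bigcup_{j\ge 0}H^{-j}(x_0/\tau)$ is a countable union of countable sets, hence countable. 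A countable subset of $\CC$ has empty interior and is nowhere dense, so its closure $K_H$ — by Theorem~\ref{theo:28up,1} — has empty interior.

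The only point that needs a word of care is that $H^{-1}$ of a point could in principle be empty or the branch domains could fail to be honest covers, but this is exactly ruled out by item 2 of Definition~\ref{defi:9xa,1} together with Lemma~\ref{lem:9xa,1} (which give that $\phi$ and $\phi_{-1}$ are unbranched coverings of the relevant punctured disks onto which the range ${\cal D}(x_0,\tau x_0)\setminus\{0\}$ maps under rescaling). Hence every fiber is a genuine countable discrete set and the induction goes through. I do not expect any real obstacle here; the substance of the statement lives entirely in Theorem~\ref{theo:28up,1}, whose proof (to come) is where the dynamical work — showing $K_H$ coincides with the closure of that backward orbit rather than being a larger invariant set — must be done.
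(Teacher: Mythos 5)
Your argument breaks down at the final topological step. It is true that the backward orbit $\bigcup_{j\ge 0}H^{-j}(x_0/\tau)$ is countable, and a countable set has empty interior; but a countable set need \emph{not} be nowhere dense, and its closure can very well have nonempty interior. The standard counterexample is $\mathbb{Q}+i\mathbb{Q}$, which is countable yet dense in $\CC$, so its closure is all of $\CC$. Theorem~\ref{theo:28up,1} only tells you that $K_H$ is the closure of a countable set; that is compatible, a priori, with $K_H$ containing an open set (indeed, with $K_H$ being a full neighborhood of some point). So the sentence ``a countable subset of $\CC$ \ldots is nowhere dense, so its closure has empty interior'' is simply false, and the lemma does not follow from countability.

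What the paper actually extracts from Theorem~\ref{theo:28up,1} is density of the preimages of $x_0/\tau$ in $K_H$, combined with the geometry of the domain at the point $x_0/\tau$ itself. If $K_H$ had nonempty interior, that interior would contain a point $z$ with $H^j(z)=x_0/\tau$ for some $j$; pushing a small neighborhood of $z$ forward by $H^j$ (an open map on its domain, and $K_H$ is forward invariant) one would conclude that $K_H$ contains a full neighborhood of $x_0/\tau$. But by Lemma~\ref{lem:28ua,2} the closures $\overline{\Omega}_-$ and $\overline{\Omega}_+$ meet only at the single point $x_0/\tau$, so every neighborhood of $x_0/\tau$ contains points outside the domain of $H$, which cannot belong to $K_H$. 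That contradiction is the content of the lemma; the countability of the backward orbit plays no role.
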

\begin{proof}
If the interior of $K_H$ is not empty, then by
Theorem~\ref{theo:28up,1}, the interior of $K_H$ contains a
neighborhood of $x_0/\tau$. But any neighborhood of that point sticks
out of the domain of definition for $H$. 
\end{proof}

\paragraph{The Julia set for tempered dynamics.}
By analogy to the set $K_H$, we consider the filled-in Julia set
$\tilde{K}_H$ for the tempered dynamics. It can be constructed in several stages. 
First $\tilde{K}'_H = \{ z:\: U_-\cup U_+ :\: \forall j>0\; H^j(z) \in U_-\cup U_+ \}$. 
Then, $\tilde{K}''_H = \{ x_0, \tau x_0 \}$. 
Then, $\tilde{K}'''_H = 
\{ x\in U_-\cup U_+ :\: \exists j\geq 0\; H^j(x) = x_0/\tau \}$. 
By definition, $\tilde{K}_H = \tilde{K}'_H \cup \tilde{K}''_H \cup \tilde{K}'''_H$. 

\begin{lem}\label{lem:28up,7}
For any dynamics generated by the ${\cal EWF}$ class, $\tilde{K}_H =K_H$.
\end{lem}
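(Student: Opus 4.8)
The plan is to prove the equality $\tilde K_H=K_H$ by comparing the three constituent pieces $K'_H,K''_H,K'''_H$ with their tempered counterparts $\tilde K'_H,\tilde K''_H,\tilde K'''_H$ separately. One inclusion, $\tilde K_H\subseteq K_H$, is pure monotonicity: since $V=V'\setminus\{0\}\subseteq{\cal D}(x_0,\tau x_0)\setminus\{0\}$ and $U_-$ (resp.\ $U_+$) is by construction the $\phi$-preimage (resp.\ $\phi_{-1}$-preimage) of $V$, while $\Omega_-$ (resp.\ $\Omega_+$) is the corresponding preimage of ${\cal D}(x_0,\tau x_0)\setminus\{0\}$, we get $U_\pm\subseteq\Omega_\pm$. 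As the tempered dynamics is simply the restriction of $H$ to $U_-\cup U_+$, this immediately yields $\tilde K'_H\subseteq K'_H$, $\tilde K'''_H\subseteq K'''_H$, and $\tilde K''_H=K''_H=\{x_0,\tau x_0\}$, hence $\tilde K_H\subseteq K_H$.

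For the reverse inclusion the decisive ingredient is inclusion~(\ref{equ:6xp,1}), namely $\Omega_-\cup\Omega_+\subseteq V'$: the tempering was arranged so that the new, smaller range $V'$ still contains the \emph{old} domains. From this I would first extract the following claim: if $w\in\Omega_-\cup\Omega_+$ and $H(w)\in V'$, then $w\in U_-\cup U_+$. Indeed $H(w)\neq 0$, because the only $\phi$-preimage of $0$ is $x_0$ and the only $\phi_{-1}$-preimage of $0$ is $\tau x_0$, neither of which lies in $\Omega_-\cup\Omega_+$; therefore $H(w)\in V'\setminus\{0\}=V$, and $w$ then lies in $\phi^{-1}(V)=U_-$ or in $\phi_{-1}^{-1}(V)=U_+$ according to which branch is applied at $w$. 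Now I feed orbits into this claim. If $z\in K'_H$, then $H^i(z)\in\Omega_-\cup\Omega_+$ for every $i\geq 0$, so $H^{i+1}(z)\in\Omega_-\cup\Omega_+\subseteq V'$ by~(\ref{equ:6xp,1}), and applying the claim to $w=H^i(z)$ for each $i$ gives $H^i(z)\in U_-\cup U_+$ for all $i$; hence $z\in\tilde K'_H$ and $K'_H=\tilde K'_H$. The argument for $K'''_H$ is identical once one observes that $x_0/\tau\in V'$: directly from the definition $V'={\cal D}(x_0,\tau x_0)\cap A_l\cap A_r$ one has $(x_0,\tau x_0)\subseteq V'$ (on the real segment all the relevant arguments vanish), and $x_0/\tau$ lies in that segment. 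So if $z\in K'''_H$ with $H^j(z)=x_0/\tau$ and $j$ minimal, then the orbit $z,H(z),\dots,H^{j-1}(z)$ lies in $\Omega_-\cup\Omega_+$ (as required for $H^j(z)$ to be defined), and for $0\leq i\leq j-1$ we have $H^{i+1}(z)\in(\Omega_-\cup\Omega_+)\cup\{x_0/\tau\}\subseteq V'$, so the claim puts $H^i(z)\in U_-\cup U_+$; the case $j=0$ reads $z=x_0/\tau$ and is immediate since the definitions of $K'''_H$ and $\tilde K'''_H$ treat that point identically. Thus $z\in\tilde K'''_H$, and combining the three pieces gives $K_H=\tilde K_H$.

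I do not expect a genuine obstacle here: essentially all of the real content was already packaged into~(\ref{equ:6xp,1}) and into the tempering construction itself (in particular the choice of $\beta_r$ close to $\pi/2$, which was what guaranteed $U_\pm\subseteq V'$ and the companion inclusion $\Omega_-\cup\Omega_+\subseteq V'$). The only points requiring any care are bookkeeping ones: that $U_\pm$ and $\Omega_\pm$ are the full preimages of $V$ and of ${\cal D}(x_0,\tau x_0)\setminus\{0\}$ respectively, so that ``$H(w)\in V$'' really forces $w$ into $U_-\cup U_+$ and not onto some other sheet of the covering $\phi$; the trivial fact that $0\notin H(\Omega_-\cup\Omega_+)$; and the observation that $x_0/\tau$, $x_0$ and $\tau x_0$ sit exactly where the definitions of $K'_H$, $K''_H$, $K'''_H$ and their tempered versions expect them. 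With these checks the lemma follows.
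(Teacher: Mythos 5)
Your proof is correct and rests on exactly the same key point as the paper's: inclusion~(\ref{equ:6xp,1}) ($V'\supset\Omega_-\cup\Omega_+$) forces any orbit point of $\Omega_-\cup\Omega_+$ that falls outside $U_-\cup U_+$ to land outside the untempered domain, so $K_H\subset U_-\cup U_+$; your "claim" is just the contrapositive of the paper's one-line observation. The only difference is that you carry out the bookkeeping for $K'_H$, $K''_H$, $K'''_H$ explicitly where the paper appeals to invariance of $K_H$, which is harmless.
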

\begin{proof}
Since the tempered dynamics is a restriction of the untempered version to a smaller domain, ${\tilde K}_H \subset K_H$. 
In order to prove the opposite inclusion it will be enough to show that $K_H \subset U_- \cup U_+$. This is because $K_H$ is invariant. 
Recall that $H$ maps any point of $\Omega_-\cup\Omega_+$ not in 
$U_-\cup U_+$ into ${\cal D}(x_0,\tau x_0) \setminus V'$. By inclusion~(\ref{equ:6xp,1}) this is outside of the domain of untempered dynamics and hence such points cannot 
belong to $K_H$. 
\end{proof}

\subsection{Dynamics in towers.}
Let $H$ be the complex dynamics, tempered or not, generated by some $\phi$
in the ${\cal EWF}$-class.

\begin{defi}\label{defi:11fa,1}
Define, for $n=0,1,2,...,$
$H_{-n}(z)=\tau^n H(z/\tau^n)$.
Then $\tau^n K_H$ is the Julia set of the map
$H_{-n}:\Omega_{-n} \to D^{*}_{-n}$, where $\Omega_{-n}=\tau^n(\Omega_{-}\cup
\Omega_{+}), D^{*}_{-n}=\tau^n {\cal D}(x_0,\tau x_0) \setminus \{0\}$. 

The collection of maps $H_n:U_n\to V_n$, $n=0,-1,...$
forms the {\it tower} of $H$, tempered or untempered, respectively. 
Map $H_n$ will be referred to as the
$n$-th {\em level} of the tower.
\end{defi}

\begin{lem}\label{lem:30ua,1}
For any $0\geq m >n$, each branch of untempered $H_m$ on its domain is a composition for
branches of $H_n$. This includes an assertion that for each component
of $\Omega_{m}$ there is a
particular composition of branches on $H_n$ which is well defined on
this entire component. 
\end{lem}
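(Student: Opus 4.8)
The plan is to decompose the map $H_m$ level by level down to level $n$, using the fundamental self-similarity relation that defines the tower, namely that $H_{k-1} = \tau\circ H_k\circ\tau^{-1}$ in the appropriate sense, together with the functional equation~(\ref{equ:27up,1}) which expresses $\phi_1$ as $\phi_{-1}\circ\phi$. First I would recall that $H$ has exactly two branches, $\phi=\phi_0$ on $\Omega_-$ and $\phi_{-1}$ on $\Omega_+$, both mapping onto ${\cal D}(x_0,\tau x_0)\setminus\{0\}$, and that by Definition~\ref{defi:11fa,1} the level-$m$ map $H_m(z)=\tau^{-m}H(\tau^{m}z)$ (using the convention $m\le 0$, so $\tau^{-m}$ with $m$ negative is a genuine expansion) has the two branches $\phi_{m}=\tau^{-m}\phi_0\tau^{m}$ on the rescaled copy of $\Omega_-$ and $\phi_{m-1}=\tau^{-m}\phi_{-1}\tau^{m}$ on the rescaled copy of $\Omega_+$. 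So it suffices to treat the one-step case $m=n+1$ and then iterate; by induction on $m-n$ the general statement follows immediately once the one-step case is in hand, because a composition of compositions of branches of $H_n$ is again such a composition, and the domain-tracking assertion is preserved under composition of inverse branches.

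For the one-step case I would analyze the two branches of $H_{n+1}$ separately. The branch $\phi_{n+1}=\tau^{-(n+1)}\phi_0\tau^{n+1}$, after conjugating the defining relation of the lower level, is exactly one of the two branches of $H_n$ — it is $\phi_n$ composed with nothing, or more precisely it equals $\phi_{n+1}$ which already appears as the ``central'' branch at level $n$ in the sense that $\phi_{n+1}=\tau^{-1}\circ\phi_n\circ\tau$; but we must express it using branches of $H_n$, not $H_{n+1}$. Here the functional equation~(\ref{equ:27up,1}), in the rescaled form $\phi_{n+1}=\phi_{n-1}\circ\phi_n$ valid on the appropriate rescaled interval, does the job: the central branch of $H_{n+1}$ is the composition $\phi_{n-1}\circ\phi_n$ of two branches of $H_n$. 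The off-central branch $\phi_{n}$ of $H_{n+1}$ is literally a branch of $H_n$, so nothing needs to be done there. Thus each branch of $H_{n+1}$ is either a single branch of $H_n$ or the specified two-fold composition of branches of $H_n$.

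The one genuinely delicate point — and the step I expect to be the main obstacle — is the domain-tracking assertion: I must check that the composition $\phi_{n-1}\circ\phi_n$ is actually well defined on the \emph{entire} component of $\Omega_{n+1}$ on which the central branch of $H_{n+1}$ lives, i.e.\ that $\phi_n$ maps that whole component into the domain $\Omega_{n}$ (or the relevant component thereof) of $\phi_{n-1}$. This is precisely the content that makes the tower coherent, and it follows from the inclusions among Poincar\'e neighborhoods postulated in Definition~\ref{defi:9xa,1} (items 1--4) together with Lemma~\ref{lem:9xa,1}, which already established that the functional equation~(\ref{equ:27up,1}) holds on all of $\tau^{-1}U$, not merely on a real interval — the rescaled version then holds on the correspondingly rescaled domain. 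Concretely, one rescales the inclusion $W=\phi^{-1}(W_1)\subset\tau^{-1}U$ from the proof of Lemma~\ref{lem:9xa,1} by $\tau^{-n}$ and observes that the resulting set is exactly the central component of $\Omega_{n+1}$, while $\phi_n$ maps it onto $W_1$ rescaled, which lies in the domain of $\phi_{n-1}$. Feeding this back into the induction, at each stage one only ever composes inverse branches along a nested chain of Poincar\'e disks whose existence is guaranteed by item~4 of Definition~\ref{defi:9xa,1}, so the full-component well-definedness propagates. I would write the induction so that the inductive hypothesis explicitly carries the domain-tracking statement, which is what makes the induction close.
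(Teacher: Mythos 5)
Your overall architecture matches the paper's: reduce by induction to a single step, rescale so that the comparison is between levels $0$ and $-1$, observe that the off-central branch of $H_0$ is literally the branch $\phi_{-1}$ of $H_{-1}$ while the central branch $\phi_0$ factors as $\phi_{-2}\circ\phi_{-1}$ by the rescaled functional equation, and then verify domain inclusions using the Poincar\'{e}-disk properties of the ${\cal EWF}$ class together with Lemma~\ref{lem:9xa,1}. So the decomposition is right and the tools you name are the ones actually used.

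However, the domain verification --- which is the entire substance of the lemma --- is where your plan is too thin, and in one place wrong. There are three separate inclusions to check, not one. First, for the off-central branch you assert that ``nothing needs to be done,'' but the branch $\phi_{-1}$ of $H_0$ lives on $\Omega_+$ while the same function, viewed as a branch of $H_{-1}$, lives on $\tau\Omega_-$; one must check $\Omega_+\subset\tau\Omega_-$ (it follows quickly from $\Omega_+=\tau\phi^{-1}({\cal D}^*(x_0,\tau^{-1}x_0))$, but it is a check). Second, before the first factor of the composition can even be applied to the central component one needs $\Omega_-\subset\tau\Omega_-$; this is the hardest of the three and requires a chain of Poincar\'{e}-disk inclusions (items 3 and 4 of Definition~\ref{defi:9xa,1}) combined with the identity $\phi^{-1}\circ\tau\circ\phi^{-1}\circ\tau^{-2}=\tau^{-1}\phi^{-1}$ extracted from Lemma~\ref{lem:9xa,1}; it does not drop out of simply rescaling the set $W$ from that lemma, whose real trace is $(a,x_0/\tau)$ and which is not a rescaled copy of $\Omega_-$. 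Third, one needs the image $\phi_{-1}(\Omega_-)$ to land in the domain of the second factor $\phi_{-2}$, which reduces to $G(\Omega_-)\subset\Omega_-$ and follows from $\tau^{-2}\phi=\phi\circ G$. Your plan collapses the second and third checks into one and supplies no argument for the second; to close the proof you must carry out all three inclusions explicitly.
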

\begin{proof}
By induction, it is enough to prove this statement when $n=m-1$. By
rescaling, we can reduce the situation to $m=0, n=-1$. 

Then, one branch of $H_0$ is $\phi_{-1}$ defined on $\Omega_+$. It is
the same as a branch of $H_{-1}$ defined on $\tau\Omega_-$. All we need
to check is $\Omega_+ \subset \tau\Omega_{-}$. By definition, however, 
$\Omega_+$ is the preimage by $\phi_{-1} = \tau \phi \tau^{-1}$ of
${\cal D}^{*}(x_0,\tau x_0)$ which is $\tau\phi^{-1}\left({\cal  D}^{*}(x_0,\tau^{-1}x_0) \right)$, which is clearly inside
    $\tau\Omega_-$. 

The second branch of $H_0$ is a composition $\phi_{-2} \circ
\phi_{-1}$, both of which are branches on $H_{-1}$. We need, however,
to check the inclusions between domains. $\phi_{-1}$ as a branch of
$H_{-1}$ is defined on $\tau\Omega_-$. So the next inclusion to check
is $\Omega_- \subset \tau\Omega_-$. 

This follows from a sequence of inclusions:
\[ \Omega_- = \phi^{-1}({\cal D}^*(x_0,\tau x_0)) \supset \phi^{-1}({\cal
  D}(x_0\tau^{-1},\tau x_0)) = \phi^{-1}(\tau{\cal
  D}(x_0,x_0\tau^{-2})) \supset \]
\[ \supset \phi^{-1}\left( \tau\phi^{-1}({\cal
  D}^*(x_0\tau^{-1}, x_0\tau^{-2})) \right) =
(\phi^{-1}\circ\tau\circ\phi^{-1}\circ\tau^{-2}){\cal D}^*(x_0,\tau x_0)
\; .\] 

But 
\[ \phi^{-1}\circ\tau\circ\phi^{-1}\circ\tau^{-2} = \tau^{-1}\phi_{-1}^{-1}\circ\phi_{-2}^{-1} = \tau^{-1} \phi^{-1} \]
by Lemma~\ref{lem:9xa,1}. This proves $\Omega_- \supset
\tau^{-1}\Omega_-$. 

Next, we need to check that $\phi_{-1}(\Omega_-) = \tau \phi
(\tau^{-1}\Omega_-) \subset \tau^2\Omega_-$, where the last set is the
domain of $\phi_{-2}$. This inclusion is equivalent to 
\[ \tau^{-1} \phi \tau^{-1}(\Omega_-) = G(\Omega_-) \subset \Omega_-\;
.\]

The last inclusion follows from 
\[ \tau^{-2}\phi = \phi\circ G \] 
which implies that $G$ maps the domain of $\phi$ into itself. 
\end{proof}

\begin{lem}\label{lem:30up,1}
The assertion of Lemma~\ref{lem:30ua,1} remains true for the dynamics
on tempered domains if the parameter $\beta_r$ in the definition of
tempered domain is close enough to $\pi/2$.  That is, $H_n$ considered on a component $\tau^n
U_{\pm}$ of its domain can be realized as a composition of branches of
$H_m$, also defined on their tempered domains $\tau^m U_{\pm}$. 
\end{lem}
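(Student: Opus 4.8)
The plan is to repeat the proof of Lemma~\ref{lem:30ua,1} essentially verbatim, with the untempered domains $\Omega_{\pm}$ replaced throughout by the tempered domains $U_{\pm}$. As there, induction on $m-n$ reduces everything to the case $n=m-1$, and rescaling by $\tau^{m}$ reduces that to $m=0$, $n=-1$. The branches of tempered $H_0=H$ are $\phi=\phi_0$ on $U_-$ and $\phi_{-1}$ on $U_+$; the branches of tempered $H_{-1}$ are $\phi_{-1}$ on $\tau U_-$ and $\phi_{-2}$ on $\tau U_+$; and $\phi_0=\phi_{-2}\circ\phi_{-1}$ by the analytic continuation of the functional equation (Lemma~\ref{lem:9xa,1}). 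Thus it suffices to re-establish, with tempered domains, the same three inclusions used in the untempered proof:
\[
U_+\subseteq \tau U_-,\qquad U_-\subseteq \tau U_-,\qquad \phi_{-1}(U_-)\subseteq \tau^2 U_-,
\]
the last being equivalent to $G(U_-)\subseteq U_-$.

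Each of these I would reduce to an inclusion between the tempered range $V'={\cal D}(x_0,\tau x_0)\cap A_l\cap A_r$ and its rescalings, using the identities $\phi_{-1}^{-1}=\tau\circ\phi^{-1}\circ\tau^{-1}$, the reformulation $\phi^{-1}\circ\tau\circ\phi^{-1}\circ\tau^{-2}=\tau^{-1}\circ\phi^{-1}$ of Lemma~\ref{lem:9xa,1}, and $\phi\circ G=\tau^{-2}\phi$ on $U$ (equation~(\ref{equ:14xp,1})). Explicitly: $U_+=\tau\phi^{-1}(\tau^{-1}V)$ and $\tau U_-=\tau\phi^{-1}(V)$, so the first inclusion follows from $\tau^{-1}V\subseteq V$; next $\tau^{-1}U_-=\phi^{-1}\bigl(\tau\phi^{-1}(\tau^{-2}V)\bigr)=\phi^{-1}(W_+)$, so the second follows from $W_+\subseteq V'$; and since $\phi(G(U_-))=\tau^{-2}V$, the third follows from $\tau^{-2}V'\subseteq V'$. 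But $\tau^{-1}V'=W_-$ and $\tau^{-2}V'=\tau^{-1}W_-\subseteq\tau^{-1}V'=W_-$, so all three reduce to the two containments
\[
W_-\subseteq V',\qquad W_+\subseteq V',
\]
which are exactly what was verified when the complex presentation function was constructed — there $W_-\subseteq V'$ used only $\beta_r<\beta_l$, while $W_+\subseteq V'$ used Lemma~\ref{lem:28ua,1} together with the requirement that $\beta_r$ be close enough to $\pi/2$. Hence the hypothesis of the present lemma is inherited verbatim from that construction and no new estimates are needed.

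The only genuinely delicate point — and the main obstacle — is the bookkeeping of inverse branches: one must make sure that the sets $\phi^{-1}(W_+)$, $\tau\phi^{-1}(\tau^{-1}V)$ and $G(U_-)$ above are really the rescaled tempered domains and not some unrelated preimage under the infinitely-branched map $\phi$. This is handled exactly as in the untempered case: all the sets involved are connected and accumulate at the neutral fixed point $x_0$ (for $\tau\phi^{-1}(\tau^{-1}V)$, at the point $x_0/\tau$), where the relevant — always principal — inverse branch of $\phi$ is pinned down and where the cusp-like shape of the domain is controlled by Lemma~\ref{lem:28ua,1}; this is also precisely where closeness of $\beta_r$ to $\pi/2$ is used, in the same manner as in showing $W_+\subseteq V'$ and $U_\pm\subseteq V'$ for the tempered dynamics. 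Once the inclusions of domains are in hand, the identification of each branch of $H_m$ with the asserted composition of branches of $H_n$, valid on the whole relevant component, follows as before, since these are inclusions of connected open sets on which the functional equation holds identically.
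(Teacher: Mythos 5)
Your proof is correct and shares the paper's skeleton: reduce by induction and rescaling to $m=0$, $n=-1$, and then verify the same three domain inclusions $U_+\subset\tau U_-$, $U_-\subset\tau U_-$, $\phi_{-1}(U_-)\subset\tau^2 U_-$ from Lemma~\ref{lem:30ua,1} for the tempered domains. The first inclusion is handled the same way in both arguments (a containment of rescaled copies of $V$ pulled back by the full preimage under $\phi$). Where you diverge is in the second and third inclusions. The paper proves $U_-\subset\tau U_-$ by a direct geometric perturbation argument: near $x_0$ it invokes Lemma~\ref{lem:28ua,1} together with $\beta_r>\pi/4$, and away from $x_0$ it uses the positive distance between the untempered boundaries and the fact that taking $\beta_r$ close to $\pi/2$ moves $\partial(\tau U_-)$ arbitrarily little from $\partial(\tau\Omega_-)$; it then derives the third inclusion from the first two. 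You instead use the identity $\tau^{-1}\phi^{-1}=\phi^{-1}\circ\tau\circ\phi^{-1}\circ\tau^{-2}$ from Lemma~\ref{lem:9xa,1} to write $\tau^{-1}U_-\subset\phi^{-1}(W_+)$ and the relation $\phi\circ G=\tau^{-2}\phi$ to reduce the third inclusion to $\tau^{-2}V'\subset V'$, so that everything collapses onto the containments $W_\pm\subset V'$ already established in the construction of the complex presentation function. Both routes ultimately rest on Lemma~\ref{lem:28ua,1} and the choice of $\beta_r$, but yours buys a purely algebraic reduction that reuses earlier work and avoids re-running the boundary-perturbation estimate, at the cost of the branch bookkeeping you rightly flag (which is adequately resolved by working with full preimages of connected sets and the pinning of the principal branch at $x_0$). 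Your reading of $U_+$ as $\tau\phi^{-1}(\tau^{-1}V)$ matches the definition in the tempering paragraph; the paper's proof writes $\tau\phi^{-1}(\tau^{-2}V)$, but the inclusion goes through either way.
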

\begin{proof}
We only need to check that inclusions between the domains stated in the
proof of Lemma~\ref{lem:30ua,1} remain true for tempered domains. 

The first inclusion is $U_+ \subset \tau U_-$. $U_+$ was defined as 
the preimage by $\phi_{-1}$ of $\tau^{-1}V$. Therefore, 
\[ U_+ = \tau \phi^{-1}(\tau^{-2} V) \subset \tau\phi^{-1}(V) = \tau U_- \]
as needed. 

Next, we need $U_- \subset \tau U_-$. 
The boundaries of the corresponding untempered domains intersect only at $x_0$. In a
neighborhood of this point $\tau U_-$ is the preimage by a real conformal map of an angle 
${\cal A}_r$. Since $\beta_r$ is chosen greater than $\frac{\pi}{4}$ and in view of Lemma~\ref{lem:28ua,1}, the desired inclusion holds 
on a sufficiently small neighborhood of $x_0$. Outside of this neighborhood, the distance between the boundaries of 
the untempered domains is positive. By picking $\beta_r$ sufficiently close to $\pi/2$, we can move the border of $\tau U_-$ arbitrarily little from 
the border of $\tau \Omega_-$ and so preserve the inclusion. 

Thirdly, we need $U_+=\phi^{-1}_{-1}(U_-) \subset \tau^2 U_-$. But we have already seen that 
$U_+ \subset \tau U_-$ and $U_- \subset \tau U_-$. After scaling by $\tau$, 
the desired inclusion follows. 
\end{proof}

\begin{prop}\label{prop:11fa,1}
For every  $z\in \CC$ which is never mapped to $\RR$ by the untempered
tower dynamics, there exist
sequences $z_n \in \CC$ and $m_n \in \NN\cup \{0\}$, $n=0,1,\cdots$ chosen so 
that $z_0 = z$, $z_{n-1}$ belongs to the domain of $H_{-m_{n-1}}$ and $z_n$ 
is an image of
$H_{-m_{n-1}}(z_{n-1})$ by the tower dynamics. Furthermore, for some $\eta>0$  and every $n>0$
\begin{itemize}
\item
$\dist (z_n, \RR) > \eta |\tau|^{m_n}$ with $\dist$ meaning the Euclidean 
distance, {\em or}
\item
\[ \tau^{-m_n} z_n \in (\Omega_-\cup \Omega_+) \setminus
(\Omega_{+,c} \cup \Omega_{-,c})\; \]
(cf. Definition~\ref{defi:26xp,1}.) 
\end{itemize}
For every $z$, one can find a sequence $(n_p)$
and choose one case of the alternative, so that the claim of this
Proposition holds for all $n:=n_p$ with this particular case.    
\end{prop}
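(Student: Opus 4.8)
The plan is to trace orbits under the tower dynamics and use a dichotomy at each step. Given $z \in \CC$ never mapped to $\RR$, I would build the sequences $z_n$, $m_n$ inductively. Start with $z_0 = z$ and $m_0 = 0$. Given $z_{n-1}$ in the domain of some level $H_{-m_{n-1}}$, I apply that map to get $H_{-m_{n-1}}(z_{n-1})$. This image lies in $\tau^{m_{n-1}}{\cal D}(x_0,\tau x_0)\setminus\{0\}$. I then rescale to level $0$ and look at where $\tau^{-m_{n-1}} H_{-m_{n-1}}(z_{n-1})$ sits relative to $\Omega_{-,c}\cup\Omega_{+,c}$. If it is outside $\Omega_{+,c}\cup\Omega_{-,c}$ but still in the domain $\Omega_-\cup\Omega_+$, I set $z_n$ equal to this point (so $m_n = m_{n-1}$, or rather I record $\tau^{-m_n}z_n$ in the correct set) — this is the second case of the alternative. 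Otherwise the point is in $\Omega_{+,c}\cup\Omega_{-,c}$, where $\log\phi$ or $\log\phi_{-1}$ is close to being single-valued, and I can continue iterating; here I would want to push down one level in the tower, increasing $m_n$, and estimate the distance to $\RR$.

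**The key mechanism** is the interplay between the Euclidean distance to $\RR$ and the scale $|\tau|^{m_n}$. When a point lies in $\Omega_{-,c}$ (the "principal" sheet, $|\Im \log\phi| < \pi$), applying $\phi$ or its rescaled versions distorts distances to $\RR$ in a controlled way because the Fatou-coordinate picture of Lemma~\ref{lem:29up,2} and Lemma~\ref{lem:28ua,1} gives a definite opening angle near $x_0$, and away from $x_0$ the maps are uniformly bi-Lipschitz on compact sets in the hyperbolic-like metric. So if the orbit stays in the principal sheets, either the rescaled distance $\dist(z_n,\RR)/|\tau|^{m_n}$ stays bounded below by some fixed $\eta$ — the first case — or it decays, but in the latter scenario Lemma~\ref{lem:28up,2} (orbits that accumulate on $\RR$ while staying in $\Omega_{-,c}\cup\Omega_{+,c}$ must be real) forces the orbit off the principal sheet or produces a contradiction with $z$ never mapping to $\RR$. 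The extraction of the subsequence $(n_p)$ is then a pigeonhole argument: at each step one of finitely many things happens (either the distance bound holds, or the orbit exits $\Omega_{\pm,c}$), so one alternative occurs infinitely often, and I restrict to that subsequence.

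**I would organize the argument** as follows. First, fix the constant $\eta$: it comes from the real bounds (Proposition~\ref{covrb} gives uniform control of $\tau$) together with the opening-angle estimate of Lemma~\ref{lem:28ua,1}, which guarantees that the domains $U$, and hence the principal sheets $\Omega_{-,c}$, contain a definite cone around $\RR$ near the flat fixed point; away from that point the domains contain a definite Euclidean neighborhood of their real traces. Second, set up the inductive step: given $z_{n-1}$ at level $-m_{n-1}$, apply the level map and use Lemma~\ref{lem:30ua,1} (or Lemma~\ref{lem:30up,1} in the tempered case) to interpret the image either as a point to feed back in at the same or a deeper level. Third, run the dichotomy: if $\dist(z_n,\RR) > \eta|\tau|^{m_n}$ we are in case one and stop worrying about this $n$; otherwise, the point is close to $\RR$ relative to its scale, and I claim it must then lie in a principal sheet and moreover $\tau^{-m_n}z_n$ must leave $\Omega_{+,c}\cup\Omega_{-,c}$ at some subsequent bounded number of steps — else Lemma~\ref{lem:28up,2} forces $z$ real, contradicting the hypothesis. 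Fourth, pigeonhole to extract $(n_p)$ realizing a single case.

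**The main obstacle** will be the third step — showing that a point which is close to $\RR$ relative to its own scale either already witnesses the first alternative or is compelled, within a bounded number of tower steps, to exit the principal sheets $\Omega_{\pm,c}$ and land in $(\Omega_-\cup\Omega_+)\setminus(\Omega_{+,c}\cup\Omega_{-,c})$. This requires quantifying how the rescaled distance to $\RR$ evolves under one application of a tower branch: near $x_0$ one must use the precise asymptotics of $G$ from item~5 of Definition~\ref{defi:9xa,1} ($G(x) = x - \epsilon(x-x_0)^3 + \dots$) and the Fatou-coordinate normalization to see that orbits cannot linger arbitrarily close to $\RR$ in the principal sheet without actually converging to a real point; this is essentially the content of the proof of Lemma~\ref{lem:28up,2}, which I would invoke, but some care is needed because here the orbit is taken in the tower (multiple scales) rather than at a single level. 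Aligning the tower-orbit bookkeeping with the single-level statement of Lemma~\ref{lem:28up,2} via the rescaling $H_{-n}(z) = \tau^n H(z/\tau^n)$ is the technical heart of the argument.
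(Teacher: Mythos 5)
Your overall architecture --- inductive construction of $(z_n,m_n)$, a dichotomy at each stage, and a pigeonhole extraction of the subsequence $(n_p)$ --- matches the paper's. You also correctly use Lemma~\ref{lem:28up,2} for orbits that remain in $\Omega_{-,c}\cup\Omega_{+,c}$ forever (this is exactly how the paper disposes of the case where $z_{n-1}$ lies in the Julia set of $H_{-m_{n-1}}$). However, there is a genuine gap precisely at the step you flag as the ``main obstacle,'' and the fix is not the one you propose.

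The critical unresolved case is: the orbit under $H_{-m_{n-1}}$ leaves the domain of that level while still within $\eta|\tau|^{m_{n-1}}$ of $\RR$. Such a point must sit near one of the three points where the domain touches $\RR$ (the rescalings of $x_0$, $\tau x_0$, $x_0/\tau$), i.e.\ near $\tau^m x_0$ for $m \in \{m_{n-1}-1, m_{n-1}, m_{n-1}+1\}$. At that moment the point is in the domain of \emph{no} iterate of the current level, so Lemma~\ref{lem:28up,2} cannot be invoked: that lemma concerns an infinite single-level orbit confined to $\Omega_{-,c}\cup\Omega_{+,c}$, not a point that has exited the domain. The paper's mechanism here is the auxiliary dynamics of $\Gamma = \tau^{-1}H_{-2}$, which satisfies $\Gamma^2=G$, fixes $x_0$ with $\Gamma'(x_0)=-1$, and has the property that $\Gamma^{-1}$ attracts every point of $\CC\setminus\RR$ to $x_0$. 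One applies the compositions $\Psi_k = H_{-k-1}\circ\cdots\circ H_{-2}$ (so $\tau^{-k}\Psi_k=\Gamma^k$), climbing through deeper and deeper tower levels, until the orbit either leaves the disk $D(\tau^k x_0,\epsilon|\tau|^k)$ while staying inside a fixed angle off $\RR$ (first alternative, since distance to $\RR$ is then comparable to $|\tau|^k$) or leaves the angle (second alternative, because $\partial\Omega_{\pm,c}$ is \emph{tangent} to $\RR$ at $x_0$, $\tau x_0$ while $\partial\Omega_{\pm}$ meets $\RR$ at angle $\pi/4$, so a point at argument in $(\pi/10,\pi/5)$ relative to the fixed point lies in $(\Omega_-\cup\Omega_+)\setminus(\Omega_{-,c}\cup\Omega_{+,c})$). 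This is Lemma~\ref{lem:30va,1}, and without it --- or an equivalent multi-level escape argument near the neutral fixed point --- the induction cannot be closed: a point may linger near $\tau^m x_0$ for arbitrarily long, and neither of your two single-level tests (distance bound, membership in $\Omega_{\pm,c}$) will be triggered at any fixed level. Your appeal to the cubic expansion of $G$ and the Fatou coordinate points in the right direction, but you would still need to set up the transition to deeper tower levels explicitly; that bookkeeping is the substance of the proof, not a routine verification.
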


\paragraph{Auxiliary dynamics of $\Gamma$.}
For a point $\tau^n x_0$ the highest level tower map defined on a
neighborhood of this point is $H_{-n-2}$. We have
\begin{equation}\label{equ:30va,1}
 H_{-n-2}(\tau^n x_0) = \tau^{n+2} H(\tau^{-2}2 x_0) \tau^{n+2}
\tau^{-1} x_0 = \tau^{n+1} x_0\; .
\end{equation}

This leads one to consider the map 
\[ \Gamma(z) = \tau^{-1} H_{-2}(z) \; .\]
Then, formula~(\ref{equ:30va,1}) means that $\tau^n x_0$ is a fixed
point for $\Gamma_{-n}=\tau^n\Gamma \tau^{-n}$, in particular, 
$x_0$ is a fixed point of $\Gamma$.  

To find out the local dynamics at this point, observe that 
\[ \Gamma \circ \Gamma = \tau^{-1} H_{-2} \tau^{-1} H_{-2} \tau
\tau^{-1} = \tau^{-1} H_{-2} H_{-1} \tau^{-1} = \tau^{-1} H \tau^{-1}
= G\; .\]

However, $\Gamma$ is orientation-reversing on the real line, so
$\Gamma'(x_0)=-1$. As a consequence of the defining properties of the
${\cal EWF}$-class, $x_0$ is the global attractor for $\Gamma^{-1}$ on
$\CC\setminus \RR$.  

\begin{lem}\label{lem:30va,1}
Let $\Psi_k$ denote the composition of $k$ maps in the form 
$H_{-k-1} \circ \cdots \circ H_{-2}$ defined on a neighborhood of
$x_0$. We put $\Psi_0 = \mbox{id}$. Choose $z\in\CC$ so that 
$\arg (z-x_0) \mod \pi \in [\frac{\pi}{5},\frac{4\pi}{5}]$. 
There exists $\epsilon_0>0$ such that for every $\epsilon_0 \geq
\epsilon > 0$ and $z$ as above there is $k\geq 0$ so that 
$\arg(\Psi_k(z) - \tau^k x_0)\in [\frac{\pi}{5}, \frac{4\pi}{5}]$,
$|\Psi_k(z)-\tau^k x_0| < 2\epsilon |\tau|^k$
and either 
$|\Psi_k(z)-\tau^k x_0|\geq |\tau|^k \epsilon$, or $\Psi_{k+1}(z)$ is
defined and 
\[ \arg(\Psi_{k+1}(z)-\tau^{k+1}x_0)\mod \pi \in
(\frac{\pi}{10},\frac{\pi}{5}) \cup (\frac{4\pi}{5},\frac{9\pi}{10})
\; .\]
\end{lem}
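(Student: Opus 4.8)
The plan is to reduce the statement to the local dynamics of $\Gamma$ at its fixed point $x_0$ and then to a standard analysis of a parabolic germ. First I would verify by induction on $k$ that $\Psi_k(z)=\tau^k\,\Gamma^k(z)$. Indeed $\Psi_1=H_{-2}=\tau\,\Gamma$ by the definition of $\Gamma$; and unwinding $H_{-n-2}(z)=\tau^{n+2}H(\tau^{-n-2}z)$ together with $\Gamma_{-n}=\tau^{n}\Gamma\tau^{-n}$ gives $H_{-(k+1)}=\tau\,\Gamma_{-(k-1)}$, so $\Psi_k=H_{-(k+1)}\circ\Psi_{k-1}=\tau^{k}\Gamma(\tau^{-(k-1)}\tau^{k-1}\Gamma^{k-1}(z))=\tau^{k}\Gamma^{k}(z)$. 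Write $w_k=\Gamma^k(z)-x_0$, the orbit under the germ $\gamma(w):=\Gamma(x_0+w)-x_0$, which has real coefficients, $\gamma(w)=-w+O(w^2)$, and $\gamma^2(w)=G(x_0+w)-x_0=w-c\,w^3+O(|w|^4)$ with $c>0$ the constant of item~5 of Definition~\ref{defi:9xa,1}. Then $\Psi_k(z)-\tau^k x_0=\tau^k w_k$. By the $\RR$-symmetry of everything we may assume $z\in\overline{\HH}_+$ (the hypothesis forces $z\notin\RR$); then $w_k\in\overline{\HH}_+$ for even $k$ and $w_k\in\overline{\HH}_-$ for odd $k$, while $\tau^k>0$ for even $k$ and $\tau^k<0$ for odd $k$, so $\tau^k w_k\in\overline{\HH}_+$ for all $k$. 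Put $\theta_k=\arg(\tau^k w_k)\in(0,\pi)$ and $r_k=|w_k|$; then $\theta_0=\arg(z-x_0)\in[\pi/5,4\pi/5]$, and the lemma asks for some $k$ with $\theta_k\in[\pi/5,4\pi/5]$, $r_k<2\epsilon$, and either $r_k\ge\epsilon$ or ($w_{k+1}$ defined and $\theta_{k+1}\in(\pi/10,\pi/5)\cup(4\pi/5,9\pi/10)$).

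Next I would record how the pair $(r_k,\theta_k)$ evolves. The one–step identity $\tau^{k+1}w_{k+1}/(\tau^k w_k)=\tau\,\gamma(w_k)/w_k=|\tau|(1+O(r_k))$ gives $r_{k+1}=r_k(1+O(r_k))$ and $\theta_{k+1}-\theta_k=O(r_k)$; in particular both quantities move slowly once $r_k$ is small, and $r_k<\epsilon$ implies $r_{k+1}<2\epsilon$ when $\epsilon_0$ is small. The two–step identity $w_{k+2}/w_k=1-c\,w_k^2+O(r_k^3)$ (using $\tau^2>0$, and $\sin 2\arg(w_k)=\sin 2\theta_k$, $\cos 2\arg(w_k)=\cos 2\theta_k$) gives the systematic drift
\[ \theta_{k+2}-\theta_k=-c\,r_k^2\sin(2\theta_k)+O(r_k^3), \qquad \log r_{k+2}-\log r_k=-c\,r_k^2\cos(2\theta_k)+O(r_k^3). \]
Hence $\theta_k$ drifts away from the repelling direction $\pi/2$ of $G$, while $r_k$ increases as long as $\theta_k$ stays in the repelling sector $(\pi/4,3\pi/4)$ and decreases in the attracting sectors $(0,\pi/4)\cup(3\pi/4,\pi)$; in particular $\theta_k$ cannot remain in $(\pi/4,3\pi/4)$ forever while $r_k<\epsilon$, since there $r_k$ is non-decreasing with two-step increments bounded below, forcing $r_k\to\infty$. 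Finally, by the Fatou flower theorem for the parabolic point of $G=\Gamma^2$, if the $\gamma$-orbit of $z$ converges to $x_0$ it does so tangentially to one of the attracting directions $\RR_+,\RR_-$ at $x_0$, so in that case $\theta_k\to 0$ or $\theta_k\to\pi$; otherwise the orbit leaves every fixed neighborhood of $x_0$, so $r_k\ge\epsilon$ for some $k$.

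With these facts I would conclude as follows. Fix $\epsilon_0>0$ so small that the germ estimates above hold on $\{|w|\le 2\epsilon_0\}$, every one-step change of $\theta$ is $<\pi/10$, and every one-step factor on $r$ lies in $(1/2,2)$; and assume $z$ is close enough to $x_0$ that $r_0<\epsilon$ (if $r_0\in[\epsilon,2\epsilon)$ take $k=0$, which works by the first alternative; the case $r_0\ge 2\epsilon$ does not arise under the intended use, where $z$ lies in a small neighbourhood of $x_0$). Let $T=\{j\ge 0:\ r_i<\epsilon\ \text{and}\ \theta_i\in[\pi/5,4\pi/5]\ \text{for all }i\le j\}$; then $0\in T$, and $T$ is a finite initial segment $\{0,\dots,k\}$: if $T$ were all of $\NN$ then $r_j<\epsilon$ and $\theta_j\in[\pi/5,4\pi/5]$ forever, which contradicts the dichotomy of the previous paragraph (either $\theta_j\to 0$ or $\pi$, or $r_j\ge\epsilon$ eventually). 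Now look at the step from $k$ to $k+1$: by maximality of $k$, either $r_{k+1}\ge\epsilon$ or $\theta_{k+1}\notin[\pi/5,4\pi/5]$. If $r_{k+1}\ge\epsilon$ and $\theta_{k+1}\in[\pi/5,4\pi/5]$, then since also $r_{k+1}<2r_k<2\epsilon$ the index $k+1$ satisfies the conclusion by the first alternative. In the remaining cases $\theta_{k+1}\notin[\pi/5,4\pi/5]$; since $\theta$ moves by less than $\pi/10$ in one step and $\theta_k\in[\pi/5,4\pi/5]$, necessarily $\theta_{k+1}\in(\pi/10,\pi/5)\cup(4\pi/5,9\pi/10)$, and as $r_k<\epsilon<2\epsilon_0$ the point $w_{k+1}=\gamma(w_k)$ is defined and $r_k<2\epsilon$, so the index $k$ itself satisfies the conclusion by the second alternative.

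The main obstacle is the quantitative control at the transition: showing that a \emph{last} good index $k$ exists — this is exactly where one needs that $r_j$ grows in the repelling sector (so it cannot stay small there) and that an orbit tending to $x_0$ must do so tangentially to $\RR$ (so $\theta_j$ leaves $[\pi/5,4\pi/5]$) — and then ensuring that the very next step neither leaves $[\pi/5,4\pi/5]$ by more than $\pi/10$ nor lets $r$ overshoot $2\epsilon$; both are arranged by taking $\epsilon_0$ small. A secondary nuisance is that the one-step increment of $\theta$ alternates sign (coming from the factor $(-1)^k$ in $\tau^k$ together with the quadratic term of $\gamma$) and only the two-step increment carries the genuine parabolic drift; this wobble is $O(r_k)$ and is absorbed into the same choice of $\epsilon_0$.
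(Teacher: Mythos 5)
Your proposal is correct and follows essentially the same route as the paper: the key identity $\tau^{-k}\Psi_k=\Gamma^k$, the reduction to the parabolic germ of $\Gamma$ at $x_0$ with $\Gamma'(x_0)=-1$, the dichotomy that the orbit must leave either the angle or the disk $D(x_0,\epsilon)$ (the paper cites the attraction of $\Gamma^{-1}$ on angles off $\RR$ where you compute the drift of $(r_k,\theta_k)$ explicitly), and the same two-case conclusion controlled by taking $\epsilon_0$ small. Your version merely makes the exit argument and the choice of the transition index quantitative, which is a more careful rendering of the paper's terse proof rather than a different approach.
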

\begin{proof}
One easily shows by induction that, in a generalization of
formula~(\ref{equ:30va,1}), $\tau^{-k}\Psi_k = \Gamma^k$. Then
$\epsilon_0$ should be chosen so that $\Gamma$ is defined on 
$D(x_0,\epsilon_0)$.

Since
$\Gamma^{-1}$ attracts to $x_0$ on every angle disjoint from the real
line, then the orbit $\Gamma^k(z)$ for $z$ in this angle has to either
leave the angle, or leave $D(x_0,\epsilon)$. Then choose $k$ to the
first moment when that happens. If $|\Gamma^k(z)-x_0| \geq \epsilon$,
this immediately translates by rescaling to the first case of this
Lemma. The upper bound $|\Gamma^k(z) - x_0| < 2\epsilon$ can be
obtained by specifying $\epsilon_0$ small enough, since
$\Gamma'(x_0)=-1$. 

If $\Gamma^k(z) - x_0$ is still inside the angle, then we can
reason by the conformality of $\Gamma$ at $x_0$ and, by decreasing
$\epsilon_0$ if needed, get that $\Gamma^{k+1}(z)-x_0$ is still in a
slightly larger angle, leading to the second case in the Lemma.
\end{proof}

\paragraph{Proof of Proposition~\ref{prop:11fa,1}}
Suppose $z_{n-1}$ has been chosen in the domain of $H_{-m_{n-1}}$. Let 
$z=H_{-m_{n-1}}(z_{n-1})$.  The
first possibility is that $z_{n-1}$ is in the Julia set of
$H_{m_{n-1}}$. This case is resolved by applying
Lemma~\ref{lem:28up,2}. If $H^k_{-m_{n-1}}(z)$ gets away from the
real line, or outside of $\Omega_{-c,} \cup \Omega_{+,c}$ for any
$k>0$, then we can
set $z_n = H^k_{-m_{n-1}}$ and $m_n:=m_{n-1}+1$. One of those must
occur, since otherwise by Lemma~\ref{lem:28up,2}, $z\in\RR$ contrary
to the hypothesis of Proposition~\ref{prop:11fa,1}.

If $z_{n-1}$ is not in the Julia set, then we map
$z=H_{-m_{n-1}}(z_{n-1})$ by the
dynamics of $H_{-m_{n-1}}$ until the first moment $q$ when $w :=
H^q_{m_{n-1}}(z)$ is no longer in the domain of $H_{-m_{n-1}}$, or $w$
gets away from the real line by $\eta |\tau|^{m_{n-1}}$. If the second
possibility occurs, then we set $z_n=w$ and proceed like in the
previous case. Otherwise, $w$ is outside the domain of
$H_{-m_{n-1}}$ and still close to the real line, thus in a neighborhood 
of one of the points where the domain of $H_{-m_{n-1}}$ touches
$\RR$, that is $x_0$, $\tau x_0$, and $x_0/\tau$ rescaled by
$\tau^{m_{n-1}}$. The size of those neighborhoods can be controlled by
setting $\eta$ small enough.  

In other words, $\tau^{-m} w \in D(x_0,\epsilon)$ where
$m=m_{n-1},m_{n-1}+1$ or $m_{n-1}-1$ and $\epsilon$ is small depending
on $\eta$.   

We then apply maps $\Psi_k$ from
Lemma~\ref{lem:30va,1} to $\tau^{-m}w$ obtaining the sequence 
$w_k=\tau^m\Psi_k(\tau^{-m}x)$.  

The first possibility in Lemma~\ref{lem:30va,1} means that $w_k$ is
inside a fixed angle disjoint from $\RR$ in a distance greater than
$\epsilon|\tau|^{m+k}$ from $x_0\tau^{m+k}$. So, its distance from $\RR$
is bounded away from $0$ by $\eta |\tau|^{m+k}$, perhaps after
decreasing $\eta$ again. At the same time, its distance from $x_0
\tau^{m+k}$ is also bounded above by $2\epsilon|\tau|^{m+k}$, so by choosing
$\epsilon$ suitably small, $w_k$ belong to the domain of
$H_{-m-k-2}$. So, in this case we set $z_n = w_k$ and $m_n = m+k+2$. 
  
The second possibility is that $\arg(w_k - x_0\tau^{m+k})\mod\pi$ is
either between $\pi/10$ and $\pi/5$, or between $4\pi/5$ and
$9\pi/10$. The boundary of $\Omega_{-,c}$ is tangent to $\RR$ at
$x_0$ and likewise the boundary of $\Omega_{+,c}$ is tangent to $\RR$
at $\tau x_0$. Since on the other hand, the boundaries of
$\Omega_+,\Omega_-$ intersect $\RR$ at angle $\pi/4$ by choosing 
$\epsilon$ suitably small we can guarantee
that 
\[ \tau^{-m-n}w_k \in \left(\Omega_-\cup \Omega_+) \right) \setminus
\left( \Omega_{-,c} \cup \Omega_{+,c} \right) \]
leading to the second case in Proposition~\ref{prop:11fa,1} with $z_n=w_k$
and $m_n = m+k$. 

Proposition~\ref{prop:11fa,1} follows. 

\subsection{Hyperbolic Metric}
\begin{defi}\label{defi:8xa,1}
Let $\rho_n$ be the Poincar\'{e} metric on ${\cal D}(\tau^nx_0,\tau^{n+1}x_0)\setminus \RR$. 
Since this set is disconnected, it means the Poincar\'{e} metric on each connected component, with infinite distance between the components. 
Similarly, $\rho_{\infty}$ is the Poincar\'{e} metric of $\CC\setminus\RR$.
\end{defi}

Note that $\rho_\infty$ is invariant under the rescaling $z\mapsto \tau z$. 

If $\rho$ is a metric and $F$ a function, we will write $D_{\rho}
F(z)$ for the expansion ratio with respect to the metric $\rho$, thus 
\[ D_{\rho} F(z) = \frac{|F^* d\rho(z)|}{|d\rho(z)|} \; .\]

By Schwarz's lemma, we have $D_{\rho_{\infty}} H(z) > 1$ for every $z \in \Omega_+
\cup \Omega_-\setminus H^{-1}(\RR)$. 

We will observe expansion of the hyperbolic metric based on the
following fact:  

\begin{fact}\label{fa:20gp,2}
Let $X$ and $Y$ be
hyperbolic regions and $Y\subset X$ and $z\in Y$. Let $\rho_X$ and
$\rho_Y$ be the hyperbolic metrics of $X$ and $Y$, respectively. 
Suppose that the hyperbolic
distance in $X$ from $z$ to $X\setminus Y$ is no more than $D$. 
For every $D$ there
is $\lambda_0 > 1$ so that  
$|\iota'(z)|_H\leq \frac{1}{\lambda_0}$, where $\iota :\: Y \rightarrow X$
is the inclusion, and the derivative is taken with respect to
the hyperbolic metrics in $Y$ and $X$, respectively.
\end{fact}

The following lemma is stated in terms of $H$, but clearly it applies
to any $H_k$ as well, because the only difference is the conjugation
by a power of $\tau$, which is the isometry of the hyperbolic metrics
involved. 

\begin{lem}\label{lem:5ga,1} 
For any $z\in (\Omega_-\cup \Omega_+) \setminus H^{-1}(\RR)$, we get 
that the hyperbolic metric expansion ratio 
\[ \frac{|H^*d\rho_{\infty}|}{|d\rho_{\infty}|}(z) \geq
\frac{|d\rho'|}{|d\rho_{\infty}|} = D^{-1}_{\iota}(z)\]
where $\rho'$ is the hyperbolic metric of $\CC \setminus
H^{-1}(\RR)$ and $\iota$ is the inclusion map from $\CC \setminus
H^{-1}(\RR)$ into $\CC\setminus\RR$.  
\end{lem}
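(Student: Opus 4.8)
The plan is to read the inequality off from two facts: that $H$ restricted to $X:=(\Omega_-\cup\Omega_+)\setminus H^{-1}(\RR)$ is an unbranched covering onto $Y:=\bigl({\cal D}(x_0,\tau x_0)\setminus\{0\}\bigr)\setminus\RR$, hence a local isometry of the hyperbolic metrics, and that the domains in play are nested, $X\subset Y\subset\CC\setminus\RR$ and $X\subset\CC\setminus H^{-1}(\RR)\subset\CC\setminus\RR$, so that Schwarz--Pick applies to the inclusions. First I would record the covering statement: the branches $\phi|_{\Omega_-}$ and $(\phi_{-1})|_{\Omega_+}$ are unbranched coverings of ${\cal D}(x_0,\tau x_0)\setminus\{0\}$ (item~2 of Definition~\ref{defi:9xa,1}, after rescaling for $\phi_{-1}$) and $\overline{\Omega}_-\cap\overline{\Omega}_+=\{x_0/\tau\}$ by Lemma~\ref{lem:28ua,2}, so after deleting $H^{-1}(\RR)$ from the domain and $\RR$ from the range, $H:X\to Y$ is still an unbranched covering; hence $H^{*}\rho_Y=\rho_X$ on $X$, where $\rho_X$ and $\rho_Y$ denote the hyperbolic metrics of $X$ and $Y$. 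The inclusion $\Omega_-\cup\Omega_+\subset{\cal D}(x_0,\tau x_0)$, checked when $H$ was built, together with $\RR\subset H^{-1}(\RR)$, give the two chains of inclusions above.

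Next I would do the bookkeeping. Fix $z\in X$ and put $w=H(z)$. Using $H^{*}\rho_Y=\rho_X$ one has
\[
\frac{|H^{*}d\rho_\infty|}{|d\rho_\infty|}(z)=\frac{\rho_\infty(w)\,|H'(z)|}{\rho_\infty(z)}=\frac{\rho_\infty(w)}{\rho_Y(w)}\cdot\frac{\rho_X(z)}{\rho_\infty(z)}=D_j(w)\, D_{\jmath}^{-1}(z)\, D_\iota^{-1}(z),
\]
where $j:Y\hookrightarrow\CC\setminus\RR$, $\jmath:X\hookrightarrow\CC\setminus H^{-1}(\RR)$ and $\iota:\CC\setminus H^{-1}(\RR)\hookrightarrow\CC\setminus\RR$ are the inclusions, and where I used that the inclusion $X\hookrightarrow\CC\setminus\RR$ factors as $\jmath$ followed by $\iota$. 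Since $D_\iota^{-1}(z)>0$, the asserted bound is therefore equivalent to $D_j(w)\ge D_{\jmath}(z)$, i.e.\ to the comparison
\[
\frac{\rho'(z)}{\rho_X(z)}\le\frac{\rho_\infty(w)}{\rho_Y(w)} .
\]
I expect this to follow by inserting an intermediate covering. The branch $\phi$ (resp.\ $\phi_{-1}$) is an unbranched covering of a punctured disk $V_\pm$ centered at $0$, hence restricts to an unbranched covering $\hat H$ of $V_\pm\setminus\RR$ by a domain $\hat X$ with $X\subset\hat X\subset\CC\setminus H^{-1}(\RR)$, $\hat H=H$ on $X$ and $X=\hat H^{-1}(Y)$. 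The isometry property of $\hat H$ then yields the exact identity $D_{X\hookrightarrow\hat X}(z)=D_{Y\hookrightarrow V_\pm\setminus\RR}(w)$, and the two residual one-step inclusions $\hat X\hookrightarrow\CC\setminus H^{-1}(\RR)$ and $V_\pm\setminus\RR\hookrightarrow\CC\setminus\RR$ would be compared using items~3 and~4 of Definition~\ref{defi:9xa,1}, which describe the inverse branches on Poincar\'e neighborhoods and so fix the shapes of all these domains near $\RR$, showing that enlarging $\hat X$ to $\CC\setminus H^{-1}(\RR)$ contracts the metric at $z$ by no more than enlarging $V_\pm\setminus\RR$ to $\CC\setminus\RR$ contracts it at $w$. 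Multiplying the identity by this comparison gives $D_{\jmath}(z)\le D_j(w)$ and hence the lemma.

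The main obstacle is precisely this monotonicity comparison. Taken individually, the one-step Schwarz--Pick inequalities (for $H$, for a local inverse branch, and for each inclusion) all point the wrong way and only reproduce the trivial estimate $\rho'(z)\le\rho_X(z)$; the genuine content is that the ${\cal EWF}$ axioms force the nested domains to differ only along arcs attached to the real line, so that the covering $\hat H$ transports the inclusion defect at $z$ to the one at $w$. Everything else — the covering property of $H$ on $X$, the isometry $H^{*}\rho_Y=\rho_X$, and the algebra above — is routine, and the statement then applies verbatim to every level $H_k$ of the tower because conjugation by a power of $\tau$ is an isometry of all the hyperbolic metrics involved.
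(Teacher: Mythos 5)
Your reduction is correct as far as it goes: the factorization of the expansion ratio through the covering $H:X\to Y$ is sound, and you have correctly isolated the inequality $\frac{\rho_\infty(w)}{\rho_Y(w)}\cdot\frac{\rho_X(z)}{\rho'(z)}\ge 1$ as the actual content of the lemma. But that inequality is precisely where your argument stops. You acknowledge that the one-step Schwarz--Pick estimates all point the wrong way, and your proposed fix --- inserting one intermediate covering $\hat H$ and then asserting that ``enlarging $\hat X$ to $\CC\setminus H^{-1}(\RR)$ contracts the metric at $z$ by no more than enlarging $V_\pm\setminus\RR$ to $\CC\setminus\RR$ contracts it at $w$'' --- merely restates the same comparison of inclusion defects at two different points of two different nested pairs, one level down. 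Items 3 and 4 of Definition~\ref{defi:9xa,1} control the \emph{shape} of the domains near $\RR$ but do not by themselves yield a pointwise inequality between these two contraction factors; no mechanism is given that ``transports'' the defect at $z$ to the defect at $w$. So there is a genuine gap, and it sits exactly at the step you flagged as the main obstacle.

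The paper closes this gap by a different mechanism, which avoids comparing inclusion defects altogether. By Lemma~\ref{lem:30ua,1}, each branch of $H$ is, for every $k$, a composition of branches of the rescaled map $H_{-k}$; consequently the inverse branch of $H$ at $z$ extends to a univalent map $\zeta$ defined on all of $\HH_\pm\cap D(0,\tau^k R)$, with image $W_\zeta$ contained in $\tau^k(\Omega_-\cup\Omega_+)$ and disjoint from $H^{-1}(\RR)$. Writing $\rho_k$ for the hyperbolic metric of $\HH_\pm\cap D(0,\tau^k R)$, the isometry $|d\rho_{W_\zeta}|=|H^*d\rho_k|$ together with $W_\zeta\subset\CC\setminus H^{-1}(\RR)$ gives $|H^*d\rho_k|\ge|d\rho'|$ at $z$ for every $k$, and letting $k\to\infty$ turns $\rho_k$ into $\rho_\infty$. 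In other words, the essential input is the analytic continuation of the inverse branches to arbitrarily large half-disks supplied by the tower structure --- an ingredient absent from your proposal --- and the inequality is obtained as a limit of one-sided Schwarz estimates rather than as a balance between two opposing ones. If you want to complete your argument, you should replace the intermediate-covering heuristic with this tower extension.
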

\begin{proof}
The second equality is obvious and the claim is equivalent to 
\begin{equation}\label{equ:30vp,1}
\frac{|H^*d\rho_{\infty}|}{|d\rho'|}(z) \geq 1\; . 
\end{equation}

By Lemma~\ref{lem:30ua,1}, for every $k>0$ we can represent both
branches of $H$ as compositions of $H_{-k}$ defined on appropriate
sets. For a composition of $H_{-k}$ each inverse branch $\zeta$ can be
defined from $\HH_+$
or $\HH_-$ intersected with $D(0,\tau^k R)$ is univalent onto some set 
$W_{\zeta}$ which is contained
in $\tau^k (\Omega_-\cup\Omega_+)$. Domains $W_{\zeta}$ cover the
domain of $H$ and $\zeta$ are analytic continuations of inverse branches of  
$H$. If $\rho_W$ denotes the hyperbolic metric of $W$ and $\rho_k$ the
hyperbolic metric of $D(0,\tau^k R)$, then $|d\rho_W| =
|H^*d\rho_k|$. Since $W \subset (\CC\setminus \RR) \setminus
H^{-1}(\RR)$, $|d\rho_W| \geq |d\rho'|$ and 
\[ \frac{|H^*d\rho_{k}|}{|d\rho'|}(z) \geq 1\; .\]
But when $k\rightarrow\infty$, then $|d\rho_k|\rightarrow
|d\rho_{\infty}|$ from which estimate~(\ref{equ:30vp,1}) follows.
\end{proof}

We will write $D_{\rho_{\infty}}f =
\frac{|f^*d\rho_{\infty}|}{|d\rho_{\infty}|}$. 

\begin{lem}\label{lem:8xa,1}
Suppose that $z_0$ belongs to $(\Omega_- \cup \Omega_+)\setminus (\Omega_{c,-}\cup \Omega_{c,+})$, see Definition~\ref{defi:26xp,1}.  
Then for every $M>0$ there exists $r>0$ so that the image of the hyperbolic ball centered at $z_0$ with radius $r$ with respect to 
the metric $\rho_0$ by the branch of $H$ which is defined at $z_0$ contains the set $W_M = \{ w\in {\cal D}^*(x_0,\tau x_0) :\: |\Re\log\frac{w}{H(z_0)}| < M\}$. 
\end{lem}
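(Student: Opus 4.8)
The plan is to pass to the logarithmic coordinate, in which the branch of $H$ at $z_0$ becomes (a restriction of) the translation structure given by $\exp$, and then to show that the set whose $\exp$--image is exactly $W_M$ is pulled back into a region of finite $\rho_0$--diameter; any hyperbolic ball of radius larger than that diameter then works. First I would reduce to $z_0\in\Omega_-$ (the case $z_0\in\Omega_+$ follows by rescaling, since $\phi_{-1}=\tau\phi\tau^{-1}$) and, by the symmetry of the whole configuration with respect to $\RR$, to $z_0\in\HH_+$. Then the branch at $z_0$ is $\phi$, and by the observation preceding Lemma~\ref{lem:29up,2} the map $\log\phi$ is a well--defined univalent function; since by item~2 of Definition~\ref{defi:9xa,1} $\phi$ is an unbranched --- hence universal --- covering of $D(0,R)\setminus\{0\}$ by the topological disk $U$, the map $\log\phi$ is in fact a conformal isomorphism of $\Omega_-$ onto the $2\pi i$--periodic domain $\mathcal{L}:=\exp^{-1}\!\big({\cal D}^*(x_0,\tau x_0)\big)$, and $\phi=\exp\circ\log\phi$. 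Put $\ell_0:=\log\phi(z_0)$, so $\exp(\ell_0)=H(z_0)$; by Definition~\ref{defi:26xp,1} the hypothesis $z_0\notin\Omega_{-,c}$ says exactly that $\Im\ell_0\ge\pi$.

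Next I would fix the half--open period cell $Q_M:=\{\zeta:\ |\Re(\zeta-\ell_0)|<M,\ \Im\ell_0\le\Im\zeta<\Im\ell_0+2\pi\}$. Because $\mathcal{L}$ is saturated under $\exp$ and $Q_M$ spans a full period vertically, a direct check gives $\exp(Q_M\cap\mathcal{L})=\exp(Q_M)\cap{\cal D}^*(x_0,\tau x_0)=W_M$. Hence $\Sigma_M:=(\log\phi)^{-1}(Q_M\cap\mathcal{L})\subset\Omega_-$ contains $z_0$ and satisfies $\phi(\Sigma_M)=W_M$, so it suffices to choose $r$ with $\Sigma_M\subset B_{\rho_0}(z_0,r)$, for which in turn it is enough to bound the $\rho_0$--diameter of $\Sigma_M$.

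The heart of the argument is this diameter bound. Here $\rho_0$ is the hyperbolic metric of the upper half--disk $D^+$ of ${\cal D}(x_0,\tau x_0)$, and $\Sigma_M\subset D^+$, since on $\Sigma_M$ one has $\Im\log\phi\ge\Im\ell_0\ge\pi>0$ while $\{\Im\log\phi=0\}=(x_0,x_0/\tau)$ by real--symmetry of $\log\phi$. A subset of $D^+$ has finite hyperbolic diameter as soon as its closure is compact in $D^+$, i.e.\ disjoint from $\partial D^+$. Now $\overline{\Omega_-}\subset\overline{{\cal D}(x_0,x_0/\tau)}$, and as ${\cal D}(x_0,x_0/\tau)$ is internally tangent to ${\cal D}(x_0,\tau x_0)$ at $x_0$ one gets $\overline{\Omega_-}\cap\partial D^+=[x_0,x_0/\tau]$. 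Finally $\overline{\Sigma_M}$ misses this segment: the bound $\Re\log\phi>\Re\ell_0-M$ on $\Sigma_M$ keeps $\overline{\Sigma_M}$ away from the flat critical point $x_0$ (where $\Re\log\phi=\log|\phi|\to-\infty$), while $\Im\log\phi\ge\pi$ on $\Sigma_M$, together with the fact that $\log\phi$ extends continuously and real--valued to $[x_0+\eta,x_0/\tau]$ for every $\eta>0$, keeps $\overline{\Sigma_M}$ away from the remainder of $[x_0,x_0/\tau]$ (and from $x_0/\tau$). Thus $\overline{\Sigma_M}$ is a compact subset of $D^+$, its $\rho_0$--diameter is some finite $D_0(M)$, and any $r>D_0(M)$ finishes the proof.

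I expect this last step to be the real difficulty. One must simultaneously control the three places where the geometry degenerates --- the flat point $x_0$, the boundary diameter $(x_0,x_0/\tau)$ lying on $\partial D^+$, and the corner $x_0/\tau$ --- and verify that it is precisely the hypothesis $|\Im\ell_0|\ge\pi$, i.e.\ $z_0\notin\Omega_{-,c}\cup\Omega_{+,c}$, that pushes $\Sigma_M$ off all of them at once; the mismatch between the extrinsic metric $\rho_0$ (measured in $D^+$) and the intrinsic coordinate $\log\phi$ (adapted to $\Omega_-$) is what makes the bookkeeping delicate.
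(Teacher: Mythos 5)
Your overall route is the paper's: reduce to $z_0\in\Omega_-$, pass to the logarithmic coordinate $h=\log\phi$, take a period cell $Q_M$ with $\exp(Q_M\cap\mathcal{L})=W_M$, and reduce the lemma to bounding the $\rho_0$-diameter of $\Sigma_M=h^{-1}(Q_M\cap\mathcal{L})$. The gap is in how you bound that diameter. You argue that $\overline{\Sigma_M}$ is a compact subset of $D^+$ and conclude its $\rho_0$-diameter is "some finite $D_0(M)$" --- but $\Sigma_M$ depends on $z_0$, and compactness only gives a finite diameter for each \emph{fixed} $z_0$, with no control as $z_0$ varies. The dependence on $M$ alone that your notation asserts is exactly what the soft argument does not provide. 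This is not a cosmetic issue: every application of the lemma (Lemma~\ref{lem:5gp,1}, the proof of Theorem~\ref{theo:28up,1}, Proposition~\ref{linetriv}) invokes it along a sequence of points $z_{m_n}$ and needs $r=r(M)$ uniform in $z_0$. Points of $\Omega_-\setminus\Omega_{-,c}$ do accumulate at $x_0$ (the boundary of $\Omega_{-,c}$ is tangent to $\RR$ there while $\partial\Omega_-$ meets $\RR$ at angle $\pi/4$), and as $z_0\to x_0$ one has $\Re\ell_0\to-\infty$ and the sets $\Sigma_M$ collapse onto the boundary point $x_0\in\partial D^+$; whether their hyperbolic diameter stays bounded is precisely the quantitative question your compactness step leaves unanswered.

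The paper closes this by estimating in the log coordinate and pulling back. Since $h$ maps $U\setminus\RR\subset{\cal D}(x_0,\tau x_0)\setminus\RR$ univalently into $\HH_{+}\cap\{\Re w<\log R\}$ (respectively its reflection), the Schwarz lemma makes $h$ non-contracting from $\rho_0$ to the hyperbolic metric $\rho'$ of that region, so the $\rho_0$-diameter of $\Sigma_M$ is at most the $\rho'$-diameter of $Q_M\cap\mathcal{L}$. The latter set lies in a Euclidean rectangle of height $2\pi$ and width at most $2M+\log R-\log(\tau x_0)$ whose Euclidean distance to $\partial\bigl(\HH_+\cap\{\Re w<\log R\}\bigr)$ is at least $\min\bigl(\pi/2,\,\log R-\log(\tau x_0)\bigr)$ --- here the hypothesis $\Im\ell_0\geq\pi$ enters, and the crucial point is that the bound is independent of $\Re\ell_0$ by horizontal translation invariance of the half-plane metric. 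Hence the $\rho'$-diameter is at most some $D(M)$ depending only on $M$, and $r=D(M)$ works uniformly. You should replace your compactness step by this explicit comparison (or an equivalent quantitative estimate) to obtain the uniformity the rest of the paper relies on.
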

\begin{proof}
To fix attention, the $z_0 \in \Omega_-$. 
For this proof, we will restrict $H$ to $\Omega_-$. Furthermore, from the definition of the ${\cal EWF}$ class, $H$ can be extended analytically as a map from some set 
$U\subset {\cal D}(x_0,\tau x_0)$  onto $D(0, R)\setminus \{0\}$ with $R>\tau x_0$. 
 
Recall that $h = \log H$ can be well defined at the lifting of $\exp$ to the universal cover of ${\cal D}^*(x_0,\tau x_0)$ by $H$. We will choose the lifting so that $h$ is 
symmetric with respect to the real line. $h$ is univalent and non-contracting from $\rho_0$ to the hyperbolic metric $\rho'$ of $\HH_+$ or $\HH_-$ intersected with 
$\{ w :\: \Re w < \log R\}$. By the hypothesis of the Lemma, $|\Im h(z_0)| \geq \pi$. To fix attention, let $\Im h(z_0) = a > 0$. 
Then the diameter of the set $\log W_M \cap \{ w :\: a-\frac{\pi}{2} \leq \Im w \leq a+\frac{3\pi}{2} \}$ with respect to $\rho'$ is bounded depending only on $M$ and $r$ can be chosen equal to this bound.
\end{proof}

\paragraph{Uniform expansion.} 
Now take any point $z\in \CC$ which is never mapped to $\RR$ by the
untempered tower dynamics. 
Proposition~\ref{prop:11fa,1} then delivers a sequence $z_n$. 
Let $\chi_n$ be the corresponding tower iterate which maps $z$
to $z_n$.

\begin{lem}\label{lem:5gp,1}
For every $D$ there exists $\lambda >
1$, such that for every $n$ and every $w$ in the ball centered at $z_n$
with radius $D$ with respect to $\rho_{\infty}$,
$D_{\rho_{\infty}}H_{-m_n}(w) > \lambda$, provided that $w$ is in the
domain of $H_{-m_n}$. 
\end{lem}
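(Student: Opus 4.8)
The plan is to exploit the $\tau$-invariance of $\rho_\infty$ to push the whole estimate down to the base map $H$, and then combine the expansion estimate of Lemma~\ref{lem:5ga,1}, the comparison of hyperbolic metrics in Fact~\ref{fa:20gp,2}, and the dichotomy furnished by Proposition~\ref{prop:11fa,1}.

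\emph{Reduction to the base level.} By Definition~\ref{defi:11fa,1}, $H_{-m_n}=L\circ H\circ L^{-1}$ with $L(z)=\tau^{m_n}z$, and since $L,L^{-1}$ are isometries for $\rho_\infty$ one has $D_{\rho_\infty}H_{-m_n}(w)=D_{\rho_\infty}H(\tau^{-m_n}w)$, while $L^{-1}$ carries the $\rho_\infty$-ball of radius $D$ about $z_n$ onto the $\rho_\infty$-ball of radius $D$ about $\zeta:=\tau^{-m_n}z_n$. Thus it suffices to produce $\lambda>1$, depending only on $D$ and the fixed data of $\phi$, with $D_{\rho_\infty}H(u)>\lambda$ for every $u$ in the domain of $H$ with $\dist_{\rho_\infty}(u,\zeta)\le D$. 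By Lemma~\ref{lem:5ga,1}, $D_{\rho_\infty}H(u)\ge D_\iota^{-1}(u)$, the ratio of the hyperbolic metric of $\CC\setminus H^{-1}(\RR)$ to that of $\CC\setminus\RR$ at $u$; applying Fact~\ref{fa:20gp,2} to these two regions, $D_\iota^{-1}(u)\ge\lambda_0(D')>1$ as soon as $\dist_{\rho_\infty}(u,H^{-1}(\RR))\le D'$. So the whole lemma reduces to a single uniform estimate: a constant $D'$, independent of $n$, such that $\dist_{\rho_\infty}(u,H^{-1}(\RR))\le D'$ whenever $u$ lies in the domain of $H$ and $\dist_{\rho_\infty}(u,\zeta)\le D$, with $\zeta=\tau^{-m_n}z_n$ satisfying one of the two alternatives of Proposition~\ref{prop:11fa,1}; then $\lambda=\lambda_0(D')$ finishes the proof.

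\emph{The uniform distance estimate.} Here I would split according to Proposition~\ref{prop:11fa,1}. If $\dist(\zeta,\RR)>\eta$, then, since $u$ lies within $\rho_\infty$-distance $D$ of $\zeta$, a standard estimate for $\rho_\infty$ on a half-plane gives $\dist(u,\RR)\ge\eta'$ for a fixed $\eta'=\eta'(\eta,D)$; as $u$ also lies in the bounded domain $\Omega_-\cup\Omega_+$, it lies in the fixed compact set $\mathcal K_{\eta'}=\{z\in\overline{\Omega_-\cup\Omega_+}:\dist(z,\RR)\ge\eta'\}\subset\CC\setminus\RR$. Since $H^{-1}(\RR)$ contains non-real curves in each half-plane (the components of $\{\arg\phi\in\pi\ZZ\}$ other than the real trace, and likewise for $\phi_{-1}$), the function $z\mapsto\dist_{\rho_\infty}(z,H^{-1}(\RR))$ is finite and continuous on $\CC\setminus\RR$, hence bounded on $\mathcal K_{\eta'}$. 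In the other case $\zeta\in(\Omega_-\cup\Omega_+)\setminus(\Omega_{-,c}\cup\Omega_{+,c})$ (cf.\ Definition~\ref{defi:26xp,1}), so $\zeta$ meets the hypothesis of Lemma~\ref{lem:8xa,1}; applying it with a fixed $M>0$ — the set $W_M$ always contains a real point of ${\cal D}^*(x_0,\tau x_0)$ unless $H(\zeta)\in\RR$, in which case $\zeta\in H^{-1}(\RR)$ already — yields a point $\xi$ with $H(\xi)\in\RR$ and $\dist_{\rho_0}(\zeta,\xi)<r(M)$, where $\rho_0$ is the Poincar\'e metric of ${\cal D}(x_0,\tau x_0)\setminus\RR$. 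As that region sits inside $\CC\setminus\RR$ we have $\rho_0\ge\rho_\infty$, whence $\dist_{\rho_\infty}(\zeta,H^{-1}(\RR))<r(M)$ and $\dist_{\rho_\infty}(u,H^{-1}(\RR))<D+r(M)$. Taking $D'$ to be the larger of the two resulting bounds completes the estimate.

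\emph{Main obstacle.} The delicate case is the second one, where $\zeta$ (and hence $u$) may lie arbitrarily close to $\RR$: there the hyperbolic metric $\rho_\infty$ is large near $\zeta$, the compactness argument of the first case fails, and one genuinely needs Lemma~\ref{lem:8xa,1}, which uses that $\zeta$ sits off the central sheets $\Omega_{\pm,c}$ (so $|\Im\log\phi|\ge\pi$ there) to locate a preimage of a real point at bounded $\rho_0$-distance. The first case is, by contrast, a soft compactness argument once the rescaling has confined $u$ to a fixed compact region away from the real line. The only points requiring care beyond this are the routine checks that $W_M$ meets $\RR$ for all $M>0$ and that the $\rho_\infty$-ball of radius $D$ about a point at height $>\eta$ stays at height $\ge\eta'$.
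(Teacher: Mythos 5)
Your proof is correct and follows essentially the same route as the paper's: reduce to the base level by the $\tau$-invariance of $\rho_\infty$, invoke Lemma~\ref{lem:5ga,1} together with Fact~\ref{fa:20gp,2} to convert the problem into a uniform bound on $\dist_{\rho_\infty}(\cdot,H^{-1}(\RR))$, and settle the two cases of Proposition~\ref{prop:11fa,1} respectively by a soft argument away from $\RR$ and by Lemma~\ref{lem:8xa,1} plus the triangle inequality. The only difference is that you spell out the compactness argument in the first case, which the paper dismisses as immediate.
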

\begin{proof}
By Lemma~\ref{lem:5ga,1} and
Fact~\ref{fa:20gp,2}, $DH_{\rho_{\infty}}(w) > \lambda(w) > 1$ where $\lambda(w)$
depends only on the distance in $\rho_{\infty}$ from $w$ to
$H^{-1}_{-m_n}(\RR)$. But if
either case of the alternative in Proposition~\ref{prop:11fa,1} holds,
then points $z_n$ are
all in a uniformly bounded $\rho_{\infty}$-distance from the
corresponding set $H_{m_n}^{-1}(\RR)$. This is clearly true in the
first case of the alternative. In the second case $z_n$ we will apply Lemma~\ref{lem:8xa,1} 
with the obvious rescaling by $\tau^{-m_n}$,  with $z_0 = z_{m_n}$ and any fixed positive $M$, since any ring will intersect the real line.  
We get a bounded hyperbolic distance from $z_n$ to $H_{-m_n}^{-1}(\RR)$ and also from $w$ to  $H_{-m_n}^{-1}(\RR)$ by the triangle inequality.
The claim follows from Lemma~\ref{lem:5ga,1} and Fact~\ref{fa:20gp,2}.  
\end{proof}

\begin{lem}\label{lem:5gp,2}
For every $n$, let $\zeta_n$ denote the inverse branch of $\chi_n$ which
maps $z_n$ to $z$ defined on some simply-connected set $U_n \ni z_n$. 
 Then for every $D$ and $\varepsilon$ there exists
$n_0$ such that for every $n\geq n_0$ if the diameter of $U_n$ with
respect to $\rho_{\infty}$ does not exceed $D$, then $\zeta_n(U_n)$ is
inside the hyperbolic ball of radius $\varepsilon$ centered at $z$. 
\end{lem}
\begin{proof}
Pulling back a $U_n$ will not increase its
diameter, so each time we pass $z_m$ its radius will be shrunk by a
definite factor.
\end{proof}

\paragraph{Proof of Theorem~\ref{theo:28up,1}.}
Let $z_0 \in K_H$. If the orbit of $z_0$ by $H$ ever enters the real line, then the claim of the Theorem follows 
from Proposition~\ref{prop:28ua,1}. If not, recall Lemma~\ref{lem:28up,2}. If the distance from the orbit of $z_0$ to $\RR$ is positive, then quite obviously
that $\rho_0(H^n(z_0),H^{-1}((x_0,\tau x_0)))$ is bounded independently of $n$. In the remaining case, by Lemma~\ref{lem:28up,2} we get a sequence $m_n$ such that $z_{m_n}$ is not in $\Omega_{-,c}\cup\Omega_{+,c}$. Then we use Lemma~\ref{lem:8xa,1} with $z_0 := z_{m_n}$ and any $M$ fixed and positive to get that 
$\rho_0(H^{m_n}(z_0),H^{-1}((x_0,\tau x_0)))$ is bounded independently on $n$. So, setting $m_n := n$ in the first case, we can now proceed in a uniform way. 
 
By a reasoning used in the proof of Lemma~\ref{lem:5gp,2}, we get that the inverse 
branch $H^{-{m_n}}$ which tracks back the orbit of $z_0$ shrinks the metric $\rho_0$ at a fixed rate. Thus, $\rho_0(z_0,H^{-{m_n}}((x_0,\tau x_0)))$ shrinks exponentially fast with $n$. Finally, preimages of $\tau^{-1} x_0$ are dense in $(x_0,\tau x_0)$ by Proposition~\ref{prop:28ua,1}.

\paragraph{Density of the Julia sets.}
\begin{prop}\label{prop:11fa,3}
In the tower of the untempered complex dynamics 
$H$ for every {\cal EWF}-map, the Julia set
\[ \bigcup_{n=1}^{\infty} \tau^n K_{H} \]
is dense in $\CC$. 
\end{prop}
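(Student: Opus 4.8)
The plan is to feed the description $K_H=\overline{\{H^{-j}(x_0/\tau):j\ge 0\}}$ of Theorem~\ref{theo:28up,1} into the expanding--tower machinery (Proposition~\ref{prop:11fa,1} and Lemmas~\ref{lem:5gp,1}, \ref{lem:5gp,2}, \ref{lem:8xa,1}) and to show that every $z_0\in\CC$ is a limit of points of $\bigcup_{n\ge 1}\tau^nK_H$. Two preliminary observations are needed. First, $\bigcup_{n\ge 1}\tau^nK_H\supseteq\RR$: by Theorem~\ref{theo:28up,1} and the minimality of the golden rotation (Proposition~\ref{prop:28ua,1}) the real backward orbit of $x_0/\tau$ is dense in $[x_0,\tau x_0]$, so $[x_0,\tau x_0]\cap\RR\subseteq K_H$, while a direct computation gives $\bigcup_{n\ge 1}\tau^n[x_0,\tau x_0]=\RR$ because the endpoints $\tau^nx_0$ run off to $\pm\infty$. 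Second, $\bigcup_n\tau^nK_H$ is invariant under taking preimages by the tower maps: by Lemma~\ref{lem:30ua,1} every branch of any $H_{-m}$ on a component of its domain is a composition of branches $\phi_k^{\pm 1}$, and $K_H$, being the closure of a backward orbit (with $H$ open), is backward invariant under each $\phi_k$. Consequently, if the tower orbit of $z_0$ ever meets $\RR$, then $z_0\in\bigcup_n\tau^nK_H$ and there is nothing to prove; so I would assume henceforth that the orbit of $z_0$ never meets $\RR$, in which case $z_0\notin\RR$ and Proposition~\ref{prop:11fa,1} applies.

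Fix $\varepsilon>0$ and apply Proposition~\ref{prop:11fa,1} to obtain the sequences $z_n$, $m_n$, the tower compositions $\chi_n$ with $\chi_n(z_0)=z_n$, the inverse branches $\zeta_n$ of Lemma~\ref{lem:5gp,2}, and a subsequence $(n_p)$ along which a single case of the alternative holds. The crux is the claim that along $(n_p)$ there is a $\rho_\infty$-ball $B_n$ about $z_n$, of controlled radius, on which $\zeta_n$ is defined and which contains a point of $\tau^{m_n}K_H$. Granting this, one picks by Lemma~\ref{lem:5gp,2} some $n=n_p$ so large that $\zeta_n(B_n)$ lies in the $\rho_\infty$-ball of radius $\varepsilon$ about $z_0$; then $\zeta_n$ of that Julia point lies $\varepsilon$-close to $z_0$ and, by the invariance of the previous paragraph, belongs to $\bigcup_k\tau^kK_H$. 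Letting $\varepsilon\to 0$ yields $z_0\in\overline{\bigcup_n\tau^nK_H}$.

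It remains to establish the claim, and here the two cases of the alternative are treated differently. If along $(n_p)$ the second case holds, i.e. $\tau^{-m_n}z_n\in(\Omega_-\cup\Omega_+)\setminus(\Omega_{-,c}\cup\Omega_{+,c})$, I would invoke Lemma~\ref{lem:8xa,1} rescaled by $\tau^{m_n}$, with $\tau^{-m_n}z_n$ in the role of its $z_0$: for $M$ large the image under the branch of $H_{-m_n}$ defined near $z_n$ of a ball $B_n$ about $z_n$ of controlled radius contains $\tau^{m_n}W_M$, and for $M$ large enough $\tau^{m_n}W_M$ contains $\tau^{m_n}(x_0/\tau)\in\tau^{m_n}K_H$ (Theorem~\ref{theo:28up,1} with $j=0$); pulling this point back by that branch gives, by backward invariance of the filled Julia set $\tau^{m_n}K_H$, a point of $\tau^{m_n}K_H$ inside $B_n$. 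If instead the first case holds, i.e. $\dist(z_n,\RR)>\eta|\tau|^{m_n}$, then $\tau^{-m_n}z_n$ lies in a fixed compact subset $\mathcal K$ of $(\CC\setminus\RR)\cap(\Omega_-\cup\Omega_+)$; since $K_H\not\subseteq\RR$ (for instance the non-principal $\phi$-preimages of $x_0/\tau$ lie in $K_H\setminus\RR$), the function $w\mapsto\rho_\infty(w,K_H)$ is bounded on $\mathcal K$, so a $\rho_\infty$-ball of some fixed radius about $\tau^{-m_n}z_n$ meets $K_H$, and its $\tau^{m_n}$-rescaling $B_n$ meets $\tau^{m_n}K_H$. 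That $\zeta_n$ extends over $B_n$ for $n$ large holds because pulling $B_n$ back along the (uniformly expanding, by the proof of Lemma~\ref{lem:5gp,1}) orbit produces shrinking $\rho_\infty$-balls that stay near the orbit of $z_0$ and hence off $\RR$, where the only obstruction --- the omitted value $0$ --- never occurs.

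The main obstacle is precisely this claim: extracting an honest point of the countable set $\tau^{m_n}K_H$ from a neighbourhood of $z_n$ whose size is controlled uniformly in $n$, while simultaneously keeping $\zeta_n$ defined on that neighbourhood. The first half is exactly the role of Lemma~\ref{lem:8xa,1} in the regime where the rescaled orbit leaves the core $\Omega_{-,c}\cup\Omega_{+,c}$, and is supplied by the compactness argument above in the regime where the rescaled orbit moves a definite amount off $\RR$; the second half follows from the coherence of the tower (Lemma~\ref{lem:30ua,1}) together with the confinement estimates underlying Lemma~\ref{lem:28up,1} and the defining properties of the ${\cal EWF}$-class.
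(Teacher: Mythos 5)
Your strategy is the paper's own: run Proposition~\ref{prop:11fa,1}, locate a point of $\tau^{m_n}K_H$ within a uniformly bounded hyperbolic neighbourhood of each $z_n$, and pull it back by $\zeta_n$, using the contraction of Lemma~\ref{lem:5gp,2} to shrink these neighbourhoods onto $z_0$. The preliminary reduction to orbits avoiding $\RR$, the backward invariance of $\bigcup_n\tau^nK_H$ via Lemma~\ref{lem:30ua,1}, and the first-case argument (compactness of $\{w:\dist(w,\RR)\ge\eta\}$ inside the bounded set $\overline{\Omega_-\cup\Omega_+}$, together with $K_H\not\subset\RR$) are all sound.

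The gap is in your second case. You need $\tau^{m_n}W_M\ni\tau^{m_n}(x_0/\tau)$, i.e. $\bigl|\log|x_0/\tau|-\log|H(\tau^{-m_n}z_n)|\bigr|<M$. But the points $\tau^{-m_n}z_n$ produced by the second alternative of Proposition~\ref{prop:11fa,1} come from the $\Gamma$-dynamics near the flat critical point: they lie in a fixed angle at $x_0$ with $|\tau^{-m_n}z_n-x_0|<2\epsilon$ and \emph{no lower bound} on that distance, so $|H(\tau^{-m_n}z_n)|=|\phi(\tau^{-m_n}z_n)|$ can tend to $0$ along the subsequence (indeed $\phi\circ G=\tau^{-2}\phi$ forces $|\phi|\to0$ at $x_0$). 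Hence no single $M$ captures the fixed point $x_0/\tau$, and taking $M=M(n)\to\infty$ makes the radius $r=r(M)$ supplied by Lemma~\ref{lem:8xa,1} blow up, which destroys the hypothesis ``diameter of $U_n$ at most $D$'' needed to invoke Lemma~\ref{lem:5gp,2}. The paper is explicit about this phenomenon elsewhere: in the proof of Proposition~\ref{linetriv} the target is $\tau^{p_n}i$ with $p_n$ \emph{adapted} to $|H_{-m_n}(z_n)|$, precisely because that modulus is not bounded below. The repair is the paper's: fix $M$ once and for all; the ring $W_M$ always meets the real trace $(x_0,\tau x_0)$ in a nondegenerate interval, and by Theorem~\ref{theo:28up,1} and Proposition~\ref{prop:28ua,1} the real preimages of $0$ (equivalently of $x_0/\tau$) are dense there, so $W_M$ contains such a preimage $y$; the preimage of $y$ in the $r$-ball around $z_n$ given by Lemma~\ref{lem:8xa,1} is again a preimage of $0$ by the tower dynamics, hence lies in $\tau^{k}K_H$ for some $k$, and the rest of your argument goes through unchanged.
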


We can now prove Proposition~\ref{prop:11fa,3}. For some fixed $D$ and
every $n$, we can find 
an element of $H^{-1}_{-m_n}(\RR)$, moreover, a preimage of
$0$ by $H_{-m_n}$,  which can be joined 
to $z_n$ by a simple arc $\gamma_n$ of hyperbolic length which does not exceed 
some fixed $D$ and which is completely contained in $\tau^{m_n}
(\Omega_-\cup  \Omega_+)$. This follows from Lemma~\ref{lem:8xa,1} after rescaling. 
We can then find $k$ which is at least equal to $m_n$
and large enough so that the tower iterate $\chi_n$ can be represented
as an iterate of $H_{-k}$. 

Then the inverse branch $\zeta_n$ is defined on a neighborhood of
$\gamma_n$. We can apply Lemma~\ref{lem:5gp,2}  to get that $\zeta_n$ maps
$\gamma_n$ into a neighborhood of $z$ whose diameter shrinks to $0$ as
$n$ grows. Letting $n$ go to
$\infty$, we get that every ball
centered $z$ contains a preimage of $0$ by some iterate of the tower dynamics.
But every preimage of $0$ in the tower belongs to some
$K_{H_k}$ and so Proposition~\ref{prop:11fa,3} follows.

\section{Rigidity of Towers}
\subsection{Conjugacy between towers.}
Given the untempered dynamics and their towers  built for two ${\cal EWF}$-maps, 
$H$ and $\hat{H}$, we initially construct a
quasiconformal conjugacy {\em between the towers}.

\begin{prop}\label{prop:17fa,1}
There is a quasi-conformal homeomorphism
$\Upsilon$ of the plane, symmetric w.r.t. the real axis,
and normalized so that $\Upsilon(0)=0, \Upsilon(1)=1, \Upsilon(\infty)=\infty$,
which  conjugates every $H_{-n}$ with $\hat{H}_{-n}$:
$\Upsilon\circ H_{-n}=\hat H_{-n}\circ \Upsilon$ whenever both sides
are defined.
Moreover, $\Upsilon(z)=\hat \tau \Upsilon(z/\tau)$ for any $z\in {\C}$.
\end{prop}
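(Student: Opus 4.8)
The plan is to build $\Upsilon$ by an infinite pull-back, starting from a quasiconformal map which already conjugates the real dynamics of the two towers and is equivariant under the rescaling $z\mapsto\tau z$, and then repairing it level by level. The real conjugacy is immediate: let $\upsilon$ be the order-preserving homeomorphism conjugating the circle dynamics of $H$ and $\hat H$; by Lemma~\ref{lem:28ua,3} it also conjugates the presentation functions, $\upsilon\circ\Pi=\hat\Pi\circ\upsilon$. Since $\Pi(x)=\tau x$ for $x\le x_0/\tau$ and $\hat\Pi(x)=\hat\tau x$ for $x\le\hat x_0/\hat\tau$, this forces $\upsilon(\tau x)=\hat\tau\,\upsilon(x)$ on $\tau^{-1}(x_0,\tau x_0)\subset(x_0,\tau x_0)$; hence $\upsilon$ extends to a homeomorphism of $\RR$ fixing $0$ with $\upsilon(\tau z)=\hat\tau\,\upsilon(z)$, and this extension conjugates the real dynamics of $H_{-n}$ with that of $\hat H_{-n}$ for every $n\ge0$ at once, because the real branches of $H_{-n}$ are $\phi_{-n},\phi_{-n-1}$ and scaling-equivariance together with $\upsilon\circ\phi_k=\hat\phi_k\circ\upsilon$ for $k=0,-1$ gives it for all $k\le0$.

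Next I would promote $\upsilon$ to a scaling-equivariant quasiconformal self-map $\Upsilon_0$ of the sphere, symmetric in $\RR$, with $\Upsilon_0(0)=0$, $\Upsilon_0(1)=1$, $\Upsilon_0(\infty)=\infty$, carrying each $\partial(\tau^nV')$ onto $\partial(\hat\tau^n\hat V')$ and satisfying the conjugacy relation on each $\partial(\tau^nU_-)\cup\partial(\tau^nU_+)$. It is enough to carry this out on one fundamental annulus for $z\mapsto\tau z$ — say $\overline{V'}\setminus W_-$, where $W_-=\tau^{-1}V'$ and $\overline W_-\cap\partial V'=\{x_0\}$ — by exactly the surgery used for $\Psi_3$ in Propositions~\ref{prop:29ua,1} and~\ref{prop:29up,2}: lift the $\upsilon$-compatible data through the covers $\phi_{-1},\hat\phi_{-1}$ over $U_+$, extend to the complementary region by the Beurling--Ahlfors extension and a quasiconformal reflection across the quasiconformal arcs of Lemmas~\ref{lem:29up,1}--\ref{lem:29up,4}. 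Then $\Upsilon_0$ is propagated to all of $\CC$ by the rule $\Upsilon_0(\tau^nz)=\hat\tau^n\Upsilon_0(z)$. The one delicate point is gluing across the seam points $\tau^nx_0$: there one invokes the $(\frac{\pi}{4}+\epsilon)$-angle estimate of Lemma~\ref{lem:28ua,1}, which makes these points sectorially accessible and the reflections compatible. Because $\Upsilon_0$ has bounded dilatation on a single annulus and is propagated by conformal rescalings, it has globally bounded dilatation and extends quasiconformally across $0$ and $\infty$.

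Finally I would iterate the pull-back exactly as in the proof of Theorem~\ref{theo:28up,2}: set $\Upsilon^0=\Upsilon_0$, and let $\Upsilon^k$ equal $\Upsilon^{k-1}$ off $\bigcup_{n\ge0}\tau^n(U_-\cup U_+)$ and, on each component there, the lift of $\Upsilon^{k-1}$ through $H_{-n}$ and $\hat H_{-n}$, normalized to fix $\RR$ with its orientation. Lemmas~\ref{lem:30ua,1} and~\ref{lem:30up,1} (a branch of a lower tower level composes to a branch of a higher one) guarantee this simultaneous lift is single-valued on the overlaps of the domains; hence $\Upsilon^k$ is well defined, has the same maximal dilatation as $\Upsilon_0$, and — since the pull-back commutes with the rescaling conjugation $\Phi\mapsto\hat\tau\circ\Phi\circ\tau^{-1}$, both towers being rescaling-equivariant — is again scaling-equivariant. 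Outside the union $\bigcup_{n\ge0}\tau^nK_H$ of the Julia sets of all tower levels the sequence $\Upsilon^k$ stabilizes; this union is meager (each $\tau^nK_H$ is nowhere dense by Lemma~\ref{lem:29up,6} and rescaling, and in fact their union is dense by Proposition~\ref{prop:11fa,3}), so $\Upsilon^k$ converges on a residual set, while on $\bigcup_{n\ge0}\tau^nK_H$ the values are forced by the normalization, since by Theorem~\ref{theo:28up,1} each $K_H$ is the closure of the grand orbit of $x_0/\tau$ under $H$ and any conjugacy with our normalization carries it to the corresponding grand orbit. Thus $\Upsilon^k\to\Upsilon$, a quasiconformal, symmetric, normalized, scaling-equivariant map conjugating $H_{-n}$ with $\hat H_{-n}$ for every $n$, which is Proposition~\ref{prop:17fa,1}. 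The main obstacle is precisely this last step — that the pull-back converges and that its limit conjugates the whole tower rather than just one level — and it relies on the branch-compatibility of Lemmas~\ref{lem:30ua,1} and~\ref{lem:30up,1}, together with the density and rigidity of the tower Julia set.
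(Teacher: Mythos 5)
The central step of your argument --- the ``simultaneous pull-back on all tower levels'' --- is not well defined, and this is a genuine gap. By Lemma~\ref{lem:30ua,1} the domains of the tower levels are nested, $U_-\cup U_+\subset\tau U_-\subset\tau(U_-\cup U_+)\subset\cdots$, so a point $z\in U_-$ lies in $\tau^n(U_-\cup U_+)$ for \emph{every} $n\geq 0$ and you must decide through which level to lift $\Upsilon^{k-1}$ at $z$. Lemmas~\ref{lem:30ua,1} and~\ref{lem:30up,1} do not resolve this ambiguity: they say that a branch of $H_{-m}$ is a \emph{composition} of branches of $H_{-n}$ for $n>m$, so one pull-back step through $H_0$ at $z$ corresponds to several consecutive pull-back steps through $H_{-1}$, not to one. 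Consequently the lift of $\Upsilon^{k-1}$ through $H_0,\hat H_0$ and its lift through $H_{-1},\hat H_{-1}$ disagree on the overlap unless $\Upsilon^{k-1}$ is already an exact conjugacy --- which is only what the limit is supposed to be. For the same reason a single pull-back step at level $0$ does not preserve scaling-equivariance: conjugating the level-$0$ pull-back operator by $z\mapsto\tau z$ produces the level-$(-1)$ operator, a different operator, so equivariance cannot be propagated through the iteration as you claim. The paper avoids this entirely. It performs the pull-back only at level $0$ (proof of Theorem~\ref{theo:28up,2}), obtaining an exact conjugacy $\Psi$ there; it sets $\Upsilon^n=\hat\tau^n\Psi\tau^{-n}$, which by Lemma~\ref{lem:30up,1} conjugates \emph{all} levels $0,\dots,-n$ at once, because an exact conjugacy of a deeper level automatically conjugates every composition of its branches; it extracts a limit by normality of quasiconformal maps with bounded dilatation; and only then does it obtain $\Upsilon(\tau z)=\hat\tau\,\Upsilon(z)$, from the rigidity of self-conjugacies on the Julia set (Lemma~\ref{lem:18fa,1}) together with the density of $\bigcup_m\tilde{K}_{H_{-m}}$ (Proposition~\ref{prop:11fa,3} and Lemma~\ref{lem:28up,7}).

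Two smaller problems. First, your fundamental-annulus construction of $\Upsilon_0$ conflates the presentation-function domains $W_\pm$ with the dynamical domains $U_\pm$: $U_-$ and $W_-=\tau^{-1}V'$ have the same real trace $(x_0,x_0/\tau)$, so $U_-$ is not contained in $\overline{V'}\setminus W_-$ and the boundary data you prescribe do not live in a single fundamental domain of $z\mapsto\tau z$; gluing at the orbit of $x_0$ is then not the only issue. Second, stabilization of $\Upsilon^k$ on the complement of $\bigcup_n\tau^nK_H$ (a comeager but not conull set whose complement is dense, by Proposition~\ref{prop:11fa,3}) does not by itself yield a quasiconformal limit on the plane; one needs compactness of the family to pass to a global limit, and the identification of that limit on the Julia sets requires the isotopy argument of Lemma~\ref{lem:18fa,1}, not merely the normalization at $0$, $1$ and $\infty$.
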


We initially construct the conjugacy between the {\em tempered}
towers, $H^t_{-n}, \hat{H}^t_{-n}$. 
The conjugacy $\Upsilon^0$ is between $H^t_0$ and $\hat{H}^t_0$ is
obtained by Theorem~\ref{theo:28up,2}. Denote $\Upsilon^n(z) = \hat\tau^n
\Upsilon^0(\tau^{-n}z)$. For every $n$, we have 
\[ \Upsilon^n H^t_{-n}(z) = \hat\tau^n \Upsilon_0 (\tau^{-n}\tau^n H^t(\tau^{-n} z)) = 
\hat\tau^n \hat{H}^t(\Upsilon_0(\tau^{-n} z)) = \hat{H}^t_{-n}(\Upsilon^n(z)) \]
and so $\Upsilon^n$ conjugates $H^t_{-n}$ to $\hat{H}^t_{-n}$. By
Lemma~\ref{lem:30up,1}, the same $\Upsilon^n$ also conjugates $H^t_{-i}$ to
$\hat{H}^t_{-i}$ for $i=0,\cdots,n$. 

Using the compactness of the family $\Upsilon^n$, we pick a limit point
$\Upsilon$ which conjugates the whole towers. 

Two things need to be checked: that $\Upsilon$ also conjugates between
{\em untempered} towers and that it is invariant under the rescaling. 

\paragraph{Uniqueness of the conjugacy on the Julia set.}
\begin{lem}\label{lem:18fa,1}
Suppose that $H$ is a tempered dynamics built for some ${\cal EWF}$-map.
Let $\Upsilon$ be a homeomorphism which self-conjugates $H$,
i.e. $\Upsilon(H(z)) = H(\Upsilon(z))$ for every $z\in U_-\cup
U_+$. In addition, $\Upsilon$ fixes $0$, is symmetric about the real line and
preserves its orientation. 
Then $\Upsilon(z) = z$ for every $z\in \tilde{K}_{H}$. 
\end{lem}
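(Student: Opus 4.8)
The plan is to prove that a self-conjugacy $\Upsilon$ of the tempered dynamics $H$ which fixes $0$, is symmetric about $\RR$ and orientation-preserving there, must be the identity on the filled Julia set $\tilde K_H$. The natural strategy is to first show $\Upsilon$ is the identity on a conveniently chosen dense subset of $\tilde K_H$, and then invoke continuity of $\Upsilon$ together with the density to conclude. By Lemma~\ref{lem:28up,7}, $\tilde K_H = K_H$, and by Theorem~\ref{theo:28up,1}, $K_H = \overline{\{H^{-j}(x_0/\tau):\, j\ge 0\}}$, so it suffices to prove that $\Upsilon$ fixes every preimage $H^{-j}(x_0/\tau)$.

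\textbf{Fixing the real slice.} The first step is to show $\Upsilon$ restricted to the real interval $(x_0,\tau x_0)$ is the identity. Since $\Upsilon$ is a symmetric orientation-preserving homeomorphism fixing $0$ and conjugating $H$ to itself, it conjugates the circle dynamics $\mathcal H$ to itself as a circle homeomorphism. The circle map has irrational (golden mean) rotation number, so any self-conjugacy is determined by its value at one point of a dense orbit; since $\Upsilon(0)=0$ and the forward orbit $\{H^j(0)\}$ is dense by Proposition~\ref{prop:28ua,1}, we get $\Upsilon(H^j(0)) = H^j(0)$ for all $j\ge0$, hence $\Upsilon$ is the identity on $(x_0,\tau x_0)$. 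In particular $\Upsilon$ fixes $x_0$, $\tau x_0$, and all of $K'''_H \subset \RR$ — but more importantly it fixes the omitted value $0$ and its real preimages.

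\textbf{Propagating to all preimages of $x_0/\tau$.} Next I would argue that $\Upsilon$ fixes every point of $H^{-j}(x_0/\tau)$ by induction on $j$. For $j=0$ the point $x_0/\tau$ is real and already fixed. Suppose $\Upsilon$ fixes all points of $H^{-(j-1)}(x_0/\tau)$. A point $z\in H^{-j}(x_0/\tau)$ lies in $U_-$ or $U_+$ with $H(z) \in H^{-(j-1)}(x_0/\tau)$, which is fixed by the inductive hypothesis. From $\Upsilon H = H\Upsilon$ we get $H(\Upsilon(z)) = \Upsilon(H(z)) = H(z)$, so $\Upsilon(z)$ is another preimage of $H(z)$ under the same branch (the branches $\phi$, $\phi_{-1}$ on $U_-$, $U_+$ are univalent after removing the critical value, so the preimage under each branch is unique once we know which branch). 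Hence $\Upsilon(z)=z$ provided $\Upsilon$ preserves the partition into $U_-$ and $U_+$; this holds because $\Upsilon$ fixes $\overline U_-\cap\overline U_+ = \{x_0/\tau\}$ (Lemma~\ref{lem:28ua,2}), fixes $\RR$, and is a homeomorphism, so it maps each of the two domains to itself. This completes the induction, so $\Upsilon$ fixes the dense set $\{H^{-j}(x_0/\tau)\}$, and by continuity $\Upsilon = \mathrm{id}$ on its closure $\tilde K_H$.

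\textbf{Main obstacle.} The delicate point is the branch-tracking in the inductive step: one must be sure that the preimage of a given point under $H$ restricted to a single branch is genuinely unique and that $\Upsilon$ cannot swap the two domains $U_-$ and $U_+$ or fold within one domain. The univalence of $\phi$ and $\phi_{-1}$ away from the omitted value $0$ (built into the ${\cal EWF}$ definition, items 1--2) handles the first concern, and the fact that $\overline U_- \cap \overline U_+$ is the single point $x_0/\tau$, fixed by $\Upsilon$, handles the second — since a plane homeomorphism fixing that separating point and the real line, and conjugating a dynamics whose two domains are distinguished topologically (one contains $x_0$, the other contains $\tau x_0$, both fixed), must preserve each domain. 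A secondary subtlety is making sure $x_0/\tau$ itself is fixed before starting the induction, which follows from the real-slice step above. Once these are pinned down, the rest is routine.
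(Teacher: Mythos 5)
The first step of your argument (forcing $\Upsilon=\mathrm{id}$ on the real interval $(x_0,\tau x_0)$ via density of the critical orbit, Proposition~\ref{prop:28ua,1}) is exactly the paper's first step and is fine. The inductive step, however, rests on a false premise. You claim that ``the branches $\phi$, $\phi_{-1}$ on $U_-$, $U_+$ are univalent after removing the critical value, so the preimage under each branch is unique once we know which branch.'' Item 2 of Definition~\ref{defi:9xa,1} says the opposite: $\phi$ is an \emph{unbranched covering} of the punctured disk $V=D(0,R)\setminus\{0\}$ by the simply connected domain $U$, i.e.\ the universal covering, which is infinite-to-one. The same holds for the tempered restriction $\phi:U_-\to V$, since $V$ is still an essential punctured neighbourhood of $0$. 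Consequently $H(\Upsilon(z))=H(z)$ together with ``$\Upsilon(z)$ lies in the same domain $U_-$'' leaves infinitely many candidates for $\Upsilon(z)$, one on each sheet of the covering. The gap appears already at $j=1$: the non-real preimages of $x_0/\tau$ in $U_-$ form an infinite discrete set permuted, a priori, by $\Upsilon$, and nothing in your argument rules out a nontrivial permutation.

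Distinguishing these sheets is precisely the work the paper's proof does. It organizes $\bigcup_{k\ge 0}H^{-k}(\overline{I}_0)$ as a tree of ``regular arcs'' joined at ``branching points'' (the preimages of $x_0$ and $\tau x_0$), observes that $\Upsilon$ must permute branching points among themselves, and then runs an isotopy argument — scaling the Beltrami coefficient of $\Upsilon$ by $t\in[0,1]$ to interpolate between $\mathrm{id}$ and $\Upsilon$ through conjugacies $\Upsilon_t$ — to show that if a regular point were sent to a \emph{different} regular arc, some intermediate $\Upsilon_t$ would have to push it through a branching point, which is impossible. Some topological input of this kind (equivalently, a winding-number argument showing $\Upsilon$ cannot change the sheet of the covering) is unavoidable here, and your proposal does not supply it.
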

\begin{proof}
Let $I_0 = (x_0,\tau x_0)$. $\overline{I}_0$ is the domain of circle dynamics
generated by $H$. By Proposition~\ref{prop:28ua,1}, the orbit of $0$
is dense in this interval, and so $H(z)=z$ for all $z\in I_0$. Now
consider the set ${\cal I} = \cup_{k=0}^{\infty}
H^{-k}(\overline{I}_0)$. We distinguish regular points mapped into
$I_0$ by $H^k$ for some $k\geq 0$. Regular points constitute a disjoint
union of arcs, each of which is mapped onto $I$ by some iterate of
$H$. Branching points are preimages of $x_0$ and $\tau x_0$ and arcs
of regular points join there. 

Let $z_0$ be regular point and $z_1=\Upsilon(z_0)$. If $z_1$ and $z_0$
belong to the same regular arc, then $z_1 = z_0$ since $H^k$ is
one-to-one on regular arcs and $\Upsilon$ is the identity of $I_0$. 

Point $z_1$ cannot be a branching point, since the conjugacy must permute
branching points among themselves. If it belongs to a different
regular arc, then we construct an isotopy $\Upsilon_t$ between $\Upsilon$ and
$\mbox{id}$ by scaling the Beltrami coefficient of $\Upsilon$ by
$t$. Each $\Upsilon_t$ conjugates $H$ to some $H_t$ which is a complex
dynamics symmetric about $\RR$. ${\cal I}_t = \Upsilon_t({\cal I})$ is
the full preimage of the domain of the circle dynamics for $H_t$ and
in particular, $\Upsilon_t$ still permutes among the branching points
of ${\cal I}_t$.    
Then $z_t$ has to join $z_1$ to $z_0$ inside $\Upsilon_t({\cal
  I})$. Then, for some $t$, $z_t$ has to pass through a branching
point of $\Upsilon_t({\cal I})$, but that is not possible by the
foregoing remark. 

Hence $\Upsilon$ fixes all regular points, but those are dense in
${\cal I}$. Finally, by Theorem~\ref{theo:28up,1}, ${\cal I}$ is dense
in $\tilde{K}_H$.  
\end{proof}

Coming back to the proof of Proposition~\ref{prop:17fa,1}, we observe
that for any $m>n$, $\Upsilon^n(z) = \Upsilon^m(z)$ provided that $z\in
\tilde{K}_{H_{-n}}$. Indeed, both $\Upsilon^m$ and $\Upsilon^n$ conjugate $H_{-n}$ to
$\hat{H}_{-n}$ and so $(\Upsilon^m)^{-1} \circ \Upsilon^n$ provides a
self-conjugacy of $H_{-n}$ and Lemma~\ref{lem:18fa,1} becomes
applicable. 

Now if $\Upsilon = \lim_{k\rightarrow\infty} \Upsilon^{n_{k}}$, then 
$\Upsilon'(z) = \hat\tau \Upsilon(\tau^{-1} z)$ is the limit of the sequence 
$\Upsilon^{n_k+1}$. For $z\in \tilde{K}_{H_{-m}}$ and any $m$, the values of both
sequences at $z$ stabilize. Hence, $\Upsilon(z) = \Upsilon'(z)$ for any $z\in
\bigcup_{m=0}^{\infty} \tilde{K}_{H_{-m}}$ but this set is dense in $\CC$ by
Proposition~\ref{prop:11fa,3} and Lemma~\ref{lem:28up,7}. So,
$\Upsilon=\Upsilon'$ and invariance and the rescaling by $\tau$ 
has been demonstrated.   

Finally, we need to show that $\Upsilon$ conjugates between the
untempered towers as well. Given the invariance under $\tau$, it will
be enough to show that for any $z\in \Omega_- \cup \Omega_+$ we have 
\begin{equation}\label{equ:2xp,1}
 \Upsilon\circ H(z) = {\hat H} \circ \Upsilon(z) 
\end{equation}
for the untempered dynamics. 

Assume first that $z\in \bigcup_{n=0}^{\infty} \tau^n K_H$. Then $z\in
K_{H_{-n}}$ for all $n$ large enough. But $K_{H_{-n}} =
\tilde{K}_{H_{-n}}$ by Lemma~\ref{lem:28up,7} and so $\Upsilon$
conjugates on the entire forward orbit of $z$ by $H^t_{-n}$, which is
the same as the forward orbit by $H_{-n}$. But $H(z)$ is somewhere in
this orbit by  Lemma~\ref{lem:30up,1} and so
relation~(\ref{equ:2xp,1}) holds. 

To finish the proof, invoke Proposition~\ref{prop:11fa,3}.

\subsection{Invariant line-fields}
We will identify {\em measurable line-fields} with differentials in
the form $\nu(z) \frac{d\overline{z}}{dz}$ where $\nu$ is a measurable
function with values on the unit circle or at the origin. 
A line-field is considered
{\em holomorphic} at $z_0$ if for some holomorphic function $\psi$
defined on a neighborhood of $z_0$, we have $\nu(z) =
c \frac{\overline{\psi'(z)}}{\psi'(z)}$ for some constant $c$.  

By a standard reasoning, Proposition~\ref{prop:17fa,1} gives us a
measurable line-field $\mu(z)\frac{d{\overline z}}{dz}$ which
is invariant under the action of $H_n^*$ for any $n$ as well as under
rescaling: $\mu(\tau z) = \mu(z)$.  
  
We will proceed to show that $\mu$ must be trivial, i.e. $0$ almost
everywhere. This will be attained by a typical approach: showing first
that $\mu$ cannot be non-trivial and holomorphic at any $z_0$ for
dynamical reasons, and on the contrary, that it must be  
holomorphic at some point for analytic reasons and because of
expansion. 

\paragraph{Absence of line-fields holomorphic on an open set.}
\begin{lem}\label{linehol}
The line-field $\mu$ cannot be both holomorphic and non-trivial on any open set.
\end{lem}
\begin{proof} Let $\mu$ be holomorphic in a neighborhood $W$.
Since $\mu$ is invariant under $z\mapsto z/\tau$ and since
$\cup_{n\ge 0} \tau^n K_H$ is dense in the plane,
one can assume that $W$ is a neighborhood of
a point $b$ of $K_H$. Moreover, since $b$ is approximated by
preimages of $x_0$ and $\tau x_0$, one can further assume 
that $W$ is a neighborhood of $a$, such that $H^n(a)=x_0$ or
$H^n(a)=\tau x_0$ for some $n\ge 0$, and (shrinking $W$)
that $H^n$ is univalent on $W$. To fix attention, suppose that
$H^n(a)=x_0$, since the other case can be treated in the same way. 
Apply $H^n$ and see that $\mu$
is holomorphic in a neighborhood $W'$ of $x_0$.
Applying $H$ one more time to $W'\cap U$, one sees
that $\mu$ is holomorphic in a neighborhood
of every point of a punctured disk
$D(0,r)\setminus \{0\}$. Now apply the rescalings
$z\mapsto \tau^n z$, $n=0,1,...$. Hence, 
$\mu$ is holomorphic everywhere except for $0$.
In particular, $\mu$ is holomorphic
around $1=H(0)$. Since $H$ is univalent in a neighborhood of $0$,
then $\mu$ is actually holomorphic in the whole disc 
$D(0,r)$. Then $\mu$ cannot be holomorphic around
$H^{-1}(0)=x_0$, a contradiction.
\end{proof}

\paragraph{Construction of holomorphic line-fields.}
Our goal is to prove the following: 
\begin{prop}\label{linetriv}
Suppose that $H$ is an untempered dynamics derived from the ${\cal
  WF}$-class. Assume that $H$ fixes an invariant
line-field $\mu(z) \frac{d\overline{z}}{dz}$, which is additionally
invariant under rescaling: $\mu(\tau z) = \mu(z)$. Then the line-field is
holomorphic at some point. Additionally, it is non-trivial in a
neighborhood of the same point unless $\mu(z)$ vanishes almost everywhere. 
\end{prop}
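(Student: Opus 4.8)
The plan is to exploit the uniform expansion built up in Proposition~\ref{prop:11fa,1} and Lemmas~\ref{lem:5gp,1}, \ref{lem:5gp,2} together with the density of the Julia sets in the tower (Proposition~\ref{prop:11fa,3}) in order to produce, at some point of the plane, a sequence of univalent pullbacks with controlled distortion whose images fill a whole neighborhood; a standard normal-families/renormalization argument then forces $\mu$ to agree almost everywhere with the pullback of a fixed holomorphic line-field, hence to be holomorphic there. Since the tower dynamics has no invariant holomorphic line-field on an open set (Lemma~\ref{linehol}), once we know $\mu$ is holomorphic at a point we are one step from the conclusion of Theorem~\ref{theo:13xp,3}, but here the Proposition only asks for the weaker statement, so I can stop at holomorphy.

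First I would fix a Lebesgue density point $z$ of the set where $\mu \neq 0$, assuming $\mu$ does not vanish a.e.\ (otherwise there is nothing to prove). Since $\mu$ is rescaling-invariant, I may replace $z$ by $\tau^{-m}z$ and so assume $z$ lies in a small neighborhood of a point whose forward tower orbit stays away from $\RR$ (if $z$ is in $\bigcup_n \tau^n K_H$ or its orbit hits $\RR$, a separate and easier argument using Proposition~\ref{prop:28ua,1} and the density of preimages of $x_0/\tau$ applies, and in fact that case may be excluded because those form a measure-zero set). Now apply Proposition~\ref{prop:11fa,1} to get the sequences $z_n$, $m_n$ and the tower iterates $\chi_n$ mapping $z$ to $z_n$; Lemma~\ref{lem:5gp,1} gives a definite hyperbolic expansion factor $\lambda>1$ of $H_{-m_n}$ on a fixed-size hyperbolic ball around each $z_n$, and Lemma~\ref{lem:5gp,2} shows the inverse branches $\zeta_n$ contract: $\zeta_n$ maps a fixed-radius hyperbolic ball about $z_n$ into an arbitrarily small hyperbolic ball about $z$. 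The composition of these contractions gives maps $\zeta_n$ from a fixed-size region around $z_n$ onto shrinking neighborhoods of $z$, all univalent and with uniformly bounded distortion in the spherical metric after rescaling.

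Then I would set up the renormalization limit. Rescale the picture: let $R_n(w) = z + r_n^{-1}(\zeta_n^{-1}(z+r_n w) - z)$ or, more directly, push forward: consider $\Lambda_n = $ (affine rescaling centered at $z_n$, normalizing the fixed-size hyperbolic ball) composed with $\zeta_n$ composed with (affine rescaling centered at $z$ undoing the shrinking). These $\Lambda_n$ are univalent with uniformly bounded distortion, so a subsequence converges to a univalent map $\Lambda$, and because $\mu$ is $(\zeta_n)^*$-invariant (line-field invariance under the tower dynamics, applied in reverse), $\mu$ near $z$ is, up to the affine rescalings which act isometrically on line-fields, a pullback by $\Lambda_n$ of the line-field $\mu$ restricted to the fixed region near $z_n$. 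Since $z$ is a density point of $\{\mu\neq 0\}$ and the $\Lambda_n$ have bounded distortion, one argues (this is the classical ``pushing forward at a density point'' lemma, as in McMullen's work on line-fields) that $\mu$ near $z_n$ itself converges, in the appropriate weak sense, to a line-field which pulls back under $\Lambda$ to a constant line-field $\frac{d\overline{w}}{dw}$ up to rotation; hence $\mu = c\,\overline{\Lambda'}/\Lambda'$ near $z$, which is holomorphic. The claim about non-triviality near the same point is automatic: a holomorphic line-field is either identically $0$ or nowhere $0$ on a connected domain, and we started from a density point of $\{\mu\neq 0\}$.

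The main obstacle I expect is making the ``density point pushes forward to a holomorphic field'' step precise in this tower setting: one must ensure the images $\chi_n$ of a fixed-size region around $z$ are genuinely of fixed hyperbolic size (not degenerating), that the distortion of $\zeta_n$ is controlled uniformly in $n$ — this is exactly what Lemmas~\ref{lem:5gp,1} and~\ref{lem:5gp,2} are for, but one has to track the two cases of the alternative in Proposition~\ref{prop:11fa,1} uniformly, in particular handling the second case via Lemma~\ref{lem:8xa,1} and its rescaled versions — and that the limiting univalent map $\Lambda$ is non-constant, which again follows from the bounded-distortion normalization. A secondary technical point is separating out, at the very start, the measure-zero exceptional set of points whose tower orbit meets $\RR$ or lies in $\bigcup_n\tau^nK_H$; since that set carries no mass, a density point of $\{\mu\neq 0\}$ can always be taken outside it, so Proposition~\ref{prop:11fa,1} is applicable. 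Everything else is routine once this expansion-and-distortion machinery is invoked.
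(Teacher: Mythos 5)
Your overall strategy is the paper's own: fix a Lebesgue density point, invoke the expansion machinery of Proposition~\ref{prop:11fa,1} and Lemmas~\ref{lem:5gp,1}--\ref{lem:5gp,2}, and use normalized univalent inverse branches together with invariance and compactness to manufacture a holomorphic line-field in the limit (this is precisely Lemma~\ref{lem:1gp,1}). However, you have the direction of the density-point argument reversed. The almost-parallel structure of $\mu$ lives at small scales around the density point $z$; the inverse branches $\zeta_n$ carry fixed-size regions around $z_n$ onto those shrinking neighborhoods of $z$, so pulling the nearly constant field back through the normalized $\zeta_n$ yields holomorphy at the accumulation point $Z$ of the rescaled points $\tau^{-m_n}z_n$ --- not at $z$ itself. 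Your assertion that ``$\mu$ near $z_n$ converges to a field which pulls back under $\Lambda$ to a constant'' and hence ``$\mu=c\,\overline{\Lambda'}/\Lambda'$ near $z$'' is exactly backwards. Since the Proposition only asks for holomorphy at some point, the corrected version still suffices, but your non-triviality argument (``we started from a density point of $\{\mu\neq 0\}$'') must be replaced by the observation that the limit field equals $\mu(z)\,\overline{\psi'}/\psi'$ with $\mu(z)\neq 0$, hence is nowhere vanishing near the point where holomorphy is actually obtained.

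The second, more substantive gap is your treatment of the second case of the alternative in Proposition~\ref{prop:11fa,1} as a routine technicality. In that case $\tau^{-m_n}z_n$ lies in $(\Omega_-\cup\Omega_+)\setminus(\Omega_{-,c}\cup\Omega_{+,c})$, a region pinched against $\RR$ near $x_0$ and $\tau x_0$; fixed-radius hyperbolic balls about $z_n$ in $\CC\setminus\RR$ need not sit inside the domain of $H_{-m_n}$ with uniform geometry, and the accumulation point of $\tau^{-m_n}z_n$ may lie on $\RR$, so the first-case argument with a fixed disk about $Z$ simply does not apply. The paper must construct, for each $n$, a simply connected $Y_n\ni z_n$ with a base point $y_n$ at uniformly bounded hyperbolic distance such that $H_{-m_n}$ is univalent on $Y_n$ and its image covers $\tau^{p_n}D(i,\eta)$ (via Lemma~\ref{lem:8xa,1} in the logarithmic coordinate, with $M=\log|\tau|$), then compose with Riemann maps $R_n$ and run a second compactness argument on the auxiliary family $T_n=\tau^{-p_n}H_{-m_n}\circ R_n$ to transport the holomorphic field to a neighborhood of the point $i$. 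Without this construction your argument does not close in the second case.
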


Construction of holomorphic line-fields is based on the following
analytic idea. This lemma appeared in~\cite{feig} and is repeated here
for the sake of completeness. 

\begin{lem}\label{lem:1gp,1}
Consider  a line-field $\nu_0 \frac{d{\overline
z}}{dz}$ defined on a neighborhood of some point $z_0$ which also is a
Lebesgue (density) 
point for $\nu_0$. Consider a sequence of univalent functions
$\psi_n$ defined on some disk 
$D(z_1,\eta_1)$ chosen so that for every $n$ and a
fixed $\rho<1$ the set $\psi_n(D(z_1,\rho\eta_1))$ covers
$z_0$. In addition, let $\lim_{n\rightarrow\infty} \psi'_n(z_1) = 0$.
Define 
\[ \mu_n(z) \frac{d\overline{z}}{dz}  = \psi_n^*(\nu_0(w))
\frac{d\overline{w}}{dw} \]

Then for some subsequence $n_k$ and a univalent mapping $\psi$ defined
on $D(z_1, \eta_1)$, $\mu_{n_k}(z)$ tend to $\nu_0(z_0)
\frac{\overline{\psi'(z)}}{\psi'(z)}$ on a neighborhood of $z_1$.  
\end{lem}
\begin{proof}
Let us normalize the objects by setting $\hat{\psi}_n :=
|\psi'_n(z_1)|^{-1} \psi_n$ and $\hat{\nu}_n(w) = \nu_0(|\psi'_n(z_1)|
w)$. By bounded distortion, $\hat{\psi}_n(D(z_1,\rho\eta_1))$ contains
some $D(z_0, r_1)$ and is contained in $D(z_0, r_2)$ with $0<r_1 <
r_2$ independent of $n$. By choosing a subsequence, and taking into
account compactness of normalized univalent functions and the fact
that $z_0$ was a Lebesgue point of $\nu_0$, we can assume that
$\hat{\psi}_n$ converge to a univalent function $\psi$ and $\hat{\nu}_n$
converge to a constant line-field $\nu_0(z_0)
\frac{d\overline{w}}{dw}$ almost everywhere. Since 
\[ \mu_n(z) \frac{d\overline{z}}{dz} = \hat{\psi}_n^* (\hat{\nu}_n(w)
\frac{d\overline{w}}{dw} ) \]
for all $n$, we get 
\[ \mu_n(z) \frac{d\overline{z}}{dz} \rightarrow \psi^* (\nu_0(z_0)
\frac{d\overline{w}}{dw}) \]
for $z\in D(z_1, \eta_1\rho)$ which concludes the proof of the Lemma. 
\end{proof}

\paragraph{Proof of Proposition~\ref{linetriv}.}
Start with a Lebesgue point $z_0$ of $\mu$. If the field is
non-trivial, without loss of generality $\mu(z_0) \neq 0$. Also, we
can pick $z_0$ so that it is never mapped on the real line and we can
use Proposition~\ref{prop:11fa,1}. 

We then pick a sequence $z_n$ from
Proposition~\ref{prop:11fa,1} in such a way that the same case happens for all $n$. 
In the first case, we choose a point
$Z$ to be an accumulation point of $\tau^{-m_n} z_{n}$. Without loss
of generality, we suppose that $\tau^{-m_n} z_n \rightarrow Z$. The distance
from $Z$ to $\omega_{\infty}$ is positive and we can denote it by
$2\eta_1$. Then, for any $n$ we can find an inverse branch $\zeta_n$ of the
tower iterate $\chi_n$ mapping $z_0$ to $z_{m_n}$ defined on
$D(z_{m_n},\tau^{m_n}\eta)$. One easily checks that functions
$\psi_n(z) = \zeta_n(\tau^{m_n} z)$ defined on $D(Z,\eta_1)$ satisfy the
hypotheses of Lemma~\ref{lem:1gp,1}. In particular, their derivatives
go to $0$ because $D_{\rho_{\infty}}\chi_n(z_0)$ go to $\infty$ by
Lemma~\ref{lem:5gp,2}.   

To consider the second case of Proposition~\ref{prop:11fa,1}, fix
attention on some $n$. The first observation is that without loss of
generality $|H_{-m_n}(z_n)| < R' |\tau|^{m_n}$ with some $R' < R$
independent of $n$. Indeed, all points on the circle $C(0,
|\tau|^{m_n} R)$ are in distance $\eta |\tau|^{m_n}$ from
$\omega_{\infty}$ for some $\eta$ positive. So if this additional
property fails for infinitely many $n$, we can reduce the situation to
the first case already considered. 

Now the key observation is that for every $n$ the point $z_n$ has a
simply connected neighborhood $Y_n$, a point $y_n \in Y_n$ such that
the distance in the hyperbolic metric of $Y_n$ from $z_n$ to $y_n$ is
bounded independently of $n$. Finally, $Y_n$ is  mapped univalently 
by $H_{-m_n}$ so that for some integer 
$p_n$ and $\eta>0$ which is independent of
$n$ its image covers $\tau^{p_n}(D(i,\eta))$ with $H_{-m_n}(y_n) = \tau^{p_n}
i$. To choose $Y_m$ invoke Lemma~\ref{lem:8xa,1} after rescaling. In Lemma~\ref{lem:8xa,1}, set 
$M = \log |\tau|$ which will guarantee that $W_M$ contains $\tau^{-p_n} i$. Then Lemma~\ref{lem:8xa,1}
provides a uniform hyperbolic bound. Univalence can be obtained by restricting to the appropriately smaller neighborhood of the 
path joining $z_{m_n}$ to $\tau^{p_n} i$. 

Once $y_n, Y_n, p_n$ were chosen, we easily conclude the proof. Let
$R_n: D(0,1)\to Y_n$ 
be Riemann maps of regions $Y_n$ with $R_n(0) = y_n$. Then we can
set $\psi_n = (\chi_n)^{-1} \circ R_n$ where $\chi_n$ are maps
specified in Proposition~\ref{prop:11fa,1}. Maps $\psi_n$ satisfy the
conditions of Lemma~\ref{lem:1gp,1}. In particular, 
$|R^{-1}_n(z_n)|$ is bounded independently of $n$ as a consequence of the
construction of $Y_n$.

From this and Proposition~\ref{prop:11fa,1},
the derivatives of $\psi_n$ at $R_n^{-1}(z_n)$ go to $0$, and then the
same can be said of $\psi'_n(0)$ by bounded distortion. So, by passing
to a subsequence, we get that $R_n^* (\mu(z)
\frac{d\overline{z}}{dz})$ tend a.e. to a holomorphic line-field $\nu
\frac{d\overline{w}}{dw}$ on a neighborhood of $0$. 

To finish the proof, we ignore the fact that a subsequence has been
chosen and consider mappings $T_n := \tau^{-p_n} H_{-m_n} \circ R_n$
defined on the unit disk. We have $T_n^* (\mu(z)
\frac{d\overline{z}}{dz}) = R_n^* (\mu(z) \frac{d\overline{z}}{dz})$
for every $n$. Maps $T_n$ are all univalent and have been normalized
so that $T_n(0) = i$ and the image of $D(0,1)$ under $T_n$ contains
$D(i, \eta)$ for a fixed $\eta>0$, but avoids $0$. Then $T_n$ is a
compact family of univalent maps and has a univalent limit $T$. Then
it develops that $\mu$ in a neighborhood of $i$ is the image under $T$
of the holomorphic line-field $\nu$ from a neighborhood of $0$, hence
is holomorphic.   

\paragraph{Proof of Theorem~\ref{theo:13xp,3}.}
Suppose that class ${\cal EWF}$ contains two mappings, $\phi$ and $\hat{\phi}$. We construct their 
towers $(H_{-n})$ and $(\hat{H}_{-n})$. By Proposition~\ref{prop:17fa,1}, the towers are quasiconformally conjugated which gives rise 
a line-field invariant under $H^*_{-n}$. If this line-field is non-trivial, then by Proposition~\ref{linetriv} it is holomorphic on some open set, but then
one reaches a contradiction with Lemma~\ref{linehol}. If the line-field is trivial, then the quasiconformal conjugacy is affine. But it fixes $0$ and $1$, hence 
$\phi=\tilde{\phi}$. 
  
\paragraph{Proof of Theorem~\ref{theo:12xp,1}.}
By Facts~\ref{fa:13xp,1} and~\ref{fa:13xp,2} we can pick $k_{\ell}$ so that for every $n_{\ell}\geq k_{\ell}$ and every $\ell$, the $C^0$ distance between 
${\cal R}^{n_{\ell}}(\psi_{\ell})$ and the fixed point $H_{\ell}$ tends to $0$ as $\ell\to\infty$. 
Then, by Theorems~\ref{theo:13xp,1} and~\ref{theo:13xp,2}, every subsequence of ${\cal R}^{n_{\ell}}(\psi_{\ell})$ 
has a subsequence convergent to a ${\cal EWF}$ map. But by Theorem~\ref{theo:13xp,3}, it means that the whole sequence converges to the unique member of the ${\cal EWF}$-class.

\section{Proof of Theorem~\ref{theo:13xp,4}}

The idea is to introduce the presentation function $\Pi_\ell$ 
for finite $\ell$
and to present the post-critical set of $H_\ell$
as a ``non-escaping set'' of the presentation function on some proper 
subinterval
of its domain of definition. For the limit map and for its presentation
function $\Pi$, it is done in Lemma~\ref{lem:28ua,4}.
Then we pass to the limit as $\ell\to \infty$ 
using Theorem~\ref{theo:12xp,1}. It is important to keep in mind that $X_{\ell} \to \frac{x_0}{\tau^2}$
when this limit is taken. Thus, points $\tau^2_{\ell} X_{\ell}$ and $x_{\ell}$ merge in the limit to a parabolic fixed point
of $G$. 

Fix a finite $\ell$. 
Recall that the point $X_\ell$ is defined as the unique solution of the
equation $\phi_\ell(X_\ell)=\tau_\ell X_\ell$, see Lemma~\ref{covcontin}. 
Recall also that $x_\ell$ denotes the critical point of $\phi_\ell$.
Note that $\tau_\ell^2 X_\ell<x_\ell<X_\ell$.
The map $H_\ell$ consists of a pair of maps.
It is defined  as the map $\phi_\ell$ on
$(\tau_\ell^2 X_\ell, \tau_\ell X_\ell]$ and 
$\tau_\ell\circ \phi_{\ell}\circ \tau_\ell^{-1}$ on 
$(\tau_\ell X_\ell, \tau_\ell^3 X_\ell)$.

Define the presentation function $\Pi_\ell$ similar to $\Pi$
as follows. $\Pi_\ell$ is defined  
on the interval $(\tau_\ell^2 X_\ell,\tau_\ell^3 X_\ell)$. 
For $\tau_\ell^2 X_\ell<x\leq \tau_\ell X_\ell$, 
$\Pi_\ell(x) = \tau_\ell x$. For $\tau_\ell X_\ell<x<\tau_\ell^3 X_\ell$, 
$\Pi_\ell(x) = 
\tau_\ell\circ \phi_{\ell}\circ \tau_\ell^{-1}(x)$. 

Like in the limit case, 
it is post-critically finite since $\Pi_\ell(\tau_\ell x_\ell) = 0$ 
and $0$ is a repelling fixed point of $\Pi_\ell$.
Also, the points
$1,\tau_\ell^{-1}$ form a periodic orbit under $\Pi_\ell$. 

Denote by $\omega_\ell$ the post-critical set of $H_\ell$.
It is the closure of the forward iterates $H^i_\ell(0)$, $i\ge 0$,
of the critical value $0$.

As it follows directly from the definition of Fibonacci combinatorics
for covering maps, we have the following fact:
\begin{equation}\label{w}
\omega_\ell\subset [x_\ell, \tau_\ell x_\ell].
\end{equation}
Note that $[x_\ell, \tau_\ell x_\ell]$ is a proper subset 
of the domain of definition of $H_\ell$. 
Using the fact~(\ref{w}) and repeating word by word the proof of 
Lemma~\ref{lem:8xp,1} we get the following.
\begin{lem}\label{wpre}
A point $x\in (x_\ell,\tau x_\ell)$ is equal to 
$H_\ell^j(0)$, $j>0$, if and only
if $\Pi_\ell^k(x)=1$ for some $k\geq 0$. 
\end{lem}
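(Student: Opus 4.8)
The plan is to mirror the proof of Lemma~\ref{lem:8xp,1} essentially verbatim, substituting the finite-$\ell$ objects $H_\ell$, $\Pi_\ell$, $\tau_\ell$ for the limit objects $H$, $\Pi$, $\tau$, and using the inclusion~(\ref{w}) to make sure all the arguments take place inside the domain where the functional equations hold. First I would record the structural facts that make the analogy exact: the two branches of $H_\ell$ are $\phi_\ell$ and $\phi_{-1,\ell}=\tau_\ell\circ\phi_\ell\circ\tau_\ell^{-1}$, they satisfy the fixed-point equation~(\ref{equ:27up,1}) in the form $\phi_{-2,\ell}\circ\phi_{-1,\ell}=\phi_{0,\ell}$ (this is just~(\ref{covfixp0}) rescaled, valid on the appropriate subinterval, which by~(\ref{w}) contains $\omega_\ell$), and $\Pi_\ell$ is built from the linear branch $x\mapsto\tau_\ell x$ on $(\tau_\ell^2 X_\ell,\tau_\ell X_\ell]$ and from $\phi_{-1,\ell}$ on $(\tau_\ell X_\ell,\tau_\ell^3 X_\ell)$. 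The Fibonacci combinatorics for covering maps (Fact~\ref{fa:13xp,1}) guarantees, exactly as in the limit case, that any $H_\ell^j(0)$ can be written as $\phi_{\epsilon_j,\ell}\circ\cdots\circ\phi_{\epsilon_1,\ell}(0)$ with $\epsilon_m\in\{0,-1\}$ and no two consecutive $-1$'s.

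The main body of the argument then splits into the two directions, as in Lemma~\ref{lem:8xp,1}. For the forward direction, given $x=H_\ell^j(0)$ with $j>0$, one checks that if $x\le\tau_\ell^{-1}\cdot(\text{right endpoint})$, i.e.\ $x$ lies in the linear piece, then $\Pi_\ell(x)=\tau_\ell x$, and applying $\tau_\ell$ to the branch decomposition and using $\phi_{-2,\ell}\circ\phi_{-1,\ell}=\phi_{0,\ell}$ rewrites it as $H_\ell^k(0)$ with $k\le j$, where $j-k$ equals the number of $-1$'s in $(\epsilon_m)_{m=1}^j$; if $x$ lies in the nonlinear piece, one first applies $\Pi_\ell$ once to land in the linear piece (this is where one uses that the next symbol is forced to be $-1$) and then repeats. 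Iterating, $j$ strictly decreases unless the symbol string is all zeros, which forces $j=1$, so $\Pi_\ell^k(x)=1$ for some $k\ge0$. Conversely, a preimage of $H_\ell^j(0)$ under $\Pi_\ell$ is either $\tau_\ell^{-1}H_\ell^j(0)$, whose two $H_{1,\ell}$-branches are $\phi_{0,\ell}$ and $\phi_{1,\ell}=\phi_{-1,\ell}\circ\phi_{0,\ell}$ by the functional equation, yielding some $H_\ell^k(0)$ with $k>j$, or it is $\phi_{-1,\ell}^{-1}H_\ell^j(0)=H_\ell^{j-1}(0)$; by induction every $\Pi_\ell$-preimage of $1=H_\ell(0)$ is of the form $H_\ell^k(0)$.

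The one place requiring a moment's care — and the place I'd flag as the main (minor) obstacle — is checking that the induced rewriting stays inside the domain of definition, so that $\Pi_\ell$ and the branch compositions are genuinely defined at every step; this is exactly what~(\ref{w}) provides, together with the fact that $[x_\ell,\tau_\ell x_\ell]$ is a proper subinterval of the domain of $H_\ell$ and that $\Pi_\ell$ maps $\omega_\ell$ into itself (which follows from $\Pi_\ell$ being a ``first return / rescaling'' hybrid respecting the dynamics of $0$). Since the excerpt explicitly says the lemma is obtained by ``repeating word by word the proof of Lemma~\ref{lem:8xp,1}'', I would keep the write-up short: set up the notation, invoke~(\ref{w}) to legitimize the computations, and then point to the identical combinatorial bookkeeping with $\tau_\ell$ in place of $\tau$, rather than reproducing the full symbol-manipulation argument.

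\begin{nproof}{ of Lemma~\ref{wpre}}
Write $\phi=\phi_\ell$, $\tau=\tau_\ell$, $\phi_k=\tau^{-k}\circ\phi\circ\tau^{k}$ for $k\in\ZZ$, and $H=H_\ell$, $\Pi=\Pi_\ell$. By~(\ref{w}), every iterate $H^j(0)$, $j\ge 0$, lies in $[x_\ell,\tau x_\ell]$, which is a proper subinterval of the domain of $H$; consequently all the branch compositions and all the applications of $\Pi$ occurring below are well defined, and the rescaled fixed-point equation~(\ref{covfixp0}) may be used in the form $\phi_{-2}\circ\phi_{-1}=\phi_0$.

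For $j>0$ one may write $H^j(0)=\phi_{\epsilon_j}\circ\cdots\circ\phi_{\epsilon_1}(0)$ with $\epsilon_m\in\{0,-1\}$, and by the Fibonacci combinatorics for covering maps (Fact~\ref{fa:13xp,1}) the sequence $(\epsilon_m)$ contains no two consecutive $-1$'s. If $H^j(0)\le\tau^{-1}\cdot(\tau^3 X_\ell)$ lies in the linear piece of $\Pi$, then $\Pi(H^j(0))=\tau H^j(0)=\phi_{\epsilon_j-1}\circ\cdots\circ\phi_{\epsilon_1-1}(0)$, and rewriting via $\phi_{-2}\circ\phi_{-1}=\phi_0$ — replacing each occurrence of $\epsilon_m=0$ followed by $\epsilon_{m+1}=-1$ by a single $0$, and each other $\epsilon_m=0$ by $-1$ — expresses this as $H^k(0)$ with $k\le j$, where $j-k$ is the number of $-1$'s among $\epsilon_1,\dots,\epsilon_j$. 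If instead $H^j(0)$ lies in the nonlinear piece, then $\Pi(H^j(0))=H^{j+1}(0)$ lies in the linear piece (since the next symbol $\epsilon_{j+1}$ is forced to be $-1$), and applying the previous case gives $\Pi^2(H^j(0))=H^k(0)$ with the same count $j-k$. Thus one or two applications of $\Pi$ strictly decrease $j$ unless the symbol string consists only of zeros, which can happen only for $j=1$; hence $\Pi^k(H^j(0))=1$ for some $k\ge 0$.

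Conversely, any $\Pi$-preimage of $H^j(0)$ equals either $\tau^{-1}H^j(0)=H_1^j(0)$ or $\phi_{-1}^{-1}(H^j(0))$. In the first case the two branches of $H_1$ are $\phi_0$ and $\phi_1=\phi_{-1}\circ\phi_0$ (by~(\ref{equ:27up,1})), so $\tau^{-1}H^j(0)=H^{k}(0)$ for some $k>j$; in the second case $\phi_{-1}^{-1}(H^j(0))=H^{j-1}(0)$. By induction, every $\Pi$-preimage of $1=H(0)$ of any order is of the form $H^k(0)$ for some $k\ge 0$. Combining the two directions proves the lemma.
\end{nproof}
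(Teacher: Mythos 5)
Your proposal matches the paper exactly: the paper gives no separate argument for Lemma~\ref{wpre} beyond the sentence ``Using the fact~(\ref{w}) and repeating word by word the proof of Lemma~\ref{lem:8xp,1}'', which is precisely what you do, with~(\ref{w}) invoked to keep all branch compositions inside the domain. The only blemish is the inequality ``$H^j(0)\le\tau^{-1}\cdot(\tau^3 X_\ell)$'' describing the linear piece (the breakpoint is $\tau_\ell X_\ell$, the finite-$\ell$ analogue of $x_0/\tau$, not $\tau_\ell^2 X_\ell$), but your parenthetical ``lies in the linear piece of $\Pi$'' makes the intent unambiguous.
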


Similar to the limit case, we consider the first return map
of $\Pi_\ell$ to the open interval $(x_\ell, X_\ell)$.
Denote this first return map by $R_\ell$. Since $\Pi^2_\ell$
maps $(x_\ell, X_\ell)$ onto $(\tau_\ell^2 X_\ell, 0)$ and
$\Pi^2_\ell=\tau_\ell\circ \phi_\ell$ on $(x_\ell, X_\ell)$, the following
statement follows easily.
\begin{lem}\label{fret}
Denote $\psi_n=\tau_\ell^{2n+1}\phi_\ell$, $n=0,1,...$.
For every $n\ge 0$,
there exists an interval $K^n_\ell\subset (x_\ell, X_\ell)$ such that
$\psi_n(K_\ell^n)=(x_\ell, X_\ell)$, $n=0,1,...$. The intervals $K_\ell^n$
are pairwise disjoint. Domain of definition of the map $R_\ell$ is the union
of $K_\ell^n$, $n\ge 0$, so that $R_\ell$ on $K_\ell^n$ is equal to  $\psi_n$.
Each branch of $R_\ell$ has negative Schwarzian derivative and
extends diffeomorphically to a
fixed interval which contains $[x_\ell,X_\ell]$ in its interior.
\end{lem}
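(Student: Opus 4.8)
The plan is to imitate the proof of the limit-case statement, Lemma~\ref{lem:8xp,2}, using the explicit form of $\Pi_\ell$ near its repelling fixed point $0$ together with Lemma~\ref{covcontin}. Write $J=(x_\ell,X_\ell)$ for brevity. From the definition of $\Pi_\ell$ and the inequalities $\tau_\ell^2X_\ell<x_\ell<X_\ell<0<\tau_\ell X_\ell$ (and the strict inequality $\tau_\ell^2X_\ell<x_\ell$), one first reads off that $\Pi_\ell(y)=\tau_\ell y$ for $y\in(\tau_\ell^2X_\ell,\tau_\ell X_\ell]$; consequently $\Pi_\ell^2(y)=\tau_\ell^2y$ on the ``linear cascade'' $(X_\ell,0)$, while $\Pi_\ell^2=\tau_\ell\circ\phi_\ell$ on $(\tau_\ell^2X_\ell,X_\ell)$, in particular on $J$, where $\psi_0:=\tau_\ell\circ\phi_\ell$ carries $J$ diffeomorphically onto $(\tau_\ell^2X_\ell,0)\supset J$.

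Next I would build all branches from $\psi_0$. By Lemma~\ref{covcontin}(e) the map $\phi_\ell$, and hence $\psi_0$, extends to a diffeomorphism of the fixed interval $(\tau_\ell^2X_\ell,\tau_\ell x_\ell)$, which contains $[x_\ell,X_\ell]$ in its interior; and since $\phi_\ell=(E_\ell)^\ell$ with $E_\ell$ in the Epstein class, $\psi_0$ has negative Schwarzian derivative on $J$ (the Schwarzian being unchanged under the affine postcomposition by $\times\tau_\ell$). For $n\ge 0$ put $\psi_n(x)=\tau_\ell^{2n}\psi_0(x)=\tau_\ell^{2n+1}\phi_\ell(x)$ and $K_\ell^n=\{x\in J:\psi_n(x)\in J\}$. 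Since $\psi_n|_J$ is a monotone diffeomorphism onto $(\tau_\ell^{2n+2}X_\ell,0)\supset J$ (using $\tau_\ell^{2n+2}X_\ell\le\tau_\ell^2X_\ell<x_\ell$), each $K_\ell^n$ is a single subinterval of $J$ with $\psi_n(K_\ell^n)=J$, and $\psi_n$ extends diffeomorphically to the same fixed interval with negative Schwarzian, as required. The $K_\ell^n$ are pairwise disjoint: if $\psi_n(x)$ and $\psi_m(x)$ both lay in $J$ with $m>n$, then $\psi_m(x)=\tau_\ell^{2(m-n)}\psi_n(x)$ is more negative than $\psi_n(x)<X_\ell$, hence $\psi_m(x)<\tau_\ell^{-2(m-n)}x_\ell\cdot\tau_\ell^{2(m-n)}$-type bookkeeping shows $\psi_m(x)<x_\ell$ (one uses $\tau_\ell^{-2}x_\ell>X_\ell$, which is exactly $\tau_\ell^2X_\ell<x_\ell$), contradicting $\psi_m(x)\in J$. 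Finally, for $x\in K_\ell^n$ the orbit $x,\Pi_\ell(x),\Pi_\ell^2(x),\dots$ stays, up to time $2(n+1)$, inside the sets $\tau_\ell^{j}(X_\ell,0)$, all of which are disjoint from $J$ and well inside the domain of $\Pi_\ell$: after $\psi_0$ the point lies in $(X_\ell,0)$, the cascade $\Pi_\ell^2=\times\tau_\ell^2$ drives it leftward, and it first leaves $(X_\ell,0)$ — landing exactly in $J$ — at time $2(n+1)$, where $\Pi_\ell^{2(n+1)}=\tau_\ell^{2n}\circ(\Pi_\ell^2|_J)=\psi_n$. Thus $R_\ell=\psi_n$ on $K_\ell^n$, and no earlier return is possible.

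The step I expect to be the real work is pinning down that the domain of $R_\ell$ is \emph{exactly} $\bigcup_nK_\ell^n$, i.e.\ that an $x\in J$ whose $\psi_0$-image lands neither in $J$ nor in the linear cascade $(X_\ell,0)$, but in the remaining piece $(\tau_\ell^2X_\ell,x_\ell]$, never returns to $J$. This piece is the one genuinely new feature for finite $\ell$; it degenerates to the single point $x_0$ in the limit, which is why Lemma~\ref{lem:8xp,2} has no analogue of it. The argument is that $(\tau_\ell^2X_\ell,x_\ell]$ is \emph{expanded} by the dynamics — $\tau_\ell^2X_\ell$ being a repelling fixed point of the auxiliary map $\Gamma=\tau_\ell\circ\phi_\ell\circ\tau_\ell^{-2}$ from Lemma~\ref{covcontin}(d), with $\Gamma^2=G$ — so that the forward $\Pi_\ell$-orbit of any point of that piece is pushed out of the bounded interval $(\tau_\ell^2X_\ell,\tau_\ell^3X_\ell)$ on which $\Pi_\ell$ is defined before it can re-enter $J$. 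Once this is established, the domain of $R_\ell$ equals $\bigcup_nK_\ell^n$, and the remaining assertions of the lemma have already been verified in the previous paragraph.
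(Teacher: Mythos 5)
Your first two paragraphs are essentially what the paper itself intends: the identities $\Pi_\ell^2=\tau_\ell\circ\phi_\ell$ on $(x_\ell,X_\ell)$ and $\Pi_\ell^2=\tau_\ell^2$ on the cascade $(X_\ell,0)$, the resulting intervals $K_\ell^n$ with $\psi_n(K_\ell^n)=(x_\ell,X_\ell)$, their disjointness, the non-positive Schwarzian, and the extendability of the branches; the paper disposes of all of this with ``follows easily,'' exactly along your lines. One small correction: $\phi_\ell$ is only a \emph{homeomorphism} on $(\tau_\ell^2X_\ell,\tau_\ell x_\ell)$ (Lemma~\ref{covcontin}(e)), not a diffeomorphism, since $x_\ell$ is its critical point; the extendability claim must be read as in Lemma~\ref{lem:8xp,2}, namely that each branch extends to a diffeomorphism \emph{onto} a fixed interval containing $[x_\ell,X_\ell]$ in its interior (e.g.\ onto $(\tau_\ell^3,0)$), its domain being a neighborhood of $\overline{K_\ell^n}$ which avoids $x_\ell$.

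The genuine gap is in your last paragraph, and the mechanism you propose there is false. Take $y\in(\tau_\ell^2X_\ell,x_\ell)$ with $\phi_\ell(y)\in(x_\ell/\tau_\ell^2,\,X_\ell/\tau_\ell^2)$; this is a nonempty open subinterval of the ``bad piece'' accumulating at $x_\ell$, since $\phi_\ell$ maps $(\tau_\ell^2X_\ell,x_\ell)$ increasingly onto $(-\infty,0)$. Then $\Pi_\ell(y)=\tau_\ell y>\tau_\ell X_\ell$, so the nonlinear branch gives $\Pi_\ell^2(y)=\tau_\ell\phi_\ell(y)\in(0,\tau_\ell X_\ell)$ (because $X_\ell<x_\ell/\tau_\ell^2<\phi_\ell(y)<0$), and then the linear branch gives $\Pi_\ell^3(y)=\tau_\ell^2\phi_\ell(y)\in(x_\ell,X_\ell)$. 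So such $y$ re-enters $(x_\ell,X_\ell)$ after three steps rather than being expelled from the domain of $\Pi_\ell$; the expansion at the repelling point $\tau_\ell^2X_\ell$ controls only the left end of the bad piece, where $\phi_\ell\to-\infty$, whereas the troublesome points sit near $x_\ell$ — the \emph{attracting} end for $G_\ell$ — where $\phi_\ell$ is close to $0$ and the orbit is fed back through the linear branch. Since $\psi_0(J)=(\tau_\ell^2X_\ell,0)$ covers the entire bad piece, there are $x\in(x_\ell,X_\ell)$ with $\psi_0(x)=y$, and these have first return time $5$ with $R_\ell(x)=\tau_\ell^2\phi_\ell(\tau_\ell\phi_\ell(x))$, which is not any $\psi_n$; iterating the same observation produces infinitely many further branches with odd return times. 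Consequently the literal assertion that the domain of the first return map is $\bigcup_nK_\ell^n$ cannot be proved, because it does not hold; the lemma must be read with the paper's standing convention that first return maps are \emph{retained} to the components meeting the forward critical orbit. That reading is legitimate here: by~(\ref{w}) the post-critical set lies in $[x_\ell,\tau_\ell x_\ell]$ and (by the argument of Lemma~\ref{lem:8xp,1}, cf.\ Lemma~\ref{wpre}) its $\Pi_\ell$-orbit never enters $(\tau_\ell^2X_\ell,x_\ell)$, so the retained branches are exactly the $\psi_n|_{K_\ell^n}$, which is all that Lemma~\ref{wfret} and the dimension estimate use. Replacing your escape claim by this restriction completes the argument.
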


The next statement is crucial.
Let $\tilde\omega_\ell=\omega_\ell\cap [x_\ell, X_\ell]$. 
Introduce also the non-escaping set
of $R_\ell$:
$$J(R_\ell)=\{x\in (x_\ell, X_\ell): R_\ell^n(x)\in (x_\ell, X_\ell),
n\ge 0 \}.$$
\begin{lem}\label{wfret}
(a) A point $x\in (x_\ell,X_\ell)$ is equal to 
$H_\ell^j(0)$, $j>0$, if and only
if $R_\ell^k(x)=1/\tau_\ell$ for some $k\geq 0$. 
(b) $\tilde\omega_\ell$ coincides with the closure of 
the non-escaping set $J(R_\ell)$.
\end{lem}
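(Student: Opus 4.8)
\textbf{Proof proposal for Lemma~\ref{wfret}.}

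The plan is to deduce both parts from the already-established analogue for the full presentation function, namely Lemma~\ref{wpre}, together with the structure of the first return map $R_\ell$ given by Lemma~\ref{fret}, mimicking exactly how Lemma~\ref{lem:28ua,4} (the limit case) is obtained from Lemma~\ref{lem:8xp,1}. The key bookkeeping fact I would isolate first is that the interval $(x_\ell,X_\ell)$ is a \emph{fundamental domain} for the dynamics of $\Pi_\ell$ restricted to the relevant part of $[x_\ell,\tau_\ell x_\ell]$: since $\Pi_\ell^2=\tau_\ell\circ\phi_\ell$ on $(x_\ell,X_\ell)$ and this maps onto $(\tau_\ell^2X_\ell,0)$, and since $X_\ell$ and $\tau_\ell^2 X_\ell$ are the (repelling resp.\ attracting-from-outside) boundary behaviour controlled by Lemma~\ref{covcontin}(d), every $\Pi_\ell$-orbit of a point of $[x_\ell,\tau_\ell x_\ell]$ must pass through $(x_\ell,X_\ell)$, and the successive passages are precisely recorded by the return map $R_\ell$. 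In other words, for $x\in(x_\ell,X_\ell)$ and $k\ge 0$, the point $R_\ell^k(x)$ equals $\Pi_\ell^{m}(x)$ for the appropriate $m=m(k)$, and conversely every $\Pi_\ell$-iterate of $x$ that lands back in $(x_\ell,X_\ell)$ is an $R_\ell$-iterate.

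For part (a): given $x\in(x_\ell,X_\ell)$, apply Lemma~\ref{wpre}: $x=H_\ell^j(0)$ for some $j>0$ iff $\Pi_\ell^k(x)=1$ for some $k\ge 0$. Now $1\notin(x_\ell,X_\ell)$ since $X_\ell<0<1$, but $\tau_\ell^{-1}\in(x_\ell,X_\ell)$ — indeed $x_\ell<1/\tau_\ell<0$ and $1/\tau_\ell<X_\ell$ because $X_\ell$ is characterized in Lemma~\ref{covcontin}(d) as lying in $(1/\tau_\ell,0)$ — and $\Pi_\ell(\tau_\ell^{-1})=1$ while $\Pi_\ell^2(\tau_\ell^{-1})=\tau_\ell^{-1}$, so $1$ and $\tau_\ell^{-1}$ form the periodic orbit already noted. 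Hence reaching $1$ under $\Pi_\ell$ is equivalent, after one more (or one fewer) iterate, to reaching $\tau_\ell^{-1}$, and reaching $\tau_\ell^{-1}\in(x_\ell,X_\ell)$ under $\Pi_\ell$ is, by the fundamental-domain remark, equivalent to reaching it under $R_\ell$. This gives (a). The one thing to watch is that $x$ itself might equal $\tau_\ell^{-1}$ (then $k=0$ works on both sides) and that the correspondence between $\Pi_\ell$-time and $R_\ell$-time is monotone and finite-to-one, which Lemma~\ref{fret} guarantees because each branch of $R_\ell$ is a genuine diffeomorphism onto $(x_\ell,X_\ell)$.

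For part (b): by definition $\omega_\ell=\overline{\{H_\ell^j(0):j\ge 0\}}$, so $\tilde\omega_\ell=\omega_\ell\cap[x_\ell,X_\ell]$ is the closure (intersected with $[x_\ell,X_\ell]$) of the forward critical orbit. By part (a), the points of this orbit that lie in $(x_\ell,X_\ell)$ are exactly the $R_\ell$-preimages of $\tau_\ell^{-1}$; since $\tau_\ell^{-1}$ lies in $(x_\ell,X_\ell)$ it belongs to $J(R_\ell)$, and any $R_\ell$-preimage of a point of $J(R_\ell)$ that lies in $(x_\ell,X_\ell)$ is again in $J(R_\ell)$, so $\{H_\ell^j(0)\}\cap(x_\ell,X_\ell)\subset J(R_\ell)$, giving $\tilde\omega_\ell\subset\overline{J(R_\ell)}$. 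For the reverse inclusion, note that $R_\ell$ restricted to $J(R_\ell)$ is an expanding (negative Schwarzian, all branches onto the whole interval) Cantor-type system by Lemma~\ref{fret}, so preimages of any point of $J(R_\ell)$ — in particular of $\tau_\ell^{-1}$ — are dense in $J(R_\ell)$; this is the standard transitivity/minimality argument for such full-branch expanding maps. Combined with part (a), which identifies those preimages with critical-orbit points, we get $J(R_\ell)\subset\overline{\{H_\ell^j(0)\}}=\omega_\ell$, and since $J(R_\ell)\subset[x_\ell,X_\ell]$ this yields $\overline{J(R_\ell)}\subset\tilde\omega_\ell$. The main obstacle — really the only delicate point — is verifying the density of $R_\ell$-preimages of $\tau_\ell^{-1}$ in $J(R_\ell)$ uniformly enough to pass to the closure cleanly; but this follows from the full-branch structure in Lemma~\ref{fret} exactly as in the proof of Lemma~\ref{lem:28ua,4}, so no new ideas are needed beyond carefully transcribing that argument with $\Pi$ replaced by $R_\ell$ and $1$ replaced by $\tau_\ell^{-1}$.
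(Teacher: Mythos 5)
Your proof is correct and follows essentially the same route as the paper's (much terser) argument: part (a) is deduced from Lemma~\ref{wpre} via the first-return structure of $R_\ell$ and the periodic orbit $\{1,1/\tau_\ell\}$, and part (b) from (a) together with $1/\tau_\ell\in J(R_\ell)$ and the density of $R_\ell$-preimages in $J(R_\ell)$ coming from the full-branch structure of Lemma~\ref{fret}. The extra details you supply (the fundamental-domain bookkeeping and the explicit two inclusions) are exactly what the paper leaves implicit.
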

\begin{proof}
(a) follows directly from Lemma~\ref{wpre} and from the fact that
$R_\ell$ is the first return map. In turn, (b) follows from (a)
and from the fact that the preimages of any point in $J(R_\ell)$
by all $R^n_\ell$, $n\ge 0$, are dense in $J(R_\ell)$. Then
we use that
$1/\tau_\ell\in J(R_\ell)$ because $\Pi^2_\ell(1/\tau_\ell)=1/\tau_\ell$.
\end{proof}

As $\ell\to \infty$, then $x_\ell\to x_0$
and $X_\ell\to x_0/\tau^2$.
Moreover, given $n\ge 0$, the interval $K_\ell^n$ tends $K^n$,
so that the intervals $K^n$, $n\ge 0$, form a partition of
$(x_0, x_0/\tau^2)$. 
We can elaborate Lemma~\ref{lem:8xp,2} as follows.
The first return map $R$ of the limit presentation function
$\Pi$ into $(x_0,x_0/\tau^2)$ is defined on $\cup_{n\ge 0}K^n$,
and $R|_{K^n}=\tau^{2n+1}\phi$.
Clearly, the closure of the non-escaping set $J(R)$ of $R$
is the interval $[x_0, x_0/\tau^2]$, which is just the
intersection $\tilde\omega$ of the post-critical set 
$\omega=[x_0, \tau x_0]$ of $H$ with the range of $R$.
In particular, Hausdorff dimension $HD(\tilde\omega)$
of $\tilde\omega$ is $1$.
On the other hand, since $R$ is an infinite conformal
iterated function system, see~\cite{MU},
$$1=HD(\tilde\omega)=\sup\{HD(J(R(A))\},$$
where the supremum is taken over all finite subsets $A$ of 
non-negative integers, $R(A)$ denotes the finite iterated function system
formed by the subset of the branches of $R$ with the indexes in the set $A$,
and $J(R(A))$ is the closure of non-escaping set of $R(A)$.
Given $\delta>0$, we find $A$ such that $HD(J(R(A))>1-\delta$.
On the other hand, by Theorem~\ref{theo:12xp,1}, any branch
$\tau_\ell^{2n+1}\phi_\ell$ of $R_\ell$ tends to the corresponding
branch  $\tau^{2n+1}\phi$ of $R$. Therefore, since $A$ is a finite set
of indexes $n$, 
$HD(J(R_\ell(A))\to HD(J(R(A))$ as $\ell\to \infty$, with the obvious notation
for $R_\ell(A)$ as a finite subset of $R_\ell$ with the indexes in the same
finite set $A$. Thus, for any $\ell$ large enough,
$$HD(\omega_\ell)=HD(\tilde\omega_\ell)=HD(J(R_\ell))\ge 
HD(J(R_\ell(A))>1-\delta.$$
Since $\delta>0$ is arbitrary, the theorem is proved.

\paragraph{Acknowledgments.}
We thank the referee for insightful comments and Adam \'{S}wi\c{a}tek for making the illustrations.

\end{document}